\documentclass[oneside,english]{amsart}
\usepackage[T1]{fontenc}
\usepackage[latin9]{inputenc}
\usepackage{geometry}
\geometry{verbose,rmargin=3cm}
\usepackage{color}
\usepackage{babel}
\usepackage{refstyle}
\usepackage{float}
\usepackage{units}
\usepackage{amstext}
\usepackage{amsthm}
\usepackage{amssymb}
\usepackage{stmaryrd}
\usepackage{graphicx}
\PassOptionsToPackage{normalem}{ulem}
\usepackage{ulem}
\usepackage[unicode=true,pdfusetitle,
 bookmarks=true,bookmarksnumbered=false,bookmarksopen=false,
 breaklinks=false,pdfborder={0 0 0},pdfborderstyle={},backref=false,colorlinks=true]
 {hyperref}
\hypersetup{
 citecolor=blue}

\makeatletter


\AtBeginDocument{\providecommand\subsecref[1]{\ref{subsec:#1}}}
\AtBeginDocument{\providecommand\partref[1]{\ref{part:#1}}}
\AtBeginDocument{\providecommand\figref[1]{\ref{fig:#1}}}
\AtBeginDocument{\providecommand\lemref[1]{\ref{lem:#1}}}
\AtBeginDocument{\providecommand\secref[1]{\ref{sec:#1}}}
\AtBeginDocument{\providecommand\thmref[1]{\ref{thm:#1}}}
\AtBeginDocument{\providecommand\defref[1]{\ref{def:#1}}}
\AtBeginDocument{\providecommand\corref[1]{\ref{cor:#1}}}
\AtBeginDocument{\providecommand\eqref[1]{\ref{eq:#1}}}
\AtBeginDocument{\providecommand\assuref[1]{\ref{assu:#1}}}
\RS@ifundefined{subsecref}
  {\newref{subsec}{name = \RSsectxt}}
  {}
\RS@ifundefined{thmref}
  {\def\RSthmtxt{theorem~}\newref{thm}{name = \RSthmtxt}}
  {}
\RS@ifundefined{lemref}
  {\def\RSlemtxt{lemma~}\newref{lem}{name = \RSlemtxt}}
  {}

\numberwithin{equation}{section}
\numberwithin{figure}{section}
\theoremstyle{plain}
\newtheorem{thm}{\protect\theoremname}
\theoremstyle{definition}
\newtheorem{defn}[thm]{\protect\definitionname}
\theoremstyle{definition}
\newtheorem{problem}[thm]{\protect\problemname}
\theoremstyle{remark}
\newtheorem{claim}[thm]{\protect\claimname}
\theoremstyle{plain}
\newtheorem{cor}[thm]{\protect\corollaryname}
\theoremstyle{remark}
\newtheorem{rem}[thm]{\protect\remarkname}
\theoremstyle{plain}
\newtheorem{lem}[thm]{\protect\lemmaname}
\theoremstyle{definition}
\newtheorem{example}[thm]{\protect\examplename}
\theoremstyle{remark}
\newtheorem{assumption}[thm]{\protect\assumptionname}

\@ifundefined{date}{}{\date{}}
%


\makeatother

\providecommand{\assumptionname}{Assumption}
\providecommand{\claimname}{Claim}
\providecommand{\corollaryname}{Corollary}
\providecommand{\definitionname}{Definition}
\providecommand{\examplename}{Example}
\providecommand{\lemmaname}{Lemma}
\providecommand{\problemname}{Problem}
\providecommand{\remarkname}{Remark}
\providecommand{\theoremname}{Theorem}

\begin{document}
\global\long\def\norm#1{\left\Vert #1\right\Vert }%
\global\long\def\AA{\mathbb{A}}%
\global\long\def\QQ{\mathbb{Q}}%
\global\long\def\PP{\mathbb{P}}%
\global\long\def\CC{\mathbb{C}}%
\global\long\def\HH{\mathbb{H}}%
\global\long\def\ZZ{\mathbb{Z}}%
\global\long\def\NN{\mathbb{N}}%
\global\long\def\KK{\mathbb{K}}%
\global\long\def\RR{\mathbb{R}}%
\global\long\def\FF{\mathbb{F}}%
\global\long\def\oo{\mathcal{O}}%
\global\long\def\aa{\mathcal{A}}%
\global\long\def\bb{\mathcal{B}}%
\global\long\def\ff{\mathcal{F}}%
\global\long\def\mm{\mathcal{M}}%
\global\long\def\limfi#1#2{{\displaystyle \lim_{#1\to#2}}}%
\global\long\def\pp{\mathcal{P}}%
\global\long\def\qq{\mathcal{Q}}%
\global\long\def\da{\mathrm{da}}%
\global\long\def\dt{\mathrm{dt}}%
\global\long\def\dg{\mathrm{dg}}%
\global\long\def\ds{\mathrm{ds}}%
\global\long\def\dm{\mathrm{dm}}%
\global\long\def\dmu{\mathrm{d\mu}}%
\global\long\def\dx{\mathrm{dx}}%
\global\long\def\dy{\mathrm{dy}}%
\global\long\def\dz{\mathrm{dz}}%
\global\long\def\dnu{\mathrm{d\nu}}%
\global\long\def\flr#1{\left\lfloor #1\right\rfloor }%
\global\long\def\nuga{\nu_{\mathrm{Gauss}}}%
\global\long\def\diag#1{\mathrm{diag}\left(#1\right)}%
\global\long\def\bR{\mathbb{R}}%
\global\long\def\Ga{\Gamma}%
\global\long\def\PGL{\mathrm{PGL}}%
\global\long\def\GL{\mathrm{GL}}%
\global\long\def\PO{\mathrm{PO}}%
\global\long\def\SL{\mathrm{SL}}%
\global\long\def\PSL{\mathrm{PSL}}%
\global\long\def\SO{\mathrm{SO}}%
\global\long\def\mb#1{\mathrm{#1}}%
\global\long\def\wstar{\overset{w^{*}}{\longrightarrow}}%
\global\long\def\vphi{\varphi}%
\global\long\def\av#1{\left|#1\right|}%
\global\long\def\inv#1{\left(\mathbb{Z}/#1\mathbb{Z}\right)^{\times}}%
\global\long\def\cH{\mathcal{H}}%
\global\long\def\cM{\mathcal{M}}%
\global\long\def\bZ{\mathbb{Z}}%
\global\long\def\bA{\mathbb{A}}%
\global\long\def\bQ{\mathbb{Q}}%
\global\long\def\bP{\mathbb{P}}%
\global\long\def\eps{\epsilon}%
\global\long\def\on#1{\mathrm{#1}}%
\global\long\def\nuga{\nu_{\mathrm{Gauss}}}%
\global\long\def\set#1{\left\{  #1\right\}  }%
\global\long\def\smallmat#1{\begin{smallmatrix}#1\end{smallmatrix}}%
\global\long\def\len{\mathrm{len}}%
\global\long\def\idealeq{\trianglelefteqslant}%

\title{Shearing in the space of adelic lattices}
\author{Ofir David}
\begin{abstract}
In this notes we show how a problem regarding continued fractions
of rational numbers, lead to several phenomena in number theory and
dynamics, and eventually to the problem of shearing of divergent diagonal
orbits in the space of adelic lattices. Finding these ideas quite interesting,
the first half of these notes is about explaining theses ideas, the
intuition and motivation behind them, and the second contains the
details and proofs.
\end{abstract}

\email{eofirdavid@gmail.com}
\maketitle

\section{Introduction}

\subsection{The main results}

The connection between number theory and homogeneous dynamics is well
established - many problems in number theory have found elegant formulations
and solutions in the language of homogeneous dynamics, and in particular
the dynamics of the space of unimodular Euclidean lattices $\SL_{n}\left(\ZZ\right)\backslash\SL_{n}\left(\RR\right)$.
One of the main examples, which led eventually to this paper, is the
problem of finding good Diophantine approximations. 

It is well known that these rational approximations can be read as
the prefixes of the continued fraction expansion of any given number
\[
x=\left[a_{0};a_{1},a_{2},...\right]=a_{0}+\frac{1}{a_{1}+\frac{1}{a_{2}+\ddots}}.
\]
This continued fraction presentation comes with the natural Gauss
map, which is simply the shift left map $T\left(\left[0;a_{1},a_{2},a_{3},...\right]\right):=\left[0;a_{2},a_{3},...\right]$
(or equivalently $T\left(x\right):=\frac{1}{x}-\flr{\frac{1}{x}}$),
and many problems in Diophantine approximation are studied via this
map. In particular, one of the mail tools in this area is the ergodicity
of this map with respect to the Gauss measure $\nuga=\frac{1}{\ln\left(2\right)}\cdot\frac{1}{1+t}\cdot\dt$.
This allows us to use the Pointwise Ergodic Theorem, which states
that almost every $x\in\left[0,1\right]$ is generic, namely for every
continuous function $f:\left[0,1\right]\to\RR$ we have that 
\[
\frac{1}{N}\sum_{0}^{N-1}f\left(T^{i}\left(x\right)\right)\to\nuga\left(f\right)=\frac{1}{\ln\left(2\right)}\int_{0}^{1}f\left(t\right)\frac{1}{1+t}\dt.
\]

In this case we say that the $T$-orbit of $x$ equidistributes. However,
this is not true in general, and in particular this fails for rational
$x$ which has a finite continued fraction expansion. In this case
(and in others as well), instead of studying an orbit of a single
point $x$, we usually study the ``finite'' orbits of certain naturally
defined finite families $\ff_{i}$ of points (and in this paper the
families of rationals $\left\{ \frac{p}{q}\;\mid\;1\leq p\leq q,\;\left(p,q\right)=1\right\} $
for $q\in\NN$). Then, our question is if taking the orbits of each
family together, do they equidistribute as $i\to\infty$.

It is well known that any $T$-orbit has a continuous analogue as
an orbit of the diagonal subgroup $A\leq\SL_{2}\left(\RR\right)$
in $\SL_{2}\left(\ZZ\right)\backslash\SL_{2}\left(\RR\right)$. We
can reformulate the problem above in this new continuous language,
where the ``finite'' orbits become divergent $A$-orbits. This already
give us more tools to work with, and in particular we can use unipotent
flows which are much more understood than $A$-flows. 

As it turns out, an even more natural point of view for this kind
of questions is actually over the Adeles, where these finite families
of $A$-orbits are combined together to a translation of a single
orbit of the diagonal matrices over the adeles $\AA$, which is known
in the literature as the shearing process.

In this notes we show that these translations of a single orbit over
the adeles through the origin always equidistribute, as long as there
are no trivial reasons for them not to, or formally we have the following.
\begin{thm}
\label{thm:main_theorem} Let $X_{\AA}=\Gamma_{\AA}\backslash G_{\AA}$
where $\Gamma_{\AA}=\GL_{2}\left(\QQ\right)$ and 
\begin{align*}
G_{\AA} & =\left\{ \left(g^{\left(\infty\right)},g^{\left(2\right)},g^{\left(3\right)},...\right)\in\GL_{2}\left(\AA\right)\;\mid\;\prod_{\nu}\left|\det\left(g^{\left(\nu\right)}\right)\right|_{\nu}=1\right\} .
\end{align*}
Denote by $A_{\AA}\leq G_{\AA}$ the diagonal subgroup and let $\delta_{\Gamma_{\AA}A_{\AA}}$
be the $A_{\AA}$-invariant orbit measure on $\Gamma_{\AA}A_{\AA}$.
Then for any sequence $g_{i}\in G_{\AA}$ such that $g_{i}/A_{\AA}$
diverges in $\nicefrac{G_{\AA}}{A_{\AA}}$, the sequence $g_{i}\left(\delta_{\Gamma_{\AA}A_{\AA}}\right)$
equidistributes, i.e. for any $f_{1},f_{2}\in C_{c}\left(X_{\AA}\right)$
with $\mu_{Haar,\AA}\left(f_{2}\right)\neq0$ we have that 
\[
\frac{\left(g_{i}\delta_{\Gamma_{\AA}A_{\AA}}\right)\left(f_{1}\right)}{\left(g_{i}\delta_{\Gamma_{\AA}A_{\AA}}\right)\left(f_{2}\right)}\to\frac{\mu_{Haar,\AA}\left(f_{1}\right)}{\mu_{Haar,\AA}\left(f_{2}\right)}.
\]
\end{thm}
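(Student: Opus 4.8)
The plan is to run a shearing-plus-measure-rigidity argument, in the spirit of the Dani--Margulis linearization method, with the twist that the orbit $\Gamma_{\AA}A_{\AA}$ carries an \emph{infinite} invariant measure. First I would reduce the ratio statement to a statement about weak-$*$ limits of normalized measures. Right translation by $g_{i}$ is a homeomorphism of $X_{\AA}$, so the translated orbit still meets each compact set in a set of finite orbit-measure; hence $\mu_{i}:=g_{i}\delta_{\Gamma_{\AA}A_{\AA}}$ is a Radon measure, finite on compacta, even though its total mass is infinite. Fix $f_{0}\in C_{c}(X_{\AA})$ with $f_{0}\geq0$ and $\mu_{Haar,\AA}(f_{0})>0$; granting the non-divergence input below one has $\mu_{i}(f_{0})>0$ for large $i$, and we set $\nu_{i}:=\mu_{i}/\mu_{i}(f_{0})$. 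It then suffices to prove that $\{\nu_{i}\}$ is tight and that every weak-$*$ subsequential limit is a scalar multiple of $\mu_{Haar,\AA}$: such a limit $\nu_{\infty}$ satisfies $\nu_{\infty}(f_{0})=1$, hence equals $\mu_{Haar,\AA}/\mu_{Haar,\AA}(f_{0})$, and then $\mu_{i}(f_{1})/\mu_{i}(f_{2})=\nu_{i}(f_{1})/\nu_{i}(f_{2})\to\mu_{Haar,\AA}(f_{1})/\mu_{Haar,\AA}(f_{2})$ whenever $\mu_{Haar,\AA}(f_{2})\neq0$.

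The core is the \emph{shearing} estimate: although $A_{\AA}$ is a torus, the conjugate $g_{i}^{-1}A_{\AA}g_{i}$ degenerates toward a unipotent subgroup as $g_{i}A_{\AA}$ diverges in $G_{\AA}/A_{\AA}$. Concretely, identify $G_{\AA}/A_{\AA}$ with the set of ordered pairs of distinct points of $\bP^{1}(\AA)$, namely the pair of fixed points of $g_{i}A_{\AA}g_{i}^{-1}$; since $\bP^{1}(\AA)$ is compact, divergence in $G_{\AA}/A_{\AA}$ means exactly that these two points collide. Tracking this collision through a Bruhat-type decomposition of $g_{i}$ relative to $A_{\AA}$, one finds that along the translated orbit, on a bounded piece of fixed size, the $A_{\AA}$-parameter is \emph{sheared}: it sweeps out a window of diverging size along a unipotent one-parameter subgroup $u(\cdot)$ at a single place $\nu$ --- which place, and which root, are read off from the direction of the collision after passing to a subsequence --- while moving only a bounded amount inside $A_{\AA}$ itself. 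Pairing this with any fixed $f\in C_{c}(X_{\AA})$ and using the right-$A_{\AA}$-invariance of the Haar measure on $A_{\AA}$ gives $|\nu_{i}(f(\cdot\,u))-\nu_{i}(f)|\to0$ for every $u$ in a \emph{nontrivial} unipotent $\QQ_{\nu}$-subgroup $U_{\nu}\leq G_{\AA}$; hence every weak-$*$ limit $\nu_{\infty}$ is $U_{\nu}$-invariant. Nontriviality of $U_{\nu}$ uses precisely the hypothesis that $g_{i}A_{\AA}$ diverges --- this is the ``trivial obstruction'' the theorem must exclude, a bounded or convergent $g_{i}A_{\AA}$ producing translates with non-Haar limits.

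Two ingredients then close the argument. \emph{Non-divergence}: the family $\{\nu_{i}\}$ is tight, so no mass escapes into the cusp; I would deduce this from the Kleinbock--Tomanov $S$-adic non-divergence estimate for $U_{\nu}$, combined with (or, alternatively, replaced by) the explicit description of the cuspidal excursions of the divergent orbit attached to a $\QQ$-split torus, i.e.\ to a pair of rational points of $\bP^{1}$. This also justifies $\mu_{i}(f_{0})>0$. \emph{Measure rigidity}: the classification of $U_{\nu}$-invariant measures --- classical for $\GL_{2}$ (Dani), and in the adelic/$S$-adic setting Ratner's theorem as extended by Margulis--Tomanov --- forces $\nu_{\infty}$ to be a convex combination of homogeneous measures on closed orbits of $\QQ$-subgroups between $U_{\nu}$ and $G_{\AA}$; for $\GL_{2}$ these are $\mu_{Haar,\AA}$ itself and periodic horocycle measures concentrated in a single cusp. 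The periodic-horocycle components are excluded by the linearization argument, using once more that $g_{i}A_{\AA}$ leaves every compact subset of $G_{\AA}/A_{\AA}$: a nonzero such component would constrain the geometry of the colliding pair attached to $g_{i}$ and so contradict genuine divergence. (Equivalently, the sheared pieces are long adelic horocycle arcs, and one may instead invoke their equidistribution directly, for which Ratner-free arguments are available for $\GL_{2}$.) Therefore $\nu_{\infty}=\mu_{Haar,\AA}/\mu_{Haar,\AA}(f_{0})$ and the theorem follows.

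The step I expect to be the main obstacle is the adelic shearing carried out uniformly over all places, entangled with non-divergence. The divergence of $g_{i}A_{\AA}$ may be spread across infinitely many places at once, so pinning down the limiting unipotent $U_{\nu}$ --- even the place $\nu$ --- requires a careful normalization and passage to subsequences, and the shearing comparison must be made quantitative $p$-adically, not just archimedeanly. At the same time one must rule out escape of mass: a generic torus orbit does leak mass into the cusp, so tightness of $\{\nu_{i}\}$ is not a black box but genuinely needs the shearing estimate together with unipotent non-divergence and the explicit cuspidal geometry of $\Gamma_{\AA}A_{\AA}$. Once these are in hand, the measure-rigidity endgame is routine.
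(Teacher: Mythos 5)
Your shearing step contains a genuine gap. You assert that divergence of $g_iA_{\AA}$ in $G_{\AA}/A_{\AA}$ forces, after passing to a subsequence, every weak-$*$ limit to be invariant under a nontrivial unipotent $\QQ_\nu$-subgroup $U_\nu$ at a \emph{single place} $\nu$. This is false in a case that is central to the theorem. After an adelic Iwasawa reduction (which is also the paper's first move), every translation can be put in the form $(u_n,\,u_{1/m}^{(f)})$ with $n,m\in\NN$, and divergence of $g_iA_{\AA}$ is equivalent to $n_i\to\infty$ \emph{or} $m_i\to\infty$. Take $n_i\equiv 0$ and let $m_i=p_i$ be the $i$-th prime (this is precisely the continued-fraction case that motivated the theorem). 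Then $\mu_i$ is invariant under $u_{-1/p_i}a\,u_{1/p_i}$ for $a\in A_{\QQ_{p_i}}^+$, which produces a large unipotent arc \emph{only at the place $p_i$} — a different place for each $i$. No subsequence fixes the place, and the limit measure is not $U_\nu$-invariant for any single $\nu$; it is only $A$-invariant in the real place. Your Ratner/Margulis--Tomanov endgame, which needs a fixed unipotent to classify invariant measures, therefore has nothing to act on in exactly the case the theorem is designed to cover. The paper closes this case with a \emph{maximal entropy} argument: the limit is $T_a$-invariant for $a\in A\subseteq\SL_2(\RR)$, entropy does not increase under the proper factor map $\pi_\RR^{\AA}$, and equality in the entropy bound $h_\mu(T_a)\le\Psi_a$ forces $U_a$ and $U_a^{tr}$ invariance, hence $\SL_2(\RR)$-invariance. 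Your proposal has no substitute for this mechanism.

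A second, smaller issue: Ratner's theorem and its $S$-arithmetic extensions (Margulis--Tomanov, Ratner, Tomanov) classify unipotent-invariant measures on $\mathbb G(\ZZ[S^{-1}])\backslash\mathbb G(\QQ_S)$ for \emph{finite} $S$; there is no off-the-shelf classification on the full adelic quotient. The paper deals with this by a lifting theorem: first prove the projection to $X_\RR$ is Haar, lift $\SL_2(\RR)$-invariance via entropy or Ratner on $Y_S$ for each finite $S$, use weak approximation and disintegration to bootstrap to $\SL_2(\RR)\times\prod_{p\in S}\SL_2(\ZZ_p)$-invariance, add a ``prime-uniformity'' condition on the pushforward determinant to reach $H_S$-invariance, and finally use Stone--Weierstrass to pass from all finite $S$ to $S=\PP_\infty$. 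You would need something of the same shape; applying Ratner ``over the adeles'' directly, as the proposal reads, is not available. Finally, even in the case $n_i\to\infty$ where your shearing does produce a real unipotent invariance, one still needs the prime-uniformity condition (that the determinant pushforward is Haar on $\prod\ZZ_p^\times$) to rule out the limit being supported on a proper $H_{S}$-coset; unique ergodicity of $U$ on $X_\RR$ does not see this finite-place freedom. Your linearization argument to exclude periodic horocycles addresses a different obstruction and does not replace this.
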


Once the theorem above is proved over the adeles, we automatically
obtain similar results for spaces which are defined naturally as projections
of $X_{\AA}$ (see \subsecref{Adelic_lattices} for the definition).
One of the main examples is the space of unimodular lattices $X_{\RR}=\SL_{2}\left(\ZZ\right)\backslash\SL_{2}\left(\RR\right)$.
The discussion about equidistribution of $T$-orbits of rational points,
is a specific case of the theorem above where the translation is only
in the finite places, and then projecting to $X_{\RR}$. This specific
case was proven in \cite{david_equidistribution_2018} by the author
together with Uri Shapira.

More specifically, let $q$ be some positive integer and set $\ff_{q}=\left\{ 1\leq p\leq q\;\mid\;\left(p,q\right)=1\right\} $.
For $p\in\ff_{q}$, let $\len\left(\frac{p}{q}\right)$ be the length
of the continued fraction expansion of $\frac{p}{q}$. Letting $T$
be the Gauss map on $\left(0,1\right)$, we can define the average
of the ``$T$-orbit'' of $\frac{p}{q}$ to be the probability measure
\[
\nu_{p/q}=\frac{1}{\len\left(p/q\right)}\sum_{0}^{\len\left(p/q\right)}\delta_{T^{i}\left(\frac{p}{q}\right)}.
\]
We then define the average
\[
\nu_{q}=\frac{1}{\left|\ff_{q}\right|}\sum_{p\in\ff_{q}}\nu_{p/q}.
\]

\begin{thm}
\cite{david_equidistribution_2018} The measures $\nu_{q}$ equidistribute,
namely $\nu_{q}\wstar\nuga$, where $\nuga=\frac{\dt}{\ln\left(2\right)\left(1+t\right)}$
is the Gauss measure on $\left(0,1\right)$.
\end{thm}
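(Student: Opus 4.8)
The plan is to obtain this statement as the archimedean shadow of Theorem~\ref{thm:main_theorem}, through the classical correspondence between the Gauss map and the geodesic flow on $X_\RR=\SL_2(\ZZ)\backslash\SL_2(\RR)$. The first step is to repackage the family $\ff_q$ as a single adelic orbit translate. For each $q$ one chooses $g_q\in G_\AA$ with trivial archimedean component and $\ell$-adic components concentrated at the primes $\ell\mid q$ (essentially $\diag{1,\ell^{v_\ell(q)}}$ there, identity elsewhere), so that under the natural projection $\pi\colon X_\AA\to X_\RR$ the image $\pi\bigl(\Gamma_\AA A_\AA g_q\bigr)$ is exactly the union over $p\in\ff_q$ of the divergent $A$-orbits attached to the rational boundary points $p/q$; this is the adelic incarnation of the fact that the $q$-th batch of rationals forms a single Hecke-type family. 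Since the $\ell$-adic components of $g_q$ escape every compact set as $q\to\infty$, the coset $g_q/A_\AA$ diverges in $\nicefrac{G_\AA}{A_\AA}$, so Theorem~\ref{thm:main_theorem} applies and $g_q(\delta_{\Gamma_\AA A_\AA})$ equidistributes to $\mu_{Haar,\AA}$.

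Next I would push everything down to the real place via $\pi$. The pushforward of $\mu_{Haar,\AA}$ is the Haar probability measure $\mu_{Haar,\RR}$ on $X_\RR$, and $\pi_*\bigl(g_q(\delta_{\Gamma_\AA A_\AA})\bigr)$ is (a renormalization of) the arc-length measure carried by the union of the divergent geodesic segments attached to $\{p/q : p\in\ff_q\}$. So the ratio statement of Theorem~\ref{thm:main_theorem}, applied to continuous test functions on $X_\RR$, says precisely that the arc-length-weighted average of these divergent orbits equidistributes towards $\mu_{Haar,\RR}$.

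The crux is to transfer this from the geodesic flow to the Gauss map through a cross-section. There is a transversal $\Sigma\subset X_\RR$ (a piece of a periodic horocycle) whose first-return map under $a_t=\diag{e^{t/2},e^{-t/2}}$ is measurably isomorphic to the Gauss map $T$ on $(0,1)$, with return-time function $\tau$ integrable and with the induced cross-section measure corresponding to $\nuga$, so that the suspension of $(T,\nuga,\tau)$ realizes the Haar flow on $X_\RR$ up to the $\ln 2$ normalization. For a rational with continued fraction $[0;a_1,\dots,a_k]$, the divergent orbit attached to $p/q$ meets $\Sigma$ exactly at $T^0(p/q),\dots,T^{k-1}(p/q)$ before escaping into the cusp, the arc length before the $i$-th hit being $\sum_{j<i}\tau(T^j(p/q))+O(1)$. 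Disintegrating $\mu_{Haar,\RR}$ along the flow direction over $\Sigma$ converts the arc-length equidistribution of the previous step into equidistribution of the counting measures $\tfrac1k\sum_{i<k}\delta_{T^i(p/q)}$ averaged over $p\in\ff_q$, i.e. $\nu_q\wstar\nuga$, provided one also knows that $\sum_{p\in\ff_q}\sum_{j<\len(p/q)}\tau(T^j(p/q))$ is comparable to a fixed multiple of $\sum_{p\in\ff_q}\len(p/q)$.

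The main obstacle is exactly this last transfer. The roof $\tau$ is unbounded — large partial quotients produce long cusp excursions — so replacing $\sum_j\tau(T^j x)$ by a constant times $\len(p/q)$ cannot be done pointwise and must be extracted from the equidistribution statement itself; moreover the orbits here are finite and divergent, so the bijection between cross-section hits and Gauss iterates has to be set up with explicit control of both endpoints, and the initial and terminal cusp excursions (governed by $a_1$ and $a_{\len(p/q)}$) must be shown to contribute negligibly after summing over $\ff_q$. The device that makes this work is to truncate $X_\RR$ to the complement of a small cusp neighbourhood, apply the adelic equidistribution to continuous test functions adapted to the truncation, and then let the truncation recede; the fact that Theorem~\ref{thm:main_theorem} holds for \emph{every} divergent sequence, with the ratio normalization absorbing the infinite total mass of the orbit, is precisely what licenses interchanging the truncation limit with the limit $q\to\infty$.
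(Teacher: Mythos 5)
The paper does not prove this statement itself---it is cited verbatim from \cite{david_equidistribution_2018}, and the surrounding text only asserts informally that it is the real-place ``specific case'' of \thmref{main_theorem}. Your plan to make that implication explicit is the natural one, but two of your steps need repair.

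First, the element $g_q$ you propose does not lie in $G_\AA$. With trivial archimedean component and $\ell$-adic component $\diag{1,\ell^{v_\ell(q)}}$ at $\ell\mid q$, one computes $\prod_\nu\bigl|\det(g_q^{(\nu)})\bigr|_\nu = q^{-1}\ne 1$, violating the determinant condition defining $G_\AA$; and after correcting for this a genuinely diagonal $g_q$ would not give a diverging coset $g_q/A_\AA$, so \thmref{main_theorem} could not be applied. The translate that does the job, and the one the paper actually uses (see \lemref{Iwasawa_reduction} and \lemref{gamma_fix}), is the unipotent $g_q=(Id,u_{1/q}^{(f)})$, whose $\pi_\RR^\AA$-image of the translated $A_\AA$-orbit is the average of the divergent orbits at $x_\RR u_{\ell/q}$ over $\ell\in(\ZZ/q\ZZ)^\times$. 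You can recover a diagonal piece only by simultaneously introducing a real-place factor $a(\ln q)$, which is the $v_m$-device in the proof of \lemref{restricted_measures}.

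Second, and more fundamentally, the cross-section transfer does not directly give $\nu_q\wstar\nuga$, and truncation alone does not close the gap. Disintegrating the arc-length measure of the divergent geodesics along the geodesic flow over a cross-section $\Sigma$ weights each return to $\Sigma$ by its local return time; what you obtain is therefore convergence of the \emph{roof-weighted} measure
\begin{equation*}
\frac{\displaystyle\sum_{p\in\ff_q}\sum_{j<\len(p/q)}\tau\bigl(T^j(p/q)\bigr)\,\delta_{T^j(p/q)}}{\displaystyle\sum_{p\in\ff_q}\sum_{j<\len(p/q)}\tau\bigl(T^j(p/q)\bigr)}\;\wstar\;\frac{\tau\cdot\nuga}{\nuga(\tau)},
\end{equation*}
not of the per-orbit average $\nu_q=\frac{1}{|\ff_q|}\sum_p\frac{1}{\len(p/q)}\sum_j\delta_{T^j(p/q)}$ to $\nuga$. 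These are genuinely different statements: the first biases each $T$-iterate by $\tau$ and its limit is the $\tau$-biased Gauss measure, while the second assigns each orbit equal total mass irrespective of the highly variable $\len(p/q)$. Truncating near the cusp controls the unbounded part of $\tau$ and the initial and terminal cusp excursions, but it does not by itself convert a $\tau$-weighted average into a per-orbit-uniform one; for that you need to know that for \emph{most individual} $p\in\ff_q$ the orbit of $p/q$ equidistributes, so that $\frac{1}{\len(p/q)}\sum_j\tau(T^j(p/q))\to\nuga(\tau)$ for most $p$. That is exactly the ``upgrade'' sketched in \secref{continuoued_fraction}, obtained via a convex-combination/extreme-point argument, and it is the real content of \cite{david_equidistribution_2018}; without it the claimed conclusion does not follow from arc-length equidistribution alone.
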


\newpage{}

One of the main tools to show equidistribution when translating diagonal
orbits, is using shearing. This process is well known, however when
trying to solve the main theorem above, we will encounter three main
problems:
\begin{enumerate}
\item The orbit measure $\delta_{x_{\AA}A_{\AA}}$ and its translations
are not probability measures. This leads to the definition and study
of divergent orbits which are $A_{\AA}$-invariant and locally finite.
\item While the behavior of translations over the finite (prime) places
and the infinite (real) place behave similarly, they are not quite
the same and we need to ``glue'' them together.
\item Finally, the translation is over the adeles, and in particular the
number of primes in which we translate is nontrivial and can grow
to infinity.
\end{enumerate}
The study of translations of a fixed divergent $A$-orbit in the real
place, was first done by Shah and Oh in \cite{oh_limits_2014} for
dimension 2 over $\RR$, where they give a quantitative result. The
high dimension result over $\RR$ was done by Shapira and Cheng in
\cite{shapira_limiting_nodate}. The proof for translation in the
finite prime places in dimension 2 was done by the author and Shapira
in \cite{david_equidistribution_2018} where it was later generalized
to high dimension for certain type of translations in \cite{david_equidistribution_nodate}.

In this paper we combine the results for the translations in the finite
and infinite places for dimension 2 to give the full equidistribution
theorem.

\subsection{The intuition and the proofs}

The paper is composed of two main parts. \partref{Intuition} contains
the main ideas of the proof, while in \partref{Proofs_Details} we
complete the details and the more technical parts of the proof. As
mentioned above, the two ``parts'' of the proof - the translation
in the real place, and the translation in the finite places, were
already done previously and here we just combine them together. However,
we believe that the story leading to the final result is the interesting
part of this work, as it goes through several interesting areas of
number theory and dynamics utilizing some of the central results in
a natural way. As such, the emphasis of this notes is on \partref{Intuition}
and it was written with newcomers to this areas in mind, starting
with the original problem in continued fractions, and ending in the
equidistribution result in the language of the adelic numbers.

\subsection{Acknowledgments}

I would like to thank Uri Shapira for introducing me to the interesting
land residing between number theory and dynamics, and in particular
to the problem studied in this notes. The research leading to these
results has received funding from the European Research Council under
the European Union Seventh Framework Programme (FP/2007-2013) / ERC
Grant Agreement n. 335989.

\newpage{}

\part{\label{part:Intuition}Intuition and sketch of the proof}

\global\long\def\norm#1{\left\Vert #1\right\Vert }%
\global\long\def\AA{\mathbb{A}}%
\global\long\def\QQ{\mathbb{Q}}%
\global\long\def\PP{\mathbb{P}}%
\global\long\def\CC{\mathbb{C}}%
\global\long\def\HH{\mathbb{H}}%
\global\long\def\ZZ{\mathbb{Z}}%
\global\long\def\NN{\mathbb{N}}%
\global\long\def\KK{\mathbb{K}}%
\global\long\def\RR{\mathbb{R}}%
\global\long\def\FF{\mathbb{F}}%
\global\long\def\oo{\mathcal{O}}%
\global\long\def\aa{\mathcal{A}}%
\global\long\def\bb{\mathcal{B}}%
\global\long\def\ff{\mathcal{F}}%
\global\long\def\mm{\mathcal{M}}%
\global\long\def\limfi#1#2{{\displaystyle \lim_{#1\to#2}}}%
\global\long\def\pp{\mathcal{P}}%
\global\long\def\qq{\mathcal{Q}}%
\global\long\def\da{\mathrm{da}}%
\global\long\def\dt{\mathrm{dt}}%
\global\long\def\dg{\mathrm{dg}}%
\global\long\def\ds{\mathrm{ds}}%
\global\long\def\dm{\mathrm{dm}}%
\global\long\def\dmu{\mathrm{d\mu}}%
\global\long\def\dx{\mathrm{dx}}%
\global\long\def\dy{\mathrm{dy}}%
\global\long\def\dz{\mathrm{dz}}%
\global\long\def\dnu{\mathrm{d\nu}}%
\global\long\def\flr#1{\left\lfloor #1\right\rfloor }%
\global\long\def\nuga{\nu_{\mathrm{Gauss}}}%
\global\long\def\diag#1{\mathrm{diag}\left(#1\right)}%
\global\long\def\bR{\mathbb{R}}%
\global\long\def\Ga{\Gamma}%
\global\long\def\PGL{\mathrm{PGL}}%
\global\long\def\GL{\mathrm{GL}}%
\global\long\def\PO{\mathrm{PO}}%
\global\long\def\SL{\mathrm{SL}}%
\global\long\def\PSL{\mathrm{PSL}}%
\global\long\def\SO{\mathrm{SO}}%
\global\long\def\PSO{\mathrm{PSO}}%
\global\long\def\mb#1{\mathrm{#1}}%
\global\long\def\wstar{\overset{w^{*}}{\longrightarrow}}%
\global\long\def\vphi{\varphi}%
\global\long\def\av#1{\left|#1\right|}%
\global\long\def\inv#1{\left(\mathbb{Z}/#1\mathbb{Z}\right)^{\times}}%
\global\long\def\cH{\mathcal{H}}%
\global\long\def\cM{\mathcal{M}}%
\global\long\def\bZ{\mathbb{Z}}%
\global\long\def\bA{\mathbb{A}}%
\global\long\def\bQ{\mathbb{Q}}%
\global\long\def\bP{\mathbb{P}}%
\global\long\def\eps{\epsilon}%
\global\long\def\on#1{\mathrm{#1}}%
\global\long\def\nuga{\nu_{\mathrm{Gauss}}}%
\global\long\def\dnuga{\mathrm{d\nu_{\mathrm{Gauss}}}}%
\global\long\def\set#1{\left\{  #1\right\}  }%
\global\long\def\smallmat#1{\begin{smallmatrix}#1\end{smallmatrix}}%
\global\long\def\len{\mathrm{len}}%
\global\long\def\idealeq{\trianglelefteqslant}%
\global\long\def\ss#1{{\scriptstyle #1}}%
\global\long\def\sss#1{{\scriptscriptstyle #1}}%

In this part we give the main ideas of the proof and we defer the
details themselves to \partref{Proofs_Details}.

In \subsecref{continuoued_fraction} we start with a problem of equidistribution
of continued fractions. This dynamic system is one of the first examples
when learning about ergodic theory. However, we will be interested
in points in the system where the ergodic theorems fail e the rational
points. In \subsecref{continuous_analogue}, we will recall the connection
between continued fractions, and diagonal orbits in the space of unimodular
lattice $\SL_{2}\left(\ZZ\right)\backslash\SL_{2}\left(\RR\right)$.
Not only can we reformulate our problem there, we will also show why
it is a more natural language to use when trying to solve such problem.

In \subsecref{Symmetries_horospheres}, we will use some of the symmetries
that can be seen much more naturally in this new way, and more over,
we will see how not only our measures are defined using diagonal orbits,
but they also have some horocyclic nature which we can utilize. In
particular, the combination of both diagonal and horocyclic nature
of the problem will suggest the use of equidistribution of expanding
horocycles, which will be one of the main results needed in this notes.

We continue in \subsecref{The_p_adic_motivation} to find an even
better language for our problem. While in the space of Euclidean lattices
we have an average of finitely many diagonal orbits, in \subsecref{The_p_adic_motivation}
we will see how their definitions suggest an even bigger world where
they are all combined into a single diagonal orbit. This bigger world
will eventually lead to the definition of $p$-adic numbers, for which
we provide the main definitions, results and the intuition needed
for the main theorem. In this new language of $p$-adic numbers, our
problem turn into a well known phenomenon called shearing e translating
the diagonal orbit using a unipotent matrix.

The shearing process uses the results about equidistribution of expanding
horocycles in order to prove that such translations equidistribute
in themselves. In \subsecref{Shearing} we will show how shearing
lead naturally to thinking about expanding horocycles, and we will
use the shearing in $\SL_{2}\left(\ZZ\right)\backslash\SL_{2}\left(\RR\right)$
as an example in order to visualize it. This example will eventually
be part of the proof of the main theorem e this is the shearing in
the real place, while the problem of continued fractions of rational
numbers is shearing the the finite (prime) places.

Lastly, in \subsecref{shearing_Adelic_intuition} we show how to combine
the real place and all the prime places in order to form the adelic
numbers. The language of adelic numbers is very common in problems
relating to number theory and this one is not different. Both the
original problem of equidistribution of continued fractions of rational
numbers, and the shearing above, can be thought of projections of
an analogue problem over the adeles. While the ideas mentioned until
that point can be used to show equidistribution in the projection
to $\SL_{2}\left(\ZZ\right)\backslash\SL_{2}\left(\RR\right)$, in
this section we will talk about how to lift this solution to all of
the adeles. In particular we will see how to use either entropy or
classification of unipotent invariant measures to do this lifting.

\newpage{}

\section{\label{subsec:continuoued_fraction}The continued fraction motivation}

The starting point for out story is with continued fractions and the
problem of finding good rational approximations. We give the main
ideas and results here, and for more details on continued fractions,
and their connection to diagonal orbits, which we discuss in \subsecref{continuous_analogue},
the reader is referred to \cite{einsiedler_ergodic_2010}.

Recall that for an irrational $\alpha\in\RR\backslash\QQ$, a Diophantine
approximation is a rational $\frac{p}{q}\in\QQ$ such that $\left|\alpha-\frac{p}{q}\right|<\frac{1}{q^{2}}$.
The famous Dirichlet's theorem for Diophantine approximations shows
that there are infinitely many distinct solutions to this inequality.
Trying to actually find these solutions, we are led to the continued
fraction expansion (CFE). 

Given $a_{0}\in\ZZ$ and $a_{i}\in\NN$ positive for $i\geq1$, we
define 
\begin{align*}
\left[a_{0};a_{1},a_{2},...,a_{k}\right] & =a_{0}+\frac{1}{a_{1}+\frac{1}{a_{2}+\frac{1}{a_{3}+\frac{1}{\ddots\;\frac{1}{a_{k}}}}}}=\frac{p_{k}}{q_{k}},\\
\left[a_{0};a_{1},a_{2},a_{3},...\right] & =a_{0}+\frac{1}{a_{1}+\frac{1}{a_{2}+\frac{1}{a_{3}+\ddots}}}:=\limfi k{\infty}\left[a_{0};a_{1},a_{2},...,a_{k}\right].
\end{align*}
It is well known that the convergents $\frac{p_{k}}{q_{k}}\in\QQ$
always converge, and every $\alpha\in\RR$ has such an expression
as a CFE. Moreover, the convergents of $\alpha$ satisfy $\left|\alpha-\frac{p_{k}}{q_{k}}\right|<\frac{1}{q_{k}^{2}}$,
and in a sense these are the best possible Diophantine approximations.

Studying these approximations, we can restrict our attention to $\left[0,1\right]$,
namely $a_{0}=0$, so we are left with the $\NN$-valued sequences
$\left(a_{1},a_{2},...\right)$. While the finite prefixes correspond
to these convergent $\frac{p_{n}}{q_{n}}$, ``most'' of the information
is in the tails. This leads to the Gauss map, which is basically the
shift left map:
\begin{align*}
T\left(\left[0;a_{1},a_{2},a_{3},...\right]\right) & =\left[0;a_{2},a_{3},...\right]\\
T\left(x\right) & =\frac{1}{x}-\flr{\frac{1}{x}}.
\end{align*}

This Gauss map is ergodic with respect to the Gauss probability measure
$\nuga:=\frac{1}{\ln\left(2\right)}\frac{\dt}{\left(1+t\right)}$.
Recall that the \emph{Mean Ergodic Theorem (MET)} in this case states
that 
\[
\forall f\in L^{1}\;:\;\left|\frac{1}{N}\sum_{0}^{N-1}f\circ T^{i}-\int_{0}^{1}f\dnuga\right|_{1}\to0.
\]
An upgrade of this theorem, the \emph{Pointwise Ergodic Theorem (PET)},
states that almost every $x\in\left[0,1\right]$ is generic, namely
\[
\forall f\in L^{1}\;:\;\frac{1}{N}\sum_{0}^{N-1}f\circ T^{i}\left(x\right)\to\int_{0}^{1}f\dnuga.
\]
In other words, taking the discrete averages over longer and longer
parts of the $T$-orbit of $x$ gets us closer and closer to the integral. 

\newpage{}

While almost every point is generic, there are many interesting families
of points which are not. It is not hard to show that $x$ is generic
if and only if its coefficients $a_{i}$ in its CFE satisfy a certain
statistics called the Gauss Kuzmin statistics, and in particular every
integer should appear in this sequence. Some interesting example where
this fails:
\begin{enumerate}
\item \uline{The \mbox{$a_{i}$} are bounded}: These numbers are called
\emph{badly approximable numbers} e numbers which do not have ``very
good'' Diophantine approximation. By definition, a number $\alpha$
is badly approximable, if there is some $c>0$ such that for every
rational $\frac{p}{q}$ we have $\frac{c}{q^{2}}<\left|\alpha-\frac{p}{q}\right|$.
\item \uline{The \mbox{$a_{i}$} are eventually periodic:} These correspond
to real algebraic numbers of rank $2$. For example, the number $\alpha=\left[0;1,1,1,....\right]$
satisfy $\alpha=\frac{1}{1+T\left(\alpha\right)}=\frac{1}{1+\alpha}$,
so that $\alpha^{2}+\alpha-1=0$ (and $\alpha>0$), which implies
that $\alpha=\frac{-1+\sqrt{5}}{2}$ (and $\alpha+1$ is the Golden
ratio).
\item \uline{The \mbox{$a_{i}$} is a finite sequence}: These correspond
to rational numbers. In this case we cannot even apply the theorem
since $T^{n}\left(\alpha\right)=0$ for some $n$, and $T$ is not
defined on zero.
\end{enumerate}
While all these families have measure zero by the PET, the family
of badly approximable numbers is very big e indeed, its cardinality
is that of the continuum, it has maximal Hausdorff dimension and is
even Schmidt winning. On the other hand, the other two families are
only infinitely countable, and it turns out that other versions of
the mean and pointwise ergodic theorems hold for them. Note that the
Gauss map do not exactly act on the rationals since their $T$e''orbits''
get stuck once they reach zero (they have ``finite'' $T$-orbit),
but other than this problem, in a sense their behavior is similar
to the numbers with eventually periodic expansion. As we are mainly
interested in the rational case in this paper, we shall concentrate
on them, and the well known analogue for algebraic numbers, which
in essence is Linnik's theorem, can be seen in \cite{einsiedler_distribution_2012}.\\

Trying to find the continued fraction coefficients of a rational number
$\frac{n}{m}$ with $1\leq n\leq m,\;gcd\left(n,m\right)=1$ is basically
the same as running the Euclidean division algorithm. Indeed, writing
$m=a_{1}n+r_{1}$ with $0\leq r_{1}<n$, we obtain the equality 
\[
\frac{n}{m}=\frac{1}{m/n}=\frac{1}{a_{1}+\frac{r_{1}}{n}}.
\]
If $r_{1}=0$, then $\frac{n}{m}=\left[0;a_{1}\right]$ and we are
done. Otherwise, we can divide $n$ by $r_{1}$ to get $\frac{1}{a_{1}+\frac{1}{n/r_{1}}}$
and repeat this process, leading eventually to the continued fraction
expansion $\frac{n}{m}=\left[0;a_{1},a_{2},...,a_{k}\right]$. Note
also that $T\left(\frac{n}{m}\right)=\frac{m\;(mod\;n)}{n}$ where
$m\;\left(mod\;n\right)$ the remainder of dividing $m$ by $n$. 

Let $\len\left(\frac{n}{m}\right)$ to be the length of the $T$e''orbit''
of $\frac{n}{m}$, namely the first index $k$ such that $T^{k}\left(\frac{n}{m}\right)=0$,
or equivalently the number of steps in the Euclidean division algorithm
when dividing $q$ by $p$. We then let 
\[
\nu_{n/m}=\frac{1}{\len\left(\frac{n}{m}\right)}\sum_{0}^{\len\left(\frac{n}{m}\right)-1}\delta_{T^{i}\left(\frac{n}{m}\right)}
\]
be the uniform probability measure on the ``full $T$-orbit'' of
$\frac{n}{m}$. 

\begin{figure}[H]
\begin{centering}
\includegraphics[scale=0.5]{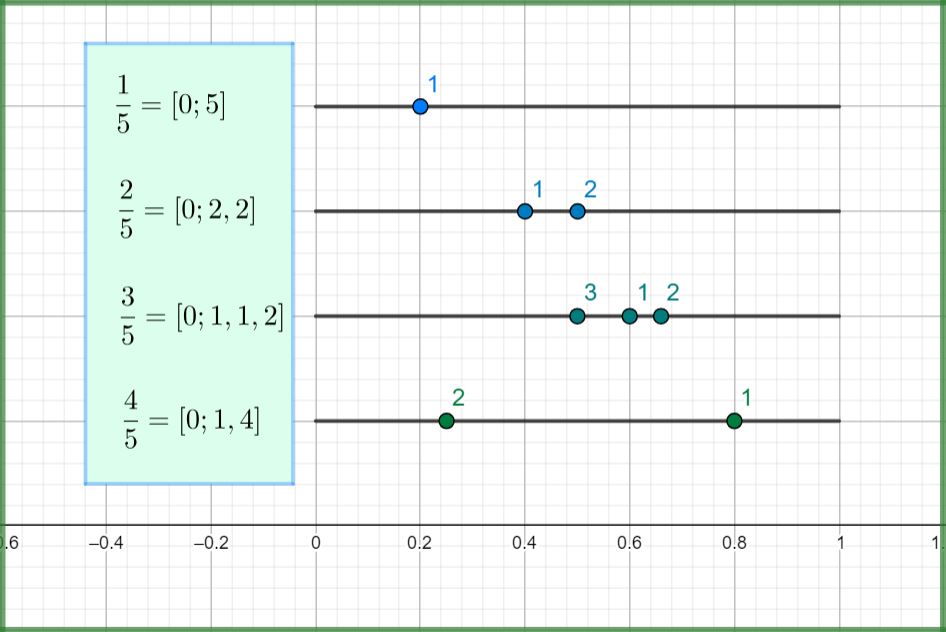}
\par\end{centering}
\caption{\label{fig:Gauss_orbits}The ``finite orbits'' of $\frac{n}{5}$
for $n=1,2,3,4$ (GeoGebra \cite{hohenwarter_geogebra:_2002}).}

\end{figure}
Since a single ``$T$-orbit'' of a rational number cannot converge
to the Gauss measure $\nuga$, we can hope that maybe a sequence of
such orbits converge equidistribute:
\begin{defn}
We say that a sequence $\mu_{i}$ of probability measures on $\left[0,1\right]$
equidistributes if $\mu_{i}\wstar\nuga$.
\end{defn}

\begin{problem}
Find $1\leq n_{m}\leq m$, $\left(n_{m},m\right)=1$ such that $\nu_{n_{m}/m}$
equidistributes, i.e. $\nu_{n_{m}/m}\wstar\nuga$ as $m\to\infty$.
\end{problem}

Clearly, not every sequence $n_{m}$ defines an equidistributing sequence
$\nu_{n_{m}/m}$. For example, as can be seen in \figref{Gauss_orbits},
the measures $\nu_{1/m}=\delta_{1/m}$ are always Dirac measures on
a single point, so that $\nu_{1/m}$ cannot converge to $\nuga$.
Similarly $\nu_{(m-1)/m}$ are supported on 2 points so they cannot
equidistribute. But there are only 2 such ``bad'' measures for any
$q$, and maybe the rest are not so bad.

With this in mind, for $m\in\NN$ fixed we set $\Lambda_{m}=\left\{ n\in\NN\;\mid\;1\leq n\leq m,\;\left(n,m\right)=1\right\} $
and define the averages 
\[
\nu_{m}:=\frac{1}{\left|\Lambda_{m}\right|}{\displaystyle \sum_{n\in\Lambda_{m}}}\nu_{n/m}
\]
where $\left|\Lambda_{m}\right|=\varphi\left(m\right)$ is the Euler
totient function. Thus if the set of ``bad'' orbit measures is very
small, they will not affect this average. In \cite{david_equidistribution_2018}
the author and Uri Shapira proved that this is indeed the case, namely
$\nu_{m}\wstar\nuga$. 

It is interesting to ask what happens if we do not average over all
of $\Lambda_{m}$, but only over, for example, half of it. If we decompose
$\Lambda_{m}=\tilde{\Lambda}_{m}\sqcup\hat{\Lambda}_{m}$ to two halves,
and let $\tilde{\nu}_{m},\hat{\nu}_{m}$ be the corresponding averages,
then $\nu_{m}=\frac{1}{2}\tilde{\nu}_{m}+\frac{1}{2}\hat{\nu}_{m}$.
We can now take the limit of both sides, where we might restrict to
a subsequence to assume that $\tilde{\nu}_{m}$ and $\hat{\nu}_{m}$
converge to $\tilde{\nu}_{\infty}$ and $\hat{\nu}_{\infty}$ respectively
to get the convex combination of 
\[
\nuga=\frac{1}{2}\tilde{\nu}_{\infty}+\frac{1}{2}\hat{\nu}_{\infty}.
\]

One of the defining properties of ergodic measures with respect to
some action $T$ is that they are the extreme point in the space of
$T$-invariant probability measure, namely they cannot be written
as a nontrivial convex combination of $T$-invariant probability measures.
It is not immediately clear that both $\tilde{\nu}_{\infty}$ and
$\hat{\nu}_{\infty}$ are $T$-invariant, but this is true, and it
will be much more obvious once we move on to the language of lattice
and diagonal orbits. In any way, the property mentioned above, shows
that both $\tilde{\nu}_{\infty}$ and $\hat{\nu}_{\infty}$ must be
$\nuga$. The constant $\frac{1}{2}$ was not really important , and
we can actually do it for any $0<\alpha<1$.

This idea can be used to further upgraded the equidistribution result
and show that the intuition about small ``bad'' sets is correct
e there are families $\Lambda_{m}'\subseteq\Lambda_{m}$ with $\frac{\left|\Lambda_{m}'\right|}{\left|\Lambda_{m}\right|}\to1$,
such that for any choice of $n_{m}\in\ff_{m}'$ we have that $\nu_{n_{m}/m}\wstar\nu_{Gauss}$
as $m\to\infty$. Thus in a philosophical sense we have a mean and
a pointwise ergodic theorems for the rational (non generic) points
as well.

Trying to prove this claim, leads to at least two problems that we
must overcome.
\begin{itemize}
\item The standard dynamical method to solve such problems is to show that
the limit measure $\nu_{m}\wstar\nu_{\infty}$ (if it exists) is $T$-invariant,
and then use some classification for $T$-invariant probability measure.
If each of the $\nu_{m}$ were $T$-invariant in themselves, then
clearly $\nu_{\infty}$ would be $T$-invariant as well. However,
in our case not only are the measure not $T$-invariant, since $T\left(0\right)$
is not well defined, $T^{\ell}\left(\nu_{m}\right)$ is not well defined
for any $\ell>1$.
\item In the current formulation, each point the the orbits of $\frac{n}{m}$
for some $\left(n,m\right)=1$ has some positive weight in $\nu_{m}=\frac{1}{\varphi\left(m\right)}{\displaystyle \sum_{n\in\Lambda_{m}}}\nu_{n/m}$.
However, its weight is determined according to which orbit it is in.
For example, the ``orbit'' of $\frac{1}{5}$ contains only one point
so its weight from there is $\frac{1}{1}\cdot\frac{1}{\varphi\left(5\right)}=\frac{1}{4}$.
On the other hand, the ``orbit'' of $\frac{3}{5}$ has three points,
so each point there contributes $\frac{1}{3}\cdot\frac{1}{\varphi\left(5\right)}=\frac{1}{12}$
mass to the measure $\nu_{m}$. If we want each such point to have
the same measure, then we can define instead
\[
\tilde{\nu}_{m}=\frac{1}{\sum_{n\in\Lambda_{m}}\len\left(\frac{n}{m}\right)}\sum_{n\in\Lambda_{m}}\sum_{0}^{\len\left(\frac{n}{m}\right)-1}\delta_{T^{i}\left(\frac{n}{m}\right)},
\]
and similarly ask whether $\tilde{\nu}_{m}\to\nuga$. Thus, in a sense
it is not clear what is the ``right'' normalization. Interestingly,
the upgrade mentioned above shows that both of these normalization
equidistribute.
\end{itemize}
As we shall see, viewing this problem via the diagonal flow in the
space of 2-dimensional unimodular lattices helps us solve both of
these problems.

\newpage{}

\section{\label{subsec:continuous_analogue}The continuous analogue and symmetries}

It is well known that the continued fraction expansion of some $\alpha\in\left(0,1\right)$
can be extracted from a certain $A$-orbit in the space of 2-dimensional
unimodular lattices $\SL_{2}\left(\ZZ\right)\backslash\SL_{2}\left(\RR\right)$
where $A$ is the diagonal subgroup of $\SL_{2}\left(\RR\right)$.
Let us recall the main steps of this process.

For the rest of this section we fix the following notations:
\begin{align*}
X & :=\SL_{2}\left(\ZZ\right)\backslash\SL_{2}\left(\RR\right),\quad\HH:=\PSL_{2}\left(\RR\right)/\PSO_{2}\left(\RR\right)\\
A & :=\left\{ a\left(t\right)=\left(\begin{array}{cc}
e^{-t/2} & 0\\
0 & e^{t/2}
\end{array}\right)\;\mid\;t\in\RR\right\} \\
U & :=\left\{ u_{x}=\left(\begin{array}{cc}
1 & x\\
0 & 1
\end{array}\right)\;\mid\;x\in\RR\right\} 
\end{align*}

An element $\SL_{2}\left(\ZZ\right)\cdot g$ with $g\in\SL_{2}\left(\RR\right)$
correspond to the lattice $\ZZ^{2}\cdot g$. To get some intuition
we will look instead on the hyperbolic upper half plane $\HH$ modulo
the $\SL_{2}\left(\ZZ\right)$-action which we can actually draw.
The fundamental domain for $\SL_{2}\left(\ZZ\right)$ in $\HH$ is
\[
\mathcal{F}=\left\{ z\in\CC\;\mid\;\norm z>1,\;Im\left(z\right)>0,\;\left|Re\left(z\right)\right|<\frac{1}{2}\right\} ,
\]
where a point $z\in\ff$ correspond to the lattice $span_{\ZZ}\left(1,z\right)$.

Recall that $\SL_{2}\left(\RR\right)$ act on the hyperbolic upper
half plane $\HH$ via the M\"{o}bius transformation $\ss{\left(\begin{array}{cc}
a & b\\
c & d
\end{array}\right)}z=\frac{az+b}{cz+d}$. The geodesics in $\HH$ are $gAi,\;g\in\SL_{2}\left(\RR\right)$,
which in $\HH$ look like either half circles with their ends on the
$x$-axis, or vertical lines. Trying to compute the endpoints, namely
the limit when $t\to\pm\infty$ we get that
\[
\left(\begin{array}{cc}
a & b\\
c & d
\end{array}\right)a\left(t\right)i=\frac{ae^{-t}i+b}{ce^{-t}i+d}=\begin{cases}
\frac{a}{c} & t\to-\infty\\
\frac{b}{d} & t\to\infty
\end{cases}.
\]

In particular for $g=u_{\alpha},\;\alpha\in\RR$, the endpoint of
$u_{\alpha}a\left(t\right)i$ in the past is $\frac{1}{0}=\infty$,
while in the future it is $\alpha$, so that the geodesic $u_{\alpha}Ai$
is the line $x=\alpha$. We then consider the projection of this geodesic
to the modular surface $\SL_{2}\left(\ZZ\right)\backslash\HH\cong\PSL_{2}\left(\ZZ\right)\backslash\PSL_{2}\left(\RR\right)/\SO_{2}\left(\RR\right)$,
or equivalently to the standard fundamental domain $\mathcal{F}$. 

\begin{figure}[H]
\begin{centering}
\includegraphics[scale=0.35]{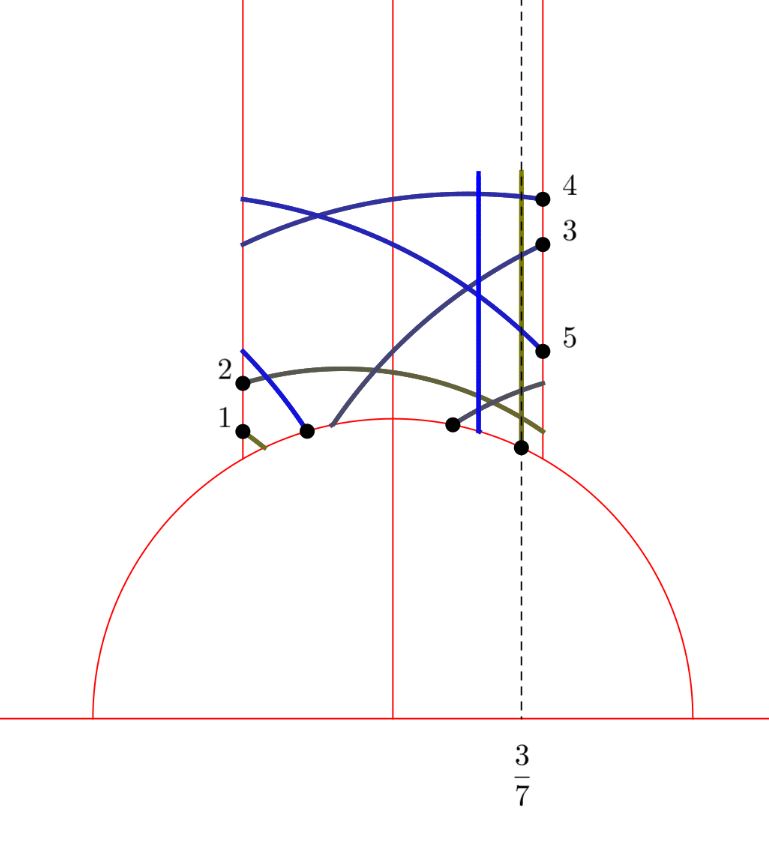}
\par\end{centering}
\caption{\label{fig:CFE_to_GEO}The geodesic from $\infty$ to $\frac{3}{7}$
projected to $\protect\SL_{2}\left(\protect\ZZ\right)\backslash\protect\HH$:
Coming from the cusp, it first hits the bottom, then 2-times the left
boundary, the bottom again, 3-times the right boundary, the bottom,
and then go straight up to the cusp. These boundary hitting can be
seen in the continued fraction expansion of $\frac{3}{7}$ which is
$\frac{3}{7}=\frac{1}{2+\frac{1}{3}}=\left[0;2,3\right]$.}
\end{figure}
Every time the geodesic leaves the fundamental domain $\ff$, we need
to act with a matrix in $\SL_{2}\left(\ZZ\right)$ in order to bring
it back inside. In particular, the matrices corresponding to the left
and right boundary of $\ff$ are $\ss{\left(\begin{array}{cc}
1 & 1\\
0 & 1
\end{array}\right)},\ss{\left(\begin{array}{cc}
1 & -1\\
0 & 1
\end{array}\right)}$, and to the lower boundary we have the matrix $\ss{\left(\begin{array}{cc}
0 & 1\\
-1 & 0
\end{array}\right)}$. These transformations affect the endpoints of the geodesic via the
maps $x\mapsto x\pm1$ and $x\mapsto-\frac{1}{x}$. Note that this
is more or less what we use in the Gauss map $T\left(x\right)=\frac{1}{x}-\flr{\frac{1}{x}}$
e indeed, we first apply $x\mapsto\frac{1}{x}$, and then decrease
by $-1$ exactly $\flr{\frac{1}{x}}$ times. Geometrically, the geodesic
hits the lower boundary and then (more or less) $\flr{\frac{1}{x}}$
times the right boundary before coming back to the lower boundary.
The minus sign in the action $\ss{\left(\begin{array}{cc}
0 & 1\\
-1 & 0
\end{array}\right)}\left(x\right)=-\frac{1}{x}$ above produce an alternation between the left and right boundary.
As can be seen in the example in \figref{CFE_to_GEO}, the geodesic
from $\infty$ to $\frac{3}{7}$ hits two times the left boundary
and then 3 times the right boundary, and the corresponding CFE of
$\frac{3}{7}$ is $\frac{3}{7}=\left[0;2,3\right]$. For the exact
connection between the continued fraction expansion and the diagonal
flow in $\SL_{2}\left(\ZZ\right)\backslash\SL_{2}\left(\RR\right)$,
we refer the reader to section 9.6 in \cite{einsiedler_ergodic_2010}.

The spaces $X=\SL_{2}\left(\ZZ\right)\backslash\SL_{2}\left(\RR\right)$
and $\SL_{2}\left(\ZZ\right)\backslash\HH\cong X_{2}/\SO_{2}\left(\RR\right)$
are quite similar since $\SO_{2}\left(\RR\right)$ is a compact group,
so the picture above gives a very good intuition of what happens in
$X$. For example, an important phenomenon for $A$-orbits of the
form $\SL_{2}\left(\ZZ\right)u_{\alpha}A$ in $X$ with rational $\alpha\in\QQ$,
as can be seen in \figref{CFE_to_GEO}, is that they come and eventually
return to the cusp, and we call such orbits \emph{divergent}.\emph{
}Indeed, this follows from the fact that under the $\SL_{2}\left(\ZZ\right)$-M\"{o}bius
action, all the rational points and $\infty$ are equivalent. This
is the analogue of the fact that continued fraction expansion of rationals
are finite (or equivalently $T^{k}\left(\frac{n}{m}\right)=0$ for
some $k$). We can always define the $A$-invariant measure on the
orbit $\SL_{2}\left(\ZZ\right)gA$ by pushing the standard Lebesgue
measure from $stab\left(\SL_{2}\left(\ZZ\right)g\right)\backslash A$.
Unless the orbit is periodic, or equivalently $stab\left(\SL_{2}\left(\ZZ\right)g\right)$
is a lattice in $A\cong\RR$, then the measure is infinite. However
for divergent orbits this measure is locally finite - the measure
of any compact set is finite (because most of the mass of the measure
is near the cusp). 

Let us denote by $\mu_{n/m}$ this $A$-invariant measure on the orbit
$\SL_{2}\left(\ZZ\right)u_{n/m}A$ and define $\mu_{m}=\frac{1}{\left|\Lambda_{m}\right|}\sum_{n\in\Lambda_{m}}\mu_{n/m}$.
The analogue of the equidistribution $\nu_{m}\wstar\nuga$ in the
continued fraction setting is $\mu_{m}\to\mu_{Haar}$, where $\mu_{Haar}$
is the $\SL_{2}\left(\RR\right)$-invariant measure on $X=\SL_{2}\left(\ZZ\right)\backslash\SL_{2}\left(\RR\right)$.
Note that since $\mu_{m}$ are only locally finite, and not probability
measure, by the limit we mean that $\frac{\mu_{m}\left(f_{1}\right)}{\mu_{m}\left(f_{2}\right)}\overset{q\to\infty}{\to}\frac{\mu_{Haar}\left(f_{1}\right)}{\mu_{Haar}\left(f_{2}\right)}$
for any $f_{1},f_{2}\in C_{c}\left(X\right)$ with $\mu_{Haar}\left(f_{2}\right)\neq0$
(see \subsecref{Locally_finite} for more details about locally finite
measures and their limits).

\begin{figure}[H]
\begin{centering}
\includegraphics[scale=0.35]{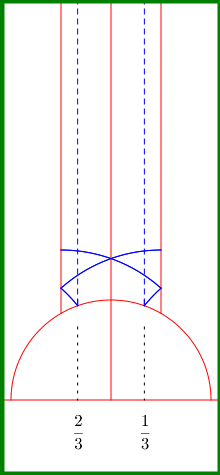}~~\includegraphics[scale=0.35]{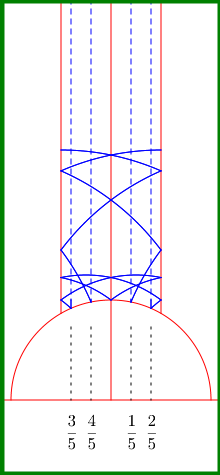}~~\includegraphics[scale=0.35]{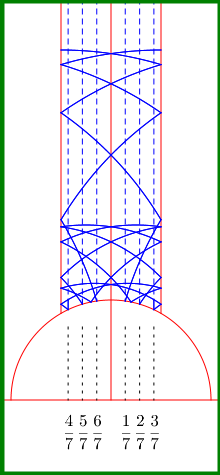}
\par\end{centering}
\caption{\label{fig:all_orbits}From left to right we have the $A$-orbits
on which $\mu_{3},\mu_{5}$ and $\mu_{7}$ are supported.}
\end{figure}
There are three main advantages of working with $X$ instead of with
continued fractions, which can already be seen in the examples in
\figref{all_orbits}:
\begin{enumerate}
\item Unlike the measures $\nu_{m}$ on which we cannot really act with
the Gauss map $T$, the measures $\mu_{m}$ are $A$-invariant. Of
course we didn't get this for free e we now work with infinite locally
finite measures instead of probability measure. Fortunately, most
of the mass of these measures are near the cusp, so as we shall see
later, we can reduce this problem to finite measures. Also, we will
soon see that the time the $A$-orbit-measure $\mu_{n/m}$ spends
in $X$ ``before'' diverging to the cusp (minus their ``vertical''
parts) doesn't depend on $n$. Thus, unlike the continued fraction
result where we had two type of averages in $\nu_{n}$ and $\tilde{\nu}_{n}$,
in this setting we have only one natural way to average.
\item The pictures are symmetric with respect to the $y$-axis. Moreover,
we know that the $A$-orbits leading to rational numbers come and
go to the cusp, but instead of having two vertical lines for each
orbit, to a total of $2\varphi\left(m\right)$, we only have $\varphi\left(m\right)$.
In other words, a geodesic leading to $\frac{n}{m}$, will eventually
go straight to the cusp, but the corresponding vertical line will
be over $\frac{n'}{m}$ for some $n'$. This symmetry can be expressed
in the continued fraction form, but cannot seen as clearly as in \figref{all_orbits}.
\item If we don't fold the orbits into the fundamental domain, and only
consider their vertical geodesics, then all of these are parallel
lines which we can get one from the other by translation in the $x$-coordinate.
This almost correspond to the unipotent flow in $X$ and in general
unipotent flows are much better understood than geodesic flows.
\end{enumerate}
These phenomena are key to proving the equidistribution result.

\newpage{}

\section{\label{subsec:Symmetries_horospheres}Symmetries and hidden horospheres}

Part (3) above is probably one of the most important observations,
since it allow us to use horocycles and not just geodesics.

\begin{figure}[H]
\begin{centering}
\includegraphics[scale=0.4]{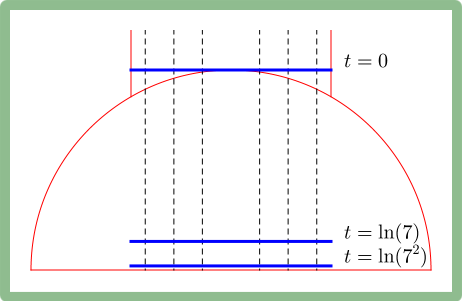}
\par\end{centering}
\caption{\label{fig:horocycles}The dashed lines are the geodesics without
foldings modulo $\protect\SL_{2}\left(\protect\ZZ\right)$. The blue
segments are horocycles e at time $t$ it is in the Euclidean height
$e^{-t}$.}
\end{figure}
For every $h$, the horocycle at height $h$ (blue lines in \figref{horocycles})
contains $\varphi\left(m\right)$ points, one from each geodesic.
Let us assume that as $m\to\infty$, in every height $h$ taking the
uniform average over the $\varphi\left(m\right)$ points is more or
less the uniform measure over the whole horocycle. Our measure is
the average over $\varphi\left(m\right)$ orbits of the diagonal group
$A$, and switching the order of integration we can first take the
discrete average in each height, and then take their averages along
the $A$ direction. With our assumption above, this will be more or
less the same as taking the uniform average in each horocycle, and
then taking the averages of the horocycles.

We now have one of the main results in this space, namely the fact
that expanding horocycles equidistribute: if we take a single horocycle,
e.g. at height $t=0$, and push it by a diagonal element $a\in A$
which expands it (in the picture, this means pushing it down), then
as $a$ increases to infinity the pushed horocycles equidistribute
inside the whole space. More formally:
\begin{defn}[Uniform measure on the standard horocycle]
 Let $\mu_{U}$ be the pushforward of the Lebesgue measure on $\left[0,1\right]$
to $x\mapsto\SL_{2}\left(\ZZ\right)u_{x}$, namely $\mu_{U}\left(f\right):=\int_{0}^{1}f\left(\SL_{2}\left(\ZZ\right)u_{x}\right)\dx$
for $f\in C_{c}\left(X_{2}\right)$.
\end{defn}

\begin{thm}
Expanding horocycles equidistribute e i.e. $a\left(-t\right)_{*}\mu_{U}\wstar\mu_{Haar}$
as $t\to\infty$.
\end{thm}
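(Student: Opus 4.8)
The plan is to prove equidistribution of expanding horocycles by the standard mixing-plus-thickening argument, which in $\SL_{2}(\RR)$ reduces to decay of matrix coefficients. First I would record the basic observation that $\mu_{U}$ is a finite $U$-invariant probability measure on $X$ (the map $x\mapsto\SL_{2}(\ZZ)u_{x}$ is $1$-periodic, so $\mu_{U}$ is well defined and is the orbit measure of the compact periodic horocycle $\SL_{2}(\ZZ)\backslash\SL_{2}(\ZZ)U$). The element $a(-t)$ conjugates $U$ to itself with expansion: $a(-t)u_{x}a(t)=u_{e^{-t}x}$, so $a(-t)_{*}\mu_{U}$ is the uniform measure on the pushed horocycle, and it is invariant under the expanded one-parameter group $\{u_{e^{-t}x}\}=U$ again, but it is ``long''. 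The goal is to show $\int_{X}f\,d(a(-t)_{*}\mu_{U})\to\mu_{Haar}(f)$ for every $f\in C_{c}(X)$; by a density argument it suffices to treat $f$ smooth and, after subtracting the mean, to show $\int_{X}f\circ a(-t)\,d\mu_{U}\to 0$ whenever $\mu_{Haar}(f)=0$.

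The key step is the thickening. Choose a small bump function $\psi\geq 0$ supported on a neighbourhood of the identity in $A U^{-}$ (the complementary directions to $U$), with $\int\psi=1$, and form the thickened measure $\mu_{\psi}$ on $X$ obtained by smearing $\mu_{U}$ in the $A U^{-}$ directions. On one hand $\mu_{\psi}$ is absolutely continuous with respect to $\mu_{Haar}$ with a bounded density; on the other hand, pushing $\mu_{U}$ by $a(-t)$ contracts the $U^{-}$ direction and the $A$ direction moves transversally by a bounded amount, so $a(-t)_{*}\mu_{\psi}$ and $a(-t)_{*}\mu_{U}$ differ, when tested against a fixed uniformly continuous $f$, by an error that is $O(\mathrm{diam}(\mathrm{supp}\,\psi))$ uniformly in $t$. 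Thus it is enough to show $\int_{X} f\circ a(-t)\,d\mu_{\psi}\to 0$. But $\mu_{\psi}=\rho\,\mu_{Haar}$ with $\rho\in L^{2}(X,\mu_{Haar})$, so $\int f\circ a(-t)\,d\mu_{\psi}=\langle \pi(a(t))f,\bar\rho\rangle_{L^{2}(X)}$, a matrix coefficient of the regular representation of $\SL_{2}(\RR)$ on $L^{2}_{0}(X)=L^{2}(X)\ominus\CC$. Since $X=\SL_{2}(\ZZ)\backslash\SL_{2}(\RR)$ has finite volume and no nontrivial finite-dimensional or invariant subrepresentations in $L^{2}_{0}$, the Howe--Moore theorem (decay of matrix coefficients) gives $\langle\pi(a(t))f,\bar\rho\rangle\to 0$ as $t\to\infty$. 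Combining with the thickening error and letting $\mathrm{diam}(\mathrm{supp}\,\psi)\to 0$ afterwards yields the claim.

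I would then assemble the quantifiers in the correct order: fix $f$ and $\eps>0$, pick $\psi$ so the thickening error is $<\eps/2$ for all $t$, then pick $T_0$ so the matrix-coefficient term is $<\eps/2$ for $t>T_0$. A small additional point is that $f\in C_{c}(X)$ need only be continuous, not $L^{2}$; but $f\in C_{c}(X)\subseteq L^{2}(X)$ since $\mu_{Haar}$ is finite, so this is automatic, and uniform continuity of $f$ (needed for the thickening estimate) follows from compact support. One should also note that $\mu_{Haar}$ here is normalised to a probability measure, so ``$\wstar\mu_{Haar}$'' is genuine weak-$*$ convergence of probability measures on the one-point compactification of $X$, and no mass escapes to the cusp precisely because the limit has full mass.

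The main obstacle is the thickening estimate: one must check that conjugating the transversal bump by $a(-t)$ keeps it uniformly small (the $U^{-}$ and central $A$ directions do not expand under $a(-t)$ — only $U$ does), so that $a(-t)_{*}\mu_{U}$ and $a(-t)_{*}\mu_{\psi}$ stay uniformly close when paired with a fixed $f$. This is where the geometry of the horocycle being the \emph{expanding} direction of $a(-t)$ is used in an essential way; had we pushed by $a(t)$ instead (contracting the horocycle), the argument would fail, consistent with the fact that contracting horocycles do not equidistribute. Everything else — density of smooth functions, Howe--Moore, finite volume — is standard and can be cited.
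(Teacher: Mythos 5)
The paper does not actually prove this statement: it records it, without proof, as classical background on the modular surface (it goes back to Sarnak's work on periodic horocycle orbits, and the mixing-based proof you sketch is Margulis's thickening argument). So there is no paper proof to compare against; you are supplying an independent argument, and it is complete and correct in its essentials. You identify the three steps precisely: thicken $\mu_U$ transversally by a bump $\psi$ supported near the identity in $AU^{-}$, where $U^{-}$ denotes the lower-triangular unipotent subgroup, so that $\mu_\psi$ is absolutely continuous with bounded density relative to $\mu_{Haar}$; observe that under the geodesic push the $U^{-}$-direction contracts and $A$ is preserved, so the thickened and unthickened measures, tested against a fixed $f\in C_c(X)$, differ by $O(\mathrm{diam}\,\mathrm{supp}\,\psi)$ uniformly in the push parameter; rewrite the thickened integral as a matrix coefficient of the right regular representation on $L^2_0(X)$ and invoke Howe--Moore. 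Your remark on escape of mass is also fine: since the candidate limit is already a probability measure, convergence on $C_c(X)$ forces tightness.

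One orientation remark, which matters because it determines which transversal contracts. With the paper's convention $a(t)=\left(\begin{smallmatrix}e^{-t/2}&0\\0&e^{t/2}\end{smallmatrix}\right)$, one has $a(-t)u_xa(t)=u_{e^{t}x}$ and $a(-t)u^{-}_ya(t)=u^{-}_{e^{-t}y}$. For the right action $x\mapsto xg$ the induced action on transversals is conjugation by $g^{-1}$, so it is right multiplication by $a(t)$ --- not $a(-t)$ --- that expands $U$ and contracts $U^{-}$, and the equidistributing family is $a(t)_{*}\mu_U$ as $t\to\infty$. The $a(-t)$ in the paper's displayed statement is a sign slip, and your identity $a(-t)u_xa(t)=u_{e^{-t}x}$ carries a slip of its own (the right-hand side should be $u_{e^{t}x}$). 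Straightened out, these do not affect the logic of your proof; but you should fix the signs so the transversal bump is genuinely carried into a uniformly small neighborhood, since thickening in a direction that expands under the push would make the uniform estimate fail --- a point you yourself flag at the end.
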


This result explains why we expect our measures $\mu_{m}$ to equidistribute
in $X$ (or at least their part with $t\geq0$, though as we shall
see, this will be enough). Hence, we only need to justify our assumption
that the discrete average over the $\varphi\left(m\right)$ points
is almost the uniform measure over the corresponding horocycle.

For that, first note that the horocycle at height $1=e^{-0}$ is just
$\left\{ \SL_{2}\left(\ZZ\right)\cdot u_{x}\;\mid\;x\in\RR\right\} $,
and since $u_{1}=\ss{\left(\begin{array}{cc}
1 & 1\\
0 & 1
\end{array}\right)}\in\SL_{2}\left(\ZZ\right)$, the horocycle is homeomorphic to a cycle $\nicefrac{\RR}{\ZZ}$.
Under this identification, the $\varphi\left(m\right)$ points on
it are simply $\frac{1}{m}\Lambda_{m}=\left\{ \frac{n}{m}\;\mid\;1\leq n\leq m,\;\left(n,m\right)=1\right\} $.

\begin{figure}[H]
\begin{centering}
\includegraphics[scale=0.35]{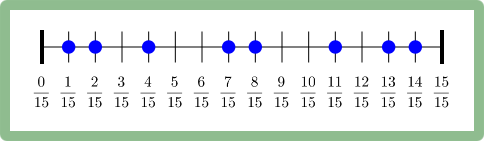}~~~~\includegraphics[scale=0.35]{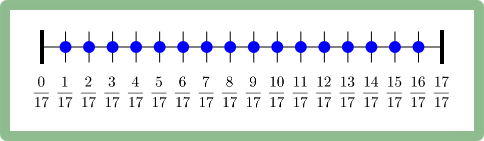}
\par\end{centering}
\caption{\label{fig:coprime} The sets $\frac{1}{15}\Lambda_{15}$ and $\frac{1}{17}\Lambda_{17}$
inside the segment $\left[0,1\right]$. The missing points on the
left correspond to numbers divisable by $3$ and $5$.}
\end{figure}
As can be seen for example in the figure above for $m=17$, for primes
$m$ the sets $\frac{1}{m}\Lambda_{m}$ equidistribute in $\left[0,1\right]$
as $m\to\infty$. If $m$ is not prime, then we get ``holes'' in
$\left[0,1\right]$ for each $n$ not coprime to $m$, and there are
more and more holes the more prime divisors $m$ has. However, as
we shall see later (\lemref{equi_coprime}) using a simple inclusion
exclusion argument, even in these cases the $\varphi\left(m\right)$
points equidistribute in $\left[0,1\right]$.

We can do the same for other horocycles in different heights, but
the argument above is not enough. While all the horocycles are homeomorphic
to cycles, the length of the cycle is not fixed. The lower we are
in \figref{horocycles} the larger the cycle is in the hyperbolic
plane (the distance between the vertical lines increase). In particular,
there is a point where the horocycle is isometric to $\nicefrac{\left[0,m\right]}{0\sim m}$
and the points are simply the integers $n$ with $\left(n,m\right)=1$
so they are at least at distance 1 from each other. The average over
these points and the uniform measure on $\left[0,m\right]$ are quite
far away.

\begin{figure}[H]
\begin{centering}
\includegraphics[scale=0.1]{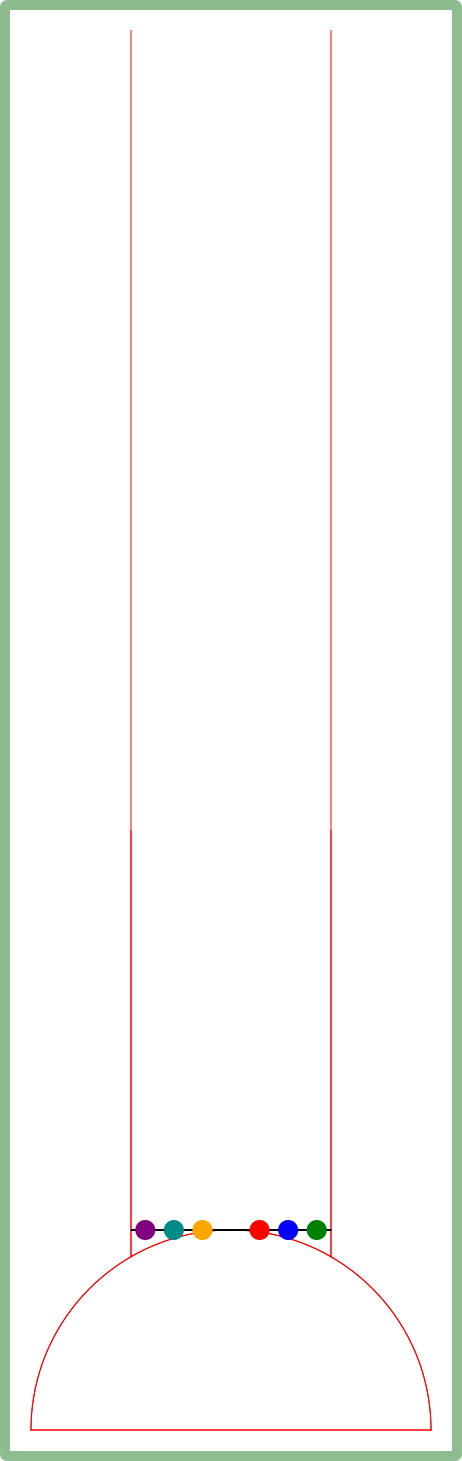}~\includegraphics[scale=0.1]{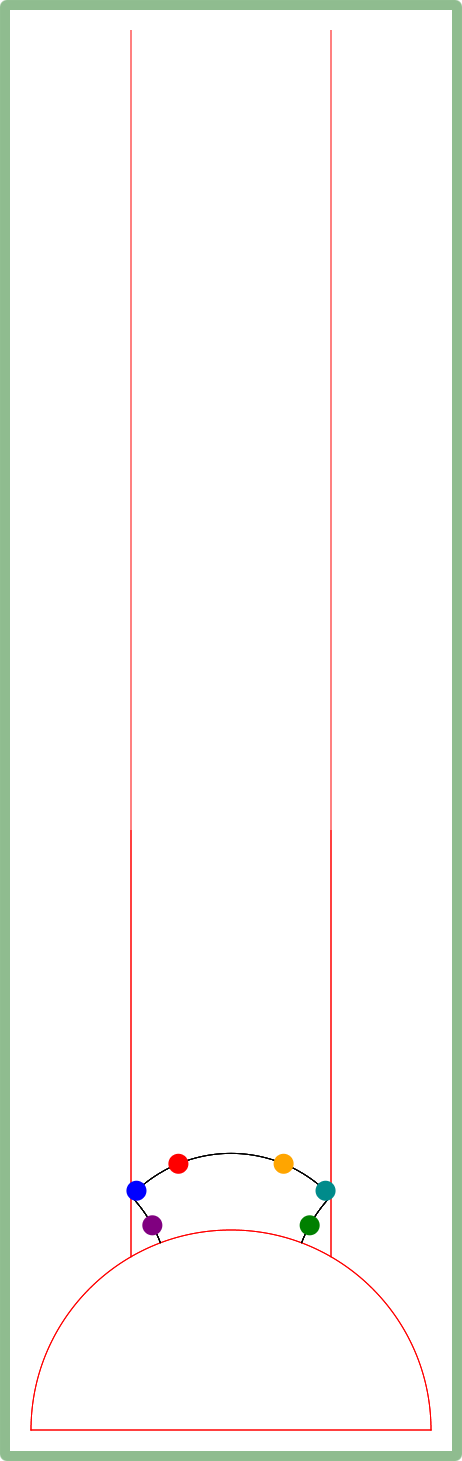}~\includegraphics[scale=0.1]{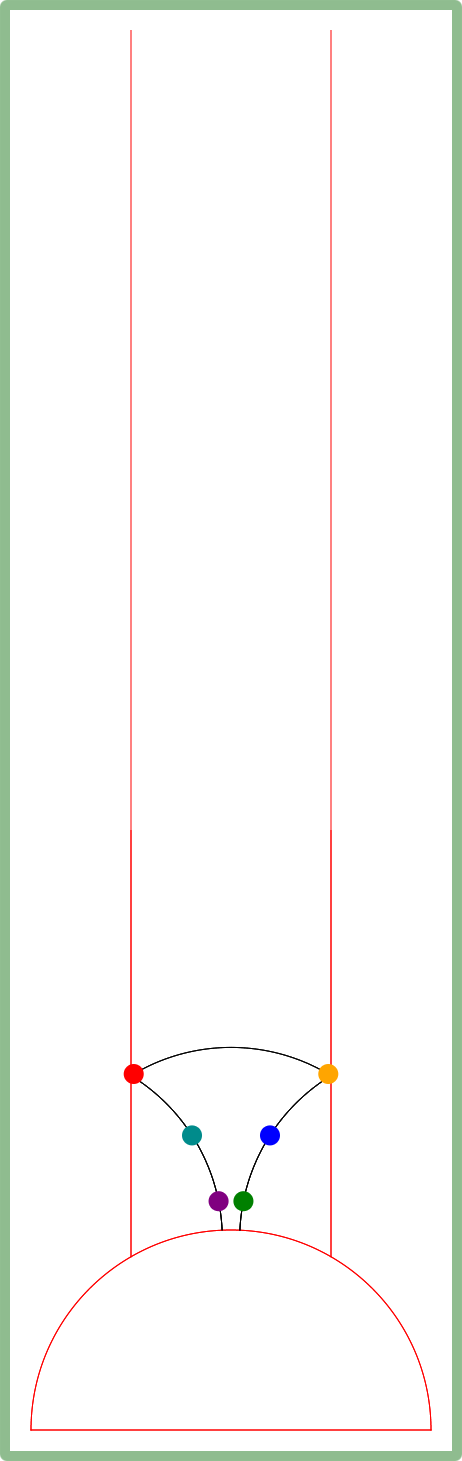}~\includegraphics[scale=0.1]{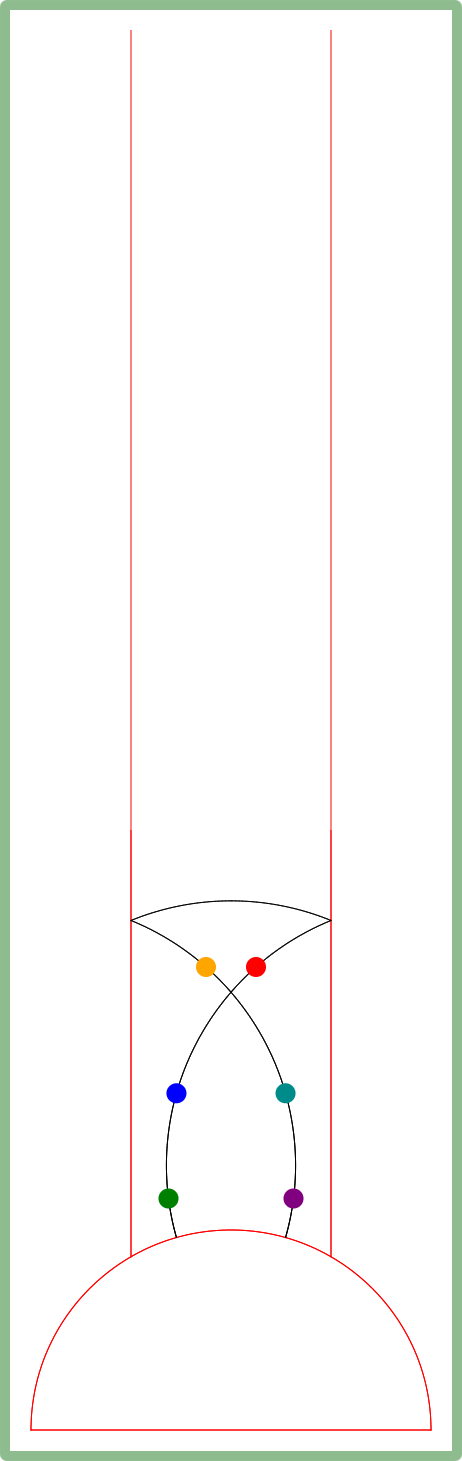}~\includegraphics[scale=0.1]{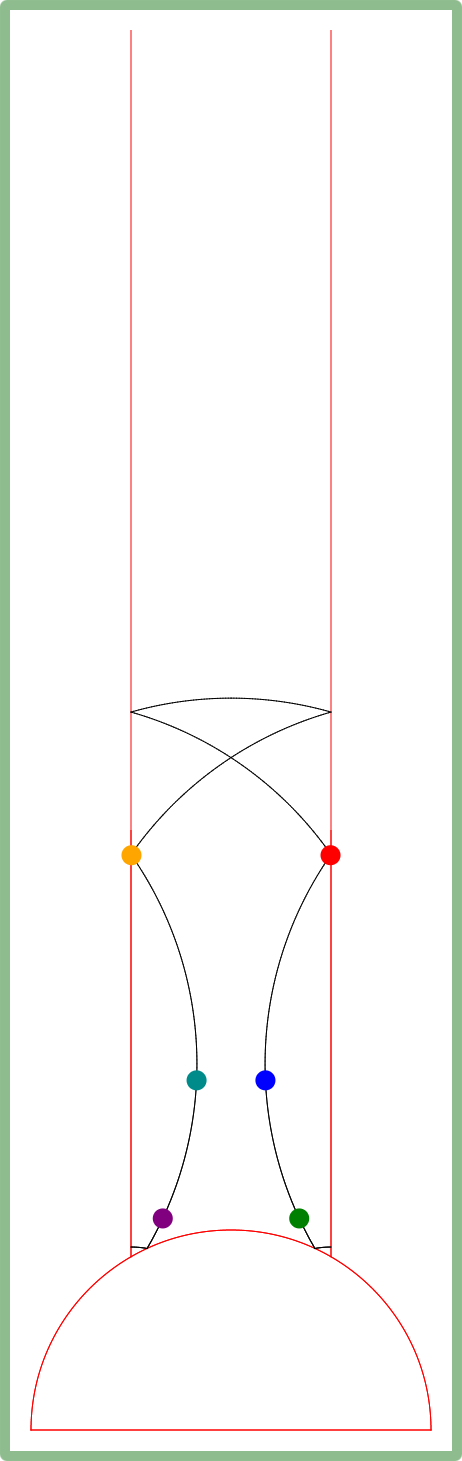}~\includegraphics[scale=0.1]{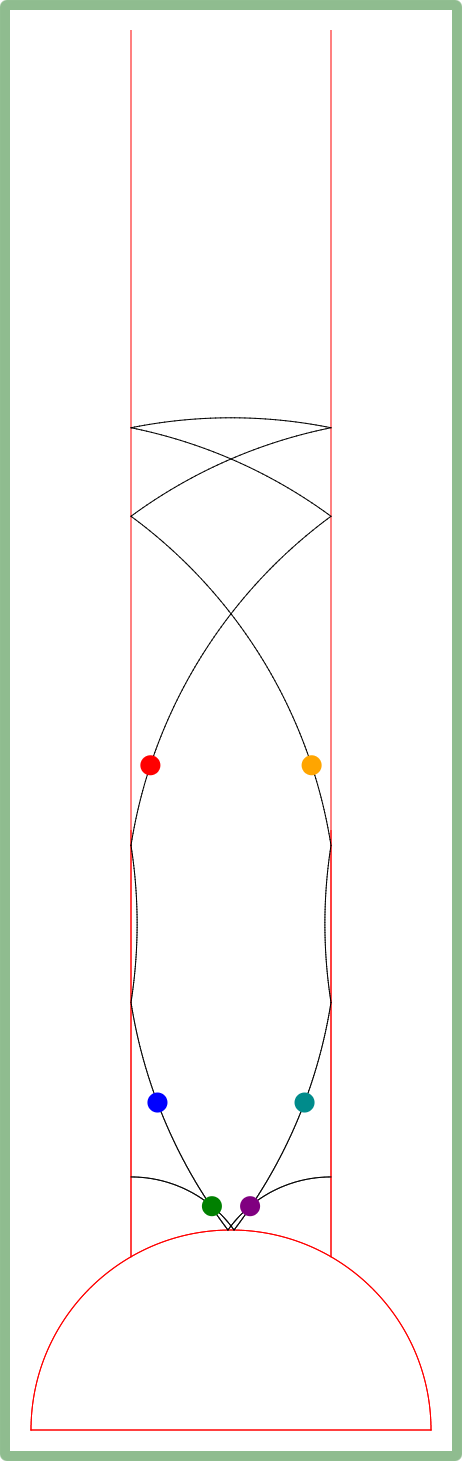}~\includegraphics[scale=0.1]{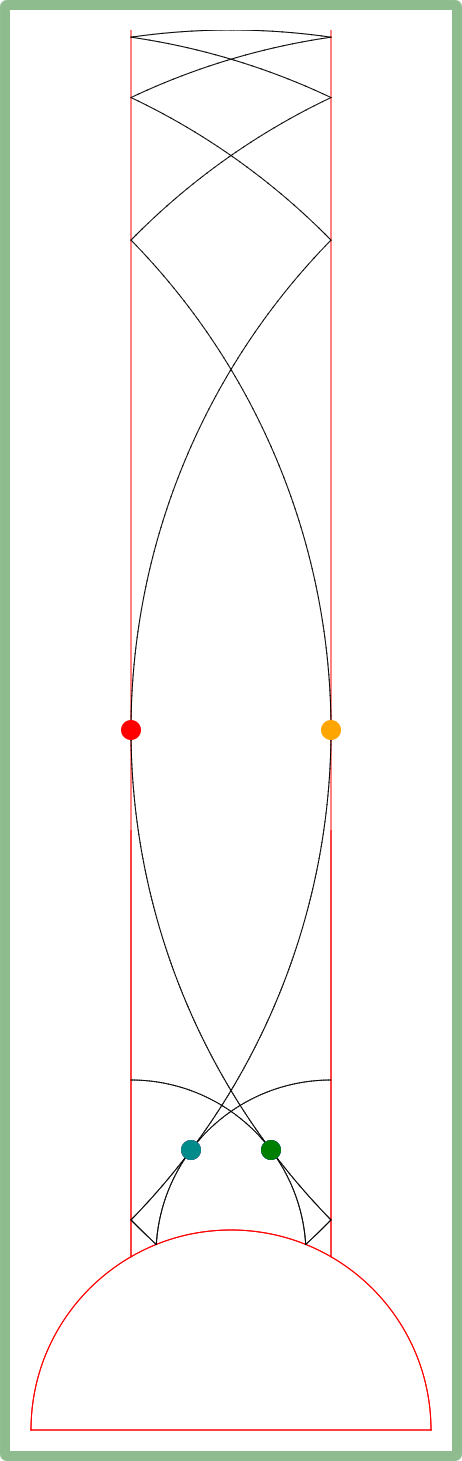}~\includegraphics[scale=0.1]{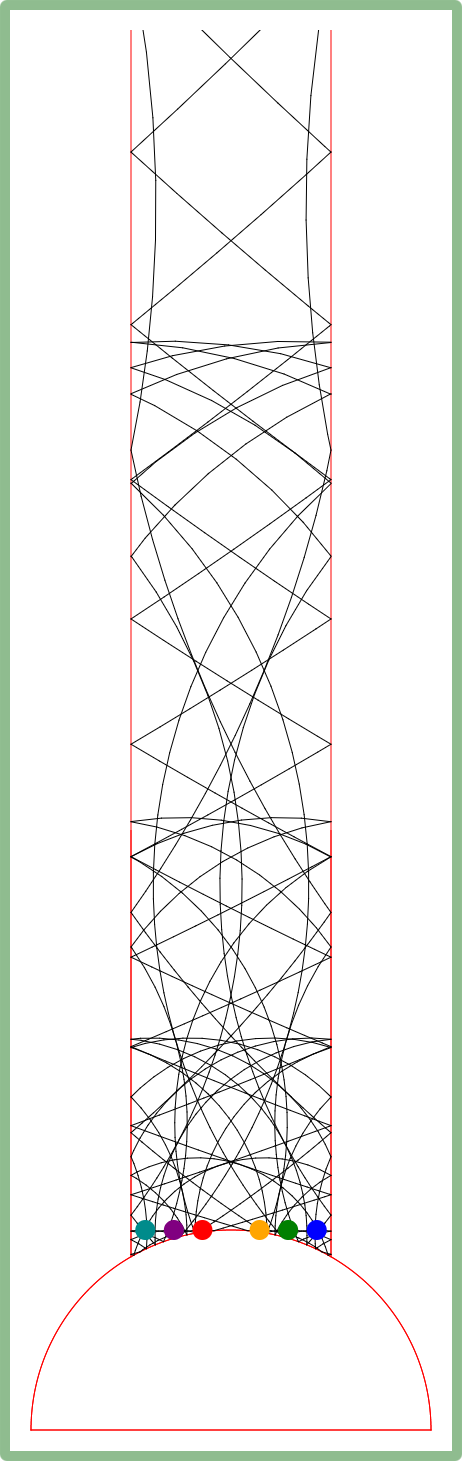}
\par\end{centering}
\caption{\label{fig:expanding_discrete_horocycles}For $m=7$, in the first
7 images from the left we see the $\varphi\left(7\right)$ points
on the corresponding horocycle at times $t=\ln\left(7\right)\cdot\frac{i}{6},\;i=0,1,...,6$.
The right most image is at time $t=2\cdot\ln\left(7\right)$.}
\end{figure}
As can be seen in the image above, at time $0$ our points are very
close to one another along the horocycle. As $t$ increases this distance
becomes longer and longer until at time $t=\ln\left(7\right)$ they
are quite far away. While the distance along the horocycle continues
to grow, when we get to time $2\ln\left(7\right)$ all the points
return to the same horocycle. If we ignore the horocycle itself, then
the pictures at time $t=0$ and $t=2\ln\left(7\right)$ are exactly
the same. This is part of a symmetry that we will use later on which
shows that the time $\left[\ln\left(m\right),2\ln\left(m\right)\right]$
are a mirror image of the times $\left[0,\ln\left(m\right)\right]$.\\

To understand this expanding problem formally we do the following
simple computation:

\[
\SL_{2}\left(\ZZ\right)u_{\left(n+1\right)/m}a\left(t\right)=\SL_{2}\left(\ZZ\right)u_{n/m}a\left(t\right)a\left(-t\right)u_{1/m}a\left(t\right)=\SL_{2}\left(\ZZ\right)u_{n/m}a\left(t\right)\cdot u_{e^{t}/m}.
\]
This tells us that at time $t$, the distance (over the horocycle)
between the points corresponding to $\frac{n}{m}$ and $\frac{n+1}{m}$
is $\frac{e^{t}}{m}$. Hence, as long as $\frac{e^{t}}{m}$ is small,
say $t\leq\left(1-\varepsilon\right)\ln\left(m\right)$ for some fixed
$\varepsilon>0$, the argument that the discrete average is closed
to the uniform average over the horocycle works.

As $t\geq\ln\left(m\right)$ grows, the distance between consecutive
points increases to infinity (along the horocycle), however luckily
for us our orbits have a nice algebraic structure which we can exploit.
There is a symmetry at the time $\leq\ln\left(m\right)$ and at the
times $\geq\ln\left(m\right)$ which we just saw an example in \figref{expanding_discrete_horocycles}

Let us make this notion more precise.

Recall that the point $\SL_{2}\left(\ZZ\right)g\in X$ correspond
to the lattice $\ZZ^{2}\cdot g\subseteq\RR^{2}$, and in particular
we have that $\ZZ^{2}u_{\alpha}=span_{\ZZ}\left\{ \left(0,1\right),\left(1,\alpha\right)\right\} $.
Let $1\leq n\leq m,\;\left(n,m\right)=1$ and consider the lattices
\begin{align*}
L_{n/m} & =\ZZ^{2}u_{\alpha}=span_{\ZZ}\left\{ \left(0,1\right),\left(1,\frac{n}{m}\right)\right\} \\
L_{m} & =\ZZ^{2}\left(\begin{array}{cc}
m & 0\\
0 & 1
\end{array}\right)=span_{\ZZ}\left\{ \left(0,1\right),\left(m,0\right)\right\} .
\end{align*}
Note that $\left(m,0\right)=m\left(1,\frac{n}{m}\right)-n\left(0,1\right)$
is in $L_{n/m}$ so that $L_{m}$ is a sublattice of $L_{n/m}$. Furthermore
$covol\left(L_{m}\right)=m$ and $\nicefrac{L_{n/m}}{L_{m}}\cong\nicefrac{\ZZ}{m\ZZ}$
(because $\left(n,m\right)=1$). As example, see the left picture
in \figref{symmetry_latices} below.

If $L\leq\RR^{2}$ is any lattice which contains $L_{m}$ such that
$\nicefrac{L}{L_{m}}=\nicefrac{\ZZ}{m\ZZ}$ then $L_{m}\leq L\leq\frac{1}{m}L_{m}$
and $\nicefrac{\frac{1}{m}L_{m}}{L}\cong\nicefrac{\ZZ}{m\ZZ}$ as
well. It is now a standard argument to show that $L$ must be $L_{n/m}$
for some $\left(n,m\right)=1$. This already gives us a good starting
point e these $L_{n/m}$ are exactly the representatives of the distinct
$A$-orbits, and this point of view group them naturally together
as certain lattices which contain $L_{m}$.

The lattice $L_{n/m}$ all lie on the horocycle of height $t=0$.
As we want to check what happens at general time $t$, we simply multiply
by $a\left(t\right)$. In particular at time $t=\ln\left(m\right)$,
that we already encountered above, something interesting happens to
$L_{m}$. Indeed, we get
\[
L_{m}a\left(\ln\left(m\right)\right)=\ZZ^{2}\left(\begin{array}{cc}
m & 0\\
0 & 1
\end{array}\right)\left(\begin{array}{cc}
m^{-1/2} & 0\\
0 & m^{1/2}
\end{array}\right)=m^{1/2}\ZZ^{2}
\]
which is just a stretching of the lattice $\ZZ^{2}$, and in particular
it is invariant under the reflection of switching the $x$ and $y$
coordinates. Let us denote this reflection by $\tau$, namely $\tau\left(x,y\right)=\left(y,x\right)$,
so that $\tau\left(L_{m}a\left(\ln\left(m\right)\right)\right)=L_{m}a\left(\ln\left(m\right)\right)$.
It is also easy to check that $L_{m}a\left(2\ln\left(m\right)\right)=\tau\left(L_{m}\right)$.

\begin{figure}[H]
\begin{centering}
\includegraphics[scale=0.25]{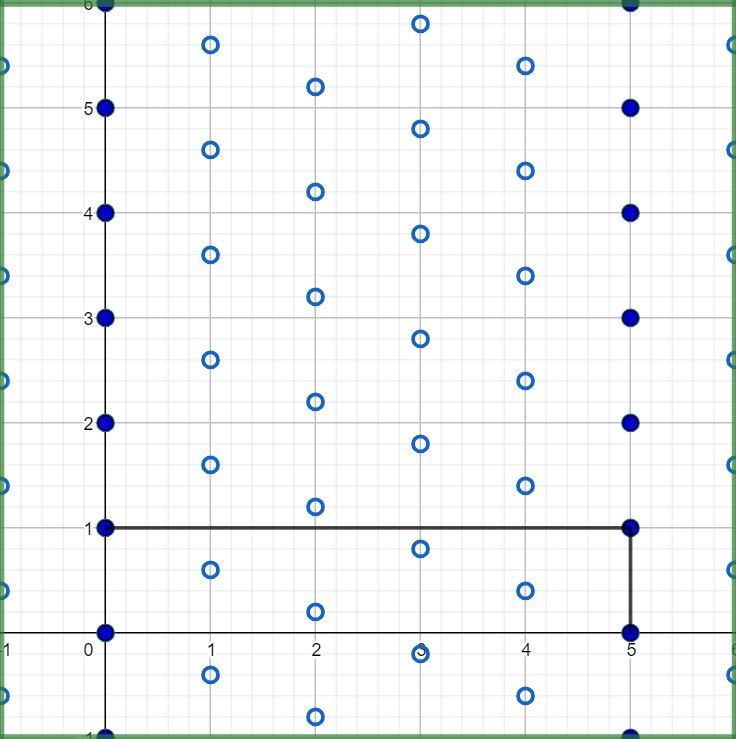}~~\includegraphics[scale=0.25]{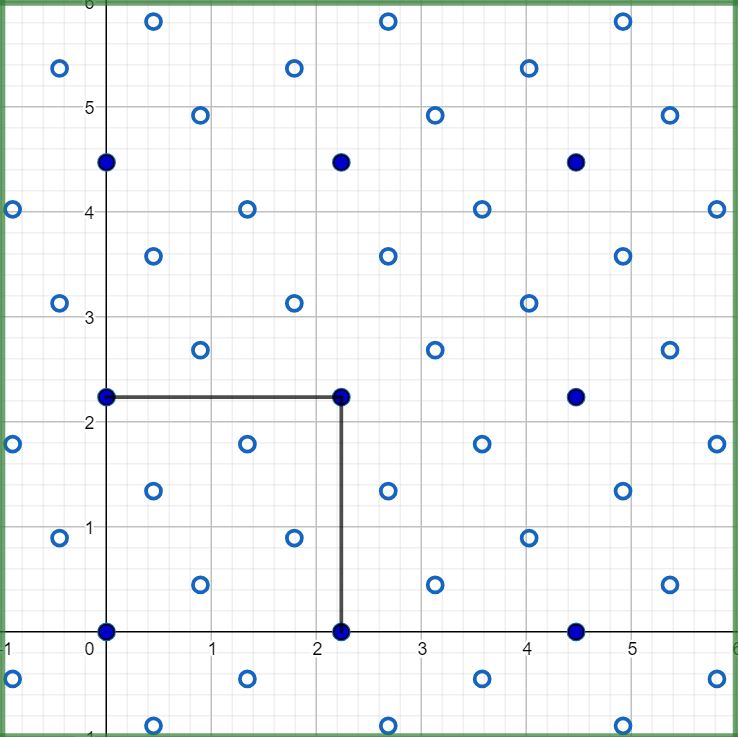}~~\includegraphics[scale=0.25]{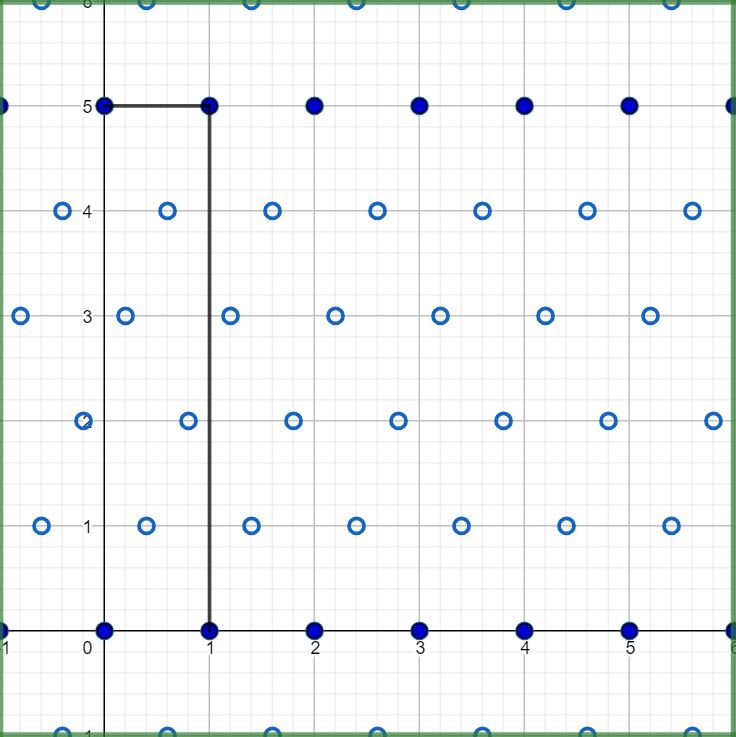}
\par\end{centering}
\caption{\label{fig:symmetry_latices}From left to right are the lattices $L_{3/5}a\left(t\right)$
(circles) and $L_{5}a\left(t\right)$ (full balls) at times $t=i\cdot\ln\left(5\right),\;i=0,1,2$
(GeoGebra \cite{hohenwarter_geogebra:_2002}).}
\end{figure}
As can be seen in \figref{symmetry_latices}, unlike $L_{m}a\left(\ln\left(m\right)\right)$
in the middle picture, the lattice $L_{n/m}a\left(\ln\left(m\right)\right)$
is not invariant under the reflection $x\leftrightarrow y$. Its reflection,
however, looks similar to $L_{n/m}a\left(\ln\left(m\right)\right)$
e both contain $\sqrt{m}\ZZ^{2}$ as a sublattice, and the quotient
is $\nicefrac{\ZZ}{m\ZZ}$. Hence $\tau\left(L_{n/m}a\left(\ln\left(m\right)\right)\right)=L_{n'/m}a\left(\ln\left(m\right)\right)$
for some $n'$ coprime to $m$, and a simple computation shows that
$nn'\equiv_{m}-1$. So while the lattice itself is not invariant under
this reflection, the set $\left\{ L_{n/m}a\left(\ln\left(m\right)\right)\;\mid\;n\in\Lambda_{m}\right\} $
is invariant. We can do the same argument for the left and right images
in \figref{symmetry_latices} which are almost reflections of one
another. So up to changing the $n$ with $n'$, going forward from
time $t=\ln\left(m\right)$ and going backwards are the same up to
this reflection. More formally, we have that 
\[
\tau\left(L_{n/m}a\left(\ln\left(m\right)+t\right)\right)=L_{n'/m}a\left(\ln\left(m\right)-t\right).
\]
Because we want to take the average over all the $n$, we get that
the average at time $\ln\left(m\right)+t$ is the same as the average
at time $\ln\left(m\right)-t$ composed with the reflection. Thus,
if we can show that our measures equidistribute at the times $t\leq\ln\left(m\right)$,
then this reflection implies that they also equidistribute at times
$t\geq\ln\left(m\right)$. With this in mind, we decompose our orbits
to 4 parts (which in \figref{horocycles} are separated by the blue
lines):
\begin{enumerate}
\item $t<0$: The orbit come from the cusp directly to the horocycle $\left\{ \SL_{2}\left(\ZZ\right)u_{x}\;\mid\;0\leq x\leq1\right\} $.
Most of the mass here is near the cusp, so in any way it will not
contribute too much to our measure.
\item $0<t<\ln\left(m\right)$: The orbit flows from the standard horocycle
above up to a $\tau$-invariant set. In these times the discrete points
equidistribute along their corresponding horocycle (up to some small
distance from $\ln\left(m\right)$) and further more, the horocycles
are expanding and therefore they equidistribute in the whole space.
\item $\ln\left(m\right)<t<2\ln\left(m\right)$: The mirror image of $0<t<\ln\left(m\right)$.
\item $2\ln\left(m\right)<t$: The mirror image of $t<0$ e the orbits starts
at the transposed horocycle $\left\{ \SL_{2}\left(\ZZ\right)u_{x}^{T}\;\mid\;0\leq x\leq1\right\} $
and flow directly to the cusp.
\end{enumerate}
Thus, in the end, it is enough to show that our equidistribution argument
works at the times $0<t<\ln\left(m\right)$, and we already have a
good reason for that to hold.

\newpage{}

\section{\label{subsec:The_p_adic_motivation}The $p$-adic motivation}

Using the process and language described in the previous section,
one can already prove full equidistribution. However, this result
becomes much more natural once we begin to use the language of $p$-adic
and adelic numbers. For example, on each horocycle we only have a
finite set of points which only approximate the horocycle. There are
many families of points which can do that, so what natural reason
do we have to choose these exact families, and why do they all fit
together so nicely? If we ever try to generalize this equidistribution
result to higher dimension, which families would we want to choose
then? The language of adelic numbers make this discussion much more
natural and even answer some of these questions.\\

Recall that our measure $\mu_{m}$ as viewed in the hyperbolic plane
looks like $\varphi\left(m\right)$ vertical lines (see \figref{horocycles}),
in particular we can think of it as a single line translated by matrices
of the form $u_{x}$ from the left. However, our whole discussion
is in the modular surface $\SL_{2}\left(\ZZ\right)\backslash\SL_{2}\left(\RR\right)$,
so this left translation is not well defined. In order to make this
process well defined, we need to go to a bigger space which projects
onto the space of unimodular lattices.

At time $t$, our $\varphi\left(m\right)$ points are $\SL_{2}\left(\ZZ\right)u_{n/m}a\left(t\right)$
with $1\leq n\leq m,\;\left(n,m\right)=1$. Thus, in a sense we want
to take $\SL_{2}\left(\ZZ\right)a\left(t\right)$ and ``multiply''
if from the right by $u_{n/m}$ with $n\in\Lambda_{m}$. The matrices
$u_{n/m}$ are all in $\SL_{2}\left(\ZZ\left[\frac{1}{m}\right]\right)$,
which suggests that we might want to work with the space $\SL_{2}\left(\ZZ\left[\frac{1}{m}\right]\right)\backslash\SL_{2}\left(\RR\right)$
instead. However, in this space the points $\SL_{2}\left(\ZZ\left[\frac{1}{m}\right]\right)u_{n/m}a\left(t\right),\;n\in\Lambda_{m}$
are identified to a single point, which is not what we wanted, and
even without this problem, the group $\SL_{2}\left(\ZZ\left[\frac{1}{m}\right]\right)$
is not a lattice in $\SL_{2}\left(\RR\right)$ so we cannot use all
the results about lattices. Fortunately, there is an easy and natural
way to solve it e we look instead on the diagonal embedding $\SL_{2}\left(\ZZ\left[\frac{1}{m}\right]\right)\hookrightarrow\SL_{2}\left(\RR\right)\times\SL_{2}\left(\ZZ\left[\frac{1}{m}\right]\right)$
and the quotient space
\[
\SL_{2}\left(\ZZ\left[\frac{1}{m}\right]\right)\backslash\SL_{2}\left(\RR\right)\times\SL_{2}\left(\ZZ\left[\frac{1}{m}\right]\right).
\]
By definition, every pair $\left(g^{\left(\infty\right)},g^{\left(m\right)}\right)\in\SL_{2}\left(\RR\right)\times\SL_{2}\left(\ZZ\left[\frac{1}{m}\right]\right)$
is equivalent to $\left(\left(g^{\left(m\right)}\right)^{-1}g_{\infty},Id\right)$,
and it is easy to check that the map
\[
\pi_{m}:\SL_{2}\left(\ZZ\left[\frac{1}{m}\right]\right)\left(g^{\left(\infty\right)},g^{\left(m\right)}\right)\mapsto\SL_{2}\left(\ZZ\right)\left(g^{\left(m\right)}\right)^{-1}g_{\infty}\in\SL_{2}\left(\ZZ\right)\backslash\SL_{2}\left(\RR\right),
\]
is a well defined surjective map. It is also not hard to show that
the preimage of $\SL_{2}\left(\ZZ\right)g^{\left(\infty\right)}$
is exactly the orbit $\SL_{2}\left(\ZZ\left[\frac{1}{m}\right]\right)\left(g^{\left(\infty\right)},\SL_{2}\left(\ZZ\right)\right)$.
In particular, the projection $\pi_{m}$ induces an isomorphism 
\[
\SL_{2}\left(\ZZ\left[\frac{1}{m}\right]\right)\backslash\left[\SL_{2}\left(\RR\right)\times\SL_{2}\left(\ZZ\left[\frac{1}{m}\right]\right)\right]/\left[Id\times\SL_{2}\left(\ZZ\right)\right]\cong\SL_{2}\left(\ZZ\right)\backslash\SL_{2}\left(\RR\right).
\]
Philosophically, this new presentation of the space of lattices tell
us that these is a right ``action'' of $\SL_{2}\left(\ZZ\left[\frac{1}{m}\right]\right)/\SL_{2}\left(\ZZ\right)$
on our standard space of unimodular lattices $\SL_{2}\left(\ZZ\right)\backslash\SL_{2}\left(\RR\right)$.\\

For example, our set $\SL_{2}\left(\ZZ\right)u_{n/m}a\left(t\right),\;n\in\Lambda_{m}$
from before can now be seen as the projection of 
\[
\left\{ \SL_{2}\left(\ZZ\left[\frac{1}{m}\right]\right)\left(a\left(t\right),u_{-n/m}\right),\;n\in\Lambda_{m}\right\} =\SL_{2}\left(\ZZ\left[\frac{1}{m}\right]\right)\left(a\left(t\right),Id\right)\cdot\left\{ \left(Id,u_{-n/m}\right),\;n\in\Lambda_{m}\right\} ,
\]
namely it is a projection of an orbit of the set $\left\{ \left(Id,u_{-n/m}\right),\;n\in\Lambda_{m}\right\} $.
Note that the map $x\mapsto u_{x}$ from $\RR$ to $U$ is an isomorphism
so we can identify $\left\{ \left(Id,u_{-n/m}\right),\;n\in\Lambda_{m}\right\} $
with $\frac{1}{m}\Lambda_{m}$. Since we only care about the projection
of this orbit, and the projection is constant on $Id\times\SL_{2}\left(\ZZ\right)$
which contains $\left(Id,u_{1}\right)$, we can actually think of
this set as $\frac{1}{m}\Lambda_{m}\;(mod\;1)$ or $\Lambda_{m}\;(mod\;m)$.

This is already a better presentation, since now we have an actual
orbit. Unfortunately, the matrix multiplication of the $u_{x}$ translate
to addition of $x$, and even when considered mod $m$, the set $\Lambda_{m}$
is not of a group (additively). However, it is a group multiplicatively,
and we want to somehow exploit this fact.

In order to present this as a group action we first write the $u_{n/m}$
as conjugations
\[
u_{n/m}=\left(\begin{array}{cc}
1 & 0\\
0 & n^{-1}
\end{array}\right)u_{-1/m}\left(\begin{array}{cc}
1 & 0\\
0 & n
\end{array}\right).
\]
The set $\left\{ \left(\begin{array}{cc}
1 & 0\\
0 & n^{-1}
\end{array}\right)\;\mid\;n\in\Lambda_{m}\right\} $ is again not a group, but unlike before, when we consider the multiplication
mod $m$ it is a group.

Both the problem that our new set is only a group mod $m$, and that
the matrices $\ss{\left(\begin{array}{cc}
1 & 0\\
0 & n^{-1}
\end{array}\right)}$ are not in $\SL_{2}\left(\ZZ\left[\frac{1}{m}\right]\right)$ (the
determinant is not in $\ZZ\left[\frac{1}{m}\right]^{\times}$) can
be fixed once we move to the groups $\GL_{2}\left(\QQ_{m}\right)$
and $\GL_{2}\left(\ZZ_{m}\right)$ instead of $\SL_{2}\left(\ZZ\left[\frac{1}{m}\right]\right)$
and $\SL_{2}\left(\ZZ\right)$ in the right coordinate of our bigger
space.\\

There are many ways to define the $m$-adic integers $\ZZ_{m}$ and
$m$-adic numbers $\QQ_{m}$, and we refer the reader to \cite{gouvea_p-adic_2003,koblitz_p-adic_1996}
for further details. Probably the most elementary way is as rings
of formal power series where
\begin{align*}
\ZZ_{m} & =\left\{ \sum_{0}^{\infty}a_{j}m^{j}\;\mid\;a_{j}\in\left\{ 0,...,m-1\right\} \right\} .\\
\QQ_{m} & =\left\{ \sum_{N}^{\infty}a_{j}m^{j}\;\mid\;N\in\ZZ,\;a_{j}\in\left\{ 0,...,m-1\right\} \right\} 
\end{align*}

The first important observation is that the map $\rho\left(\sum_{0}^{\infty}a_{j}m^{j}\right)=a_{0}\;(mod\;m)$
from $\ZZ_{m}$ to $\nicefrac{\ZZ}{m\ZZ}$ is a well defined homomorphism
of rings. There are many results which we can get from this homomorphism
into the finite ring $\nicefrac{\ZZ}{m\ZZ}$ (and the similarly defined
homomorphism into the rings $\nicefrac{\ZZ}{m^{k}\ZZ}$). This homomorphisms
are at the core of these $m$-adic numbers and we will use them repeatedly.
As a first example, we use it to study the invertible elements and
the general structure of $\ZZ_{m}$ and $\QQ_{m}$.
\begin{claim}
An element $z=\sum_{0}^{\infty}a_{j}m^{j}\in\ZZ_{m}^{\times}$ is
invertible if and only if $\left(a_{0},m\right)=1$ or equivalently
$\rho\left(z\right)\in\left(\nicefrac{\ZZ}{m\ZZ}\right)^{\times}$. 
\end{claim}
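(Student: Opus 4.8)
The plan is to prove both directions via the ring homomorphism $\rho:\ZZ_m\to\nicefrac{\ZZ}{m\ZZ}$ introduced just above the claim, using that $z\mapsto z^{-1}$ exists in $\ZZ_m$ if and only if we can successfully build a formal power series inverse term by term. First I would observe the easy direction: if $z=\sum_0^\infty a_j m^j$ is invertible in $\ZZ_m$, say $zw=1$ with $w\in\ZZ_m$, then applying the ring homomorphism $\rho$ gives $\rho(z)\rho(w)=\rho(1)=1$ in $\nicefrac{\ZZ}{m\ZZ}$, so $\rho(z)=a_0\ (\mathrm{mod}\ m)$ is a unit there, which is exactly the condition $(a_0,m)=1$.

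For the converse — which I expect to be the main content — suppose $(a_0,m)=1$. I would construct the inverse $w=\sum_0^\infty b_j m^j$ recursively, solving the congruences $z\cdot(\sum_0^{k} b_j m^j)\equiv 1\ (\mathrm{mod}\ m^{k+1})$ for $k=0,1,2,\dots$. The base case $k=0$ asks for $b_0$ with $a_0 b_0\equiv 1\ (\mathrm{mod}\ m)$, which has a (unique) solution in $\{0,\dots,m-1\}$ precisely because $(a_0,m)=1$. For the inductive step, assuming $b_0,\dots,b_{k-1}$ have been chosen so that $z\cdot(\sum_0^{k-1}b_j m^j)=1+c\,m^{k}+(\text{higher order})$ for some integer $c$, the coefficient of $m^k$ in $z\cdot(\sum_0^{k}b_j m^j)$ is $a_0 b_k + c$ modulo $m$, and since $a_0$ is invertible mod $m$ we may solve $a_0 b_k + c\equiv 0\ (\mathrm{mod}\ m)$ for $b_k\in\{0,\dots,m-1\}$. (One should be slightly careful that $m$ need not be prime, so ``$\QQ_m$'' is really a ring and not a field; but the argument only uses that $\nicefrac{\ZZ}{m\ZZ}$ is a commutative ring in which $a_0$ is a unit, so nothing breaks.) Passing to the limit, the resulting $w=\sum_0^\infty b_j m^j\in\ZZ_m$ satisfies $zw\equiv 1\ (\mathrm{mod}\ m^{k})$ for every $k$, hence $zw=1$ in $\ZZ_m$, so $z$ is invertible.

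The main obstacle — really just a bookkeeping point rather than a genuine difficulty — is keeping track of the carries when multiplying two formal power series in $m$ and expressing the ``coefficient of $m^k$'' cleanly, since the digits $b_j$ live in $\{0,\dots,m-1\}$ but the intermediate sums need not. The clean way around this is to work modulo $m^{k+1}$ throughout (i.e.\ in the finite quotient rings $\nicefrac{\ZZ}{m^{k+1}\ZZ}$, using the compatible homomorphisms $\ZZ_m\to\nicefrac{\ZZ}{m^{k+1}\ZZ}$ mentioned in the text), so that one only ever solves a single linear congruence mod $m$ at each stage and never has to normalize digits by hand. Finally, the stated ``equivalently'' — that invertibility is the same as $\rho(z)\in(\nicefrac{\ZZ}{m\ZZ})^\times$ — is immediate since $(a_0,m)=1$ is by definition the condition that $a_0\ (\mathrm{mod}\ m)$ be a unit in $\nicefrac{\ZZ}{m\ZZ}$.
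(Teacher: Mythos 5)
Your proof is correct and follows essentially the same route as the paper's: the forward direction by applying the ring homomorphism $\rho$, and the converse by recursively choosing the digits $b_j$ so that the product has all higher coefficients vanish. You simply carry out in detail (and cleanly, by working modulo $m^{k+1}$) the induction that the paper states as ``a simple induction argument.''
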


\begin{proof}
Let $\sum_{0}^{\infty}b_{j}m^{j}\in\ZZ_{m}$ and write 
\[
\left(\sum_{0}^{\infty}a_{j}m^{j}\right)\left(\sum_{0}^{\infty}b_{j}m^{j}\right)=\left(\sum_{0}^{\infty}c_{j}m^{j}\right).
\]
The fact that $\rho$ is homomorphism implies in particular that $a_{0}b_{0}\equiv_{m}c_{0}$.
It follows that if $\sum_{0}^{\infty}a_{j}m^{j}\in\ZZ_{m}^{\times}$,
then $a_{0}$ is invertible mod $m$, namely $\left(a_{0},m\right)=1$.
For the other direction, a simple induction argument shows that we
can always choose $b_{j}$ so that $c_{0}=1$ and $c_{j}=0$ for all
$n\geq1$. 

\newpage{}
\end{proof}
\begin{cor}
The following holds:
\begin{enumerate}
\item We can write $\ZZ_{m}^{\times}=\bigcup_{n}\left(n+m\ZZ_{m}\right)$
where $n\in\Lambda_{m}$.
\item We have that $\ZZ_{m}\cap\QQ=\ZZ\left[\frac{1}{p}\;\mid\;\left(p,m\right)=1\right]$.
\item If $p$ is prime, then $\ZZ_{p}=\left\{ 0\right\} \cup\bigsqcup_{0}^{\infty}p^{n}\ZZ_{p}^{\times}$,
$\QQ_{p}=\left\{ 0\right\} \cup\bigsqcup_{-\infty}^{\infty}p^{n}\ZZ_{p}^{\times}$
and $\QQ_{p}$ is a field.
\end{enumerate}
\end{cor}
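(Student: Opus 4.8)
The plan is to deduce all three parts from the claim above (which identifies $\ZZ_m^\times$ with the power series $\sum_{0}^{\infty}a_jm^j$ whose constant term $a_0$ is coprime to $m$) together with the reduction homomorphism $\rho\colon\ZZ_m\to\ZZ/m\ZZ$. For part (1), I would first note that when $m>1$ the residues $a_0\in\{0,\dots,m-1\}$ coprime to $m$ are exactly the elements of $\Lambda_m$; so the claim says $z=\sum_{0}^{\infty}a_jm^j$ lies in $\ZZ_m^\times$ if and only if $a_0\in\Lambda_m$, and since imposing $a_0=n$ is the same as requiring $z\in n+m\ZZ_m$, this is precisely $\ZZ_m^\times=\bigcup_{n\in\Lambda_m}(n+m\ZZ_m)$ --- a disjoint union, since the constant term of a power series is well defined. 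So part (1) is essentially a reformulation of the claim.

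For part (2), the inclusion $\supseteq$ is immediate: a prime $p$ with $(p,m)=1$ has constant term $p\bmod m$ coprime to $m$, hence is a unit in $\ZZ_m$, so $\frac{1}{p}\in\ZZ_m$; since $\ZZ\subseteq\ZZ_m$ and $\ZZ_m$ is a ring, $\ZZ[\frac{1}{p}\mid(p,m)=1]\subseteq\ZZ_m\cap\QQ$. For the reverse inclusion I would take $x=\frac{a}{b}\in\ZZ_m\cap\QQ$ in lowest terms and suppose, for contradiction, that some prime $p$ divides both $b$ and $m$; applying $\rho$ to the identity $bx=a$ (valid in $\ZZ_m$, since $b,x,a,bx$ all lie there) gives $\rho(b)\rho(x)=\rho(a)$ in $\ZZ/m\ZZ$, and reducing this further modulo $p$ --- legitimate because $p\mid m$ --- yields $\bar a=0$ in $\ZZ/p\ZZ$, i.e. $p\mid a$, contradicting $(a,b)=1$. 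Hence every prime factor of $b$ is coprime to $m$, so $\frac{1}{b}$, and therefore $x$, lies in $\ZZ[\frac{1}{p}\mid(p,m)=1]$.

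For part (3), fixing a prime $p$, I would take a nonzero $z=\sum_{j\ge N}a_jp^j$, let $n$ be the least index with $a_n\ne0$, and write $z=p^nw$ with $w=\sum_{k\ge0}a_{n+k}p^k$; since $0<a_n<p$ forces $(a_n,p)=1$, the claim gives $w\in\ZZ_p^\times$ and so $z\in p^n\ZZ_p^\times$, with $n\ge0$ when $z\in\ZZ_p$ and $n\in\ZZ$ in general. The pieces $p^n\ZZ_p^\times$ are pairwise disjoint because $\ZZ_p^\times$ is exactly the set of elements with nonzero constant term, hence disjoint from $p\ZZ_p$: an equality $p^nu=p^{n'}u'$ with $n<n'$ would force the unit $u=p^{n'-n}u'$ into $p\ZZ_p$, while the case $n=n'$ gives $u=u'$ after cancelling $p^n$. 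Finally $\QQ_p$ is a field since it is a commutative ring with $1\ne0$ in which every nonzero $z=p^nu$ has inverse $p^{-n}u^{-1}\in\QQ_p$.

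The only step I expect to require a genuine argument, rather than unwinding definitions, is the reverse inclusion in part (2); the point to get right there is the double reduction --- first $\rho$ modulo $m$, then further modulo a prime $p\mid m$ --- that exposes the contradiction with $(a,b)=1$. Everything else is routine manipulation of $m$-adic expansions, using that $\ZZ$ embeds in $\ZZ_m$ and that $\ZZ_p$ is an integral domain.
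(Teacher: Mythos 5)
Your proposal is correct, and parts (1) and (3) coincide with the paper's argument: (1) is read off as the preimage of $\left(\ZZ/m\ZZ\right)^{\times}$ under $\rho$, and (3) factors out $p^n$ where $n$ is the valuation and cites part (1). Part (2) is where you take a genuinely different route. For the inclusion $\ZZ_m\cap\QQ\subseteq\ZZ\left[\frac{1}{p}\mid(p,m)=1\right]$, the paper first discards the prime factors of $a,b$ coprime to $m$, then writes $\frac{a}{b}=\prod p_i^{\ell_i}$ over the distinct primes dividing $m$, reorders so $\ell_1$ is minimal, rewrites the product as $m^{\ell_1}\cdot n$ with $n\in\ZZ$, $n\not\equiv_m 0$, and concludes $\ell_1\geq 0$ by inspecting the $m$-adic expansion. (That rewriting $\prod p_i^{\ell_i}=m^{\ell_1}\prod_{i\geq2}p_i^{\ell_i-\ell_1}$ is only literal when $m$ is squarefree, so the paper's version needs a little care in general.) You avoid that bookkeeping entirely: take $\frac{a}{b}$ in lowest terms, assume for contradiction a prime $p$ divides both $b$ and $m$, apply $\rho$ to $bx=a$ to get $\rho(b)\rho(x)=\rho(a)$ in $\ZZ/m\ZZ$, and further reduce mod $p$ to force $p\mid a$, contradicting $(a,b)=1$. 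This is cleaner, works uniformly for arbitrary $m$, and relies only on the homomorphism property of $\rho$ and the further quotient $\ZZ/m\ZZ\to\ZZ/p\ZZ$, which is exactly what is available. The trade-off is that the paper's argument makes visible, concretely, which power of $m$ controls membership in $\ZZ_m$, whereas yours is a pure divisibility contradiction; both are fine, and yours is arguably the more robust of the two.
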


\begin{proof}
\begin{enumerate}
\item These are exactly the preimages of $\left(\nicefrac{\ZZ}{m\ZZ}\right)^{\times}$
under $\rho$, which by the previous claim form the invertibles in
$\ZZ_{m}$.
\item If $p\in\ZZ$ is coprime to $m$, then by the previous claim $\frac{1}{p}\in\ZZ_{m}$,
implying the $\supseteq$ containment. On the other hand, in order
to show that a rational $\frac{a}{b}\in\ZZ_{m}\cap\QQ$ is in $\ZZ\left[\frac{1}{p}\;\mid\;\left(p,m\right)=1\right]$,
we may assume that the prime divisors of $a$ and $b$ are prime divisors
of $m$. Write $\frac{a}{b}=\prod_{1}^{k}p_{i}^{\ell_{i}}$ with $\ell_{i}\in\ZZ$,
and $p_{i}$ are the distinct primes which divide $m$. Assume without
loss of generality that $\ell_{1}\leq\ell_{2}\leq\cdots\leq\ell_{k}$
so that $\prod_{1}^{k}p_{i}^{\ell_{i}}=m^{\ell_{1}}\prod_{2}^{k}p_{i}^{\ell_{i}-\ell_{1}}$
where $n=\prod_{2}^{k}p_{i}^{\ell_{i}-\ell_{1}}\in\ZZ$ and $n\not\equiv_{m}0$.
Since $n=\sum_{0}^{M}c_{j}m^{j}$ with $c_{0}\neq0$, it follows that
$m^{\ell_{1}}n\in\ZZ_{m}$ if and only if $\ell_{1}\geq0$, which
is equivalent to $\frac{a}{b}\in\ZZ\leq\ZZ\left[\frac{1}{p}\;\mid\;\left(p,q\right)=1\right]$.
\item Every nonzero element in $\ZZ_{p}$ can be written as $p^{N}\sum_{0}^{\infty}a_{j}p^{j}$
with $a_{0}\neq0$ and $N\geq0$, so by part (1) it is in $p^{N}\ZZ_{p}^{\times}$
and a for $\QQ_{p}$ we have the same presentation but $N$ allowed
to be negative as well. Finally, since $p$ is invertible in $\QQ_{p}$,
we conclude that $\QQ_{q}\backslash\left\{ 0\right\} =\bigsqcup_{-\infty}^{\infty}p^{n}\ZZ_{q}^{\times}$
consists of invertible elements, hence $\QQ_{p}$ is a field.
\end{enumerate}
\end{proof}
Part (3) is very important, and shows that every $p$-adic number
can be written as $p^{n}\alpha$ with $\alpha\in\ZZ_{p}$. This idea
allows us to generalize the presentation $\QQ_{p}=\ZZ_{p}\left[\frac{1}{p}\right]$
to the analogue $\GL_{2}\left(\ZZ\left[\frac{1}{p}\right]\right)\GL_{2}\left(\ZZ_{p}\right)=\GL_{2}\left(\QQ_{p}\right)$
for a prime $p$. In turn, we get the analogue for the isomorphism
from the beginning of this section
\[
\GL_{2}\left(\ZZ\left[\frac{1}{p}\right]\right)\backslash\left[\GL_{2}\left(\RR\right)\times\GL_{2}\left(\QQ_{p}\right)\right]/\left[Id\times\GL_{2}\left(\ZZ_{p}\right)\right]\cong\GL_{2}\left(\ZZ\right)\backslash\GL_{2}\left(\RR\right).
\]
For general $m$, one can use the Chinese remainder theorem (and an
equivalent definition of the $p$-adic numbers) to show that $\QQ_{m}\cong\prod\QQ_{p_{i}}$
and $\ZZ_{m}\cong\prod\ZZ_{p_{i}}$ where $p_{i}$ are the distinct
prime divisors of $m$. We can then generalize the isomorphism above
to any natural number $m$.
\begin{rem}
We move from $\SL$ to $\GL$ since it is easier to work with $\GL$
and not to worry about the determinant. Furthermore, the group $\SL$
doesn't act transitively on the space of the generalized lattices
over the adeles. Later on in \subsecref{Adelic_lattices} we will
move to the group $\GL_{2}^{1}$ which is a little bit smaller than
$\GL_{2}$ and is the right generalization of $\SL_{2}\left(\RR\right)$
to the adelic language.
\end{rem}

Returning to our original problem, let us write $u_{-n/m},\;n\in\Lambda_{m}$
as
\[
u_{-n/m}=\left(\begin{array}{cc}
1 & 0\\
0 & n^{-1}
\end{array}\right)u_{-1/m}\left(\begin{array}{cc}
1 & 0\\
0 & n
\end{array}\right)\in\left(\begin{array}{cc}
1 & 0\\
0 & n^{-1}
\end{array}\right)u_{-1/m}\cdot\GL_{2}\left(\ZZ_{m}\right).
\]
It then follows that our points $\GL_{2}\left(\ZZ\right)u_{n/m}a\left(t\right),\;n\in\Lambda_{m}$
are the projection of 
\[
\GL_{2}\left(\ZZ\left[\frac{1}{m}\right]\right)\left\{ \left(a\left(t\right),\left(\begin{array}{cc}
1 & 0\\
0 & n^{-1}
\end{array}\right)\right),\;n\in\Lambda_{m}\right\} \cdot\left(Id,u_{-1/m}\right).
\]
This already looks like a translation with $u_{-1/m}$ of (part of)
the diagonal orbit in $\GL_{2}\left(\ZZ_{m}\right)$. We claim that
the rest of the diagonal orbit can be decomposed to $\varphi\left(m\right)$
cosets, each containing a different $\ss{\left(\begin{array}{cc}
1 & 0\\
0 & n^{-1}
\end{array}\right)}$ with $n\in\Lambda_{m}$, and when projected down to $\GL_{2}\left(\ZZ\right)\backslash\GL_{2}\left(\RR\right)$
each of these cosets are mapped to a single point. Since the cosets
of a single group have the same mass, the projected measure is going
to be uniform (for the full details, see \subsecref{Uniform_prime_invariance})

In other words our measure $\mu_{m}$ which are uniform averages over
$\varphi\left(m\right)$ orbit measure in in the space of Euclidean
lattices are the projection of a single orbit measure 
\[
\GL_{2}\left(\ZZ\left[\frac{1}{m}\right]\right)A_{m}\left(Id,u_{-1/m}\right)
\]
where $A_{m}\leq\GL_{2}\left(\RR\right)\times\GL_{2}\left(\ZZ_{m}\right)$
is the diagonal subgroup.

While we translate by $u_{-1/m}\in\GL_{2}\left(\QQ_{m}\right)$ in
the big new space, when we project it down to $\GL_{2}\left(\ZZ\right)\backslash\GL_{2}\left(\RR\right)$
we only care about the translating element as in $\nicefrac{\GL_{2}\left(\QQ_{m}\right)}{\GL_{2}\left(\ZZ_{m}\right)}$.
This space, just like $\GL_{2}\left(\ZZ\right)\backslash\GL_{2}\left(\RR\right)$,
can be identified as the space of lattices in $\QQ_{m}$ (see \subsecref{Adelic_lattices}).
This space also comes with a geometric interpretation, and in particular
for $m=p$ primes the space $\nicefrac{\PGL_{2}\left(\QQ_{p}\right)}{\PGL_{2}\left(\ZZ_{p}\right)}$
can be viewed as the $p+1$-regular tree. We will not use this interpretation
here, and we refer the interested reader to \cite{lubotzky_discrete_1994}.
However, we do want to show how we can see the symmetry of our orbits
in this new language.

Recall, that our orbits have symmetry around the time $t=\ln\left(m\right)$.
In this new bigger space, the points at this time are
\begin{align*}
\GL_{2}\left(\ZZ\left[\frac{1}{m}\right]\right)\left(a\left(\ln\left(m\right)\right),a^{\left(m\right)}\right)\left(Id,u_{-1/m}\right) & =\GL_{2}\left(\ZZ\left[\frac{1}{m}\right]\right)\left(\left(\begin{array}{cc}
m^{-1} & 0\\
0 & 1
\end{array}\right)m^{1/2},a^{\left(m\right)}\right)\left(Id,u_{-1/m}\right)\\
 & =\GL_{2}\left(\ZZ\left[\frac{1}{m}\right]\right)\left(m^{1/2},a^{\left(m\right)}\right)\left(Id,\left(\begin{array}{cc}
m & 0\\
0 & 1
\end{array}\right)u_{-1/m}\right),
\end{align*}
where we use the fact that $\ss{\left(\begin{array}{cc}
m & 0\\
0 & 1
\end{array}\right)}\in\GL_{2}\left(\ZZ\left[\frac{1}{m}\right]\right)$ and diagonal elements commute. In our space of $m$-adic lattice
$\nicefrac{\GL_{2}\left(\QQ_{m}\right)}{\GL_{2}\left(\ZZ_{m}\right)}$,
the point $\ss{\left(\begin{array}{cc}
m & 0\\
0 & 1
\end{array}\right)}u_{-1/m}$ correspond to
\[
\left(\begin{array}{cc}
m & 0\\
0 & 1
\end{array}\right)u_{-1/m}\ZZ_{m}^{2}=\left(\begin{array}{cc}
m & -1\\
0 & 1
\end{array}\right)\ZZ_{m}^{2}.
\]
For simplicity, considering the lattice $\ss{\left(\begin{array}{cc}
m & -1\\
0 & 1
\end{array}\right)}\ZZ^{2}$ instead, we see that an equivalent definition is 
\[
\left\{ \left(k_{1},k_{2}\right)\in\ZZ^{2}\;\mid\;k_{1}+k_{2}\equiv_{m}0\right\} .
\]
Clearly this lattice is invariant under our symmetry $\tau$ which
switches the $x$ and $y$ coordinates. We actually also see that
$m^{1/2}$ makes an appearance, which is the normalization that appeared
in \figref{symmetry_latices} and the argument after it. Thus, the
better translation choice should be $\ss{\left(\begin{array}{cc}
m & -1\\
0 & 1
\end{array}\right)}$, and we will see it again more formally in \subsecref{Uniform_prime_invariance},
but on the other hand $u_{-1/m}$ has the advantage of being unipotent,
so we will keep it.

\newpage{}

To summarize what we saw so far:
\begin{enumerate}
\item We start with a problem about continued fractions of rational numbers.
\item We saw how to translate this problem to the space of unimodular lattices
and $A$-orbits there. In this language our measure was an average
of $\varphi\left(q\right)$ $A$-orbits.
\item Using Fubiny we rewrote the measure as an $A$-orbit of $\varphi\left(m\right)$
points on a single horocycle, and we explained why the average on
these points is close to the uniform average on the whole cocycle.
This was true for half of the $A$-orbit, and the other half was a
mirror image of the first.
\item We then lifted the problem to the $m$-adic number, where the $\varphi\left(m\right)$
different $A$-orbits are combined to a translation of a single $A_{m}$-orbit. 
\item We are now left to show that as $m\to\infty$, the translated orbit
$\GL_{2}\left(\ZZ\left[\frac{1}{m}\right]\right)A_{m}\left(Id,u_{-1/m}\right)$
``equidistributes''. Note that this is still not well defined, because
for each $m$ this translated orbit lives in a different space $\GL_{2}\left(\ZZ\left[\frac{1}{m}\right]\right)\backslash\left[\GL_{2}\left(\RR\right)\times\GL_{2}\left(\QQ_{m}\right)\right]$.
\end{enumerate}
The last statement should be very familiar to people in homogeneous
dynamics and ergodic theory. Indeed, this is the famous shearing effect.
If we have a measure on diagonal orbit and we shear it (translate)
by a unipotent element, then the resulting measure will be close to
an average over expanding horocycles. Since expanding horocycles equidistribute,
then we should expect our measures to equidistribute. So far our translation
was only in the $m$-adic coordinate, but of course we can do it in
the real coordinate as well. If we restrict to only translation in
the real coordinate, then this was done in \cite{oh_limits_2014}.
We will give some of the intuition in below in \subsecref{Shearing}
where we will see the analogues of some of the results that we have
seen so far for the translation in the $m$-adic place. Finally, what
we will want to do is to combine translations in the real and in the
$m$-adic place. There are two main issues when doing this combination.
First, while the real and $m$-adic places behave similarly in many
ways, there are still some differences, and there is some technicalities
when trying to combine them. Second, when $m\to\infty$, the space
$\QQ_{m}$ can change (recall that it only depends on the primes which
divide $m$). For that we will define the adelic numbers in \subsecref{shearing_Adelic_intuition}
which contain all the $m$-adic numbers. Finally, we will need to
show how to lift equidistribution results from $\RR$ and $m$-adic
spaces to the whole adelic space.

\newpage{}

\section{\label{subsec:Shearing}Shearing and equidistribution}

In this section we give some of the intuition and ideas for the equidistribution
resulting from shearing, namely a translation of a fixed diagonal
orbit by a unipotent matrix. This is true in quite a general setting,
though for our example, we will concentrate on $\SL_{2}\left(\ZZ\right)\backslash\SL_{2}\left(\RR\right)$
and the standard $A$-orbit through the origin.

As usual, to visualize this space, we look instead on the hyperbolic
plane, where the $A$-orbit $\SL_{2}\left(\ZZ\right)A$ there is simply
the $y$-axis
\[
A\left(i\right)=\left\{ a\left(t\right)i\;\mid\;t\in\RR\right\} =\left\{ \frac{i}{e^{t}}\;\mid\;t\in\RR\right\} .
\]
The translated orbit if $\SL_{2}\left(\ZZ\right)Au_{x}$ for $x\in\RR$,
which on the hyperbolic space is
\[
Au_{x}\left(i\right)=A\left(i+x\right)=\left\{ \frac{i+x}{e^{t}}\;\mid\;t\in\RR\right\} .
\]

\begin{rem}
Note that up until now we had $\SL_{2}\left(\ZZ\right)u_{x}A$ because
the translation was in the $m$-adic place. Now the translation is
in the real place so we need to switch between $u_{x}$ and $A$.
\end{rem}

This set $Au_{x}\left(i\right)$ is again a line which goes through
the origin, and the bigger $x$ is, the smaller the slope is. Folding
this line modulo $\left\langle u_{1}\right\rangle \leq\SL_{2}\left(\ZZ\right)$
we already get this picture of union of almost horizontal lines:

\begin{figure}[H]
\begin{centering}
\includegraphics[scale=0.2]{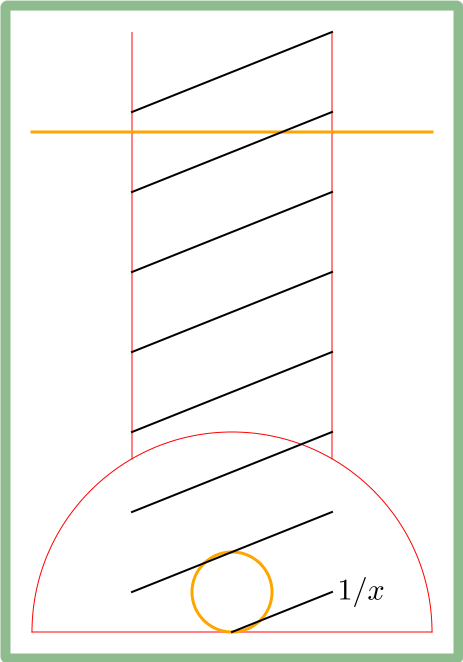}~~~~\includegraphics[scale=0.2]{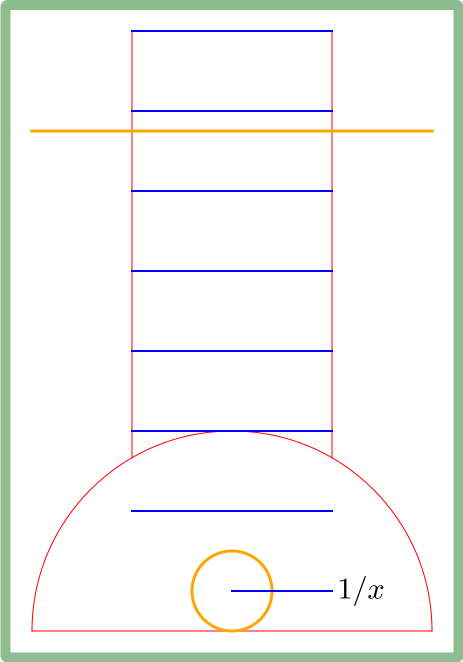}
\par\end{centering}
\caption{\label{fig:shearing_decomposition-1}The black lines are part of the
curve $\protect\SL_{2}\left(\protect\ZZ\right)Au_{x}$ as viewed in
the hyperbolic plane modulo $u_{1}$. On the right, the blue lines
are ``approximations'' of the black lines which are horocycles.
The orange circle through the origin is the image of the orange line
after applying the M\"{o}bius action $z\protect\mapsto-\frac{1}{z}$,
so that the area above the line and inside the circle is a neighborhood
of the cusp.}
\end{figure}
Next, we try to approximate each black line in the image above with
the corresponding blue horocycle line. In order to do that we write
\[
\SL_{2}\left(\ZZ\right)a\left(T+t\right)u_{x}=\SL_{2}\left(\ZZ\right)u_{xe^{-T}e^{-t}}a\left(T\right)a\left(t\right)
\]
for the integration at times $\left[T,T+\delta\right]$.

\newpage{}

There are three parts to this expression:
\begin{enumerate}
\item $\SL_{2}\left(\ZZ\right)u_{xe^{-T}e^{-t}}$ : This part is on the
standard horocycle $\SL_{2}\left(\ZZ\right)U$. However the integral
over $t$ is not the Haar measure there, since $t\mapsto xe^{-T}e^{-t}$
is not linear.
\item Multiplication by $a\left(T\right)$ : This will take the horocycle
from above and expand it (push it down).
\item Multiplication by $a\left(t\right)$ : When $t$ is very small, this
will be negligible.
\end{enumerate}
Part (3) is the simplest one. If $f$ is any compactly supported continues
function and $\varepsilon>0$, then by uniform continuity there is
some $\delta>0$ such that if $\norm g<\delta$ for some $g\in\SL_{2}\left(\RR\right)$,
then $\norm{g\left(f\right)-f}_{\infty}<\varepsilon$. In particular,
if we assume that $t\in\left[0,\delta\right]$, then we can remove
the multiplication by $a\left(t\right)$ up to some $\varepsilon$
error which will be as small as we want. Thus, let us ignore this
$a\left(t\right)$ from now on.

For part (1), in order to get the Haar measure on the standard horocycle,
and not an exponential movement, let us set $s=xe^{-T}e^{-t}$. We
then get that 
\[
\int_{0}^{\delta}f\left(\SL_{2}\left(\ZZ\right)u_{xe^{-T}e^{-t}}a\left(T\right)\right)\dt=\int_{xe^{-T}e^{-\delta}}^{xe^{-T}}f\left(\SL_{2}\left(\ZZ\right)u_{s}a\left(T\right)\right)\frac{1}{s}\ds.
\]
The last integral is almost the Haar measure on the standard horocycle,
where the only problem is the $\frac{1}{s}$. Fortunately, it is inside
$\left[e^{-\delta}\frac{x}{e^{T}},\frac{x}{e^{T}}\right]$ and $e^{-\delta}$
is very close to 1, so $\frac{1}{s}$ is almost the constant $\frac{e^{T}}{x}$. 

Once we changed $\frac{1}{s}$ to a constant, we integrate over the
whole horocycle $\flr{xe^{-T}\left(1-e^{-\delta}\right)}$ times plus
an extra $xe^{-T}\left(1-e^{-\delta}\right)-\flr{xe^{-T}\left(1-e^{-\delta}\right)}$.
Since $\delta$ is now fixed, if $\frac{x}{e^{T}}$ is large, then
this extra integration will be very small. Hence, for example we can
assume that $T\leq\ln\left(x\right)\left(1-\varepsilon\right)$. 

Putting everything together, we get that up to some small error we
have that {\small{}
\[
\frac{1}{\delta}\int_{0}^{\delta}f\left(\SL_{2}\left(\ZZ\right)a\left(T+t\right)u_{x}\right)\dt\sim\frac{1}{\delta}\frac{e^{T}}{x}\flr{xe^{-T}\left(1-e^{-\delta}\right)}\int_{0}^{1}f\left(\SL_{2}\left(\ZZ\right)u_{s}a\left(T\right)\right)\ds\sim\int_{0}^{1}f\left(\SL_{2}\left(\ZZ\right)u_{s}a\left(T\right)\right)\ds.
\]
}If we also assume that $T$ is not too small, say $T\geq\varepsilon\ln\left(x\right)$,
then by the equidistribution of expanding horocycles, the last term
is more or less $\mu_{Haar}\left(f\right)$.

Decomposing the segment $\left[0,\ln\left(x\right)\right]$ to $\delta=\delta\left(T,x,\varepsilon\right)$
intervals, we get that up to an error as small as we want (depending
on $\varepsilon$ and $\norm f_{\infty}$) we have that 
\[
\frac{1}{\ln\left(x\right)}\int_{0}^{\ln\left(x\right)}f\left(\SL_{2}\left(\ZZ\right)a\left(t\right)u_{x}\right)\dx\sim\mu_{Haar}\left(f\right).
\]

This takes care of a big part of our translated orbit. For times $t<0$,
the translated orbit goes to the cusp, so most of it doesn't contribute
to the integral (above the orange line in \figref{shearing_decomposition-1}),
and the time before it leaves the support of $f$ is uniformly bounded,
so all together it contributes a constant (which depends only on $\norm f$
and $supp\left(f\right)$) e see \lemref{restricted_measures} for
the exact details. This constant is of course negligible with respect
to our normalization by $\frac{1}{\ln\left(x\right)}$ as $x\to\infty$.

As in our study of continued fractions of rational numbers, our argument
fail when $T\geq\ln\left(x\right)$ is too large, but here too there
is a symmetry which comes to help us. However, for this argument we
need to perturb a bit our measure as follows. Instead of translating
by the unipotent matrix $u_{x}$, we will do it using 
\[
h\left(y\right)=\left(\begin{array}{cc}
\cosh\left(y/2\right) & \sinh\left(y/2\right)\\
\sinh\left(y/2\right) & \cosh\left(y/2\right)
\end{array}\right).
\]
The first important observation is that 
\begin{align*}
h\left(y\right) & =\left(\begin{array}{cc}
\cosh^{-1/2}\left(y\right) & 0\\
0 & \cosh^{1/2}\left(y\right)
\end{array}\right)\left(\begin{array}{cc}
1 & \sinh\left(y\right)\\
0 & 1
\end{array}\right)\left[\overbrace{\left(\cosh\left(y\right)\right)^{-1/2}\left(\begin{array}{cc}
\cosh\left(y/2\right) & -\sinh\left(y/2\right)\\
\sinh\left(y/2\right) & \cosh\left(y/2\right)
\end{array}\right)}^{k\left(y\right)}\right]\\
\Rightarrow h\left(y\right) & \in a\left(\ln\left(\cosh\left(y\right)\right)\right)\cdot u_{\sinh\left(y\right)}\cdot k\left(y\right),\qquad k\left(y\right)\in\SO_{2}\left(\RR\right).
\end{align*}
This means that 
\[
\SL_{2}\left(\ZZ\right)Ah\left(y\right)=\SL_{2}\left(\ZZ\right)Au_{\sinh\left(y\right)}k\left(y\right).
\]
Since $k\left(y\right)$ is in a compact set, the limit as $y\to\infty$
equidistribute if and only if $\SL_{2}\left(\ZZ\right)Au_{\sinh\left(y\right)}$
equidistribute, and this is exactly the shearing that we discussed
before. Also, since \\
$\left|\sinh\left(y\right)-\cosh\left(y\right)\right|=e^{-y}\to0$
as $y\to\infty$, for $x=\sinh\left(y\right)$ very big we get that
\[
a\left(\ln\left(\cosh\left(y\right)\right)\right)\cdot u_{\sinh\left(y\right)}\sim a\left(\ln\left(x\right)\right)u_{x},
\]
so this translation already has inside it our center of symmetry,
which is at time $\ln\left(x\right)$ (this is the analogue to translation
by $\ss{\left(\begin{array}{cc}
m & 0\\
0 & 1
\end{array}\right)}u_{-1/m}$ instead of $u_{-1/m}$ that we saw in the $m$-adic translation).

Recall that in our discussion of the continued fraction of rational
numbers we denoted by $\tau$ the reflection $\tau=\ss{\left(\begin{array}{cc}
0 & 1\\
1 & 0
\end{array}\right)}$. It is now easy to see that $\tau h\left(y\right)=h\left(y\right)\tau$
and $a\left(t\right)\tau=\tau a\left(-t\right)$. We then get that
\[
\GL_{2}\left(\ZZ\right)a\left(t\right)h\left(y\right)\tau=\GL_{2}\left(\ZZ\right)\tau a\left(-t\right)h\left(y\right)=\GL_{2}\left(\ZZ\right)a\left(-t\right)h\left(y\right),
\]
so that flowing forward and backward in time is the same up to this
reflection. In particular, in order to show equidistribution when
$y\to\infty$, it is enough to prove it for $t\in[0,\infty)$.

Going back to the unipotent translation by $u_{x}$, we get that the
times $[\ln\left(x\right),\infty)$ are the mirror image of $(-\infty,\ln\left(x\right)]$,
so it is enough to show equidistribution for $t\leq\ln\left(x\right)$
which we already have shown.

Now that we have both the equidistribution for the $m$-adic translations,
and the real translations, we can combine both of these ideas to get
equidistribution for combined translations. This will be done in \secref{real_case},
and other than it being more technical, it contains no new ideas.
\begin{rem}
It is interesting to understand the symmetry mentioned above in the
hyperbolic plane. The curve $h\left(x\right)\left(i\right),\;x\in\RR$
is simply the upper half of the unit circle $y=\sqrt{1-x^{2}}$. This
means that the mid point in this visualization is on that half circle.
Our two cutoffs right after our measure ``comes'' from the cusp
and before it ``returns'' to the cusp are when it intersect the
standard horocycle and its reflection via $z\mapsto-\frac{1}{z}$.

\newpage{}
\end{rem}

\section{\label{subsec:shearing_Adelic_intuition}Shearing over the adeles}

Up until now we saw two types of equidistribution phenomena. The first
started with measures coming from continued fractions of rational
numbers, which we reinterpreted as measures on the space of unimodular
lattices $\SL_{2}\left(\ZZ\right)\backslash\SL_{2}\left(\RR\right)$,
where each one was a finite average over $A$-orbits. We then saw
a more natural point of view for each such measure, namely as a projection
of a translation of a single diagonal orbit in $\GL_{2}\left(\ZZ\left[\frac{1}{m}\right]\right)\backslash\GL_{2}\left(\RR\right)\times\GL_{2}\left(\QQ_{m}\right)$
where the translation is done in the $\QQ_{m}$-coordinate by the
matrix $u_{-1/m}$.

The phenomena of translating a diagonal orbit with a unipotent matrix
is called shearing. To get some intuition, we saw how the shearing
process in $\SL_{2}\left(\ZZ\right)\backslash\SL_{2}\left(\RR\right)$
leads to expanding horocycles, which we can use to prove equidistribution.
Actually, the measures in this case can also be seen as projections
of a translated diagonal orbit from $\PGL_{2}\left(\ZZ\left[\frac{1}{m}\right]\right)\backslash\PGL_{2}\left(\RR\right)\times\PGL_{2}\left(\QQ_{m}\right)$,
where the translation is done in the $\RR$-coordinate by the matrix
$u_{x}$. 

The same shearing argument almost works for our original equidistribution
result. The problem is that as $m_{i}\to\infty$, the set $\QQ_{m_{i}}$
can change. However, if all the $m_{i}$ are divisible only by primes
from a finite set $\left\{ p_{1},...,p_{k}\right\} $, then $\QQ_{m_{i}}\leq\QQ_{\prod p_{i}}=\prod\QQ_{p_{i}}$
in which case a similar shearing argument will work. However, if this
is not the case, we need to take the product over all the primes,
which lead to the definition of the adeles.

The main result of this notes is to combine the two equidistribution
results that we saw so far, and to show that they still hold when
we need infinitely many primes. As we have already seen, the steps
in both of these results are quite similar e finding the important
part of the measure, where the rest is ``near'' the cusp, use a
symmetry argument to cut the important part to two so we can then
approximate the measure with expanding horocycles, and finally use
the result about expanding horocycles. However, there are some differences,
mainly where we approximate the horocycles using the shearing effect
in the $\RR$-coordinate, or approximating using averages on discrete
points so that the shearing is in the $\QQ_{m}$-coordinate.

In order to continue our investigation, we first need a better understanding
of the adele ring, and we begin first with some topological properties
of the $p$-adic numbers for primes $p$.
\begin{defn}
Let $z=\sum_{N}^{\infty}a_{j}m^{j}\in\QQ_{m}$ with $a_{N}\neq0$.
Define the $m$-adic \emph{valuation} and \emph{norm} to be
\begin{align*}
val_{m}\left(z\right) & =N\\
\left|z\right|_{m} & =m^{-N}=m^{-val_{m}\left(z\right)}.
\end{align*}
For $0$ we define $val_{m}\left(0\right)=\infty$ and $\left|0\right|_{m}=0$.
\end{defn}

\begin{claim}
Let $p$ be a prime number. Then the following holds.
\begin{enumerate}
\item The function $\left|\cdot\right|_{p}$ satisfies:
\begin{enumerate}
\item For all $q\in\QQ_{p}$ we have $\left|q\right|_{p}\geq0$ with equality
if and only if $q=0$.
\item (strong triangle inequality) For every $z,w\in\QQ_{p}$ we have that
$\left|z+w\right|_{p}\leq\max\left(\left|z\right|_{p},\left|w\right|_{p}\right)$
with equality if $\left|z\right|_{p}\neq\left|w\right|_{p}$.
\item (multiplicative) For every $z,w\in\QQ_{p}$ we have that $\left|zw\right|_{p}=\left|z\right|_{p}\left|w\right|_{p}$.
\end{enumerate}
\item $\ZZ_{p}$ is a compact ring inside $\QQ_{p}$.
\item $\QQ_{p}$ is a complete field with respect to the $p$-adic norm.
\end{enumerate}
\newpage{}
\end{claim}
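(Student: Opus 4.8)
The plan is to reduce every statement to the normal form recorded in the corollary above: each nonzero $z\in\QQ_{p}$ is uniquely of the form $z=p^{N}u$ with $N=val_{p}(z)\in\ZZ$ and $u\in\ZZ_{p}^{\times}$. Granting this, part (1)(a) is immediate, since $p^{-N}>0$ for every $N\in\ZZ$ and $\left|0\right|_{p}=0$ by convention. For (1)(c), writing $z=p^{N}u$ and $w=p^{M}v$ with $u,v\in\ZZ_{p}^{\times}$ gives $zw=p^{N+M}(uv)$ with $uv\in\ZZ_{p}^{\times}$ (the units form a group), whence $val_{p}(zw)=N+M$ and $\left|zw\right|_{p}=p^{-(N+M)}=\left|z\right|_{p}\left|w\right|_{p}$; the case where one factor vanishes is trivial. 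This is the step that carries the weight, and it lets me avoid any bookkeeping with carries in the power-series product.

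For (1)(b) I would assume without loss of generality that $N=val_{p}(z)\le M=val_{p}(w)$ --- the degenerate cases where $z$, $w$, or $z+w$ is zero being checked directly --- and factor $z+w=p^{N}\bigl(u+p^{M-N}v\bigr)$ with $u+p^{M-N}v\in\ZZ_{p}$; hence $val_{p}(z+w)\ge N$, i.e. $\left|z+w\right|_{p}\le p^{-N}=\max\bigl(\left|z\right|_{p},\left|w\right|_{p}\bigr)$. If moreover $N<M$, then the reduction homomorphism $\rho$ gives $\rho\bigl(u+p^{M-N}v\bigr)=\rho(u)\neq0$, so $u+p^{M-N}v\in\ZZ_{p}^{\times}$ and $val_{p}(z+w)=N$ exactly, which is the equality case. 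As a byproduct, $\ZZ_{p}=\set{z\in\QQ_{p}:\left|z\right|_{p}\le1}$ is then visibly a subring of $\QQ_{p}$ by (1)(b) and (1)(c).

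For part (2) I would first match the topology induced by the metric $d(z,w)=\left|z-w\right|_{p}$ on $\ZZ_{p}$ with the digitwise one: once (1)(b) is in hand, $\left|z-w\right|_{p}\le p^{-k}$ holds iff $z\equiv w\bmod p^{k}$, so the closed $p^{-k}$-ball about $z$ is the coset $z+p^{k}\ZZ_{p}$, and these cosets give the finite partition $\ZZ_{p}=\bigsqcup_{a=0}^{p^{k}-1}\bigl(a+p^{k}\ZZ_{p}\bigr)$; hence $\ZZ_{p}$ is totally bounded. It is also complete, since a Cauchy sequence stabilizes digit by digit and so converges to the power series formed by its eventual digits. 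A complete, totally bounded metric space is compact, which gives the claim. (Equivalently, the digit map is a homeomorphism $\ZZ_{p}\cong\set{0,\dots,p-1}^{\NN}$ onto a product of finite discrete spaces, and one invokes Tychonoff.)

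For part (3), the field property is already contained in the corollary above, since $\QQ_{p}\setminus\set{0}=\bigsqcup_{n\in\ZZ}p^{n}\ZZ_{p}^{\times}$ consists of invertible elements and $\QQ_{p}$ is a commutative ring, while $d(z,w)=\left|z-w\right|_{p}$ is an ultrametric by (1). For completeness, I would note that every Cauchy sequence $\left(x_{n}\right)$ is bounded: by (1)(b), if $\left|x_{n}-x_{m}\right|_{p}<1$ for $n,m\ge N$ then $\left|x_{n}\right|_{p}\le\max\bigl(1,\left|x_{N}\right|_{p}\bigr)$ for $n\ge N$, and the finitely many earlier terms are bounded too, so $x_{n}\in p^{-N_{0}}\ZZ_{p}$ for all $n$ and some $N_{0}$. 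Then $\bigl(p^{N_{0}}x_{n}\bigr)$ is Cauchy in the compact --- hence complete --- set $\ZZ_{p}$, so it converges, and therefore so does $\left(x_{n}\right)$. The only genuinely non-formal point is part (2); everything else is a few lines on top of the structure theorem, and the care there goes into matching the $p$-adic metric balls with the residue cosets before invoking compactness.
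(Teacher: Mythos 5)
Your proposal is correct, and for the parts the paper actually proves it follows the same outline. The one genuine divergence is the compactness argument in part (2): the paper extracts a convergent subsequence directly by a digitwise diagonal argument (sequential compactness), whereas you invoke either ``complete and totally bounded implies compact'' or Tychonoff's theorem via the digit-map homeomorphism $\ZZ_p\cong\set{0,\ldots,p-1}^{\NN}$. All three are standard and encode the same combinatorics; the paper's version is more self-contained, yours is shorter granted the general theorems, and the Tychonoff packaging makes the structural reason for compactness transparent. For part (1), which the paper leaves to the reader, your reduction to the normal form $z=p^{N}u$ with $u\in\ZZ_p^{\times}$ (available from the corollary preceding the claim) cleanly handles (1)(b) and (1)(c) and avoids any carry bookkeeping in the power-series product; in particular the equality case of the ultrametric inequality drops out from $\rho\bigl(u+p^{M-N}v\bigr)=\rho(u)\neq 0$ when $N<M$, exactly as you say. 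Part (3) matches the paper, except that you explicitly supply the boundedness of a Cauchy sequence, a step the paper asserts but does not spell out.
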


\begin{proof}
\begin{enumerate}
\item This is elementary and is left to the reader.
\item Note that if $z=\sum_{N}^{\infty}a_{n}p^{n},w=\sum_{N}^{\infty}b_{n}p^{n}\in\QQ_{p}$,
then $\left|z-w\right|_{p}\leq p^{-M}$ is exactly the same as $a_{i}=b_{i}$
for all $i<M$. \\
If $z_{i}=\sum_{0}^{\infty}a_{i,j}p^{j}\in\ZZ_{p}$ is any sequence,
then we can use a diagonal argument to find a subsequence where for
each $j$ the sequence $a_{i,j}$ stabilizes to some $a_{j}$ (recall
that $a_{i,j}\in\left\{ 0,...,p-1\right\} $). From the remark above,
it is now easy to check that $\sum_{0}^{\infty}a_{j}p^{j}\in\ZZ_{p}$
is the limit of this subsequence. Thus, $\ZZ_{p}$ is sequentially
compact and therefore compact.
\item By definition, the closed (and open) balls in $\QQ_{p}$ are exactly
$p^{n}\ZZ_{p}$. If $z_{i}$ is any Cauchy sequence, it must be in
one of thee balls, which by part (2) is compact, so the limit of $z_{i}$
exists and must be in the same ball and therefore in $\QQ_{p}$. We
conclude that $\QQ_{p}$ is complete.
\end{enumerate}
For uniformity of notation, we will use $\QQ_{\infty}$ to denote
$\RR$. We set $\PP$ to be the set of prime numbers in $\NN$ and
$\PP_{\infty}=\PP\cup\left\{ \infty\right\} $. 
\end{proof}
\begin{rem}
Note that $\QQ_{p}$ is a complete field, just like $\RR$, and algebraically
speaking $\ZZ_{p}$ behaves similar to $\ZZ$ e for example both are
generated as a topological ring by $1$ in the corresponding norms.
However, while in $\RR$ the subring $\ZZ$ is discrete and has finite
covolume, the ring $\ZZ_{p}$ in $\QQ_{p}$ is compact and has infinite
covolume. Hence, with this point of view $\ZZ_{p}$ behaves more like
$\left[0,1\right]$ in $\RR$. This two opposite points of view e
algebraic and topological e are quite common when dealing with $p$-adic
numbers, namely that sometimes we think of $\ZZ_{p}$ as $\ZZ$ and
sometimes as $\left[0,1\right]$.
\end{rem}

We can now define the adele ring.
\begin{defn}
For a finite set $\infty\in S\subseteq\PP_{\infty}$, let 
\begin{align*}
\QQ_{S} & :=\prod_{\nu\in S}\QQ_{\nu}\\
\QQ^{\left(S\right)} & :=\prod_{\nu\in S}\QQ_{\nu}\times\prod_{p\notin S}\ZZ_{p},
\end{align*}
both with the product topology. We define the \emph{ring of adeles}
$\AA=\QQ_{\PP_{\infty}}$ to be the union $\bigcup_{S}\QQ^{\left(S\right)}$
where $S$ runs over all the finite subsets of $\PP_{\infty}$ containing
$\infty$. The topology on $\AA$ is the induced topology, namely
$U$ is open if $U\cap\QQ_{S}$ is open in $\QQ_{S}$ for any $S$
(or equivalently, it is generated by the open sets in $\QQ^{\left(S\right)}$).
This is called the \emph{restricted product} $\AA:=\RR\times\prod_{p}'\QQ_{p}$
with respect to $\ZZ_{p}$, namely sequence $\left(g^{\left(\infty\right)},g^{\left(2\right)},g^{\left(3\right)},...\right)\in\RR\times\prod_{p}\QQ_{p}$
where $g^{\left(p\right)}\in\ZZ_{p}$ for almost every $p$.

For each $S\subseteq\PP_{\infty}$ set 
\[
\ZZ\left[S^{-1}\right]:=\ZZ\left[\frac{1}{p}\;\mid\;p\in S\backslash\left\{ \infty\right\} \right]\leq\QQ,
\]
and embed it diagonally in $\QQ_{S}$ for $S$ finite and $S=\PP_{\infty}$. 
\end{defn}

\begin{lem}
For $\infty\in S\subseteq\PP_{\infty}$ the group $\ZZ\left[S^{-1}\right]$
is a cocompact lattice in $\QQ_{S}$.
\end{lem}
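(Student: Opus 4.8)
The plan is to exhibit an explicit compact fundamental domain and to verify discreteness by hand; the argument is uniform in $S$, the only difference between the case $S$ finite and the case $S=\PP_{\infty}$ being whether compactness of a certain product of $\ZZ_{p}$'s is trivial (finitely many factors) or requires Tychonoff, equivalently the diagonal argument already used above to see that $\ZZ_{p}$ is compact. Let $K_{S}:=\prod_{p\in S\setminus\{\infty\}}\ZZ_{p}$ and set $F:=[0,1]\times K_{S}\subseteq\QQ_{S}$, which is compact. I will show that $\ZZ[S^{-1}]$ is discrete in $\QQ_{S}$ and that $\QQ_{S}=\ZZ[S^{-1}]+F$; these two facts together say precisely that $\ZZ[S^{-1}]$ is a cocompact lattice.

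\emph{Discreteness.} Consider the neighbourhood $V:=(-\frac{1}{2},\frac{1}{2})\times K_{S}$ of $0$ in $\QQ_{S}$, which is open since each $\ZZ_{p}$ is open in $\QQ_{p}$ (and for $S=\PP_{\infty}$ it is one of the basic open sets of the restricted product). If $r\in\ZZ[S^{-1}]\cap V$ then $\left|r\right|_{p}\le1$ for every $p\in S\setminus\{\infty\}$ because $r\in K_{S}$, and $\left|r\right|_{p}\le1$ for every prime $p\notin S$ because the elements of $\ZZ[S^{-1}]=\ZZ[\frac{1}{p}:p\in S\setminus\{\infty\}]$ have denominators supported on $S$. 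Hence $r$ is a rational with $\left|r\right|_{p}\le1$ for all primes $p$, so $r\in\ZZ$; together with $\left|r\right|_{\infty}<\frac{1}{2}$ this forces $r=0$. Thus $\ZZ[S^{-1}]$ is discrete, hence closed, in $\QQ_{S}$.

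\emph{Cocompactness.} Fix $x=\left(x_{\infty},(x_{p})_{p\in S\setminus\{\infty\}}\right)\in\QQ_{S}$ and let $T\subseteq S\setminus\{\infty\}$ be the set of finite places with $x_{p}\notin\ZZ_{p}$; this set is finite, trivially if $S$ is finite and by the restricted-product condition if $S=\PP_{\infty}$. For $p\in T$ write the $p$-adic expansion $x_{p}=\sum_{j\ge N_{p}}a_{j}p^{j}$ with $a_{N_{p}}\neq0$ and $N_{p}<0$, and put $r_{p}:=\sum_{N_{p}\le j\le-1}a_{j}p^{j}$, a rational number whose denominator is a power of $p$, so that $r_{p}\in\ZZ[\frac{1}{p}]\subseteq\ZZ[S^{-1}]$ and $x_{p}-r_{p}\in\ZZ_{p}$. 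Set $r:=\sum_{p\in T}r_{p}\in\ZZ[S^{-1}]$. The key point is that for a prime $q\neq p$ the denominator of $r_{p}$ is a $q$-adic unit, so $r_{p}\in\ZZ_{q}$; consequently, for every $q\in S\setminus\{\infty\}$ the $q$-component of $x-r$ equals $(x_{q}-r_{q})-\sum_{p\in T,\,p\neq q}r_{p}$, a sum of elements of $\ZZ_{q}$, hence lies in $\ZZ_{q}$ (with $r_{q}:=0$ if $q\notin T$). Finally let $n:=\flr{x_{\infty}-r}\in\ZZ\subseteq\ZZ[S^{-1}]$; subtracting the diagonal copy of $n$ keeps all finite components inside $\ZZ_{q}$ (as $n\in\ZZ_{q}$) and puts the real component into $[0,1)$. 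Therefore $x-(r+n)\in F$, proving $\QQ_{S}=\ZZ[S^{-1}]+F$, so $\QQ_{S}/\ZZ[S^{-1}]$ is a continuous image of the compact set $F$ and hence compact.

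\textbf{The main obstacle.} There is no deep obstacle here; the statement is a repackaging of strong approximation for $\QQ$. The one point that needs genuine care is the non-interference observation in the cocompactness step, namely that clearing the denominator at the place $p$ by subtracting $r_{p}$ does not destroy integrality at the other finite places, precisely because $r_{p}\in\ZZ_{q}$ for all $q\neq p$. Alongside this one must handle the restricted-product topology correctly: only finitely many places are ``bad'', the neighbourhood $V$ is genuinely open, and $F$ is genuinely compact in $\QQ_{S}$ because it sits inside $\QQ^{(S)}$, whose topology is just the product topology, so compactness of $F$ is either a finite product of compacts (for $S$ finite) or an application of Tychonoff, i.e.\ the diagonal compactness argument already used above for $\ZZ_{p}$ (for $S=\PP_{\infty}$).
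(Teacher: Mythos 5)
Your proof is correct and follows essentially the same route as the paper: the paper exhibits the compact set $\left[-\frac{1}{2},\frac{1}{2}\right]\times\prod_{p\in S\setminus\{\infty\}}\ZZ_{p}$ as a fundamental domain and invokes the Chinese remainder theorem for surjectivity, while you use the equivalent set $[0,1]\times\prod_{p}\ZZ_{p}$ and replace the CRT appeal with an explicit clearing of $p$-adic principal parts, which is the same underlying strong-approximation argument spelled out in full.
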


\begin{proof}
We leave it as an exercise to show that $\left(-\frac{1}{2},\frac{1}{2}\right)\times\prod_{p\in S\backslash\left\{ \infty\right\} }\ZZ_{p}$
which is an open set in $\QQ_{S}$ interests $\ZZ\left[S^{-1}\right]$
only in $\left\{ 0\right\} $, implying that it is discrete in $\QQ_{S}$.
Moreover, using the restricted product structure, and the Chinese
remainder theorem we get that 
\[
\ZZ\left[S^{-1}\right]+\left[-\frac{1}{2},\frac{1}{2}\right]\times\prod_{p\in S\backslash\left\{ \infty\right\} }\ZZ_{p}=\QQ_{S},
\]
so that $\ZZ\left[S^{-1}\right]$ is cocompact.
\end{proof}
Note in particular that for $S=\left\{ \infty\right\} $, we just
get the well known fact that $\ZZ$ is a lattice in $\RR$. As with
the $m$-adic numbers, there is a natural projection $\pi_{S}:\QQ\backslash\AA\to\ZZ\left[S^{-1}\right]\backslash\QQ_{S}$.
First identify $\QQ\backslash\AA$ with the fundamental domain $[0,1)\times\prod_{p}\ZZ_{p}$
of $\QQ$ in $\AA$ and then project to the coordinates in $S$. 

An important observations about these projections is that the preimage
of every point an orbit of $\prod_{p\notin S}\ZZ_{p}$ which is compact
by Tychonoff's theorem. It follows that the preimage of any compact
set is compact, i.e. these projections are proper.

Trying to understand measures over $\QQ\backslash\AA$, we first need
to understand compactly supported continuous functions on $\QQ\backslash\AA$.
Since $\pi_{S}$ is proper for any finite $S\subseteq\PP$ we have
the induced homomorphism 
\[
C_{c}\left(\ZZ\left[S^{-1}\right]\backslash\QQ_{S}\right)\to C_{c}\left(\QQ\backslash\AA\right).
\]
The functions in the image of this map are exactly those which are
invariant under the action of $\prod_{p\notin S}\ZZ_{p}$. A simple
application of the Stone Weierstrass theorem shows that the union
of these sets of functions, as we run over the finite $S$, is dense
in $C_{c}\left(\QQ\backslash\AA\right)$. This implies that if we
want to prove an equidistribution $\mu_{i}\wstar\mu_{Haar}$ on $\AA$,
it is enough to prove that $\mu_{i}\left(f\right)\to\mu_{Haar}\left(f\right)$
for functions in these images. Alternatively, we need to show the
pushforward of the measures $\mu_{i}$ to each one of the $\ZZ\left[S^{-1}\right]\backslash\QQ_{S}$
equidistributes. 

There is a similar structure when we work with groups over the adeles,
and in particular with the group $\GL_{2}^{1}$ which is the main
focus of these notes. There is however one main difference where $\GL_{2}\left(\ZZ\left[S^{-1}\right]\right)$
is a noncompact lattice in $\GL_{2}^{1}\left(\QQ_{S}\right)$ (since
$\SL_{2}\left(\ZZ\right)\backslash\SL_{2}\left(\RR\right)$ is noncompact). 

We already talked in \subsecref{The_p_adic_motivation} about our
translated orbit for the $S$ finite cases. However, note that this
is not the same problem, because we first do the translations in $\AA$
and then project down to $\QQ_{S}$. This is the point where a single
orbit can decompose to several diagonal orbits, and in particular,
as we saw, a single translated orbit by $u_{1/m}$ is split up to
$\varphi\left(m\right)$ orbits over $\RR$.

One of the main parts of our proof will be to show that if our measures
equidistribute when pushed to only the real place via $\pi_{\left\{ \infty\right\} }$,
then we can lift this equidistribution to any $S$. We already saw
that if we know that this is true for any finite $S$, than it is
true for $S=\PP_{\infty}$, so we are left with this lifting problem
for finite $S$. We will do this in \secref{Adelic_lifting} and we
leave the details to that section, but let us just mention the main
idea which is interesting in itself.

One of the main problem when working over the adeles, is that there
are infinitely many prime place that we can translate in. If the translation
was in only finitely many places, then we can use an already known
shearing result. To work around this problem we look only on the translation
in the real place, which we may assume to be either trivial, or $u_{x_{i}}$
with $x_{i}\to\infty$.

\textbf{\uline{Case 1}}: There is no translation in the real place.

In this case, all of our measures will be invariant under the same
diagonal matrices $A$ in the real place. For such measures, we can
use an invariant called the \emph{entropy }of the measure with respect
to $A$. This entropy measures in a sense how close is the measure
to being $\SL_{2}\left(\RR\right)$-invariant. In particular, the
entropy is bounded from above, and it achieves the maximum entropy
if and only if it is $\SL_{2}\left(\RR\right)$-invariant. 

The trick now is to use the fact that when projecting down the entropy
can only decrease. Hence, if the projected measures equidistribute,
their entropy will converge to the maximal entropy, so the entropy
of our original measure must converge to the maximal entropy also.
It follows that the limit will be $\SL_{2}\left(\RR\right)$-invariant
as well. This is of course a much smaller group that $\GL_{2}^{1}\left(\AA\right)$,
however, because we look on the quotient space $\GL_{2}\left(\QQ\right)\backslash\GL_{2}^{1}\left(\AA\right)$,
the group $\GL_{2}\left(\QQ\right)$ ``mixes'' the space together,
so invariance under the small group $\SL_{2}\left(\RR\right)$ will
automatically imply a larger invariance which is almost the whole
group.

\newpage{}

\textbf{\uline{Case 2}}: The translations in the real place go
to infinity.

In this case we can no longer use the entropy argument, because our
measure are not $A$-invariant. However, we already saw that unipotent
translations of $A$-orbits become more and more like unipotent orbits.
More specifically, if a measure $\mu$ is $A$ invariant, and we consider
translations $\mu_{i}=u_{x_{i}}\left(\mu\right)$ as $x_{i}\to\infty$,
then $\mu_{i}$ will be invariant under 
\[
u_{x_{i}}a\left(t\right)u_{-x_{i}}=a\left(t\right)u_{\left(e^{t}-1\right)x_{i}}.
\]
Fixing some constant $c$, we can choose $t_{i}$ such that $\left(e^{t_{i}}-1\right)x_{i}=c$
and note that since $x_{i}\to\infty$ we have that $t_{i}\to0$. It
follows that $\mu_{i}$ is $a\left(t_{i}\right)u_{c}$ invariant and
$a\left(t_{i}\right)\to a\left(0\right)=Id$, so any limit measure
will be invariant under $u_{c}$. As $c$ was arbitrary, the limit
measure will be invariant under the unipotent group $U$. 

This is very helpful, since we can now use Ratner's classification
theorem of unipotent invariant measures to conclude that our limit
measure is algebraic e it is supported on an orbit of some unimodular
subgroup $L\leq\GL_{2}^{1}\left(\AA\right)$ which contain $U$. At
this point we will show that if the projection to $\SL_{2}\left(\ZZ\right)\backslash\SL_{2}\left(\RR\right)$
is the Haar measure, then $L$ must contain all of $\SL_{2}\left(\RR\right)$.
We can now continue like in case 1 and conclude that our measure must
be $\GL_{2}^{1}\left(\AA\right)$-invariant.\\

These are all the main steps for the full equidistribution over the
adeles, namely first prove equidistribution only in the projection
to the real place, and then use either $A$-invariance and entropy
or $U$-invariance and Ratner's theorem to lift the equidistribution
to all of the adeles.

\newpage{}

\part{\label{part:Proofs_Details}Proofs and Details}

Now that we have seen all of the main ideas and steps leading from
our problem about continued fractions of rational to shearing over
the adeles, we turn to give the details behind these ideas. We begin
in \secref{Adelic_lifting} where we give the definitions for the
space of lattices over the adeles. This space has the standard space
of Euclidean lattices as its quotient, and in particular the Haar
measure on this space is pushed forward to the Haar measure on the
Euclidean lattices. In that subsection we will give some natural conditions
which implies that the converse holds as well, namely we can lift
the equidistribution in the real place to an equidistribution over
all the adeles.

Once we have these notations and the lifting result, we define in
\secref{Adelic_translations-1} the diagonal orbit through the origin,
the locally finite, diagonal invariant measure it supports and its
translations. Using the Iwasawa decomposition, we will see that for
equidistribution results for general translations, it is enough to
prove it for unipotent translation, or in other words, we need to
prove that the shearing process holds over the adeles. In particular
we will show that these orbit translation measures satisfy automatically
the extra conditions needed for the lifting result from \secref{Adelic_lifting}.
To simplify the notations, we restrict the discussion in this section
to dimension 2, though the most of results hold for a general dimension.

Finally, in \secref{real_case} we prove that in dimension 2, when
pushed to the real place our orbit translation measures equidistribute.
This equidistribution result together with the conditions that we
prove in \secref{Adelic_translations-1} allow us to use the lifting
result from \secref{Adelic_lifting} and to get the full equidistribution
result over the adeles. This result can be proved for some specific
translations in higher dimension (see for example \cite{david_equidistribution_nodate}),
however since we don't know if it holds in general dimension (and
even what is the right formulation), we stay only in dimension 2.

\section{\label{sec:Adelic_lifting}Adelic Lifting}

\global\long\def\norm#1{\left\Vert #1\right\Vert }%
\global\long\def\AA{\mathbb{A}}%
\global\long\def\QQ{\mathbb{Q}}%
\global\long\def\PP{\mathbb{P}}%
\global\long\def\CC{\mathbb{C}}%
\global\long\def\HH{\mathbb{H}}%
\global\long\def\ZZ{\mathbb{Z}}%
\global\long\def\NN{\mathbb{N}}%
\global\long\def\KK{\mathbb{K}}%
\global\long\def\RR{\mathbb{R}}%
\global\long\def\FF{\mathbb{F}}%
\global\long\def\oo{\mathcal{O}}%
\global\long\def\aa{\mathcal{A}}%
\global\long\def\bb{\mathcal{B}}%
\global\long\def\ff{\mathcal{F}}%
\global\long\def\mm{\mathcal{M}}%
\global\long\def\limfi#1#2{{\displaystyle \lim_{#1\to#2}}}%
\global\long\def\pp{\mathcal{P}}%
\global\long\def\qq{\mathcal{Q}}%
\global\long\def\da{\mathrm{da}}%
\global\long\def\dt{\mathrm{dt}}%
\global\long\def\dg{\mathrm{dg}}%
\global\long\def\ds{\mathrm{ds}}%
\global\long\def\dm{\mathrm{dm}}%
\global\long\def\dmu{\mathrm{d\mu}}%
\global\long\def\dx{\mathrm{dx}}%
\global\long\def\dy{\mathrm{dy}}%
\global\long\def\dz{\mathrm{dz}}%
\global\long\def\dnu{\mathrm{d\nu}}%
\global\long\def\flr#1{\left\lfloor #1\right\rfloor }%
\global\long\def\nuga{\nu_{\mathrm{Gauss}}}%
\global\long\def\diag#1{\mathrm{diag}\left(#1\right)}%
\global\long\def\bR{\mathbb{R}}%
\global\long\def\Ga{\Gamma}%
\global\long\def\PGL{\mathrm{PGL}}%
\global\long\def\GL{\mathrm{GL}}%
\global\long\def\PO{\mathrm{PO}}%
\global\long\def\SL{\mathrm{SL}}%
\global\long\def\PSL{\mathrm{PSL}}%
\global\long\def\SO{\mathrm{SO}}%
\global\long\def\mb#1{\mathrm{#1}}%
\global\long\def\wstar{\overset{w^{*}}{\longrightarrow}}%
\global\long\def\vphi{\varphi}%
\global\long\def\av#1{\left|#1\right|}%
\global\long\def\inv#1{\left(\mathbb{Z}/#1\mathbb{Z}\right)^{\times}}%
\global\long\def\cH{\mathcal{H}}%
\global\long\def\cM{\mathcal{M}}%
\global\long\def\bZ{\mathbb{Z}}%
\global\long\def\bA{\mathbb{A}}%
\global\long\def\bQ{\mathbb{Q}}%
\global\long\def\bP{\mathbb{P}}%
\global\long\def\eps{\epsilon}%
\global\long\def\on#1{\mathrm{#1}}%
\global\long\def\nuga{\nu_{\mathrm{Gauss}}}%
\global\long\def\set#1{\left\{  #1\right\}  }%
\global\long\def\smallmat#1{\begin{smallmatrix}#1\end{smallmatrix}}%
\global\long\def\len{\mathrm{len}}%
\global\long\def\idealeq{\trianglelefteqslant}%

The main goal of this section is to provide some natural conditions
on a measure on the space of adelic lattices, such that it will be
the Haar measure there if and only if its pushforward to the space
of standard Euclidean lattices is the Haar measure there. We start
by fixing our notations for working with the adeles.

\subsection{\label{subsec:Adelic_lattices}Adelic lattices - notations}

It is well known that the space of unimodular lattice in $\RR^{d}$
can be identified with $\SL_{d}\left(\ZZ\right)\backslash\SL_{d}\left(\RR\right)$.
The main goal of this subsection is to extend this presentation to
the adelic setting and to fix the notation for the rest of these notes.

Let $\PP$ be all the primes in $\NN$, and let $\PP_{\infty}=\PP\cup\left\{ \infty\right\} $
be the set of all places over $\QQ$. Unless stated otherwise, the
sets $S\subseteq\PP_{\infty}$ that we work with will always contain
$\infty$. For a subset $S\subseteq\PP_{\infty}$ (possibly infinite)
we let
\[
\QQ_{S}=\prod_{p\in S}'\QQ_{p},\quad\;\ZZ\left[S^{-1}\right]:=\ZZ\left[\frac{1}{p}:p\in S\backslash\left\{ \infty\right\} \right],
\]
where $\prod'$ is the restricted product with respect to $\ZZ_{p}\leq\QQ_{p}$.
We consider $\ZZ\left[S^{-1}\right]$ as embedded diagonally in $\QQ_{S}$
and it is well known that under this embedding $\ZZ\left[S^{-1}\right]$
is a lattice in $\QQ_{S}$. We shall usually write elements $x\in\QQ_{S}$
(and in other such products) as $x=\left(x^{\left(\infty\right)},x^{\left(p_{1}\right)},x^{\left(p_{2}\right)},...\right)$
where $p_{i}\in S$ are the primes. We denote by $x^{\left(f\right)}$
the element $x^{\left(f\right)}=\left(x^{\left(p_{1}\right)},x^{\left(p_{2}\right)},...\right)\in\prod_{p}'\QQ_{p}$,
and using the diagonal embedding, if $x\in\ZZ\left[S^{-1}\right]$,
then we also write $x^{\left(f\right)}=\left(x,x,x,...\right)\in\prod_{p}'\QQ_{p}$.
We will mainly be interested with $S=\PP_{\infty}$ and $S$ finite
(and in particular $S=\left\{ \infty\right\} $).

We similarly extend these notation to $\QQ_{S}^{d},\;\ZZ\left[S^{-1}\right]^{d}$
for any dimension $d\geq1$ and later on to groups over $\QQ_{S}$.
Since $\QQ_{S}$ is generally not a field (unless $S=\left\{ \infty\right\} $),
the space $\QQ_{S}^{d}$ is not a vector space, but it is a $\QQ_{S}$-module,
namely we can multiply by elements from $\QQ_{S}$. As in vector spaces,
modules over commutative rings always have a basis, and many of the
results for vector spaces hold here as well.

Note that for $S=\left\{ \infty\right\} $, the notation above is
just $\QQ_{S}=\RR,\;\ZZ\left[S^{-1}\right]=\ZZ$ which is the original
example of a lattice. In general we have two definitions for Euclidean
lattices in $\RR^{d}$ - the first is a discrete, finite covolume
subgroup of $\RR^{d}$ and the second is the $\ZZ$-span of a basis
of $\RR^{d}$. We now extend this notion to general $S$.
\begin{defn}
Fix some $d\geq1$, $S\subseteq\PP_{\infty}$ and let $L\leq\QQ_{S}^{d}$.
\begin{enumerate}
\item A \emph{$\ZZ\left[S^{-1}\right]$-module} in $\QQ_{S}^{d}$ is a subgroup
$L\leq\QQ_{S}^{d}$ closed under multiplication by $\ZZ\left[S^{-1}\right]$.
\item A \emph{lattice }in $\QQ_{S}^{d}$ is a $\ZZ\left[S^{-1}\right]$-module
which is discrete and cocompact.
\item We say that a lattice is unimodular, if it has covolume 1 (with the
standard Haar measure on $\QQ_{S}^{d}$).
\end{enumerate}
\end{defn}

As in the real case, one can show that $L\leq\QQ_{S}^{d}$ is a lattice,
if and only if it is spanned over $\ZZ\left[S^{-1}\right]$ by a $\QQ_{S}$
basis of $\QQ_{S}^{d}$. 
\begin{example}
$\ZZ\left[S^{-1}\right]$ is a unimodular lattice in $\QQ_{S}$. For
discreteness, since $\ZZ\left[S^{-1}\right]$ is a group it is enough
to show that $0$ is separated from all the other point, and indeed
it is the unique point in $\left(-1,1\right)\times{\displaystyle \prod_{p\in S\backslash\left\{ \infty\right\} }}\ZZ_{p}$.
In addition the compact set $\left[0,1\right]\times{\displaystyle \prod_{p\in S\backslash\left\{ \infty\right\} }}\ZZ_{p}$
is a fundamental domain, so that $\ZZ\left[S^{-1}\right]$ is a lattice.
Similarly $\ZZ\left[S^{-1}\right]^{d}$ is a unimodular lattice in
$\QQ_{S}^{d}$.
\end{example}

Next, we set $\GL_{d}\left(\QQ_{S}\right):=\prod_{\nu\in S}'\GL_{d}\left(\QQ_{\nu}\right)$
where the restricted product is with respect to $\GL_{d}\left(\ZZ_{p}\right)$.
The group $\GL_{d}\left(\QQ_{S}\right)$ acts transitively on the
space of $d$-dimensional lattices in $\QQ_{S}^{d}$, and the stabilizer
of $\ZZ\left[S^{-1}\right]^{d}$ is $\GL_{d}\left(\ZZ\left[S^{-1}\right]\right)$
(embedded diagonally). Thus, just like in the real case, we can identify
this space of lattices in $\QQ_{S}^{d}$ with $\GL_{d}\left(\ZZ\left[S^{-1}\right]\right)\backslash\GL_{d}\left(\QQ_{S}\right)$.

If we want to restrict our attention to unimodular lattice, we need
to know how an element in $\GL_{d}\left(\QQ_{S}\right)$ changes the
measure in $\QQ_{S}^{d}$. As in the real case, this change can be
measured by the determinant of the matrix.
\begin{defn}
Fix some $d\geq1$ and $\infty\in S\subseteq\PP_{\infty}$.
\begin{enumerate}
\item For $x=\left(x^{\left(\nu\right)}\right)_{\nu\in S}\in\QQ_{S}$, we
define $\left|x\right|=\left|x\right|_{S}:=\prod_{\nu\in S}\left|x^{\left(\nu\right)}\right|_{\nu}\in\RR_{\geq0}$,
where $\left|\cdot\right|_{\nu}$ is the standard norm on $\QQ_{\nu}$.
\item We define $\det=\det_{S}:\GL_{d}\left(\QQ_{S}\right)\to\QQ_{S}$ by
applying determinant in each place. We further write $\left|\det\right|$
to be the composition $\left|\det\right|:\GL_{d}\left(\QQ_{S}\right)\overset{\det}{\to}\QQ_{S}\overset{\left|\cdot\right|_{S}}{\to}\RR$.\\
\end{enumerate}
\end{defn}

Note that by the definition of restricted product, if $x=\left(x^{\left(\nu\right)}\right)_{\nu\in S}\in\QQ_{S}$,
then $x^{\left(p\right)}\in\ZZ_{p}$ for almost every prime $p\in S$
and therefore $\left|x^{\left(p\right)}\right|_{p}\leq1$. It follows
that $\left|x\right|=\prod_{\nu\in S}\left|x^{\left(\nu\right)}\right|_{\nu}$
is well defined, though it can be zero even if $x$ doesn't have any
zero entries. However, if all the entries are nonzero and $x^{\left(p\right)}\in\ZZ_{p}^{\times}$
for almost every $p$, or equivalently $\left|x^{\left(p\right)}\right|_{p}=1$,
then we get that $\left|x\right|>0$. In particular we see that for
$g\in\GL_{d}\left(\QQ_{S}\right)$ we have that $\left|\det\right|\left(g\right)>0$.
Furthermore, for $x\in\ZZ\left[S^{-1}\right]^{\times}$, by the product
formula we have that $\left|x\right|_{S}=1$, implying that $\left|\det\right|\left(g\right)=1$
for $g\in\GL_{d}\left(\ZZ\left[S^{-1}\right]\right)$.

It can now be shown that for $\Omega\subseteq\QQ_{S}^{d}$ and $g\in\GL_{d}\left(\QQ_{S}\right)$,
the measure of $g\left(\Omega\right)$ is the measure of $\Omega$
times $\left|\det\right|\left(g\right)$. With this in mind we define
\begin{align*}
G_{S} & =\GL_{d}^{1}\left(\QQ_{S}\right):=\left\{ g\in\GL_{n}\left(\QQ_{S}\right)\;\mid\;\left|\det\right|\left(g\right)=1\right\} ,\\
\Gamma_{S} & =\GL_{d}\left(\ZZ\left[S^{-1}\right]\right),\\
X_{S} & =\Gamma_{S}\backslash G_{S},
\end{align*}
so that $X_{S}$ can be identified with the space of unimodular lattices
in $\QQ_{S}^{d}$. For $S=\left\{ \infty\right\} $ and $S=\PP_{\infty}$,
we will also use $G_{\RR}:=G_{\left\{ \infty\right\} }$, $G_{\AA}=G_{\PP_{\infty}}$
and similarly for $\Gamma_{S}$ and $X_{S}$. 

The space $X_{S}$ is locally compact, second countable Hausdorff
spaces and has the natural $G_{S}$-action from the right. Moreover,
the group $G_{S}$ is unimodular  and $\Gamma_{S}\leq G_{S}$ is
a lattice, so $X_{S}$ supports a $G_{S}$-invariant probability measure
which we denote by $\mu_{Haar,S}$.

Finally, as a sanity check, if $S=\left\{ \infty\right\} $, then
$X_{S}$ is simply $\GL_{d}\left(\ZZ\right)\backslash\GL_{d}^{1}\left(\RR\right)$.
Both of the groups $\GL_{d}\left(\ZZ\right),\GL_{d}^{1}\left(\RR\right)$
have the index two subgroup $\SL_{d}\left(\ZZ\right)$ and $\SL_{d}\left(\RR\right)$
respectively, so that $X_{S}\cong\SL_{d}\left(\ZZ\right)\backslash\SL_{d}\left(\RR\right)$
is the standard space of $d$-dimensional unimodular lattices in $\RR^{d}$.
\begin{rem}
The groups $\GL_{d}^{1}$ and $\PGL_{d}$ are not that far off from
each other, and one can actually prove all of the results here for
$\PGL_{d}$ instead. However, we choose to work with $\GL_{d}^{1}$
since it simplifies many of the notation, and in particular we work
with matrices and not equivalence classes modulo the center. This
allows us, for example, to have the generalization of Mahler criterion
that we prove in \secref{Mahler_criterion}.
\end{rem}

The next step is to connect between the spaces $X_{S}$ for different
$S\subseteq\PP_{\infty}$. For any $\tilde{S}\subseteq S$ the standard
projection $\GL_{d}\left(\QQ_{S}\right)\to\GL_{d}\left(\QQ_{\tilde{S}}\right)$
doesn't induce a well defined projection for the quotient spaces $X_{S}\to X_{\tilde{S}}$.
However, there is such a natural projection $\pi_{\tilde{S}}^{S}:X_{S}\to X_{\tilde{S}}$
which is defined as follows. Consider first the natural open embedding:
\[
H_{S}:=\SL_{d}\left(\RR\right)\times\prod_{p\in S}\GL_{d}\left(\ZZ_{p}\right)\hookrightarrow G_{S}.
\]
Note that while the elements $g\in G_{S}$ are such that the product
of $\left|\det\left(g^{\left(\nu\right)}\right)\right|_{\nu}$ is
1, the elements $h\in H_{S}$ satisfy $\left|\det\left(h^{\left(\nu\right)}\right)\right|_{\nu}=1$
for all $\nu$. 
\begin{claim}
The map $H_{S}\hookrightarrow G_{S}$ induces a homeomorphism $\SL_{d}\left(\ZZ\right)\backslash H_{S}\cong\Gamma_{S}\backslash G_{S}$.
\end{claim}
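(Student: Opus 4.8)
The plan is to verify, in order, that the induced map $\phi\colon\SL_{d}\left(\ZZ\right)\backslash H_{S}\to\Gamma_{S}\backslash G_{S}$, $\SL_{d}\left(\ZZ\right)h\mapsto\Gamma_{S}h$, is well defined and injective, that it is a homeomorphism onto its image, and that it is surjective. The first two are soft; the surjectivity is a hands-on instance of strong approximation, and that is where the real content sits, so I would concentrate the effort there.

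For the soft parts: every $h\in H_{S}$ satisfies $\left|\det\right|\left(h\right)=\prod_{\nu}\left|\det h^{\left(\nu\right)}\right|_{\nu}=1$, since $\det h^{\left(\infty\right)}=1$ and $\det h^{\left(p\right)}\in\ZZ_{p}^{\times}$, so $H_{S}\subseteq G_{S}$, and $H_{S}$ is a subgroup. Moreover $\Gamma_{S}\cap H_{S}=\SL_{d}\left(\ZZ\right)$: if $\gamma\in\GL_{d}\left(\ZZ\left[S^{-1}\right]\right)$ lies in $H_{S}$ then its real component forces $\det\gamma=1$ and its $p$-adic components force the entries of $\gamma$ into $\ZZ\left[S^{-1}\right]\cap\bigcap_{p\in S\backslash\left\{ \infty\right\} }\ZZ_{p}=\ZZ$. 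Because $H_{S}$ is a subgroup with $\Gamma_{S}\cap H_{S}=\SL_{d}\left(\ZZ\right)$, the map $\phi$ is well defined and injective: $\Gamma_{S}h_{1}=\Gamma_{S}h_{2}$ with $h_{i}\in H_{S}$ gives $h_{1}h_{2}^{-1}\in\Gamma_{S}\cap H_{S}=\SL_{d}\left(\ZZ\right)$. For the topology I would use that $H_{S}$ is an \emph{open} subgroup of $G_{S}$ (near a point of $H_{S}$ the integrality conditions at the finite places, together with the defining condition $\left|\det\right|=1$ of $G_{S}$, pin the real determinant to $1$): then the quotient map $\pi_{G}\colon G_{S}\to\Gamma_{S}\backslash G_{S}$ restricts to an open continuous map $\pi_{G}|_{H_{S}}$, which equals $\phi\circ\pi_{H}$ for the quotient map $\pi_{H}\colon H_{S}\to\SL_{d}\left(\ZZ\right)\backslash H_{S}$; since $\pi_{H}$ is a quotient map $\phi$ is continuous, and $\phi\left(V\right)=\pi_{G}|_{H_{S}}\left(\pi_{H}^{-1}\left(V\right)\right)$ shows $\phi$ is open. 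A continuous open bijection is a homeomorphism.

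The substance is surjectivity, i.e. $G_{S}=\Gamma_{S}H_{S}$. The key is the finite-place decomposition
\[
\GL_{d}\left(\QQ_{S\backslash\left\{ \infty\right\} }\right)=\GL_{d}\left(\ZZ\left[S^{-1}\right]\right)\cdot\prod_{p\in S\backslash\left\{ \infty\right\} }\GL_{d}\left(\ZZ_{p}\right),
\]
which I would prove by clearing denominators one prime at a time, starting from the single-prime identity $\GL_{d}\left(\QQ_{p}\right)=\GL_{d}\left(\ZZ\left[\tfrac{1}{p}\right]\right)\GL_{d}\left(\ZZ_{p}\right)$ (the case $d=2$ is recalled in the text; in general it follows from the Smith normal form over $\ZZ$, since a $\ZZ_{p}$-lattice $g\ZZ_{p}^{d}$ contains some $p^{m}\ZZ_{p}^{d}$ and, after scaling by $p^{-m}$, is the $p$-adic completion of a finite-index sublattice of $\ZZ^{d}$ with elementary-divisor form $u\,\diag{p^{c_{1}},\dots,p^{c_{d}}}$, $u\in\GL_{d}\left(\ZZ\right)$). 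The induction closes because any matrix in $\GL_{d}\left(\ZZ\left[\tfrac{1}{p}\right]\right)$ automatically lies in $\GL_{d}\left(\ZZ_{q}\right)$ for every other prime $q$ (its entries lie in $\ZZ\left[\tfrac{1}{p}\right]\subseteq\ZZ_{q}$ and its determinant lies in $\left\{ \pm p^{n}\right\} \subseteq\ZZ_{q}^{\times}$), so the successive rational corrections do not disturb the places already made integral; when $S$ is infinite only finitely many components of a given element are non-integral, so the finite statement applies verbatim. Given $g=\left(g^{\left(\infty\right)},g^{\left(f\right)}\right)\in G_{S}$, I then write $g^{\left(f\right)}=\gamma_{0}k$ with $\gamma_{0}\in\GL_{d}\left(\ZZ\left[S^{-1}\right]\right)$, $k\in\prod_{p}\GL_{d}\left(\ZZ_{p}\right)$; since $\left|\det\right|\left(\gamma_{0}\right)=1$ by the product formula and $\left|\det k^{\left(p\right)}\right|_{p}=1$, the element $\gamma_{0}^{-1}g=\left(\gamma_{0}^{-1}g^{\left(\infty\right)},k\right)\in G_{S}$ has $\left|\det\left(\gamma_{0}^{-1}g^{\left(\infty\right)}\right)\right|=1$, so the real determinant is $\pm1$; correcting, if needed, by $\diag{-1,1,\dots,1}\in\GL_{d}\left(\ZZ\right)\subseteq\Gamma_{S}$ places the real component in $\SL_{d}\left(\RR\right)$, so $\gamma^{-1}g\in H_{S}$ for a suitable $\gamma\in\Gamma_{S}$ and $g\in\Gamma_{S}H_{S}$. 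The genuine obstacle is exactly this surjectivity: it encodes the class number one property of $\QQ$ (through the Smith normal form over $\ZZ$), the only delicate point being to run the denominator-clearing uniformly over the possibly infinite set of places while keeping the corrector inside $\GL_{d}\left(\ZZ\left[S^{-1}\right]\right)$.
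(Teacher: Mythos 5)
Your proof is correct, and on the key step it is actually more complete than the paper's own argument. The paper's verification of surjectivity (that $H_{S}$ acts transitively on $X_{S}$, equivalently $G_{S}=\Gamma_{S}H_{S}$) records only the determinant normalization: it chooses a diagonal $q\in\Gamma_{S}$, checks that $\left|\det\left(qg^{\left(p\right)}\right)\right|_{p}=1$ at each finite place and $\det\left(qg^{\left(\infty\right)}\right)=1$, and concludes $qg\in H_{S}$. But a scalar multiple cannot move $g^{\left(p\right)}\notin\GL_{d}\left(\ZZ_{p}\right)$ into $\GL_{d}\left(\ZZ_{p}\right)$; the finite-place integrality --- that one can first correct $g$ by some $\gamma_{0}\in\Gamma_{S}$ so that $g^{\left(p\right)}\in\GL_{d}\left(\ZZ_{p}\right)$ for every $p\in S$ --- is left tacit. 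You correctly flag this as the real content and supply it via the per-prime decomposition $\GL_{d}\left(\QQ_{p}\right)=\GL_{d}\left(\ZZ\left[\frac{1}{p}\right]\right)\GL_{d}\left(\ZZ_{p}\right)$ (Smith normal form over $\ZZ$, i.e.\ class number one of $\QQ$), together with the observation that the corrector at one prime lies in $\GL_{d}\left(\ZZ_{q}\right)$ for every other $q$ and hence does not undo integrality already achieved; this is precisely the step the paper takes for granted (it was stated earlier for $d=2$ without proof). The remaining pieces --- injectivity from $\Gamma_{S}\cap H_{S}=\SL_{d}\left(\ZZ\right)$, and the homeomorphism from $H_{S}$ being open in $G_{S}$ --- are asserted but not argued in the paper, and you spell them out correctly.
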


\begin{proof}
We claim that the $H_{S}$ acts transitively on $X_{S}$ and since
$H_{S}\cap\Gamma_{S}=\SL_{d}\left(\ZZ\right)$, the claim will follow.
Let $g\in G_{S}$, and let $q\in\Gamma_{S}$ be the identity matrix
with 
\[
sign\left(g^{\left(\infty\right)}\right)\prod_{p\in S\backslash\left\{ \infty\right\} }\left|\det\left(g^{\left(p\right)}\right)\right|_{p}
\]
 in the $\left(1,1\right)$-coordinate. It then follows that for every
prime $p\in S\backslash\left\{ \infty\right\} $ we have that 
\[
\left|\det\left(qg^{\left(p\right)}\right)\right|_{p}=\left|\det\left(q\right)\right|_{p}\left|\det\left(g^{\left(p\right)}\right)\right|_{p}=\left|\det\left(g^{\left(p\right)}\right)\right|_{p}^{-1}\left|\det\left(g^{\left(p\right)}\right)\right|_{p}=1.
\]
Moreover, since $\left|\det_{S}\left(qg\right)\right|=1$ and $\det\left(qg^{\left(\infty\right)}\right)>0$,
we also get that $\det\left(qg^{\left(\infty\right)}\right)=1$. In
other words we have shown that $qg\in H_{S}$ which proves the transitivity.
\end{proof}
\newpage{}

In this new presentation the lattice $\SL_{d}\left(\ZZ\right)$ is
fixed, so that given $\infty\in\tilde{S}\subseteq S\subseteq\PP_{\infty}$,
the standard projection $H_{S}\to H_{\tilde{S}}$ induces the projection
\[
\pi_{\tilde{S}}^{S}:X_{S}\cong\SL_{d}\left(\ZZ\right)\backslash H_{S}\to\SL_{d}\left(\ZZ\right)\backslash H_{\tilde{S}}\cong X_{\tilde{S}}.
\]
The preimage of every point is then an orbit of $\prod_{p\in S\backslash\tilde{S}}\GL_{d}\left(\ZZ_{p}\right)$
which is compact, implying that $\pi_{\tilde{S}}^{S}$ is proper.

In general, the presentation with $H_{S}$ is much more convenient
to work with, because it let us connect between the different $X_{S}$.
On the other hand, we want to act with the larger group $G_{S}$,
so throughout these notes we will need to move back and forth between
these two presentations.

Just like the space of Euclidean lattices, we have a generalized Mahler
criterion for the space of $S$-adic lattices. We will prove this
criterion in \secref{Mahler_criterion}.
\begin{defn}
For $h\in H_{S}$ define 
\[
ht_{S}\left(\ZZ\left[S^{-1}\right]^{d}h\right)=ht_{\infty}\left(\ZZ^{d}h^{\left(\infty\right)}\right):=\left(\min_{0\neq v\in\ZZ^{d}}\norm{vh^{\left(\infty\right)}}\right)^{-1}.
\]
\end{defn}

\begin{lem}[Generalized Mahler's criterion]
\label{lem:generalized_mahler} A set $\Omega\subseteq X_{S}$ is
bounded if and only if $ht_{S}\left(\Omega\right)$ is bounded.
\end{lem}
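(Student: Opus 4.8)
The plan is to reduce the $S$-adic statement to the classical Mahler criterion over $\RR$ using the presentation $X_S \cong \SL_d(\ZZ)\backslash H_S$ established in the claim above, where $H_S = \SL_d(\RR)\times\prod_{p\in S}\GL_d(\ZZ_p)$. The key point is that $ht_S$ depends only on the archimedean component $h^{(\infty)}$ of an element $h\in H_S$, so the criterion should be governed entirely by what happens in the real place, with the $p$-adic components contributing a compact factor that cannot cause escape to infinity.

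First I would recall the classical Mahler criterion: a set of unimodular lattices in $\RR^d$, i.e.\ a subset of $\SL_d(\ZZ)\backslash\SL_d(\RR)$, is bounded (has compact closure) if and only if the systole $\min_{0\neq v\in\ZZ^d}\norm{vg^{(\infty)}}$ is bounded below, equivalently $ht_\infty$ is bounded above. This is stated for $\SL_d$, and since $X_{\{\infty\}} \cong \SL_d(\ZZ)\backslash\SL_d(\RR)$ as noted in the remark after the definition of $X_S$, it gives exactly \lemref{generalized_mahler} in the case $S = \{\infty\}$. Then for general finite $S$, the projection $\pi^S_{\{\infty\}}: X_S \to X_{\{\infty\}}$ is proper (its fibers are orbits of the compact group $\prod_{p\in S\setminus\{\infty\}}\GL_d(\ZZ_p)$), so a set $\Omega\subseteq X_S$ is bounded if and only if $\pi^S_{\{\infty\}}(\Omega)$ is bounded. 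Since $ht_S$ is defined so that $ht_S(\ZZ[S^{-1}]^d h) = ht_\infty(\ZZ^d h^{(\infty)})$, and $h^{(\infty)}$ is precisely the real component that survives under $\pi^S_{\{\infty\}}$, boundedness of $ht_S$ on $\Omega$ is the same as boundedness of $ht_\infty$ on $\pi^S_{\{\infty\}}(\Omega)$, and we are done by the real case. For infinite $S = \PP_\infty$ the same argument works because $\pi^S_{\{\infty\}}$ is still proper — its fibers are orbits of $\prod_{p}\GL_d(\ZZ_p)$, compact by Tychonoff — so no separate argument is needed.

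The main obstacle, such as it is, is bookkeeping rather than mathematics: one must be careful that the identification $X_S \cong \SL_d(\ZZ)\backslash H_S$ is a homeomorphism (already proved), that the $\GL_d(\ZZ_p)$-action on a lattice $\ZZ[S^{-1}]^d h$ genuinely leaves $h^{(\infty)}$ untouched so that $ht_S$ is well-defined on $X_S$ and not just on $H_S$, and that properness of $\pi^S_{\{\infty\}}$ is used in both directions (a bounded set pushes to a bounded set, and the preimage of a bounded set is bounded). A secondary point worth spelling out is why it suffices to check $ht$ only at the real place even though the definition of a lattice in $\QQ_S^d$ involves all places: the covolume-one normalization forces the $p$-adic parts to live in $\GL_d(\ZZ_p)$ up to the $\Gamma_S$-action, so there is simply no room for a $p$-adic direction of divergence. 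If the classical Mahler criterion is not taken as given, one would prove the $\RR$ case directly by the standard argument — a sequence escaping to infinity must have arbitrarily short nonzero vectors, and conversely a uniform lower bound on the systole together with the covolume bound confines the lattice to a compact family via reduction theory — but I expect the paper to cite or prove this separately (indeed \secref{Mahler_criterion} is referenced), so the real content of \lemref{generalized_mahler} is the reduction described above.
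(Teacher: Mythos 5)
Your proposal is correct and follows essentially the same route as the paper's proof in the appendix: reduce to the classical real Mahler criterion via the proper projection $\pi^S_\RR$ (whose fibers are orbits of the compact group $\prod_{p\in S\setminus\{\infty\}}\GL_d(\ZZ_p)$), combined with the observation that $ht_S$ depends only on the archimedean component $h^{(\infty)}$. The one cosmetic difference is that the paper states $ht_S$ via an infimum over $\ZZ[S^{-1}]^n$ against the full $S$-adic norm and then proves in \lemref{height_function}(3) that this agrees with $ht_\infty(h^{(\infty)})$, whereas you take that agreement as definitional, matching the inline definition given just before \lemref{generalized_mahler}; either way the substance is the same.
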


We can identify $G_{\tilde{S}}$ as a subgroup of $G_{S}$ as the
elements which are the identity in all of the entries in $S\backslash\tilde{S}$.
It is now not hard to check that $\pi_{\tilde{S}}^{S}$ is $G_{\tilde{S}}$-equivariant.
In particular a $G_{S}$-invariant probability measure on $X_{S}$
will be pushed down to a $G_{\tilde{S}}$. 

For the converse direction, suppose now that $\mu_{S}$ is a probability
measure on $X_{S}$ such that it pushforward $\pi_{\RR}^{S}\left(\mu_{S}\right)$
to the ``smallest'' possible space $X_{\RR}$ is the $\SL_{d}\left(\RR\right)$-invariant
measure. Trying to lift this invariance back to $\mu_{S}$ we encounter
two problems: 
\begin{enumerate}
\item Show that $\mu_{S}$ itself is $\SL_{d}\left(\RR\right)$-invariant.
\item Show that $\mu_{S}$ is invariant under $G_{S\backslash\left\{ \infty\right\} }$
as well.
\end{enumerate}
These two conditions will require us to show some invariance condition
of $\mu_{S}$. In order to get (1) we will need an extra invariance
condition that $\mu_{S}$ is invariant under the diagonal or unipotent
flow in $\SL_{d}\left(\RR\right)$ (which is done in \subsecref{Lifting_R_invariance}).
Once we have this $\SL_{d}\left(\RR\right)$-invariance, we automatically
get in \subsecref{automatice_invariance} invariance under a larger
group - this is because $\mu_{S}$ is a measure on $\Gamma_{S}\backslash G_{S}$
and $\Gamma_{S}$ ``mixes'' the real coordinate with the coordinates
in $S\backslash\left\{ \infty\right\} $. However, this will not provide
a full $G_{S}$-invariance, but if $\left|S\right|<\infty$, and we
have some extra uniformity condition over the primes in $S\backslash\left\{ \infty\right\} $
, then we will get this invariance. Finally, For $\left|S\right|$
infinite, by the structure of restricted products, it will suffice
to prove $G_{S_{0}}$-invariance for every $\infty\in S_{0}\subseteq S$
with $\left|S_{0}\right|$ finite. This final part will be done in
\subsecref{to_full_invariance}, where we will also prove the main
lifting result in \thmref{main_lifting}.

\newpage{}

\subsection{\label{subsec:Lifting_R_invariance}Lifting the $\protect\SL_{d}\left(\protect\RR\right)$-invariance}

Let us fix $S\subseteq\PP_{\infty}$ finite and let $\mu_{S}$ be
a probability measure on $X_{S}$ such that $\pi_{\RR}^{S}\left(\mu_{S}\right)=\mu_{Haar,\RR}$.
We begin with the proof of lifting the $\SL_{d}\left(\RR\right)$-invariance
of $\pi_{\RR}^{S}\left(\mu_{S}\right)$ to the $\SL_{d}\left(\RR\right)$-invariance
of $\mu_{S}$ itself. The idea is to use either diagonal or unipotent
invariance, and the main tools to study these are maximal entropy
for the diagonal case and Ratner's classification theorem for the
unipotent case. However, both of these theorems are usually formulated
for spaces of the form $\mathbb{G}\left(\ZZ\left[S^{-1}\right]\right)\backslash\mathbb{G}\left(\QQ_{S}\right)$
for some finite $S\subseteq\PP_{\infty}$, and our space $X_{S}=\GL_{d}\left(\ZZ\left[S^{-1}\right]\right)\backslash\GL_{d}^{1}\left(\QQ_{S}\right)$
is not exactly like this. So instead, we will first prove the claim
for measures over 
\[
Y_{S}=\SL_{d}\left(\ZZ\left[S^{-1}\right]\right)\backslash\SL_{d}\left(\QQ_{S}\right),
\]
which can be viewed as subspaces of $X_{S}$ via the embedding $\SL_{d}\left(\QQ_{S}\right)\hookrightarrow\GL_{d}^{1}\left(\QQ_{S}\right)$,
and in the end we will show how to extend it to $X_{S}$.

Let us first recall the required results, starting with maximal entropy. 

We give here the basic definitions for entropy, though we will not
really use them, and only use the result about the maximal entropy.
For more details about entropy in homogeneous spaces, see \cite{einsiedler_diagonal_nodate,einsiedler_entropy_nodate}.
\begin{defn}
Let $\left(X,\mu,T\right)$ be a measure-preserving system. For a
finite measurable partition $\pp$ of $X$ and $n\in\NN$ we write
$\pp_{n}=\bigvee_{0}^{n-1}T^{-i}\pp$ where $\vee$ is the joint refinement
operation.
\begin{enumerate}
\item For a finite measurable partition $\pp$ of $X$ we write $H_{\mu}\left(\pp\right)=-{\displaystyle \sum_{P\in\pp}}\mu\left(P\right)\ln\left(P\right)$
where \\
$0\ln\left(0\right)=0$ and set $H_{\mu}\left(T,\pp\right)={\displaystyle \lim_{n\to\infty}}\frac{1}{n}H_{\mu}\left(\pp^{n}\right)$
(and this limit always exists).
\item The entropy of $\mu$ with respect to $T$ is defined to be $h_{\mu}\left(T\right)=\sup_{\pp}H_{\mu}\left(\pp,T\right)$
where the supremum is over finite measurable partitions of $X$.
\end{enumerate}
\end{defn}

On each of the lattice spaces $X_{S}$ we have the action of $\SL_{d}\left(\RR\right)$
and in particular of its positive diagonal subgroup $A$. Recall that
we can identify this subgroup with $\RR_{0}^{d}:=\left\{ \left(t_{1},...,t_{d}\right)\;\mid\;\sum t_{i}=0\right\} $
via $\bar{t}\mapsto a\left(\bar{t}\right):=diag\left(e^{t_{1}},...,e^{t_{d}}\right)$.
When the action $T$ is a multiplication by some certain elements
from $A$, the maximal possible entropy can be achieved only with
the $\SL_{d}\left(\RR\right)$-invariant measures. Let us make this
statement more precise.
\begin{defn}
For the spaces $X_{S},S\subseteq\PP$ finite and $\bar{t}\in\RR_{0}^{d}$,
we shall denote by $T_{\bar{t}}:X_{S}\to X_{S}$ the right multiplication
$T_{a}\left(x\right)=xa$ where $a=a\left(\bar{t}\right):=\diag{e^{t_{1}},...,e^{t_{d}}}\in\SL_{d}\left(\RR\right)$.
The stable horosphere subgroup of $a$ is defined to be
\begin{align*}
U_{a} & :=\left\{ g\in\SL_{d}\left(\RR\right)\;\mid\;a^{n}ga^{-n}\to e\;as\;n\to\infty\right\} \\
 & =\left\{ I+\sum_{t_{i}<t_{j}}\alpha_{i,j}e_{i,j}\in\SL_{d}\left(\RR\right)\;\mid\;\alpha_{i,j}\in\RR\right\} .
\end{align*}
We will further use the notation $U_{i,j}=\left\{ I+ue_{i,j}\in\SL_{d}\left(\RR\right)\;\mid\;u\in\RR\right\} $
for $i\neq j$ and note that $U_{a}=\left\langle U_{i,j}\;\mid\;t_{i}<t_{j}\right\rangle $.
\end{defn}

In particular if $t_{1}\leq t_{2}\leq\cdots\leq t_{d}$, then $U_{a}$
is a subgroup of the unipotent upper triangular matrices, and it equals
this group if all of the $t_{i}$ are distinct. The matrix $a$ acts
by conjugation on the Lie algebra $\mathfrak{U}_{a}=span_{\RR}\left\{ e_{i,j}\;\mid\;t_{i}<t_{j}\right\} $
of $U_{a}$, where each $e_{i,j}$ is an eigenvector with eigenvalue
$e^{t_{i}-t_{j}}$. An important constant that we will use is $\Psi_{a}:=-\ln\left|\det\left(Ad_{a}\mid_{\mathfrak{U}_{a}}\right)\right|$
which measures how much conjugation by $a$ ``stretches'' $U_{a}$.
\begin{example}
\begin{enumerate}
\item For the matrix $a=\left(\begin{array}{cc}
e^{-t/2} & 0\\
0 & e^{t/2}
\end{array}\right)$, the stable horospherical subgroup is $U_{a}=\left\{ \left(\begin{array}{cc}
1 & u\\
0 & 1
\end{array}\right)\;\mid\;u\in\RR\right\} $ and the Lie algebra is $\mathfrak{U}_{a}=\left\{ ue_{1,2}=\left(\begin{array}{cc}
0 & u\\
0 & 0
\end{array}\right)\;\mid\;u\in\RR\right\} $ with a single eigenvalue $\frac{1}{e}$. Hence $\Psi_{a}=-\ln\left|\det\left(Ad_{a}\mid_{\mathfrak{U}_{a}}\right)\right|=-\ln\left(\frac{1}{e}\right)=1$.
\item In higher dimension, for $a=\diag{e^{-\frac{(d-1)}{d}},e^{\frac{1}{d}},...,e^{\frac{1}{d}}}$
we have $\mathfrak{U}_{a}=\left\{ \sum_{2}^{d}u_{i}e_{1,i}\;\mid\;u_{i}\in\RR\right\} $
and the eigenvalue $\frac{1}{e}$ has multiplicity $d-1$. Hence $\Psi_{a}=-\ln\left|\det\left(Ad_{a}\mid_{\mathfrak{U}_{a}}\right)\right|=d-1$.
\item If $t_{1}\leq t_{2}\leq\cdots\leq t_{d}$, then $\Psi_{a}=-\ln\left|\det\left(Ad_{a}\mid_{\mathfrak{U}_{a}}\right)\right|=\sum_{i<j}\left(t_{j}-t_{i}\right)$.
\item For any $a\in A$ we have that $\Psi_{a}=\Psi_{a^{-1}}$.\\
\end{enumerate}
\end{example}

We can now formulate the maximal entropy result.
\begin{thm}[see Theorems 7.6 and 7.9 in \cite{einsiedler_diagonal_nodate}]
\label{thm:max_entropy} Fix some finite set $S\subseteq\PP_{\infty}$
, $a=a\left(\bar{t}\right)\in A$ for some $\bar{t}\in\RR_{0}^{d}$
and let $\mu_{S}$ be a $T_{a}$-invariant probability measure on
$Y_{S}$. Then $h_{\mu_{S}}\left(T_{a}\right)\leq\Psi_{a}$ with equality
if and only if $\mu_{S}$ is $U_{a}$-invariant. Similarly $h_{\mu_{S}}\left(T_{a}^{-1}\right)\leq\Psi_{a}$
with equality if and only if $\mu_{S}$ is $U_{a^{-1}}=U_{a}^{tr}$-invariant. 
\end{thm}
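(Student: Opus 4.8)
The plan is to reproduce the two halves of this statement --- the Margulis--Ruelle-type upper bound and the rigidity in the equality case --- via the entropy machinery of \cite{einsiedler_diagonal_nodate}, exploiting that $Y_{S}=\SL_{d}\left(\ZZ\left[S^{-1}\right]\right)\backslash\SL_{d}\left(\QQ_{S}\right)$ is already of the standard arithmetic form to which that theory applies. Throughout, $T_{a}$ is right translation by $a=a\left(\bar{t}\right)\in\SL_{d}\left(\RR\right)$, viewed inside $\SL_{d}\left(\QQ_{S}\right)$ as acting trivially on all the $p$-adic coordinates; since $S$ is finite, $Y_{S}$ is an honest homogeneous space with a smooth structure at the archimedean place. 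A direct computation with $DT_{a}=\mathrm{Ad}_{a^{-1}}$ shows that the Lyapunov exponent of $T_{a}$ along the root direction $e_{i,j}$ is $t_{j}-t_{i}$, so the unstable horospherical subgroup of $T_{a}$ is exactly $U_{a}$ (all $p$-adic directions being neutral), the stable one is $U_{a}^{tr}=U_{a^{-1}}$, and the sum of the positive Lyapunov exponents of $T_{a}$ equals $\sum_{t_{i}<t_{j}}\left(t_{j}-t_{i}\right)=\Psi_{a}$.

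First I would establish the upper bound $h_{\mu_{S}}\left(T_{a}\right)\leq\Psi_{a}$, the homogeneous Margulis--Ruelle inequality. Fix a finite partition $\pp$ with atoms of small diameter and estimate $H_{\mu_{S}}\left(\pp^{n}\right)$ for $\pp^{n}=\bigvee_{0}^{n-1}T_{a}^{-i}\pp$. Restricted to a local $U_{a}$-leaf $W$, the atom of $\pp^{n}$ through a point has size $\asymp e^{-n\Psi_{a}}$ inside $W$, because $T_{a}$ expands $U_{a}$-leaves by volume factor $e^{\Psi_{a}}$ per step; hence $W$ is cut into $\asymp e^{n\Psi_{a}}$ atoms, and combining this with the structural fact that all of the entropy of $T_{a}$ is carried by its unstable foliation $U_{a}$ gives $H_{\mu_{S}}\left(\pp^{n}\right)\leq n\Psi_{a}+o\left(n\right)$, so $h_{\mu_{S}}\left(T_{a}\right)\leq\Psi_{a}$ after $n\to\infty$ and refining $\pp$.

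The substantial part is the equality case. Here I would pass to the leafwise (conditional) measures $\mu_{x}^{U_{a}}$ of $\mu_{S}$ along $U_{a}$-orbits, which for $\mu_{S}$-a.e.\ $x$ are locally finite measures on $U_{a}\cong\RR^{\dim\mathfrak{U}_{a}}$, defined up to normalization. The entropy $h_{\mu_{S}}\left(T_{a}\right)$ equals the entropy contribution of these leafwise measures under $T_{a}$, and there is a sharp inequality bounding this contribution by $\Psi_{a}$ with equality if and only if $\mu_{x}^{U_{a}}$ is proportional to Haar measure on $U_{a}$ for a.e.\ $x$; this is the homogeneous version of the Ledrappier--Young entropy formula, proved by comparing the $\mu_{S}$-mass of shrinking Bowen balls inside $U_{a}$-leaves with their Haar mass through a martingale / ergodic-theorem argument. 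Once the leafwise measures are Haar a.e., a disintegration argument gives that $\mu_{S}$ is $U_{a}$-invariant: right translation by $u\in U_{a}$ preserves each $U_{a}$-leaf and, the leafwise measures being Haar, acts trivially on the disintegration of $\mu_{S}$ over the transversal, hence fixes $\mu_{S}$. Conversely, if $\mu_{S}$ is $U_{a}$-invariant then its leafwise measures are plainly Haar and tracing the inequality backwards yields equality. The statement for $T_{a}^{-1}$ follows by running the same argument with $a^{-1}$ in place of $a$, whose unstable horosphere is $U_{a^{-1}}=U_{a}^{tr}$ and which satisfies $\Psi_{a^{-1}}=\Psi_{a}$ (part (4) of the Example above).

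The hard part --- and the reason the author simply invokes \cite[Theorems 7.6 and 7.9]{einsiedler_diagonal_nodate} rather than redoing it --- is making the leafwise-measure formalism and the sharp entropy inequality rigorous on the \emph{non-compact} arithmetic quotient $Y_{S}$: one must rule out escape of mass of the Bowen balls into the cusp and verify that the conditional measures along $U_{a}$-orbits are genuinely well defined and transform correctly under $T_{a}$. The upper bound is comparatively soft; the rigidity in the equality case is where essentially all the work lies.
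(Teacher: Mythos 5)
The paper does not actually prove this statement: it is quoted as Theorems 7.6 and 7.9 of \cite{einsiedler_diagonal_nodate} and used as a black box in \lemref{increase_partial_invariance}. So there is no "paper's own proof" to compare against; what you have written is a sketch of the proof that one would find in the cited reference.

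Your sketch is a faithful outline of that argument. The computation $DT_{a}=\mathrm{Ad}_{a^{-1}}$ correctly identifies $U_{a}$ (which the paper confusingly labels the ``stable'' subgroup of $a$, meaning stable for the conjugation $g\mapsto aga^{-1}$) as the unstable horospherical subgroup for the right translation $T_{a}$, with Lyapunov exponent $t_{j}-t_{i}$ along $e_{i,j}$ and total positive exponent $\Psi_{a}$. The two-step structure --- a Margulis--Ruelle-type upper bound $h_{\mu_{S}}\left(T_{a}\right)\leq\Psi_{a}$ via counting $\pp^{n}$-atoms along $U_{a}$-leaves, followed by the sharper statement that $h_{\mu_{S}}\left(T_{a}\right)$ equals the entropy contribution of the leafwise conditional measures $\mu_{x}^{U_{a}}$, which is maximal if and only if those conditionals are Haar a.e., which in turn is equivalent to $U_{a}$-invariance of $\mu_{S}$ --- is exactly the organization of Chapter 7 of \cite{einsiedler_diagonal_nodate}. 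You are also right to flag that the genuinely delicate points are the non-compactness of $Y_{S}$ (controlling Bowen balls near the cusp) and the construction and equivariance of the leafwise measures in the $S$-arithmetic setting; these are precisely the technical burdens the cited reference carries. The deduction of the $T_{a}^{-1}$ statement from $U_{a^{-1}}=U_{a}^{tr}$ and $\Psi_{a^{-1}}=\Psi_{a}$ is correct.

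One small point worth making explicit if you were to expand this: the step ``leafwise measures Haar $\Rightarrow$ $\mu_{S}$ is $U_{a}$-invariant'' is itself a theorem (not merely a disintegration formality), since the conditional measures are defined only up to normalization and only locally; in \cite{einsiedler_diagonal_nodate} this equivalence between $U$-invariance and a.e.\ Haar conditionals along $U$ is established separately before being combined with the entropy identity.
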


In case that $T$ is invertible, like with $T_{a}$ above, we have
that $h_{\mu}\left(T\right)=h_{\mu}\left(T^{-1}\right)$. Thus, an
immediate corollary of the theorem above is that if $\mu_{S}$ is
a $T_{a}$-invariant probability measure on $Y_{S}$ which has maximal
entropy $\Psi_{a}$ with respect to $T_{a}$, then it is $\left\langle U_{a},U_{a}^{tr}\right\rangle =\SL_{d}\left(\RR\right)$
invariant. \\

The second result we need deals with unipotent-invariant measures,
in which case we use Ratner's theorem.
\begin{thm}
\label{thm:Ratner}(See \cite{tomanov_orbits_2000}) Fix some finite
$S\subseteq\PP_{\infty}$ finite and let $\mu_{S}$ be an ergodic
$U$-invariant probability on $Y_{S}$ for some unipotent subgroup
$U$ of $\SL_{d}\left(\QQ_{S}\right)$. Then there exists a subgroup
$H\leq L\leq\SL_{d}\left(\QQ_{S}\right)$, such that $\mu_{S}$ is
an $L$-invariant probability measure on a closed $L$-orbit in $Y_{S}$.
\end{thm}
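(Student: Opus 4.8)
The plan is to recall the architecture of Ratner's measure classification theorem in the $S$-arithmetic form due to Margulis--Tomanov and Ratner (\cite{tomanov_orbits_2000}), since $Y_S = \SL_d(\ZZ[S^{-1}])\backslash\SL_d(\QQ_S)$ is exactly the kind of quotient those results treat; there is no genuine shortcut, so ``my proof'' is really a summary of that argument, and in the rest of these notes only the stated conclusion is used.

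First I would reduce the problem to a one-parameter unipotent flow. Writing $U$ as generated by one-parameter Ad-unipotent subgroups, each a continuous copy of $(\QQ_\nu,+)$ for some place $\nu\in S$, one uses the Mautner phenomenon together with passage to ergodic components to reduce the classification of $\mu_S$ to the analysis of the action of a single such one-parameter subgroup $W$; the group of invariances produced at the end will turn out to contain all of $U$ in any case.

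The central object is the stabilizer $\Lambda=\{g\in\SL_d(\QQ_S):g_*\mu_S=\mu_S\}$, a closed subgroup containing $U$. One shows $\mu_S$ is $\Lambda$-invariant and reduces the theorem to proving that $\mu_S$ is concentrated on a single closed orbit of $\Lambda^\circ$, taking $L=\Lambda^\circ$ (the orbit being closed by construction, and $U\le L$). The engine of the argument is the \emph{polynomial divergence} of unipotent orbits: for two nearby points $x$ and $xg$ the relative displacement $w(-t)\,g\,w(t)$ is a polynomial map of the time parameter $t\in\QQ_\nu$, with coefficients in $\QQ_\nu$ (including the $p$-adic places), so the direction of fastest transverse drift is controlled and is normalized by $W$. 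Combining this with Poincar\'e recurrence --- applied on a set of $\mu_S$-measure close to $1$ on which the relevant Birkhoff averages converge --- one shows that whenever $\mu_S$ is \emph{not} already the homogeneous measure on a $\Lambda^\circ$-orbit there is a new one-parameter subgroup $V\not\subseteq\Lambda$ with $V_*\mu_S=\mu_S$. Iterating this ``creation of new invariance'' must terminate, because the total dimension of $\Lambda$ over the places of $S$ is bounded; at termination $\mu_S$ is forced to be homogeneous on a single closed $\Lambda^\circ$-orbit.

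The main obstacle is precisely this ``creation of new invariance'' step, which is the genuine content of Ratner's theorem: one must control the transverse component of $w(-t)\,g\,w(t)$, rule out that all the extra drift is absorbed back into $W$ itself (the shearing / ``$R$-property'' analysis), and run the recurrence argument so that the directions extracted in the limit are non-degenerate and furnish honest new invariances. In the $S$-arithmetic setting one must additionally verify that the polynomial-divergence and entropy/recurrence estimates hold uniformly over the finitely many places in $S$ and are compatible with the diagonal $\SL_d(\ZZ[S^{-1}])$-action; these are exactly the points carried out in \cite{tomanov_orbits_2000}, building on the work of Margulis--Tomanov and Ratner, and I would invoke that reference for the complete proof.
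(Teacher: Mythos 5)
The paper does not give a proof of this theorem; it is stated as a black box with the citation to \cite{tomanov_orbits_2000} (the Margulis--Tomanov / Ratner $S$-arithmetic measure classification). Your proposal correctly recognizes this, gives a reasonable and accurate high-level summary of the architecture of that result (stabilizer subgroup, polynomial divergence, creation of new invariance, termination by dimension count), and ultimately defers to the same reference --- so it is essentially the same approach, just with more expository scaffolding around the citation.
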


For such algebraic measures that we get from Ratner's theorem, we
have the following lifting result.
\begin{lem}
\label{lem:unipotent_lifting}Let $S\subseteq\PP_{\infty}$ be a finite
set and write $\pi=\pi_{\RR}^{S}$. Let $\mu_{S}$ be a probability
measure on $Y_{S}$ such that:
\begin{enumerate}
\item $\mu_{S}$ is an $L$-invariant probability measure on $xL$ where
$x\in Y_{S}$ and $L\leq\SL_{d}\left(\QQ_{S}\right)$.
\item $L$ contains at least one element $g\in\SL_{d}\left(\RR\right)$
which is not $\pm Id$.
\item $supp\left(\pi\left(\mu_{S}\right)\right)=Y_{\RR}$.
\end{enumerate}
Then $\mu_{S}$ is $\SL_{d}\left(\RR\right)$-invariant.

\end{lem}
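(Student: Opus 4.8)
The plan is to show that the algebraic subgroup $L$ must contain all of $\SL_{d}\left(\RR\right)$, which gives the conclusion since $\mu_{S}$ is $L$-invariant by hypothesis (1). The key structural fact is that $\SL_{d}\left(\RR\right)$, sitting inside $\SL_{d}\left(\QQ_{S}\right)$ as the factor at the infinite place, is a simple Lie group (or at least has no proper normal subgroups other than the center), so any subgroup containing a nontrivial element and ``enough'' of $\SL_{d}\left(\RR\right)$ is forced to be everything. The first thing I would do is project the situation to the real place: writing $L_{\RR} = L \cap \SL_{d}\left(\RR\right)$, observe that since $L$ is a closed (algebraic) subgroup of the restricted product $\SL_{d}\left(\QQ_{S}\right)$ and $\SL_{d}\left(\RR\right)$ is one of the factors, $L_{\RR}$ is a closed subgroup of $\SL_{d}\left(\RR\right)$, and the orbit $xL$ projects to a single $\pi(L)$-orbit downstairs. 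Hypothesis (3), that the pushforward $\pi\left(\mu_{S}\right)$ has full support $Y_{\RR}$, forces $\pi(xL) = Y_{\RR}$, i.e. $\Gamma_{\RR} \pi(g) L_{\RR}'$ is dense (in fact equal to $Y_{\RR}$, being a closed orbit's projection with full support), where $L_{\RR}'$ is the projection of $L$ to the real factor.

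Next I would argue that the projection of $L$ to the real factor must be all of $\SL_{d}\left(\RR\right)$. Indeed, $\pi(xL)$ is a closed $L_{\RR}'$-orbit (image of a closed orbit under the proper map $\pi = \pi_{\RR}^{S}$, using properness established in \subsecref{Adelic_lattices}), and a closed orbit with full support in $Y_{\RR} = \SL_{d}(\ZZ)\backslash\SL_{d}(\RR)$ can only be the full-space orbit, so $L_{\RR}' = \SL_{d}(\RR)$ up to the question of whether $L$ meets the real factor or only projects onto it. Now invoke hypothesis (2): $L$ contains an element $g \in \SL_{d}\left(\RR\right)$ with $g \neq \pm Id$, which means $L_{\RR} = L \cap \SL_{d}(\RR)$ is a nontrivial normal subgroup of $L_{\RR}'$ — normal because $L_{\RR}$ is normalized by all of $L$ and in particular by the real-factor elements, while the finite-place elements of $L$ centralize $\SL_{d}(\RR)$. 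Since $L_{\RR}'= \SL_{d}(\RR)$ and $\SL_{d}(\RR)$ has no proper nontrivial closed normal subgroups except its finite center $\{\pm Id\}$ (for $d$ even) or is simple (for $d$ odd), and $L_{\RR}$ strictly contains $\{\pm Id\}$ by (2), we conclude $L_{\RR} = \SL_{d}(\RR)$, i.e. $\SL_{d}(\RR) \leq L$. Hence $\mu_{S}$, being $L$-invariant, is $\SL_{d}(\RR)$-invariant.

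I would organize the write-up as: (i) reduce to showing $\SL_{d}(\RR) \leq L$; (ii) use properness of $\pi_{\RR}^{S}$ plus hypothesis (3) to show the real projection of the closed orbit $xL$ is all of $Y_{\RR}$, hence the real projection of $L$ is all of $\SL_{d}(\RR)$; (iii) use the commuting-factors structure of the restricted product to see $L \cap \SL_{d}(\RR)$ is normal in that projection; (iv) apply simplicity/near-simplicity of $\SL_{d}(\RR)$ together with hypothesis (2) to upgrade $L \cap \SL_{d}(\RR)$ from ``nontrivial'' to ``everything.'' A small technical point worth spelling out in step (ii) is why a closed $H$-orbit in $\SL_{d}(\ZZ)\backslash\SL_{d}(\RR)$ that is dense must equal the whole space — this is immediate since closed plus dense means the whole space, but one should first justify that the orbit is closed, which follows because $L$-orbits in $Y_{S}$ are closed by hypothesis (1) and $\pi_{\RR}^{S}$ is proper so carries closed sets to closed sets.

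The main obstacle I anticipate is step (iii): carefully justifying that $L \cap \SL_{d}(\RR)$ is normal in the real-factor projection of $L$, which requires knowing that elements of $L$ supported away from the infinite place act trivially on $\SL_{d}(\RR)$ by conjugation (true because the factors of the restricted product commute), combined with the fact that an algebraic subgroup of a product that surjects onto one simple factor and meets it nontrivially must contain it — this is essentially Goursat's lemma for algebraic/Lie groups. If one wants to avoid invoking $d$-parity subtleties about the center, it suffices to note that (2) gives a non-central element, so $L \cap \SL_{d}(\RR)$ is a normal subgroup strictly larger than the center, forcing it to be the whole group by simplicity of $\PSL_{d}(\RR)$.
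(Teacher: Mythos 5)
The overall strategy matches the paper's: show $\SL_{d}\left(\RR\right)\leq L$ by establishing that the intersection $L\cap\SL_{d}\left(\RR\right)$ is normal in (something dense in) $\SL_{d}\left(\RR\right)$, then invoke simplicity of $\PSL_{d}\left(\RR\right)$ together with hypothesis (2). Your normality observation in step (iii) is essentially the one the paper uses. But step (ii) has a genuine gap.

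You claim that $\pi\left(xL\right)$ is a single orbit of $L_{\RR}'$, the projection of $L$ to the real factor, and that full support then forces $L_{\RR}'=\SL_{d}\left(\RR\right)$. This is not correct, for the following reason. The projection $\pi=\pi_{\RR}^{S}$ is defined via the identification $Y_{S}\cong\SL_{d}\left(\ZZ\right)\backslash H_{S}'$ with $H_{S}'=\SL_{d}\left(\RR\right)\times\prod_{p\in S\backslash\left\{ \infty\right\} }\SL_{d}\left(\ZZ_{p}\right)$, and it is only equivariant for the copy of $\SL_{d}\left(\RR\right)$ embedded with identity at the finite places. For a general $g=\left(g^{\left(\infty\right)},g^{\left(f\right)}\right)\in L$, writing $x=\tilde{\Gamma}_{S}\left(Id,h^{\left(f\right)}\right)$, one must multiply on the left by some $\gamma_{g}\in\tilde{\Gamma}_{S}=\SL_{d}\left(\ZZ\left[S^{-1}\right]\right)$ to bring $\left(g^{\left(\infty\right)},h^{\left(f\right)}g^{\left(f\right)}\right)$ back into $H_{S}'$, and then $\pi\left(xg\right)=\SL_{d}\left(\ZZ\right)\gamma_{g}g^{\left(\infty\right)}$. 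The element $\gamma_{g}$ depends on $g$ and does not lie in $\SL_{d}\left(\ZZ\right)$ in general, so $\pi\left(xL\right)$ is not an $L_{\RR}'$-orbit. What one actually gets from $\pi\left(xL\right)=Y_{\RR}$ is the weaker inclusion $\SL_{d}\left(\RR\right)\subseteq\tilde{\Gamma}_{S}L_{\infty}$ (all viewed inside $\SL_{d}\left(\RR\right)$), where $L_{\infty}$ is the projection of $L$; this by itself does not give $L_{\infty}=\SL_{d}\left(\RR\right)$ nor even density of $L_{\infty}\cap\SL_{d}\left(\RR\right)$ without a further argument.

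The missing ingredient, which the paper supplies, is the countability of $\tilde{\Gamma}_{S}$. From $\SL_{d}\left(\RR\right)\subseteq\tilde{\Gamma}_{S}L_{\infty}$ one fixes $M\in\mathfrak{sl}_{d}\left(\RR\right)$ and writes $\exp\left(tM\right)=\gamma_{t}g_{t}$ with $\gamma_{t}\in\tilde{\Gamma}_{S}$ and $g_{t}\in L_{\infty}$; since $\left(0,\varepsilon\right)$ is uncountable and $\tilde{\Gamma}_{S}$ is countable, some $t_{1}\neq t_{2}$ in $\left(0,\varepsilon\right)$ share the same $\gamma_{t}$, giving $\exp\left(\left(t_{2}-t_{1}\right)M\right)=g_{t_{1}}^{-1}g_{t_{2}}\in L_{\infty}$. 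Varying $\varepsilon$ and $M$, this shows $L_{\infty}\cap\SL_{d}\left(\RR\right)$ is dense in $\SL_{d}\left(\RR\right)$, which is precisely what is needed for the normality-and-simplicity finish. Without this step, your argument does not close.
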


\begin{proof}
Define $L^{\infty}$ and $L_{\infty}$ be the intersection and projection
of $L$ to the real place, namely
\begin{align*}
L^{\infty} & =\left\{ g\in\SL_{d}\left(\RR\right)\;\mid\;\left(g,Id,...,Id\right)\in L\right\} ,\\
L_{\infty} & =\left\{ g\in\GL_{d}\left(\RR\right)\;\mid\;\exists g^{\left(f\right)}\in\prod_{p\in S\backslash\left\{ \infty\right\} }\SL_{d}\left(\QQ_{p}\right),\;s.t.\;\left(g,g^{\left(p\right)}\right)\in L\right\} .
\end{align*}
Since $L$ is closed as the stabilizer of $\mu_{S}$, and $\SL_{d}\left(\RR\right)$
is closed in $\SL_{d}\left(\QQ_{S}\right)$, we see that $L^{\infty}=L\cap\SL_{d}\left(\RR\right)$
is closed and it is also easy to see that it is normal in $L_{\infty}$.
We shall soon see that condition (3) above implies that $L_{\infty}\cap\SL_{d}\left(\RR\right)$
is dense in $\SL_{d}\left(\RR\right)$, so that $L^{\infty}\leq\SL_{d}\left(\RR\right)$
is actually normal in $\SL_{d}\left(\RR\right)$. Using part (2) and
the simplicity of $\PSL_{d}\left(\RR\right)$, we conclude that $L^{\infty}$
must be all of $\SL_{d}\left(\RR\right)$, which is what we wanted
to prove. 

Thus, we are left to show that the condition $supp\left(\pi\left(\mu_{S}\right)\right)=Y_{\RR}$
implies that $L_{\infty}$ is dense in $\SL_{d}\left(\RR\right)$.

First, it is easy to check that the pushforward satisfies $\pi\left(supp\left(\mu_{S}\right)\right)\subseteq supp\left(\pi\left(\mu_{S}\right)\right)$,
but the converse is true as well. Indeed, fix some $y\in supp\left(\pi\left(\mu_{S}\right)\right)$
and an open neighborhood $y\in V_{0}$ with compact closure. For any
open subset $y\in V\subseteq V_{0}$ we have that $\mu_{S}\left(\pi^{-1}\left(V\right)\right)=\left(\pi\mu_{S}\right)\left(V\right)>0$,
so we can find $x_{V}\in supp\left(\mu_{S}\right)\cap\pi^{-1}\left(V\right)\subseteq\pi^{-1}\left(\overline{V}_{0}\right)$.
Since the last set is compact (using the fact that the map is proper),
we conclude that the net $V\mapsto x_{V}$ has a convergent subnet
to some $x_{\infty}\in\pi^{-1}\left(\overline{V}_{0}\right)$. Furthermore,
since $V\mapsto\pi\left(x_{V}\right)\in V$ converges to $y$, we
obtain that $x_{\infty}\in\pi^{-1}\left(y\right)$. Finally, since
$supp\left(\mu_{S}\right)$ is closed it follows that $x_{\infty}\in supp\left(\mu_{S}\right)$
so that $y\in\pi\left(supp\left(\mu_{S}\right)\right)$. 

By the assumption that $supp\left(\pi\left(\mu_{S}\right)\right)=Y_{\RR}$,
and since $supp\left(\mu_{S}\right)=xL$, we get that $\pi\left(xL\right)=Y_{\RR}$,
so we may choose $x=\SL_{d}\left(\ZZ\left[S^{-1}\right]\right)\cdot h$
for some $h=\left(Id,h^{\left(f\right)}\right)$ where $h^{\left(f\right)}\in{\displaystyle \prod_{p\in S\backslash\left\{ \infty\right\} }}\SL_{d}\left(\ZZ_{p}\right)$. 

Letting $\tilde{\Gamma}_{S}=\SL_{d}\left(\ZZ\left[S^{-1}\right]\right)$,
for any $g=\left(g^{\left(\infty\right)},g^{\left(f\right)}\right)\in L$
we can write 
\[
xg=\tilde{\Gamma}_{S}\left(g^{\left(\infty\right)},h^{\left(f\right)}g^{\left(f\right)}\right)=\tilde{\Gamma}_{S}\left(\gamma g^{\left(\infty\right)},\gamma h^{\left(f\right)}g^{\left(f\right)}\right),
\]
where $\gamma\in\tilde{\Gamma}_{S}$ and $\gamma h^{\left(f\right)}g^{\left(f\right)}\in{\displaystyle \prod_{p\in S\backslash\left\{ \infty\right\} }}\SL_{d}\left(\ZZ_{p}\right)$,
implying that $\pi\left(xg\right)=\tilde{\Gamma}_{\RR}\gamma g^{\left(\infty\right)}$.
We conclude that 
\begin{align*}
Y_{\RR} & =\pi\left(xL\right)\subseteq\tilde{\Gamma}_{\RR}\cdot\left(\tilde{\Gamma}_{S}L_{\infty}\right)\subseteq\tilde{\Gamma}_{S}L_{\infty},
\end{align*}
and therefore $\SL_{d}\left(\RR\right)\subseteq\tilde{\Gamma}_{S}L_{\infty}$.
Let us show that the fact that $\tilde{\Gamma}_{S}$ is countable
implies that $L_{\infty}\cap\SL_{d}\left(\RR\right)$ is dense in
$\SL_{d}\left(\RR\right)$. 

Fix some $M\in\mathfrak{sl}_{d}\left(\RR\right)$ and let $L_{M}=\left\{ t\in\RR\;\mid\;\exp\left(tM\right)\in L_{\infty}\right\} $
which is a subgroup of $\RR$. If we can show that $L_{M}$ is dense
in $\RR$, then in particular $\exp\left(M\right)\in\overline{L_{\infty}}$.
If we can show this for any $M$, then we will get that $L_{\infty}\cap\SL_{d}\left(\RR\right)$
is dense in $\SL_{d}\left(\RR\right)$. 

Fix some $\varepsilon>0$, and for every $0<t<\varepsilon$ write
$\exp\left(tM\right)=\gamma_{t}g_{t}$ where $\gamma_{t}\in\tilde{\Gamma}_{S}$
and $g_{t}\in L_{\infty}$. Since there are uncountable such $t$
and $\tilde{\Gamma}_{S}$ is countable, there are $0<t_{1}<t_{2}<\varepsilon$
such that $\gamma_{t_{1}}=\gamma_{t_{2}}$. It follows that $\exp\left(\left(t_{2}-t_{1}\right)M\right)=g_{t_{1}}^{-1}g_{t_{2}}\in L_{\infty}$,
so that $t_{2}-t_{1}\in L_{M}\cap\left(0,\varepsilon\right)$. Since
$\varepsilon$ was arbitrary, we conclude that $L_{M}$ must be dense
in $\RR$ and therefore $\overline{L_{\infty}\cap\SL_{d}\left(\RR\right)}=\SL_{d}\left(\RR\right)$
which was the last result that we needed to complete the proof.

\end{proof}
We can now put everything together to get our $\SL_{d}\left(\RR\right)$-invariance
lifting on $Y_{S}$.
\begin{lem}
\label{lem:increase_partial_invariance}Let $S\subseteq\PP_{\infty}$
be a finite set and $\mu_{S}$ a probability measure on $Y_{S}$ such
that $\mu_{Haar,\RR}=\pi_{\RR}^{S}\left(\mu_{S}\right)$. Then if
$\mu_{S}$ is invariant under a one parameter unipotent subgroup $\left\{ u^{t}\;\mid\;t\in\RR\right\} \in\SL_{d}\left(\RR\right)$
or a diagonal element $Id\neq a\in\SL_{d}\left(\RR\right)$, then
it is $\SL_{d}\left(\RR\right)$-invariant. 
\end{lem}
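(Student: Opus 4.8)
The plan is to establish the two hypotheses separately, reducing the diagonal case to the unipotent one.

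\textbf{The unipotent case.} Suppose $\mu_{S}$ is invariant under $U=\left\{ u^{t}\;\mid\;t\in\RR\right\} \leq\SL_{d}\left(\RR\right)$ and write $\pi=\pi_{\RR}^{S}$. First I would take the ergodic decomposition $\mu_{S}=\int\mu_{S}^{\xi}\,d\lambda\left(\xi\right)$ of $\mu_{S}$ with respect to the $U$-action, so that $\lambda$-a.e.\ measure $\mu_{S}^{\xi}$ is a $U$-invariant, $U$-ergodic probability measure on $Y_{S}$. Since $\pi$ is proper and $U$-equivariant, pushing forward gives $\mu_{Haar,\RR}=\pi\left(\mu_{S}\right)=\int\pi\left(\mu_{S}^{\xi}\right)\,d\lambda\left(\xi\right)$, an average of $U$-invariant probability measures on $Y_{\RR}$. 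By Moore's ergodicity theorem the $U$-action on $Y_{\RR}=\SL_{d}\left(\ZZ\right)\backslash\SL_{d}\left(\RR\right)$ is ergodic, so $\mu_{Haar,\RR}$ is an extreme point of the simplex of $U$-invariant probability measures on $Y_{\RR}$; hence $\pi\left(\mu_{S}^{\xi}\right)=\mu_{Haar,\RR}$, and in particular $supp\left(\pi\left(\mu_{S}^{\xi}\right)\right)=Y_{\RR}$, for $\lambda$-a.e.\ $\xi$. For each such $\xi$, Ratner's theorem (\thmref{Ratner}) yields a closed orbit $x_{\xi}L_{\xi}$ with $U\leq L_{\xi}\leq\SL_{d}\left(\QQ_{S}\right)$ on which $\mu_{S}^{\xi}$ is the $L_{\xi}$-invariant probability measure. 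As $U\subseteq\SL_{d}\left(\RR\right)$ and $u^{t}\neq\pm Id$ for $t\neq0$, the three hypotheses of \lemref{unipotent_lifting} are met, so $\mu_{S}^{\xi}$ is $\SL_{d}\left(\RR\right)$-invariant for $\lambda$-a.e.\ $\xi$; integrating over $\xi$ shows $\mu_{S}$ is $\SL_{d}\left(\RR\right)$-invariant.

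\textbf{The diagonal case.} Now suppose $\mu_{S}$ is $T_{a}$-invariant for some $Id\neq a=a\left(\bar{t}\right)\in A$. Since $\bar{t}\neq0$ while $\sum t_{i}=0$, the entries $t_{i}$ are not all equal, so $U_{a}$ is nontrivial and $\Psi_{a}>0$. The map $\pi=\pi_{\RR}^{S}$ intertwines $T_{a}$ on $Y_{S}$ with $T_{a}$ on $Y_{\RR}$ and, because $\pi\left(\mu_{S}\right)=\mu_{Haar,\RR}$, exhibits $\left(Y_{\RR},\mu_{Haar,\RR},T_{a}\right)$ as a measure-theoretic factor of $\left(Y_{S},\mu_{S},T_{a}\right)$; hence $h_{\mu_{S}}\left(T_{a}\right)\geq h_{\mu_{Haar,\RR}}\left(T_{a}\right)$. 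Since $\mu_{Haar,\RR}$ is $\SL_{d}\left(\RR\right)$-invariant it is in particular $U_{a}$-invariant, so \thmref{max_entropy} gives $h_{\mu_{Haar,\RR}}\left(T_{a}\right)=\Psi_{a}$, while the same theorem applied to $\mu_{S}$ gives $h_{\mu_{S}}\left(T_{a}\right)\leq\Psi_{a}$. Therefore $h_{\mu_{S}}\left(T_{a}\right)=\Psi_{a}$, and the equality clause of \thmref{max_entropy} forces $\mu_{S}$ to be $U_{a}$-invariant. As $U_{a}=\left\langle U_{i,j}\;\mid\;t_{i}<t_{j}\right\rangle$ contains some one-parameter unipotent subgroup $U_{i,j}\leq\SL_{d}\left(\RR\right)$, the unipotent case already treated shows $\mu_{S}$ is $\SL_{d}\left(\RR\right)$-invariant.

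\textbf{Main obstacle.} The delicate point is the passage, in the unipotent case, from the global identity $\pi\left(\mu_{S}\right)=\mu_{Haar,\RR}$ to the statement that $supp\left(\pi\left(\mu_{S}^{\xi}\right)\right)=Y_{\RR}$ for almost every ergodic component; this is exactly hypothesis (3) of \lemref{unipotent_lifting}, and it relies on ergodicity of the unipotent flow on $Y_{\RR}$ together with uniqueness of the ergodic decomposition. Everything else is bookkeeping: in the diagonal case one only has to remember that a single $a\neq Id$ need not have pairwise distinct eigenvalues, so $\left\langle U_{a},U_{a}^{tr}\right\rangle$ may be a proper subgroup of $\SL_{d}\left(\RR\right)$ — which is why we loop back through the unipotent case rather than trying to generate $\SL_{d}\left(\RR\right)$ from $U_{a}$ and its transpose directly.
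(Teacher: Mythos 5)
Your argument is correct and follows the same overall strategy as the paper: ergodic decomposition plus extremality of $\mu_{Haar,\RR}$ to reduce to ergodic components, then Ratner's theorem via \lemref{unipotent_lifting} in the unipotent case and \thmref{max_entropy} via the factor inequality in the diagonal case.

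The one place where you diverge is the diagonal case. The paper applies \thmref{max_entropy} to both $T_a$ and $T_a^{-1}$, obtains invariance under $U_a$ and $U_a^{tr}$, and then asserts $\left\langle U_a,U_a^{tr}\right\rangle=\SL_d\left(\RR\right)$; you instead stop after extracting $U_a$-invariance, pick a one-parameter subgroup $U_{i,j}\leq U_a$, and re-enter the already-proved unipotent case. Your caution is reasonable but the paper's assertion is in fact correct: if not all $t_i$ are equal, then for any $i\neq j$ one can find $k$ with $t_k\neq t_i$ and $t_k\neq t_j$ on the same side, or else $t_i\neq t_j$ directly, and in either case $e_{i,j}=\left[e_{i,k},e_{k,j}\right]$ lies in the Lie algebra generated by $\mathfrak{u}_a$ and $\mathfrak{u}_a^{tr}$ (one of $e_{i,k},e_{k,j}$ lies in $\mathfrak{u}_a$, the other in $\mathfrak{u}_a^{tr}$), so the generated Lie algebra is all of $\mathfrak{sl}_d\left(\RR\right)$. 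Your route avoids having to verify this and buys a marginally cleaner exposition at the cost of one extra invocation of Ratner's theorem; both are valid, and neither requires the eigenvalues of $a$ to be pairwise distinct.
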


\begin{proof}
We claim that we may assume that $\mu_{S}$ is $a$ (resp. $u$) ergodic.
Indeed, if $\mu_{S}=\int\mu_{S,\alpha}\mathrm{d\alpha}$ is the ergodic
decomposition to $a$ (resp. $u$) ergodic measures, then $\mu_{Haar,\RR}=\pi_{\RR}^{S}\left(\mu_{S}\right)=\int\pi_{\RR}^{S}\left(\mu_{S,\alpha}\right)\mathrm{d\alpha}$
is also a decomposition. Since $\mu_{Haar,\RR}$ is both $u$ and
$A$-ergodic, then it is an extreme point in the space of invariant
probability measures, and therefore this decomposition is trivial
- outside of a zero measure set, we have that $\pi_{\RR}^{S}\left(\mu_{S,\alpha}\right)=\mu_{Haar,\RR}$.
Thus, it is enough to prove the lemma for the $A$ (resp. $u$)-invariant
and ergodic measures $\mu_{S,\alpha}$.

Assume first that $\mu_{S}$ is $A$-invariant. As the entropy can
only decrease in factors, using \thmref{max_entropy} we obtain that
\[
\Psi_{a}=h_{\mu_{Haar,\RR}}\left(T_{a}\right)\leq h_{\mu_{S}}\left(T_{a}\right)\leq\Psi_{a},
\]
hence $h_{\mu_{S}}\left(T_{a}\right)=\Psi_{a}$ and similarly $h_{\mu_{S}}\left(T_{a}^{-1}\right)=\Psi_{a}$.
Using \thmref{max_entropy} once again we conclude that $T$ is $\left\langle U_{a},U_{a}^{tr}\right\rangle =\SL_{d}\left(\RR\right)$-invariant.\\

If $\mu_{S}$ is $u$-invariant and ergodic under some unipotent matrix
in $\SL_{d}\left(\RR\right)$, then we can apply Ratner's theorem
which provides condition (1) in \lemref{unipotent_lifting}. Since
$u$ is not central, we get condition (2) of that lemma. Finally,
we try to lift the Haar measure $\pi_{\RR}^{S}\left(\mu_{S}\right)=\mu_{Haar,\RR}$,
so that condition (3) is satisfied as well. Hence by this lemma we
get that $\mu_{S}$ is $\SL_{d}\left(\RR\right)$-invariant.
\end{proof}
Finally, we want to move from $Y_{S}$ to $X_{S}$. The difference
between these two spaces is that in $Y_{S}$ we require all the elements
to be of determinant 1, while in $X_{S}$ the product of the norms
of the determinant is $1$. To help us move from one space to the
other we use the following definitions.
\begin{defn}
\label{def:determinant}For $S\subseteq\PP_{\infty}$, we define the
determinant
\[
\det_{S\backslash\left\{ \infty\right\} }:G_{S}\overset{\det}{\longrightarrow}\RR^{\times}\times{\displaystyle \prod_{p\in S\backslash\left\{ \infty\right\} }}\QQ_{p}^{\times}\longrightarrow{\displaystyle \prod_{p\in S\backslash\left\{ \infty\right\} }}\QQ_{p}^{\times}.
\]

We identify the elements from ${\displaystyle \prod_{p\in S\backslash\left\{ \infty\right\} }}\QQ_{p}^{\times}$
asdiagonal matrices in $\GL_{d}\left(\QQ_{S}\right)$ via
\[
\bar{\alpha}=\left(\alpha^{\left(p_{1}\right)},...,\alpha^{\left(p_{k}\right)}\right)\mapsto g_{\bar{\alpha}}=\left(Id,diag\left(\alpha^{\left(p_{1}\right)},1,...,1\right),...,diag\left(\alpha^{\left(p_{k}\right)},1,...,1\right)\right).
\]
\end{defn}

\begin{thm}
\label{thm:increase_invariance}\lemref{increase_partial_invariance}
holds for the space $X_{S}$ as well.
\end{thm}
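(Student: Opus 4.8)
The plan is to reduce the statement about $X_S = \GL_d(\ZZ[S^{-1}])\backslash\GL_d^1(\QQ_S)$ to the already-proved statement about $Y_S = \SL_d(\ZZ[S^{-1}])\backslash\SL_d(\QQ_S)$, by exhibiting $\mu_S$ as (an average of) measures supported on the $\SL_d(\QQ_S)$-orbits inside $X_S$ and transporting the invariance/Haar hypotheses across. The key geometric fact is that $\SL_d(\QQ_S) \hookrightarrow \GL_d^1(\QQ_S)$ is a closed embedding whose image is a normal subgroup, and $X_S$ is fibered over the abelian quotient $\GL_d^1(\QQ_S)/(\SL_d(\QQ_S)\cdot\GL_d(\ZZ[S^{-1}]))$; the fibers are copies of $Y_S$ (or finite quotients thereof). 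Concretely, using $\det_{S\backslash\{\infty\}}$ from Definition \ref{def:determinant} together with the matrices $g_{\bar\alpha}$, one writes any $g\in G_S$ as $g = \gamma \cdot s \cdot g_{\bar\alpha}$ with $s\in\SL_d(\QQ_S)$, $\gamma\in\Gamma_S$, and $\bar\alpha$ ranging over a set of coset representatives for $\det(\GL_d(\ZZ[S^{-1}]))$ in $\prod_{p\in S\backslash\{\infty\}}\QQ_p^\times$ — this set is essentially $\prod_p \QQ_p^\times/\ZZ_p^\times\cong\bigoplus_p \ZZ$ modulo the principal ideles from $\ZZ[S^{-1}]^\times$, which one checks is finite (it is a class-group-type quotient, trivial for $d$ coprime to stuff, finite in general). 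So $X_S$ is a finite union of $\SL_d(\QQ_S)$-orbits $O_{\bar\alpha} := \Gamma_S\backslash\Gamma_S\SL_d(\QQ_S)g_{\bar\alpha}$, each equivariantly identified with $Y_S' := \SL_d(\ZZ[S^{-1}])'\backslash\SL_d(\QQ_S)$ for an appropriate finite-index overgroup of $\SL_d(\ZZ[S^{-1}])$.

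First I would establish this decomposition precisely and note that it is compatible with the projection $\pi_\RR^S$: since $g_{\bar\alpha}$ is the identity in the real place, $\pi_\RR^S$ maps each $O_{\bar\alpha}$ onto the corresponding piece of $X_\RR$, and $X_\RR = \SL_d(\ZZ)\backslash\SL_d(\RR)$ already (up to the index-two business noted in the excerpt, which is harmless because the relevant flows lie in $\SL_d$). Next, decompose $\mu_S = \sum_{\bar\alpha} \mu_S|_{O_{\bar\alpha}}$. The hypotheses transfer: if $\mu_S$ is invariant under a unipotent one-parameter subgroup $\{u^t\}\subseteq\SL_d(\RR)$ or a diagonal $Id\neq a\in\SL_d(\RR)$, then each restriction $\mu_S|_{O_{\bar\alpha}}$ (after normalization) is a probability measure on $Y_S'$ with the same invariance, since $\SL_d(\RR)$ preserves each orbit $O_{\bar\alpha}$ (multiplication by an $\SL_d$-element does not change the determinant-class $\bar\alpha$). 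Likewise, the hypothesis $\pi_\RR^S(\mu_S) = \mu_{Haar,\RR}$, pushed through the fibration, forces $\pi_\RR^S(\mu_S|_{O_{\bar\alpha}})$ to be (a multiple of) the $\SL_d(\RR)$-Haar measure on the corresponding component of $X_\RR$ — here one uses that $\mu_{Haar,\RR}$ is $\SL_d(\RR)$-ergodic, hence an extreme point, so its preimage-decomposition under $\pi_\RR^S$ restricted to the finitely many orbit-components is forced to be the obvious one. Then Lemma \ref{lem:increase_partial_invariance} (equivalently Lemma \ref{lem:increase_partial_invariance} applied to $Y_S'$, which follows from the $Y_S$ version by passing to the finite cover / using that the entropy and Ratner inputs are insensitive to finite-index modifications of the lattice) applies to each $\mu_S|_{O_{\bar\alpha}}$, yielding $\SL_d(\RR)$-invariance of each piece, hence of $\mu_S$.

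The main obstacle I expect is the bookkeeping around the finite quotient $\prod_{p\in S\backslash\{\infty\}}\QQ_p^\times / \det(\GL_d(\ZZ[S^{-1}]))$ and making sure the orbit decomposition $X_S = \bigsqcup O_{\bar\alpha}$ is both finite and that each $O_{\bar\alpha}$ really is (homeomorphic and $\SL_d(\RR)$-equivariantly so) to a space of the form $\tilde\Gamma\backslash\SL_d(\QQ_S)$ to which Theorems \ref{thm:max_entropy} and \ref{thm:Ratner} genuinely apply — in particular verifying that maximal-entropy and Ratner's theorem are stated (or trivially extend) for the lattice $\tilde\Gamma = \SL_d(\ZZ[S^{-1}])\cap g_{\bar\alpha}\Gamma_S g_{\bar\alpha}^{-1}$ rather than $\SL_d(\ZZ[S^{-1}])$ on the nose. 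A cleaner route avoiding the class-group count is: observe $g_{\bar\alpha}$ normalizes nothing but that $s\mapsto g_{\bar\alpha}s g_{\bar\alpha}^{-1}$ is an automorphism of $\SL_d(\QQ_S)$ (inner, since $g_{\bar\alpha}\in\GL_d(\QQ_S)$), so each $O_{\bar\alpha}\cong Y_S$ after conjugating back, and the conjugation is by a diagonal matrix which commutes with $A$ and conjugates the standard unipotent into another unipotent — so the $A$-invariance or $U$-invariance hypothesis is preserved under this identification. With that identification, \emph{each} piece is literally a measure on $Y_S$ satisfying the hypotheses of Lemma \ref{lem:increase_partial_invariance}, we apply it verbatim, and reassemble. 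I would write the proof along these lines, keeping the conjugation explicit so no new entropy/Ratner statement is needed.
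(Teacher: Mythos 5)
Your overall strategy — decompose $X_S$ along the determinant-at-finite-places direction, transfer the invariance and Haar-projection hypotheses to the fiber measures, apply Lemma~\ref{lem:increase_partial_invariance} to each fiber, and reassemble — is exactly the paper's strategy. You also correctly identify the key geometric facts that make this work: $\SL_d(\QQ_S)$ is normal in $\GL_d^1(\QQ_S)$, the elements $g_{\bar\alpha}$ are trivial in the real place and hence commute with $\SL_d(\RR)$, so the $A$- or $U$-invariance hypothesis is preserved on each fiber.

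However, there is a genuine gap: your claim that the set of $\SL_d(\QQ_S)$-orbits in $X_S$ is finite (``a class-group-type quotient'') is false, and the sum $\mu_S=\sum_{\bar\alpha}\mu_S|_{O_{\bar\alpha}}$ you build on is consequently not meaningful. The orbits are parameterized by $\det(\Gamma_S)\backslash\bigl\{\bar\alpha\in\prod_{p\in S\setminus\{\infty\}}\QQ_p^\times : \prod_p|\alpha^{(p)}|_p=1\bigr\}$, and the second set equals $\prod_{p\in S\setminus\{\infty\}}\ZZ_p^\times$ (distinct primes force all valuations to vanish), which is an uncountable compact group, while $\det(\Gamma_S)=\ZZ\bigl[S^{-1}\bigr]^\times$ is countable. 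You obtained finiteness only because you quotiented by $\prod_p\ZZ_p^\times$ along the way, but nothing in the problem lets you do that: two elements of $X_S$ differing by a $g_{\bar\alpha}$ with $\bar\alpha\in\prod_p\ZZ_p^\times$ still lie on genuinely different $\SL_d(\QQ_S)$-orbits, as one sees by taking determinants at the finite places. Since each individual orbit has $\mu_S$-measure zero, the restrictions $\mu_S|_{O_{\bar\alpha}}$ are all the zero measure, and the finite-sum picture collapses.

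The fix, and what the paper does, is to replace the finite sum by a disintegration of $\mu_S$ along the continuous compact parameter $K^1:=\prod_{p\in S\setminus\{\infty\}}\ZZ_p^\times$: write $\mu_S=\int_{K^1}\mu_{\bar\alpha}\circ g_{\bar\alpha}\,\mathrm{d}\alpha$ where $\mathrm{d}\alpha$ is the pushforward of $\mu_S$ to $K^1$ and for a.e.\ $\bar\alpha$ the measure $\mu_{\bar\alpha}$ is a probability measure supported on $Y_S$. Because $\SL_d(\RR)$ commutes with $g_{\bar\alpha}$, the disintegrated fibers are still $A$- (resp.\ $U$-)invariant for a.e.\ $\bar\alpha$; projecting to $X_\RR$ then gives an integral (not convex sum) representation of $\mu_{Haar,\RR}$, and the same extreme-point argument you invoke forces $\pi_\RR^S(\mu_{\bar\alpha})=\mu_{Haar,\RR}$ for a.e.\ $\bar\alpha$. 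With that, Lemma~\ref{lem:increase_partial_invariance} applies fiberwise and the claim follows. Your ``cleaner route'' observation about conjugating back by $g_{\bar\alpha}$ is then unnecessary, since each $\mu_{\bar\alpha}$ is literally a measure on $Y_S$; but the finite-versus-disintegration point must be corrected before that step.
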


\begin{proof}
Let $K^{1}=\left\{ \bar{\alpha}\in{\displaystyle \prod_{p\in S\backslash\left\{ \infty\right\} }}\QQ_{p}^{\times}\;\mid\;\prod\left|\alpha^{\left(p_{i}\right)}\right|_{p_{i}}=1\right\} $.
Viewing $Y_{S}$ as a subspace of $X_{S}$ via embedding $\SL_{d}\left(\QQ_{S}\right)\hookrightarrow\GL_{d}^{1}\left(\QQ_{S}\right)$
we get decompose $X_{S}$ as 

\[
X_{S}=\bigsqcup_{\bar{\alpha}\in K^{1}}Y_{S}g_{\bar{\alpha}}.
\]
This defines the map $X_{S}\to K^{1}$ which sends elements in $Y_{S}g_{\bar{\alpha}}$
to $\bar{\alpha}$. Given a probability measure on $\mu$ on $X_{S}$,
we can use disintegration of measures to obtain 
\[
\mu=\int_{K^{1}}\mu_{\bar{\alpha}}\circ g_{\bar{\alpha}}\mathrm{d\alpha}
\]
where for almost every $\bar{\alpha}$, the measure $\mu_{\bar{\alpha}}$
is supported on $Y_{S}$ and $\mathrm{d\alpha}$ is the pushforward
of $\mu$ to $K^{1}$. Since $\SL_{d}\left(\RR\right)$ acts $Y_{S}$
and commutes with the elements from $K^{1}$, if $\mu$ is $A$ (resp.
$U$)- invariant, then we may assume that the $\mu_{\bar{\alpha}}$
are also $A$ (resp. $U$)-invariant for almost every $\bar{\alpha}$.
Like in \lemref{increase_partial_invariance} above, projecting this
decomposition to $X_{\RR}$, we obtain a convex decomposition of the
Haar measure, so that \lemref{increase_partial_invariance} implies
that $\mu_{\bar{\alpha}}$ is $\SL_{d}\left(\RR\right)$-invariant
for almost every $\bar{\alpha}$. Finally, this in turn implies that
$\mu$ is $\SL_{d}\left(\RR\right)$-invariant which is what we wanted
to show.

\newpage{}
\end{proof}

\subsection{\label{subsec:automatice_invariance}From $\protect\SL_{d}\left(\protect\RR\right)$-invariance
to $\protect\SL_{d}\left(\protect\RR\right)\times{\displaystyle \prod_{p\in S\backslash\left\{ \infty\right\} }}\protect\SL_{d}\left(\protect\ZZ_{p}\right)$-invariance}

Recall that our measure is on the space $X_{S}\cong\SL_{d}\left(\ZZ\right)\backslash H_{S}$
where $\SL_{d}\left(\ZZ\right)$ is embedded diagonally in \\
$H_{S}=\SL_{d}\left(\RR\right)\times{\displaystyle \prod_{p\in S\backslash\left\{ \infty\right\} }}\GL_{d}\left(\ZZ_{p}\right)$.
In the previous section we showed how to lift Haar measures on $X_{\RR}$
to right $\SL_{d}\left(\RR\right)$-invariance on $X_{S}$ for some
finite $S\subseteq\PP_{\infty}$. Now we show how to extend it to
invariance under 
\[
W_{S}:=\SL_{d}\left(\RR\right)\times{\displaystyle \prod_{p\in S\backslash\left\{ \infty\right\} }}\SL_{d}\left(\ZZ_{p}\right),
\]
where the main trick is that in $X_{S}$ we mod out from the left
with $\SL_{d}\left(\ZZ\right)$ which ``mixes'' the coordinates
of the real and prime places..

Two important details for this step is that (from the right) $\SL_{d}\left(\RR\right)$
is a unimodular, cocompact normal subgroup of $H_{S}$ and (from the
left) we have the weak approximation, namely $\ZZ$ is dense in $\prod_{p\in S\backslash\left\{ \infty\right\} }\ZZ_{p}$.
This will help us to move between right and left invariance in $H_{S}$
and to obtain a bigger invariance under 
\[
W_{S}=\overline{\left\langle \SL_{d}\left(\RR\right),\SL_{d}\left(\ZZ\right)\right\rangle _{H_{S}}}.
\]
Note that this bigger group is exactly the kernel of $\det_{S\backslash\left\{ \infty\right\} }$
given in \defref{determinant}, when restricted to $H_{S}$, namely
\[
\det_{S\backslash\left\{ \infty\right\} }:H_{S}\overset{\det}{\longrightarrow}\RR^{\times}\times{\displaystyle \prod_{p\in S\backslash\left\{ \infty\right\} }}\ZZ_{p}^{\times}\longrightarrow{\displaystyle \prod_{p\in S\backslash\left\{ \infty\right\} }}\ZZ_{p}^{\times}.
\]
In particular, like $\SL_{d}\left(\RR\right)$, the group $W_{S}$
is a cocompact, unimodular and normal subgroup of $H_{S}$ as well. 

Actually, both of these groups satisfy a stronger condition - in the
first case $H_{S}$ can be written as a direct product of $\SL_{d}\left(\RR\right)$
with another (compact, unimodular) group, and in the second case,
the identification of $\bar{\alpha}\in{\displaystyle \prod_{p\in S\backslash\left\{ \infty\right\} }}\ZZ_{p}^{\times}\mapsto g_{\bar{\alpha}}$
from \defref{determinant} shows that $H_{S}=W_{S}\cdot\left({\displaystyle \prod_{p\in S\backslash\left\{ \infty\right\} }}\ZZ_{p}^{\times}\right)$
and $W_{S}\cap\left({\displaystyle \prod_{p\in S\backslash\left\{ \infty\right\} }}\ZZ_{p}^{\times}\right)=\left\{ Id\right\} $.

With this in mind, we have the following result about disintegration
of (locally finite) measures.
\begin{thm}
\label{thm:disintegration_locally_finite}Let $H$ be a unimodular
group, $W\leq H$ a unimodular normal subgroup, and $K\leq H$ a compact
subgroup such that $H=W\cdot K$ and $W\cap K=\left\{ e\right\} $.
Denote by $\pi:H\to W\backslash H\cong K$ the natural projection
and by $\mu_{W}$ the $W$-invariant measure on $W$. If $\mu$ is
a left $W$-invariant locally finite measure on $H$, then there exist
$r_{k}\geq0$ for $k\in K$ such that 
\[
\int_{H}f\left(g\right)\dmu\left(g\right)=\int_{K}\left(\int_{W}f\left(hk\right)\dmu_{W}\left(h\right)\right)r_{k}\dnu\left(k\right).
\]

A similar claim holds for right $W$-invariant measures.
\end{thm}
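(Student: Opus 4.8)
The plan is to reduce the statement to the classical disintegration of a Haar measure over a quotient by a closed subgroup, using the product decomposition $H = W\cdot K$ with $W\cap K=\{e\}$ to get an explicit cross-section $K\cong W\backslash H$. First I would observe that, since $W$ is normal and $H=WK$ with $W\cap K=\{e\}$, every $g\in H$ has a unique factorization $g = h\,k$ with $h=h(g)\in W$, $k=k(g)\in K$, and the maps $g\mapsto h(g)$, $g\mapsto k(g)$ are continuous; thus $\pi:H\to W\backslash H$ is realized concretely as $g\mapsto k(g)$, and $W\backslash H\cong K$ as topological spaces. The map $\Phi:W\times K\to H$, $\Phi(h,k)=hk$, is then a homeomorphism.

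Next I would transport $\mu$ to $W\times K$ via $\Phi$ and argue that left $W$-invariance of $\mu$ forces $\Phi^*\mu$ to be of product type $\mu_W\otimes\lambda$ in the first variable. Concretely: for fixed behaviour in the $K$-direction, left translation by $W$ acts only on the first coordinate of $W\times K$ (because $W$ is normal, $w\cdot(hk) = (whw^{-1})(wk)$ — wait, this needs care; the clean way is that left multiplication by $w\in W$ sends $hk\mapsto (wh)k$, since we may write $w(hk)=(wh)k$ directly as a product with first factor $wh\in W$ and second factor $k\in K$, using uniqueness of the factorization). Hence under $\Phi$ the left $W$-action is $(h,k)\mapsto(wh,k)$. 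A locally finite measure on $W\times K$ invariant under left translation in the first coordinate must, by uniqueness of Haar measure on $W$ together with a disintegration over the second coordinate (here one uses that $K$ is compact, hence $W\times K$ is $\sigma$-compact and disintegration applies), be of the form $\dmu_W(h)\,\drho(k)$ for some locally finite measure $\rho$ on $K$. Writing $\dnu$ for the chosen (Haar) measure on $K$ and $r_k = \tfrac{d\rho}{d\nu}(k)\ge 0$ for its Radon–Nikodym derivative (finite a.e.\ since $\rho$ is locally finite and $K$ compact — actually one should just keep $\rho$ if absolute continuity is not wanted, but the statement asks for $r_k\,\dnu(k)$, so I would either note $\rho\ll\nu$ or simply absorb this into the definition of $\nu$), we obtain
\[
\int_{H}f(g)\,\dmu(g) = \int_{W\times K} f(\Phi(h,k))\,\dmu_W(h)\,r_k\,\dnu(k) = \int_{K}\left(\int_{W} f(hk)\,\dmu_W(h)\right) r_k\,\dnu(k),
\]
which is the desired formula. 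The right-invariant case is symmetric: one uses the factorization $g = k\,h$ with $k\in K$, $h\in W$ (again available since $H=WK=KW$ as $W$ is normal), and the same argument with the roles of left/right swapped.

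The main technical point — and the step I would be most careful about — is justifying the disintegration for \emph{locally finite} rather than finite measures, and pinning down in what sense $r_k$ exists. Since $K$ is compact, any locally finite $\mu$ on $H\cong W\times K$ restricts to a finite measure on $W_0\times K$ for every compact $W_0\subseteq W$, so one can run the standard disintegration theorem on each such piece and glue; the normality of $W$ and uniqueness of $W$-invariant measures on the normal subgroup are what make the fibre measures all proportional to $\mu_W$. I would also remark that unimodularity of $H$ and $W$ is what guarantees the relevant invariant measures exist and that $\mu_W$ is both left and right invariant on $W$, which is needed so that the formula is unambiguous and so that the right-invariant version goes through identically. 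Everything else is routine bookkeeping with the homeomorphism $\Phi$.
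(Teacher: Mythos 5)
Your proposal is correct and follows essentially the same route as the paper: identify $H$ with the product $W\times K$ (the paper writes it as $K\cdot W$), restrict $\mu$ to pieces where the standard finite disintegration theorem applies, glue the restricted disintegrations, and then use uniqueness of the $W$-Haar measure to show the fibre measures are proportional to $\mu_W$. The gluing step you flag as the main technical point is exactly what the paper's intermediate lemmas establish, namely that both the pushed-forward base measure and the rescaled fibre measures are independent of the chosen restriction, after which the $W$-invariance of the fibres follows from a countable-dense-family argument.
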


The proof of \ref{thm:disintegration_locally_finite} uses the standard
arguments for disintegration of measures. For completeness, we add
its proof in \ref{sec:Disintegration-of-measures}.

\newpage{}
\begin{cor}
\label{cor:left_to_right}Let $H,W,K$ be as in \ref{thm:disintegration_locally_finite}.
Then a right locally finite measure on $H$ is left $W$-invariant
if and only if it is right $W$-invariant.
\end{cor}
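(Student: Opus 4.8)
The plan is to derive the corollary directly from Theorem~\ref{thm:disintegration_locally_finite} and its right-handed counterpart, the only real input being that normality of $W$ in $H$ lets one trade a right translation by $w\in W$ for a left translation by the conjugate $kwk^{-1}\in W$ inside each fibre, which the Haar measure $\mu_W$ then absorbs.

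First I would handle the implication ``left $W$-invariant $\Rightarrow$ right $W$-invariant''. Assume the locally finite measure $\mu$ on $H$ is left $W$-invariant. Theorem~\ref{thm:disintegration_locally_finite} produces constants $r_k\ge 0$ and, writing $\nu$ for the Haar measure on $K\cong W\backslash H$, the identity
\[
\int_H f(g)\,\dmu(g)=\int_K\Bigl(\int_W f(hk)\,\dmu_W(h)\Bigr)r_k\,\dnu(k),\qquad f\in C_c(H).
\]
Now fix $w\in W$ and $f\in C_c(H)$ and apply this to the function $g\mapsto f(gw)$. In the inner integral I would rewrite $kw=(kwk^{-1})k=w_k\,k$ with $w_k:=kwk^{-1}\in W$ (this is where normality of $W$ is used), so that $f(hkw)=f((hw_k)k)$. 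Since $W$ is unimodular, $\mu_W$ is right $W$-invariant, and the substitution $h\mapsto hw_k$ gives $\int_W f((hw_k)k)\,\dmu_W(h)=\int_W f(hk)\,\dmu_W(h)$. Substituting back, the right-hand side of the displayed identity is unchanged, so $\int_H f(gw)\,\dmu(g)=\int_H f(g)\,\dmu(g)$ for all $w\in W$; that is, $\mu$ is right $W$-invariant.

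For the converse I would run the mirror argument, starting instead from the ``similar claim for right $W$-invariant measures'' part of Theorem~\ref{thm:disintegration_locally_finite}: a right $W$-invariant $\mu$ disintegrates as $\int_H f\,\dmu=\int_K\bigl(\int_W f(kh)\,\dmu_W(h)\bigr)r_k\,\dnu(k)$, using the decomposition $H=K\cdot W$, which also holds since $W$ is normal. Testing left $W$-invariance on $g\mapsto f(wg)$ and writing $wk=k(k^{-1}wk)$ with $k^{-1}wk\in W$, the left $W$-invariance of $\mu_W$ absorbs the inner translation exactly as above, giving left $W$-invariance of $\mu$.

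I do not expect a genuine obstacle here: the argument is pure bookkeeping, and the two facts it relies on — that $kwk^{-1}$ stays inside $W$, and that $\mu_W$ is bi-invariant — are precisely the normality of $W$ and the unimodularity of $W$ assumed in Theorem~\ref{thm:disintegration_locally_finite}. The only points needing care are keeping the left and right actions straight and justifying the interchange of the two integrations, both of which are immediate from the form of the disintegration.
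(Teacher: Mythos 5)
Your proof is correct and follows essentially the same route as the paper's: apply the disintegration of Theorem~\ref{thm:disintegration_locally_finite} to $g\mapsto f(gw)$, use normality of $W$ to rewrite $kw=(kwk^{-1})k$ inside the fibre, and absorb the conjugate $kwk^{-1}\in W$ with the bi-invariance (unimodularity) of $\mu_W$. The paper dispatches the converse direction by saying ``the same argument,'' which is exactly the mirror computation you spell out using the right-handed disintegration.
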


\begin{proof}
Let $\mu$ be a left $W$-invariant measure and fix some $h_{0}\in W$.
Then we have that
\[
\mu\left(R_{h_{0}}\left(f\right)\right)=\int_{K}\left(\int_{W}f\left(hkh_{0}\right)\dmu_{W}\left(h\right)\right)r_{k}\dnu\left(k\right)=\int_{K}\left(\int_{W}f\left(h\left(kh_{0}k^{-1}\right)k\right)\dmu_{W}\left(h\right)\right)r_{k}\dnu\left(k\right)
\]
Since $W$ is normal we have that $kh_{0}k^{-1}\in W$, and because
$W$ is unimodular, its left Haar measure is also right Haar, so that
$\int_{H}f\left(h\left(kh_{0}k^{-1}\right)k\right)\dmu_{W}\left(h\right)=\int_{H}f\left(hk\right)\dmu_{W}\left(h\right)$.
It follows that $\mu\left(R_{h_{0}}\left(f\right)\right)=\mu\left(f\right)$,
so that $\mu$ is also right $W$-invariant. The same argument show
that right implies left $W$-invariance which complete the proof. 
\end{proof}
We can now show how to extend the $\SL_{d}\left(\RR\right)$-invariance
to the $W_{S}$-invariance.
\begin{lem}[Unique Ergodicity]
\label{lem:Unique_Ergodicity}Let $S\subseteq\PP_{\infty}$ be finite
and let $\mu_{S}$ be a $\SL_{d}\left(\RR\right)$-invariant probability
measure on $X_{S}$. Then $\mu_{S}$ must be $W_{S}$-invariant.
\end{lem}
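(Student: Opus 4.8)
The plan is to lift $\mu_{S}$ to a locally finite measure on the group $H_{S}$, play right and left invariance against each other by means of Corollary~\ref{cor:left_to_right}, and then push the conclusion back down to $X_{S}$.

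Concretely, using the identification $X_{S}\cong\SL_{d}(\ZZ)\backslash H_{S}$, define a measure $\hat{\mu}$ on $H_{S}$ by
\[
\hat{\mu}(f)=\int_{X_{S}}\Big(\sum_{\gamma\in\SL_{d}(\ZZ)}f(\gamma h)\Big)\,d\mu_{S}(\SL_{d}(\ZZ)h),\qquad f\in C_{c}(H_{S}).
\]
Since $\SL_{d}(\ZZ)$ is already discrete in the real factor of $H_{S}$, it acts properly, the inner sum is finite, the function $\SL_{d}(\ZZ)h\mapsto\sum_{\gamma}f(\gamma h)$ lies in $C_{c}(X_{S})$, and $\hat{\mu}$ is a well defined locally finite measure. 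By construction $\hat{\mu}$ is left $\SL_{d}(\ZZ)$-invariant, and unfolding the defining sum shows that right $\SL_{d}(\RR)$-invariance of $\mu_{S}$ passes to right $\SL_{d}(\RR)$-invariance of $\hat{\mu}$. Now $\SL_{d}(\RR)$ is a unimodular normal (direct-factor) subgroup of $H_{S}$ whose compact complement $\prod_{p\in S\setminus\{\infty\}}\GL_{d}(\ZZ_{p})$ meets it trivially, so Corollary~\ref{cor:left_to_right} converts the right $\SL_{d}(\RR)$-invariance of $\hat{\mu}$ into left $\SL_{d}(\RR)$-invariance. Hence $\hat{\mu}$ is left invariant under $\langle\SL_{d}(\RR),\SL_{d}(\ZZ)\rangle$; as left translation is continuous on locally finite measures, it is left invariant under the closure of this group, which, since $\SL_{d}(\ZZ)$ is dense in $\prod_{p\in S\setminus\{\infty\}}\SL_{d}(\ZZ_{p})$ (reduction modulo $N$ maps $\SL_{d}(\ZZ)$ onto $\SL_{d}(\ZZ/N\ZZ)$, together with the Chinese remainder theorem), is exactly $W_{S}$.

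It remains to reverse the last move. As recorded just before the statement, $W_{S}$ is again a unimodular normal subgroup of $H_{S}$, and its compact complement $\prod_{p\in S\setminus\{\infty\}}\ZZ_{p}^{\times}$ meets it trivially, so a second application of Corollary~\ref{cor:left_to_right}, this time with $W=W_{S}$, upgrades the left $W_{S}$-invariance of $\hat{\mu}$ to right $W_{S}$-invariance. Since $\hat{\mu}$ descends to $\mu_{S}$ on $\SL_{d}(\ZZ)\backslash H_{S}$ and this descent intertwines the right actions, $\mu_{S}$ is right $W_{S}$-invariant, as claimed. The one place that deserves care is the bookkeeping for $\hat{\mu}$ — that it really is locally finite and that both invocations of Corollary~\ref{cor:left_to_right} genuinely satisfy the hypothesis ``$H=W\cdot K$, $W\cap K=\{e\}$, with $W$ unimodular and normal'' — but once the structure of $H_{S}$ relative to $\SL_{d}(\RR)$ and to $W_{S}$ is in hand this is purely formal, and no dynamical input beyond the given $\SL_{d}(\RR)$-invariance is needed.
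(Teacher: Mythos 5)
Your proof is correct and follows essentially the same route as the paper's: lift $\mu_S$ to a left $\SL_d(\ZZ)$-invariant, right $\SL_d(\RR)$-invariant measure on $H_S$, apply Corollary~\ref{cor:left_to_right} with $W=\SL_d(\RR)$ to convert right into left invariance, use strong approximation for $\SL_d$ to pass from $\langle\SL_d(\ZZ),\SL_d(\RR)\rangle$ to its closure $W_S$, and then apply Corollary~\ref{cor:left_to_right} a second time with $W=W_S$ (and $K=\prod_{p\in S\setminus\{\infty\}}\ZZ_p^{\times}$) to come back to right invariance, which descends to $\mu_S$. The only cosmetic difference is that you construct the lift via the unfolding formula $\hat{\mu}(f)=\int\sum_{\gamma}f(\gamma h)\,d\mu_S$ rather than the paper's fundamental-domain description — the two are equivalent, and your version makes the local finiteness and the intertwining of right actions a bit more transparent.
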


\begin{proof}
Let $\tilde{\mu}_{S}$ be the lift of $\mu_{S}$ from $\SL_{d}\left(\ZZ\right)\backslash H_{S}$
to $H_{S}$, i.e. for sets $F$ inside the fundamental domain we set
$\tilde{\mu}_{S}\left(F\right)=\mu_{S}\left(\SL_{d}\left(\ZZ\right)F\right)$,
and extend this to a left $\SL_{d}\left(\ZZ\right)$-invariant measure
on $G_{S}$. The measure $\tilde{\mu}_{S}$ is left $\SL_{d}\left(\ZZ\right)$
(diagonally) and right $\SL_{d}\left(\RR\right)$-invariant measure,
so by \corref{left_to_right} it is also $\SL_{d}\left(\RR\right)$-left
invariant. Using the weak approximation of $\ZZ$ in $\prod_{p\in S\backslash\left\{ \infty\right\} }\ZZ_{p}$
we get that $\tilde{\mu}_{S}$ is $W_{S}:=\overline{\left\langle \SL_{d}\left(\ZZ\right),\SL_{d}\left(\RR\right)\right\rangle }=\SL_{d}\left(\RR\right)\times\prod_{p\in S\backslash\left\{ \infty\right\} }\SL_{d}\left(\ZZ_{p}\right)$-invariant.
Applying \corref{left_to_right} again, we obtain that $\tilde{\mu}_{S}$
and therefore $\mu_{S}$ is right $W_{S}$-invariant.
\end{proof}

\subsubsection{\label{subsec:to_full_invariance}From $W_{S}$ to $G_{S}$-invariance}

Finally, we want to extend the $W_{S}$-invariance from the previous
section to the full $G_{S}$-invariance for $S\subseteq\PP_{\infty}$
finite. The first observation is that it is enough to show $H_{S}$-invariance.
This is because there is a unique $H_{S}$-invariant measure on $X_{S}=\SL_{d}\left(\ZZ\right)\backslash H_{S}$
(up to normalization) and the $G_{S}$-invariant measure is in particular
$H_{S}$-invariant, so it must be this unique measure.

In order to show the $H_{S}$-invariance, we consider again the map
${\displaystyle \det_{S\backslash\left\{ \infty\right\} }}:H_{S}\to{\displaystyle \prod_{p\in S\backslash\left\{ \infty\right\} }}\ZZ_{p}^{\times}$
defined in the previous section. This map is also well defined on
$X_{S}=\SL_{d}\left(\ZZ\right)\backslash H_{S}$ and by abuse of notation
we will denote it also with ${\displaystyle \det_{S\backslash\left\{ \infty\right\} }}$.
Thus, the last ingredient that we need, is that the pushforward of
the measure to ${\displaystyle \prod_{p\in S\backslash\left\{ \infty\right\} }}\ZZ_{p}^{\times}$
will also be the Haar measure.
\begin{lem}
\label{lem:finite_index_invariance} Let $H,W$ and $K$ be as in
\ref{thm:disintegration_locally_finite} and let $\Gamma\leq H$ be
a lattice which is also contained in $W$. Then a probability measure
$\mu$ on $\Gamma\backslash H$ is $H$-invariant if it is $W$-invariant
and it projection to $W\backslash H$ via $\Gamma\backslash H\to W\backslash H\cong K$
is $K$-invariant.
\end{lem}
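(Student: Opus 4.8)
The idea is to use the disintegration structure from Theorem~\ref{thm:disintegration_locally_finite}. Since $\Gamma \le W$, the quotient map $\Gamma\backslash H \to W\backslash H \cong K$ is well defined (the $W$-action from the right descends to $\Gamma\backslash H$, and $K$ is a transversal for $W$ in $H$). Given a $W$-invariant probability measure $\mu$ on $\Gamma\backslash H$, I would first lift it to a left $\Gamma$-invariant, right $W$-invariant locally finite measure $\tilde\mu$ on $H$ by setting $\tilde\mu(F) = \mu(\Gamma F)$ for $F$ inside a fundamental domain for $\Gamma$ in $H$ and extending by left $\Gamma$-invariance, exactly as in the proof of Lemma~\ref{lem:Unique_Ergodicity}. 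By Corollary~\ref{cor:left_to_right} (right $W$-invariance $\Leftrightarrow$ left $W$-invariance, using that $W$ is a unimodular normal subgroup with $H = W\cdot K$, $W\cap K=\{e\}$), $\tilde\mu$ is also left $W$-invariant; and since $\Gamma\le W$, the left $\Gamma$-invariance is already subsumed. So $\tilde\mu$ is a left and right $W$-invariant locally finite measure on $H$.

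Now apply Theorem~\ref{thm:disintegration_locally_finite} to $\tilde\mu$: there exist weights $r_k \ge 0$, $k \in K$, so that
\[
\int_H f(g)\,\dmu(g) = \int_K\!\left(\int_W f(hk)\,\dmu_W(h)\right) r_k\,\dnu(k),
\]
where $\mu_W$ is Haar measure on $W$ and $\nu$ is Haar on $K$. The measure $r_k\,\dnu(k)$ on $K$ is precisely (a lift of) the pushforward of $\mu$ under $\Gamma\backslash H \to K$, because $W$-invariance of $\tilde\mu$ means the inner integral does not see the choice of $K$-representative. By hypothesis this pushforward is $K$-invariant, i.e. $r_k\,\dnu(k)$ is a Haar measure on $K$; after normalization $r_k$ is constant $\nu$-a.e. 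Substituting a constant for $r_k$, the formula becomes $\tilde\mu = \mu_W \otimes \nu$ (transported to $H = W\cdot K$), which is a bi-invariant Haar measure on $H$. Hence $\tilde\mu$ is right $H$-invariant, so the induced measure $\mu$ on $\Gamma\backslash H$ is $H$-invariant. Since the $H$-invariant probability measure on $\Gamma\backslash H$ is unique (as $H$ is unimodular and $\Gamma$ a lattice), $\mu = \mu_{Haar}$.

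The one point that needs care — and which I expect to be the main technical obstacle — is the identification of the disintegration weights $r_k$ with the pushforward of $\mu$ to $K$, and in particular checking that $K$-invariance of that pushforward really forces $r_k$ to be $\nu$-a.e.\ constant rather than merely equal $\nu$-a.e.\ to something $K$-invariant in a weaker sense. This amounts to: a locally finite measure on the compact group $K$ that is invariant under right translation by $K$ is a multiple of Haar measure. That is standard, but one must make sure the right-$K$-translation structure on $K\cong W\backslash H$ matches the way $K$ sits inside $H$ via $H = W\cdot K$ (i.e.\ that multiplying $hk$ on the right by $k_0\in K$ and then reprojecting to $W\backslash H$ is just $k\mapsto kk_0$ up to the $W$-component, which it is because $W$ is normal so $kk_0 = w' \cdot (k'')$ reshuffles only through $W$). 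Once this bookkeeping is in place, everything else is a routine application of the cited results. I would also remark that this lemma is exactly the missing ingredient used to upgrade $W_S$-invariance to $H_S$-invariance in the finite-$S$ case, taking $H = H_S$, $W = W_S$, $K = \prod_{p\in S\backslash\{\infty\}}\ZZ_p^\times$ (embedded as in Definition~\ref{def:determinant}), $\Gamma = \SL_d(\ZZ)$, and the projection being $\det_{S\backslash\{\infty\}}$.
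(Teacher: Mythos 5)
Your argument is correct and reaches the stated conclusion, but it is organised differently from the paper's proof. You lift $\mu$ to a locally finite, left $\Gamma$-invariant, right $W$-invariant measure $\tilde\mu$ on $H$, upgrade to left $W$-invariance via Corollary~\ref{cor:left_to_right}, and then invoke the paper's own disintegration result, Theorem~\ref{thm:disintegration_locally_finite}, to write $\tilde\mu(f)=\int_K\left(\int_W f(hk)\,\dmu_W(h)\right)r_k\,\dnu(k)$; once $r_k\,\dnu(k)$ is identified (up to a scalar, namely the covolume of $\Gamma$ in $W$, seen by testing against functions supported on $F_\Gamma K$ for a fundamental domain $F_\Gamma\subseteq W$) with the pushforward of $\mu$ to $K$, the hypothesis forces that weight measure to be Haar, so $\tilde\mu$ is a multiple of the Haar measure on $H=WK$, hence $\mu$ is $H$-invariant. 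The paper, by contrast, never leaves $\Gamma\backslash H$: it applies the standard disintegration of probability measures for $\Gamma\backslash H\to K$, then uses uniqueness of the conditional measures together with normality of $W$ and right $W$-invariance of $\mu$ to show the conditionals on $\Gamma\backslash W$ are $W$-invariant, hence equal to the unique $W$-invariant probability there; the $K$-invariance of the pushforward then immediately gives right $K$-invariance of $\mu$. The two routes encode the same density-and-uniqueness idea (the appendix proves Theorem~\ref{thm:disintegration_locally_finite} by precisely that method), so yours buys reuse of already-established machinery at the price of the lift-and-descend bookkeeping, while the paper's is shorter and stays entirely on $\Gamma\backslash H$. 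One small point to tighten in a final write-up: what the hypothesis gives directly is that $r_k\,\dnu(k)$ is a scalar multiple of the Haar measure on $K$, not literally that $r_k$ is $\nu$-a.e.\ constant; but that suffices, since it already makes $\tilde\mu$ the product of $\mu_W$ with Haar on $K$, i.e.\ a Haar measure on $H$.
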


\begin{proof}
Using the standard disintegration of measures (see for example section
5.3 in \cite{einsiedler_ergodic_2010}) for the map $\Gamma\backslash H\to W\backslash H\cong K$,
we can write $\mu$ as
\[
\mu\left(f\right)=\int_{K}\left(\int_{\Gamma\backslash W}f\left(\Gamma wk\right)\dmu_{k}\right)\dnu
\]
where $\dnu$ is the pushforward of the measure $\mu$ to $K$ and
$\dmu_{k}$ are supported on $\Gamma\backslash W$. Moreover, the
measures $\dmu_{k}$ are uniquely defined for $\nu$ almost every
$k$. Note that since $W\idealeq H$ and $\mu$ is right $W$-invariant,
for any $w_{0}\in W$ we have that 
\begin{align*}
\mu\left(f\right) & =\mu\left(R_{w_{0}}\left(f\right)\right)=\int_{K}\left(\int_{\Gamma\backslash W}f\left(\Gamma w\left(kw_{0}k^{-1}\right)k\right)\dmu_{k}\right)\dnu.
\end{align*}
But the $\mu_{k}$ are uniquely defined (almost everywhere), so they
must also be $kw_{0}k^{-1}$-invariant. Doing this for a countable
dense set of $W_{0}\subseteq W$ we conclude for $\nu$ almost every
$k$ the measure $\mu_{k}$ is $W_{0}$-invariant, and therefore $W=\overline{W_{0}}$-invariant.
Since there is a unique such probability measure, these are all the
same measure $\mu_{\Gamma\backslash W}$, and therefore 
\[
\mu\left(f\right)=\int_{K}\left(\int_{\Gamma\backslash W}f\left(\Gamma wk\right)\dmu_{\Gamma\backslash W}\right)\dnu.
\]
It now follows that $\mu$ is also right $K$-invariant and therefore
$W\cdot K=H$-invariant.
\end{proof}
We are now ready to put all the results together.
\begin{thm}
\label{thm:main_lifting}Let $S\subseteq\PP_{\infty}$ (may be infinite)
and $\mu_{S}$ a probability measure on $X_{S}$. Denote by $\mu_{\RR}$
the projection $\pi_{\RR}^{S}\left(\mu_{S}\right)$. Suppose that:
\begin{enumerate}
\item ($\RR$-uniformity) $\mu_{\RR}$ is the $\SL_{d}\left(\RR\right)$-invariant
measure on $X_{\RR}$,
\item ($\RR$-invariance) $\mu_{S}$ is invariant under some $id\neq a\in A$
or under some one parameter unipotent subgroup in $\SL_{d}\left(\RR\right)$,
and 
\item (prime-uniformity) for any $S_{0}\subseteq S$ finite, the pushforward
${\displaystyle \det_{S_{0}\backslash\left\{ \infty\right\} }}\left(\pi_{S_{0}}^{S}\left(\mu_{S}\right)\right)$
to ${\displaystyle \prod_{p\in S_{0}\backslash\left\{ \infty\right\} }}\ZZ_{p}^{\times}$
is the Haar measure.
\end{enumerate}
Then $\mu_{S}$ is the $G_{S}$-invariant probability.
\end{thm}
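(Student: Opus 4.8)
The plan is to reduce the general (possibly infinite) $S$ to the finite case, and then to chain together the three lifting results already established in this section. For the reduction: a $G_S$-invariant probability measure on $X_S$ is unique (it exists since $\Gamma_S$ is a lattice in the unimodular $G_S$), so it suffices to show that $\mu_S$ is $G_S$-invariant. By the restricted-product structure of $G_S$, the group $G_S$ is generated (topologically) by the subgroups $G_{S_0}$ over finite $\infty\in S_0\subseteq S$; hence it is enough to show that $\mu_S$ is $G_{S_0}$-invariant for every such $S_0$. Fix one such $S_0$ and write $\nu=\pi_{S_0}^{S}(\mu_S)$, a probability measure on $X_{S_0}$. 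Since $\pi_{S_0}^{S}$ is $G_{S_0}$-equivariant (identifying $G_{S_0}\le G_S$ as the elements that are the identity outside $S_0$), it is enough to prove that $\nu$ is $G_{S_0}$-invariant; pulling back, $\mu_S$ is then $G_{S_0}$-invariant because the $G_{S_0}$-action commutes with the fibres of $\pi_{S_0}^{S}$, which are orbits of $\prod_{p\in S\setminus S_0}\GL_d(\ZZ_p)$, a group commuting with $G_{S_0}$. So we are reduced to the finite set $S_0$, with a measure $\nu$ on $X_{S_0}$ satisfying: $\pi_\RR^{S_0}(\nu)=\mu_{Haar,\RR}$ (from (1), since $\pi_\RR^S=\pi_\RR^{S_0}\circ\pi_{S_0}^S$); $\nu$ is invariant under the same $a\in A$ or unipotent subgroup (invariance is preserved by the equivariant push-forward, using (2)); and $\det_{S_0\setminus\{\infty\}}(\nu)$ is Haar on $\prod_{p\in S_0\setminus\{\infty\}}\ZZ_p^\times$ (this is exactly (3) for $S_0$).

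Now apply the chain of lemmas to $\nu$ on $X_{S_0}$ with $S_0$ finite. First, by \thmref{increase_invariance} (the $X_S$-version of \lemref{increase_partial_invariance}), conditions (1) and (2) give that $\nu$ is $\SL_d(\RR)$-invariant. Second, by \lemref{Unique_Ergodicity}, an $\SL_d(\RR)$-invariant probability measure on $X_{S_0}=\SL_d(\ZZ)\backslash H_{S_0}$ is automatically $W_{S_0}$-invariant, where $W_{S_0}=\SL_d(\RR)\times\prod_{p\in S_0\setminus\{\infty\}}\SL_d(\ZZ_p)$; this is the step where modding out by the diagonally embedded $\SL_d(\ZZ)$, together with weak approximation of $\ZZ$ in $\prod_{p\in S_0\setminus\{\infty\}}\ZZ_p$, mixes the real and finite coordinates. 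Third, we want to upgrade $W_{S_0}$-invariance to $H_{S_0}$-invariance; since the $G_{S_0}$-invariant measure is the unique $H_{S_0}$-invariant probability measure on $X_{S_0}$, this will finish the proof. For this we invoke \lemref{finite_index_invariance} with $H=H_{S_0}$, $W=W_{S_0}$, $K=\prod_{p\in S_0\setminus\{\infty\}}\ZZ_p^\times$, and $\Gamma=\SL_d(\ZZ)\le W_{S_0}$: one checks $H_{S_0}=W_{S_0}\cdot K$ and $W_{S_0}\cap K=\{Id\}$ using \defref{determinant}, and $W_{S_0}\cong W\backslash H$-complement $K$ is compact. The hypothesis of \lemref{finite_index_invariance} that the push-forward of $\nu$ to $W_{S_0}\backslash H_{S_0}\cong K$ is $K$-invariant is precisely the statement that $\det_{S_0\setminus\{\infty\}}(\nu)$ is Haar on $\prod_{p\in S_0\setminus\{\infty\}}\ZZ_p^\times$, i.e. condition (3). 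Hence $\nu$ is $H_{S_0}$-invariant, therefore equals $\mu_{Haar,S_0}$, therefore is $G_{S_0}$-invariant.

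Putting this together: $\mu_S$ is $G_{S_0}$-invariant for every finite $\infty\in S_0\subseteq S$, so $\mu_S$ is $G_S$-invariant, and by uniqueness $\mu_S=\mu_{Haar,S}$. $\qed$

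I expect the main obstacle to be the reduction from infinite $S$ to finite $S_0$: one must argue carefully that $G_S$-invariance follows from $G_{S_0}$-invariance for all finite $S_0$, which rests on the topological fact that $\bigcup_{S_0}G_{S_0}$ is dense in $G_S$ together with the observation that the stabiliser of $\mu_S$ in $G_S$ is closed, and also that pulling the finite-level invariance back through $\pi_{S_0}^S$ is legitimate because the fibres of $\pi_{S_0}^S$ are orbits of a compact group commuting with $G_{S_0}$. Everything downstream of that — the three lemmas — is already done in the preceding subsections, so the remaining work is essentially bookkeeping about equivariance of the projections and preservation of invariance and of the Haar-push-forward hypotheses under $\pi_{S_0}^S$.
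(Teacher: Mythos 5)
Your treatment of the finite-$S$ case coincides with the paper's: \thmref{increase_invariance} for $\SL_d\left(\RR\right)$-invariance, then \lemref{Unique_Ergodicity} to upgrade to $W_{S_0}$-invariance, then \lemref{finite_index_invariance} together with the prime-uniformity condition to reach $H_{S_0}$-invariance, and therefore the Haar measure on $X_{S_0}$.

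The reduction from infinite $S$ to a finite $S_0$, however, has a genuine gap. You claim that $G_{S_0}$-invariance of $\nu=\pi_{S_0}^{S}\left(\mu_S\right)$ pulls back to $G_{S_0}$-invariance of $\mu_S$ itself, on the grounds that the fibres of $\pi_{S_0}^{S}$ are orbits of the compact group $\prod_{p\in S\setminus S_0}\GL_d\left(\ZZ_p\right)$ which commutes with $G_{S_0}$. That implication is false: a measure and a translate of it can have the same push-forward without being equal, because the conditional measures on the fibres may vary from fibre to fibre. Concretely, if $\mu_S$ were concentrated on the graph of a non-constant measurable section of $\pi_{S_0}^{S}$, its push-forward could be Haar on $X_{S_0}$ while $\mu_S$ itself is far from $G_{S_0}$-invariant. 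Commutativity of $G_{S_0}$ with the fibre group would only help if $\mu_S$ were already known to be invariant under the fibre group, which is part of what you are trying to prove.

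The paper avoids this by proving $H_S$-invariance directly via a density argument. By Stone--Weierstrass the functions $f\circ\pi_{S_0}^{S}$, for $S_0$ finite and $f\in C_c\left(X_{S_0}\right)$, span a dense subspace of $C_c\left(X_S\right)$. For such a function and $g\in H_S$, the part of $g$ outside $S_0$ already fixes $f\circ\pi_{S_0}^{S}$, so one may assume $g\in H_{S_0}$; then equivariance gives $g\left(f\circ\pi_{S_0}^{S}\right)=g\left(f\right)\circ\pi_{S_0}^{S}$, and
\[
\mu_S\bigl(g\left(f\circ\pi_{S_0}^{S}\right)\bigr)=\pi_{S_0}^{S}\left(\mu_S\right)\bigl(g\left(f\right)\bigr)=\pi_{S_0}^{S}\left(\mu_S\right)\left(f\right)=\mu_S\bigl(f\circ\pi_{S_0}^{S}\bigr),
\]
using the $H_{S_0}$-invariance of $\pi_{S_0}^{S}\left(\mu_S\right)$ that you already established at the finite level. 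Since this holds on a dense set of test functions and for all $g\in H_S$, the measure $\mu_S$ is $H_S$-invariant and hence $\mu_S=\mu_{Haar,S}$. If you prefer to keep your framing of proving $G_{S_0}$-invariance of $\mu_S$ for each finite $S_0$, you can, but the proof at a fixed $S_0$ must test against pull-backs from all larger finite levels $S_1\supseteq S_0$ and invoke the finite-level result at $S_1$ --- which is the same density argument in a slightly different order, not the fibre-commutativity argument you gave.
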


\begin{proof}
We begin with the proof for $S\subseteq\PP_{\infty}$ finite. In this
case, conditions (1) and (2) with \thmref{increase_invariance} imply
that $\mu_{S}$ is $\SL_{d}\left(\RR\right)$-invariant. Then using
\lemref{Unique_Ergodicity} we get that it is $\SL_{d}\left(\RR\right)\times\prod_{p\in S}\SL_{d}\left(\ZZ_{p}\right)$-invariant.
Finally, condition (3) together with \lemref{finite_index_invariance}
imply that $\mu_{S}$ is $H_{S}$-invariant.

Assume now that $S$ is infinite. For any $S_{0}\subseteq S$ finite
we can pull back the functions in $C_{c}\left(X_{S_{0}}\right)$ to
$C_{c}\left(X_{S}\right)$ and using the Stone-Weierstrass theorem
we get that the union of these sets over these $S_{0}$ spans a dense
subset of $C_{c}(X_{S})$. Hence, it is enough to prove that for any
such set $S_{0}$, $f\in C_{c}\left(X_{S_{0}}\right)$ and $g\in H_{S}$
we have that $\mu_{S}\left(g\left(f\circ\pi_{S_{0}}^{S}\right)\right)=\mu_{S}\left(f\circ\pi_{S_{0}}^{S}\right)$.
The function $f\circ\pi_{S_{0}}^{S}$ is already invariant under $g\in H_{S}$
which is the identity in the $S_{0}$ places (because $f$ is invariant
there), so it is enough to prove this for $g\in H_{S_{0}}$, which
then satisfies 
\[
\mu_{S}\left(g\left(f\circ\pi_{S_{0}}^{S}\right)\right)=\mu_{S}\left(g\left(f\right)\circ\pi_{S_{0}}^{S}\right)=\pi_{S_{0}}^{S}\left(\mu_{S}\right)\left(g\left(f\right)\right).
\]
The measure $\pi_{S_{0}}^{S}\left(\mu_{S}\right)$ on $X_{S_{0}}$
also satisfies all the condition of this theorem and $S_{0}$ is finite,
so that $\pi_{S_{0}}^{S}\left(\mu_{S}\right)$ is $H_{S_{0}}$-invariant.
It follows that the expression above equals to $\pi_{S_{0}}^{S}\left(\mu_{S}\right)\left(f\right)=\mu_{S}\left(f\circ\pi_{S_{0}}^{S}\right)$
which is what we wanted to show.
\end{proof}

\newpage{}

\section{\label{sec:Adelic_translations-1}Adelic translations}

\global\long\def\norm#1{\left\Vert #1\right\Vert }%
\global\long\def\AA{\mathbb{A}}%
\global\long\def\QQ{\mathbb{Q}}%
\global\long\def\PP{\mathbb{P}}%
\global\long\def\CC{\mathbb{C}}%
\global\long\def\HH{\mathbb{H}}%
\global\long\def\ZZ{\mathbb{Z}}%
\global\long\def\NN{\mathbb{N}}%
\global\long\def\KK{\mathbb{K}}%
\global\long\def\RR{\mathbb{R}}%
\global\long\def\FF{\mathbb{F}}%
\global\long\def\oo{\mathcal{O}}%
\global\long\def\aa{\mathcal{A}}%
\global\long\def\bb{\mathcal{B}}%
\global\long\def\ff{\mathcal{F}}%
\global\long\def\mm{\mathcal{M}}%
\global\long\def\limfi#1#2{{\displaystyle \lim_{#1\to#2}}}%
\global\long\def\pp{\mathcal{P}}%
\global\long\def\qq{\mathcal{Q}}%
\global\long\def\da{\mathrm{da}}%
\global\long\def\dt{\mathrm{dt}}%
\global\long\def\dg{\mathrm{dg}}%
\global\long\def\ds{\mathrm{ds}}%
\global\long\def\dm{\mathrm{dm}}%
\global\long\def\dmu{\mathrm{d\mu}}%
\global\long\def\dx{\mathrm{dx}}%
\global\long\def\dy{\mathrm{dy}}%
\global\long\def\dz{\mathrm{dz}}%
\global\long\def\dnu{\mathrm{d\nu}}%
\global\long\def\flr#1{\left\lfloor #1\right\rfloor }%
\global\long\def\nuga{\nu_{\mathrm{Gauss}}}%
\global\long\def\diag#1{\mathrm{diag}\left(#1\right)}%
\global\long\def\bR{\mathbb{R}}%
\global\long\def\Ga{\Gamma}%
\global\long\def\PGL{\mathrm{PGL}}%
\global\long\def\GL{\mathrm{GL}}%
\global\long\def\PO{\mathrm{PO}}%
\global\long\def\SL{\mathrm{SL}}%
\global\long\def\PSL{\mathrm{PSL}}%
\global\long\def\SO{\mathrm{SO}}%
\global\long\def\mb#1{\mathrm{#1}}%
\global\long\def\wstar{\overset{w^{*}}{\longrightarrow}}%
\global\long\def\vphi{\varphi}%
\global\long\def\av#1{\left|#1\right|}%
\global\long\def\inv#1{\left(\mathbb{Z}/#1\mathbb{Z}\right)^{\times}}%
\global\long\def\cH{\mathcal{H}}%
\global\long\def\cM{\mathcal{M}}%
\global\long\def\bZ{\mathbb{Z}}%
\global\long\def\bA{\mathbb{A}}%
\global\long\def\bQ{\mathbb{Q}}%
\global\long\def\bP{\mathbb{P}}%
\global\long\def\eps{\epsilon}%
\global\long\def\on#1{\mathrm{#1}}%
\global\long\def\nuga{\nu_{\mathrm{Gauss}}}%
\global\long\def\set#1{\left\{  #1\right\}  }%
\global\long\def\smallmat#1{\begin{smallmatrix}#1\end{smallmatrix}}%
\global\long\def\len{\mathrm{len}}%
\global\long\def\idealeq{\trianglelefteqslant}%

In this section we consider translations of orbit measures over the
adeles, where the end goal is to show that the limit is the uniform
Haar measure. In \thmref{main_lifting} we gave some conditions that
imply that a probability measure is the Haar measure. However, our
orbit translations are only locally finite and not finite, so we begin
this section with the definition and some basic results about such
measures. 

In \subsecref{Orbit_measures} we define what are orbit measures and
their translation, and using an Iwasawa decomposition over the adeles,
we show that in our translation result we only need to consider very
special type of unipotent matrices. In particular this new presentation
will allow us to show that the limit measure (if it exists) will be
either $A$ or $U$-invariant, which is the $\RR$-invariance condition
in \thmref{main_lifting}.

In \subsecref{Uniform_prime_invariance} we prove that any limit of
our translated orbits will satisfy the prime invariance from \thmref{main_lifting}.
In order to do that we need first to show that we can restrict our
infinite measure on the translated divergent orbit to a finite part,
by removing the parts ``close'' to the cusp. Also, we will utilize
some symmetry to cut even this finite part in half. This will help
us later on in \secref{real_case} when we use these measures to approximate
expanding horocycles which will give us the last $\RR$-uniformity
condition that we need for \thmref{main_lifting}.

\subsection{\label{subsec:Locally_finite}Locally finite measures}

So far, all of our spaces $X_{S}$ and the groups are locally compact,
second countable Hausdorff spaces. We will now give the definitions
for locally finite measures on such spaces.
\begin{defn}
\label{def:locally_finite}Let $Z$ be a locally compact second countable
space and denote by $\mm\left(Z\right)$ the set of all locally finite
measures on $Z$, namely measures $\mu$ such that $\mu\left(K\right)<\infty$
for any $K\subseteq Z$ compact. Since locally finite measures don't
have a natural normalization, we define $\mathcal{P}\mm\left(Z\right)$
to be homothety classes of nonzero measures in $\mm\left(Z\right)$
and for $0\neq\mu\in\mm\left(Z\right)$ we denote its class by $\left[\mu\right]\in\mathcal{P}\mm\left(Z\right)$.
In other words $\left[\mu_{1}\right]=\left[\mu_{2}\right]$ if there
is some $c>0$ such that $\mu_{1}=c\mu_{2}$.
\begin{itemize}
\item For $\mu_{i},\mu_{\infty}\in\mm\left(Z\right)$ we say that $\mu_{i}\wstar\mu_{\infty}$
if $\mu_{i}\left(f\right)\to\mu_{\infty}\left(f\right)$ for every
$f\in C_{c}\left(Z\right)$. 
\item If $\mu_{i},\mu_{\infty}$ are nonzero, we will write $\left[\mu_{i}\right]\to\left[\mu_{\infty}\right]$
if $\exists d_{i}>0$ such that $d_{i}\mu_{i}\wstar\mu_{\infty}$.
\end{itemize}
\end{defn}

It is not hard to check that the convergence in $\pp\mm\left(Z\right)$
is equivalent to the following definitions (see for example \cite{shapira_limiting_nodate}):
\begin{enumerate}
\item There exist positive scalars $c_{i}>0$ such that $c_{i}\mu_{i}\left(f\right)\to\mu\left(f\right)$
for any $f\in C_{c}\left(Z\right)$.
\item There exist positive scalars $c_{i}>0$ such that $c_{i}\mu_{i}\mid_{K}\wstar\mu\mid_{K}$
for any compact subset $K\subseteq Z$.
\item For any two $f_{1},f_{2}\in C_{c}\left(Z\right)$ with $\mu\left(f_{2}\right)\neq0$
we have that $\frac{\mu_{i}\left(f_{1}\right)}{\mu_{i}\left(f_{2}\right)}\to\frac{\mu\left(f_{1}\right)}{\mu\left(f_{2}\right)}$.
\end{enumerate}
The last definition let us define a topology on $\pp\mm\left(Z\right)$.
If $\left[\mu\right]\in\pp\mm\left(Z\right)$, then the basic open
sets containing $\left[\mu\right]$ are of the form
\[
V_{\left(\mu,f_{1},f_{2},\varepsilon\right)}:=\left\{ \nu\;\mid\;\left|\frac{\nu\left(f_{1}\right)}{\nu\left(f_{2}\right)}-\frac{\mu\left(f_{1}\right)}{\mu\left(f_{2}\right)}\right|<\varepsilon\right\} ,
\]
where $f_{1},f_{2}\in C_{c}\left(Z\right)$, $\mu\left(f_{2}\right)\neq0$
and $\varepsilon>0$.\\

Note that if $\psi:Z_{1}\to Z_{2}$ is proper, i.e. the preimage of
a compact set is compact, then for $\mu\in\mm\left(Z_{1}\right)$
we have that $\mu\circ\psi^{-1}\in\mm\left(Z_{2}\right)$. Abusing
our notations, we shall also denote by $\psi$ the induced maps $\mm\left(Z_{1}\right)\to\mm\left(Z_{2}\right)$
and $\mathcal{P}\mm\left(Z_{1}\right)\to\mathcal{P}\mm\left(Z_{2}\right)$.\\

Our spaces will usually have some group action on them (mainly $G_{S}$
and $H_{S}$), and the next lemma shows that the induced action on
the locally finite measures is continuous, if the action of $G$ is
continuous.
\begin{lem}
\label{lem:uniform_continuity}Let $G$ act strongly on the space
$Z$ (the map $\left(g,z\right)\mapsto gz$ is continuous). Then any
\\
$f\in C_{c}\left(Z\right)$ is uniformly continuous, namely for every
$\varepsilon>0$ there is some open neighborhood \\
$e\in U\subseteq G$, such that for all $g\in U$ we have that $\norm{f-f\circ g}_{\infty}<\varepsilon$.
\end{lem}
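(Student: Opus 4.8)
The plan is the standard compactness (tube-lemma) argument, the only subtlety being that $Z$ need not be compact, so the displacement $f-f\circ g$ must be controlled also at points lying outside $\mathrm{supp}(f)$. Write $K=\mathrm{supp}(f)$, which is compact since $f\in C_{c}(Z)$, and fix $\varepsilon>0$.

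First I would exploit joint continuity of the action near the slice $\{e\}\times K$. For each $z\in K$ the map $(g,z')\mapsto f(gz')$ is continuous and takes the value $f(z)$ at $(e,z)$, so there is a product neighborhood $U_{z}\times V_{z}\subseteq G\times Z$ of $(e,z)$ on which $\left|f(gz')-f(z)\right|<\varepsilon/2$; taking $g=e$ here shows in particular that $\left|f(z')-f(z)\right|<\varepsilon/2$ for all $z'\in V_{z}$. The family $\{V_{z}\}_{z\in K}$ is an open cover of the compact set $K$, so finitely many $V_{z_{1}},\dots,V_{z_{n}}$ already cover it. Set $U_{0}=\bigcap_{i=1}^{n}U_{z_{i}}$, an open neighborhood of $e$, and let $U=U_{0}\cap U_{0}^{-1}$, a symmetric open neighborhood of $e$.

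Next I would verify that $\norm{f-f\circ g}_{\infty}<\varepsilon$ for every $g\in U$, by examining an arbitrary $z\in Z$. If $z\in K$, choose $i$ with $z\in V_{z_{i}}$; then $g\in U\subseteq U_{z_{i}}$ gives $\left|f(gz)-f(z_{i})\right|<\varepsilon/2$, while $z\in V_{z_{i}}$ gives $\left|f(z)-f(z_{i})\right|<\varepsilon/2$, hence $\left|f(gz)-f(z)\right|<\varepsilon$. If instead $z\notin K$ but $gz\in K$, put $w=gz$ and apply the previous case to the point $w\in K$ and to the element $g^{-1}\in U$ (this is where symmetry of $U$ is used), obtaining $\left|f(g^{-1}w)-f(w)\right|<\varepsilon$, i.e. $\left|f(z)-f(gz)\right|<\varepsilon$ since $g^{-1}w=z$. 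In the remaining case $z\notin K$ and $gz\notin K$ both values vanish and the inequality is trivial. Taking the supremum over $z$ finishes the proof.

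I do not expect a genuine obstacle here; the only point requiring care is that $f-f\circ g$ is supported on $K\cup g^{-1}K$ rather than merely on $K$, which is precisely what forces the symmetrization $U=U_{0}\cap U_{0}^{-1}$ and the second case above. Everything else is the routine extraction of a finite subcover.
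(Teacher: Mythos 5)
Your proof is correct, and it takes a genuinely different route from the paper's. The paper argues by contradiction via nets: it assumes no neighborhood works, builds a net of witness points $x_W$ indexed by shrinking neighborhoods $W$ of $e$, observes that this net lies in the compact set $\overline{\mathrm{supp}(f)}\cdot\overline{V}$ (an enlargement of the support chosen in advance to absorb the displacement by elements of $V$), extracts a convergent subnet, and derives a contradiction from joint continuity at the limit point. You instead give a direct, constructive tube-lemma argument: cover $\mathrm{supp}(f)$ by finitely many tubes $U_{z_i}\times V_{z_i}$ coming from joint continuity, take the intersection $U_0$ and symmetrize. The two proofs also differ in how they treat points moved into or out of the support: the paper preempts the issue by enlarging the compact set so that both $f$ and $f\circ g$ vanish outside it, while you handle it by the symmetry of $U$ together with a short three-case analysis (the case $z\notin\mathrm{supp}(f)$, $gz\in\mathrm{supp}(f)$ reduced to the first case applied to $g^{-1}$). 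Your version is more elementary in that it avoids nets and subnets and produces the neighborhood explicitly as a finite intersection, at the cost of the explicit case analysis; the paper's version is shorter once one is comfortable with nets and has the enlarged compact set in hand. Both hinge on exactly the same two facts: joint continuity of the action and compactness of the support.
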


\begin{proof}
Let $\varepsilon>0$. Choose some symmetric open neighborhood $V$
of $e\in G$ with compact closure so that $K=\overline{supp\left(f\right)}\cdot\overline{V}$
is compact. It follows that $f$ is zero on $K^{c}\cdot V$, so that
for any $g\in V$ we have that $\norm{f-f\circ g}_{K^{c},\infty}=0$,
we we only need to worry about what happens inside the set $K$.

Suppose that for every $W\subseteq V$ there exists $x_{W}\in X$
and $w\in W$ such that $\left|f\left(wx_{W}\right)-f\left(x\right)\right|\geq\varepsilon$,
so in particular $x_{W}\in K$. The net $W\mapsto x_{W}$ has its
image in a compact set, and therefore has a convergent subnet to some
$x\in K$, and we restrict ourselves to this subnet. The composition
$X\times G\to X\overset{f}{\to}\RR$ is continuous, hence we can find
$x\in N_{1}\subseteq X$ and $e\in W_{1}\subseteq G$ open such that
$f\left(W_{1}\cdot N_{1}\right)\subseteq B_{\varepsilon/2}\left(f\left(x\right)\right)$.
By the convergence of $x_{W}$ to $x$, we can find $W\subseteq W_{1}$
such that $x_{W}\in N_{1}$, but then 
\[
f\left(Wx_{W}\right)\subseteq f\left(W_{1}N_{1}\right)\subseteq B_{\varepsilon/2}\left(f\left(x\right)\right)\quad\Rightarrow\quad f\left(Wx_{W}\right)\subseteq B_{\varepsilon}\left(f\left(x_{W}\right)\right)
\]
in contradiction to the choice of $x_{W}$. Thus, we proved that there
exists $W_{f,\varepsilon}\subseteq V$ (which we may assume to be
symmetric) such that for all $g\in W_{f,\varepsilon}$ we have that
$\norm{f-f\circ g}_{\infty}<\varepsilon$. 
\end{proof}
\begin{lem}
\label{lem:measure_action_cont}Let $G$ be a locally compact group
acting strongly on a locally compact, second countable Hausdorff space
$Z$. Then the action map $G\times\pp\mm\left(Z\right)\to\pp\mm\left(Z\right)$
defined by $\left(g,\mu\right)\mapsto g\mu$ is continuous.
\end{lem}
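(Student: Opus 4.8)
The plan is to reduce the continuity of the action map $G\times\pp\mm(Z)\to\pp\mm(Z)$ to a statement about convergence of evaluations on functions in $C_c(Z)$, using the explicit basic open sets $V_{(\mu,f_1,f_2,\eps)}$ that define the topology on $\pp\mm(Z)$. Since $\pp\mm(Z)$ works with homothety classes and ratios of integrals, the continuity at a point $(g_0,[\mu])$ amounts to showing that whenever $g_i\to g_0$ in $G$ and $[\mu_i]\to[\mu]$ in $\pp\mm(Z)$, we have $[g_i\mu_i]\to[g_0\mu]$, i.e. $\frac{(g_i\mu_i)(f_1)}{(g_i\mu_i)(f_2)}\to\frac{(g_0\mu)(f_1)}{(g_0\mu)(f_2)}$ for all $f_1,f_2\in C_c(Z)$ with $(g_0\mu)(f_2)\neq0$.

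**First I would** unwind the definition of the pushforward: $(g\mu)(f)=\mu(f\circ g)=\int_Z f(gz)\,d\mu(z)$, so that $(g_i\mu_i)(f_1) = \mu_i(f_1\circ g_i)$. The key is to compare $\mu_i(f_1\circ g_i)$ with $\mu_i(f_1\circ g_0)$ and then with $\mu(f_1\circ g_0)=(g_0\mu)(f_1)$. For the second comparison, I would use the assumed convergence $[\mu_i]\to[\mu]$: after rescaling by suitable $c_i>0$ we have $c_i\mu_i\wstar\mu$, hence $c_i\mu_i(f_1\circ g_0)\to\mu(f_1\circ g_0)$ since $f_1\circ g_0\in C_c(Z)$ (as $g_0$ acts homeomorphically, $f_1\circ g_0$ is still compactly supported and continuous). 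For the first comparison I would invoke \lemref{uniform_continuity}: given $\eps>0$ there is an open neighborhood $U$ of $e\in G$ such that $\norm{f_1-f_1\circ u}_\infty<\eps$ for $u\in U$; writing $g_i=g_0\cdot(g_0^{-1}g_i)$ and noting $g_0^{-1}g_i\to e$, for large $i$ we get $\norm{f_1\circ g_0 - f_1\circ g_i}_\infty=\norm{(f_1-f_1\circ(g_0^{-1}g_i))\circ g_0}_\infty<\eps$. The subtlety is that $\mu_i$ are only locally finite, so bounding $|\mu_i(f_1\circ g_i)-\mu_i(f_1\circ g_0)|$ by $\eps\cdot\mu_i(\text{something})$ requires a uniform compact set containing the supports of all $f_1\circ g_i$ for $i$ large; this is provided by the same argument as in the proof of \lemref{uniform_continuity} — choosing $V$ with compact closure so that $\overline{\mathrm{supp}(f_1)}\cdot\overline{V}$ is compact, and using that $g_0^{-1}g_i\in V$ eventually, so all the functions $f_1\circ g_i$ are supported in the fixed compact set $g_0^{-1}(\overline{\mathrm{supp}(f_1)}\cdot\overline{V})$.

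**Then I would** assemble these estimates. Fix $f_1,f_2\in C_c(Z)$ with $(g_0\mu)(f_2)\neq 0$; then also $\mu(f_2\circ g_0)\neq0$. Choose an auxiliary $\psi\in C_c(Z)$, $\psi\geq 0$, with $\psi\equiv 1$ on the fixed compact neighborhood of $\mathrm{supp}(f_1\circ g_0)\cup\mathrm{supp}(f_2\circ g_0)$ described above. Then for $j\in\{1,2\}$ and large $i$,
\[
|c_i\mu_i(f_j\circ g_i)-c_i\mu_i(f_j\circ g_0)| \leq \eps\, c_i\mu_i(\psi),
\]
and since $c_i\mu_i(\psi)\to\mu(\psi)<\infty$ the right-hand side is $O(\eps)$ uniformly in $i$. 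Combined with $c_i\mu_i(f_j\circ g_0)\to\mu(f_j\circ g_0)$, we get $c_i\mu_i(f_j\circ g_i)\to\mu(f_j\circ g_0)=(g_0\mu)(f_j)$. Taking the ratio (using that the denominator limit $(g_0\mu)(f_2)$ is nonzero) yields $\frac{(g_i\mu_i)(f_1)}{(g_i\mu_i)(f_2)}=\frac{c_i\mu_i(f_1\circ g_i)}{c_i\mu_i(f_2\circ g_i)}\to\frac{(g_0\mu)(f_1)}{(g_0\mu)(f_2)}$, which is exactly convergence in the topology of $\pp\mm(Z)$ via the basic open sets $V_{(g_0\mu,f_1,f_2,\eps)}$. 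Since $G$ is second countable (being locally compact and, in our applications, second countable) and $\pp\mm(Z)$ is metrizable-enough for sequences to suffice on the relevant neighborhoods, this sequential statement gives continuity; alternatively one runs the entire argument with nets, which changes nothing since \lemref{uniform_continuity} and the definition of $\wstar$-convergence are already net-compatible.

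**The main obstacle** I expect is the bookkeeping around local finiteness: one must be careful that the "error term" $\eps\cdot c_i\mu_i(\psi)$ really is controlled uniformly in $i$, which hinges on producing a \emph{single} compact set capturing the supports of $f_1\circ g_i$ for all sufficiently large $i$ simultaneously — this is why the neighborhood $U$ of $e$ from \lemref{uniform_continuity} must be chosen with compact closure \emph{before} passing to the limit in $i$, rather than extracting it pointwise. Everything else is a routine $\eps$-chase combining \lemref{uniform_continuity} with the three equivalent characterizations of convergence in $\pp\mm(Z)$ recorded after \defref{locally_finite}.
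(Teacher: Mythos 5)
Your proof is correct and follows essentially the same strategy as the paper's: use the uniform-continuity \lemref{uniform_continuity} to bound $\norm{f_j\circ g_i-f_j\circ g_0}_\infty$, capture all the shifted supports in one fixed compact set (because the neighborhood of $e$ is chosen with compact closure once and for all), and then pass to ratios in the $\pp\mm(Z)$ topology. The one place you add genuine value is in the treatment of the error term arising from local finiteness. The paper bounds $\av{h\nu(f_i)-\nu(f_i)}$ by $\nu(\overline{U}\cdot\mathrm{supp}(f_i))\,\varepsilon'$ and then says this means $h\mapsto h\nu(f_i)$ is continuous at $h=e$ --- but that only gives \emph{separate} continuity in $h$ for each fixed $\nu$, whereas the lemma asserts \emph{joint} continuity, so the quantity $\nu(\overline{U}\cdot\mathrm{supp}(f_i))$ must be controlled uniformly as $\nu$ ranges over a basic neighborhood of $[\mu]$. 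You close this gap cleanly by dominating the relevant set with an auxiliary $\psi\in C_c(Z)$ and using $c_i\mu_i(\psi)\to\mu(\psi)<\infty$, which makes the error genuinely $O(\varepsilon)$ uniformly in $i$. The remaining differences are cosmetic: you keep $g_0$ explicit rather than reducing to $g_0=e$ by replacing $f_j$ with $g_0^{-1}(f_j)$, and you phrase things via sequences/nets rather than $\varepsilon$-neighborhoods; neither changes the substance.
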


\begin{proof}
We want to show that given $\left(g,\left[\mu\right]\right)\in G\times\pp\mm\left(Z\right)$
and any $\varepsilon>0$, $f_{1},f_{2}\in C_{c}\left(Z\right)$ such
that $\left(g\mu\right)\left(f_{2}\right)\neq0$ we have that 
\[
\left|\frac{h\nu\left(f_{1}\right)}{h\nu\left(f_{2}\right)}-\frac{g\mu\left(f_{1}\right)}{g\mu\left(f_{2}\right)}\right|<\varepsilon
\]
for every $\left(h,\left[\nu\right]\right)\in G\times\pp\mm\left(Z\right)$
is a small enough neighborhoods of $g$ and $\left[\mu\right]$ respectively.
Changing $f_{i}$ to $g^{-1}\left(f_{i}\right)$ for $i=1,2$, we
may assume that $g=e$. 

The triangle inequality implies that 
\[
\left|\frac{h\nu\left(f_{1}\right)}{h\nu\left(f_{2}\right)}-\frac{\mu\left(f_{1}\right)}{\mu\left(f_{2}\right)}\right|\leq\left|\frac{h\nu\left(f_{1}\right)}{h\nu\left(f_{2}\right)}-\frac{\nu\left(f_{1}\right)}{\nu\left(f_{2}\right)}\right|+\left|\frac{\nu\left(f_{1}\right)}{\nu\left(f_{2}\right)}-\frac{\mu\left(f_{1}\right)}{\mu\left(f_{2}\right)}\right|
\]
so if $\nu$ is close enough to $\mu$ we may assume that the second
summand is $<\frac{\varepsilon}{2}$.

For the first summand, use \lemref{uniform_continuity} to find for
any $\varepsilon'>0$ a symmetric open set $e\in U_{\varepsilon'}\subseteq G$
with compact closure so that $\norm{f_{1}-f_{1}\circ h},\norm{f_{2}-f_{2}\circ h}<\varepsilon'$
for all $h\in U_{\varepsilon'}$. Then for $i=1,2$ we get that 
\[
\left|h\nu\left(f_{i}\right)-\nu\left(f_{i}\right)\right|=\left|\nu\left(f_{i}\circ h-f_{i}\right)\right|\leq\nu\left(\overline{U}\cdot supp\left(f_{i}\right)\right)\varepsilon',
\]
so that $h\to h\nu\left(f_{i}\right)$ is continuous at $h=e$. Thus,
for $h$ small enough we get that $\left|\frac{h\nu\left(f_{1}\right)}{h\nu\left(f_{2}\right)}-\frac{\nu\left(f_{1}\right)}{\nu\left(f_{2}\right)}\right|<\frac{\varepsilon}{2}$
which completes the proof.
\end{proof}
The result above is well known for probability measures, and it has
three immediate corollaries which we will use. 

\newpage{}
\begin{cor}
\label{cor:measure_action}Let $G$ and $Z$ be as in \lemref{measure_action_cont}.
\begin{enumerate}
\item If $\left[\mu\right]\in\pp\mm\left(Z\right)$, then $stab_{G}\left(\left[\mu\right]\right)$
is closed in $G$.
\item If $\left[\mu_{i}\right]\to\left[\mu\right]$ , $\left[\mu_{i}\right]$
is $g_{i}$-invariant and $g_{i}\to g$ in $G$, then $\left[\mu\right]$
is $g$-invariant.
\item If $\left[\mu_{i}\right]\to\left[\mu\right]$ and $K\subseteq stab_{G}\left(\left[\mu\right]\right)$
is some compact set, then for any $k_{i}\in K$ we also have that
$\left[k_{i}\mu_{i}\right]\to\left[\mu\right]$.
\end{enumerate}
\end{cor}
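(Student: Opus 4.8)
The plan is to deduce all three statements from \lemref{measure_action_cont} --- the joint continuity of the action $G\times\pp\mm\left(Z\right)\to\pp\mm\left(Z\right)$ --- once we record that $\pp\mm\left(Z\right)$ is Hausdorff. The latter is the only point needing a separate (short) argument: given $\left[\mu\right]\neq\left[\nu\right]$ in $\pp\mm\left(Z\right)$, since ratios of $C_{c}\left(Z\right)$-integrals separate homothety classes of nonzero locally finite measures one can pick $f_{1},f_{2}\in C_{c}\left(Z\right)$ with $\mu\left(f_{2}\right)\neq0\neq\nu\left(f_{2}\right)$ and $\frac{\mu\left(f_{1}\right)}{\mu\left(f_{2}\right)}\neq\frac{\nu\left(f_{1}\right)}{\nu\left(f_{2}\right)}$ (if there is no common non-null test function, one replaces $f_{2}$ by $f_{1}+f_{2}$ for a suitable $f_{1}\geq0$ which is non-null for one of the measures but not the other); the basic open sets $V_{\left(\mu,f_{1},f_{2},\varepsilon\right)}$ and $V_{\left(\nu,f_{1},f_{2},\varepsilon\right)}$ are then disjoint for $\varepsilon$ small. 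In particular singletons in $\pp\mm\left(Z\right)$ are closed and limits of sequences (and nets) are unique.

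For part (1), I would restrict the continuous action map to $G\times\left\{ \left[\mu\right]\right\} $ to see that $g\mapsto g\left[\mu\right]$ is a continuous map $G\to\pp\mm\left(Z\right)$; then $stab_{G}\left(\left[\mu\right]\right)$ is the preimage of the closed singleton $\left\{ \left[\mu\right]\right\} $, hence closed. For part (2), since $g_{i}\to g$ in $G$ and $\left[\mu_{i}\right]\to\left[\mu\right]$ we have $\left(g_{i},\left[\mu_{i}\right]\right)\to\left(g,\left[\mu\right]\right)$ in the product $G\times\pp\mm\left(Z\right)$, so \lemref{measure_action_cont} gives $g_{i}\left[\mu_{i}\right]\to g\left[\mu\right]$; but by hypothesis $g_{i}\left[\mu_{i}\right]=\left[\mu_{i}\right]\to\left[\mu\right]$, and uniqueness of limits in the Hausdorff space $\pp\mm\left(Z\right)$ forces $g\left[\mu\right]=\left[\mu\right]$.

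For part (3) I would argue by contradiction. If $\left[k_{i}\mu_{i}\right]\not\to\left[\mu\right]$, there is a basic open neighborhood $V$ of $\left[\mu\right]$ and a subsequence along which $\left[k_{i_{j}}\mu_{i_{j}}\right]\notin V$. By compactness of $K$, pass to a further subsequence with $k_{i_{j}}\to k$ for some $k\in K\subseteq stab_{G}\left(\left[\mu\right]\right)$; since also $\left[\mu_{i_{j}}\right]\to\left[\mu\right]$, \lemref{measure_action_cont} gives $k_{i_{j}}\left[\mu_{i_{j}}\right]\to k\left[\mu\right]=\left[\mu\right]$, so $\left[k_{i_{j}}\mu_{i_{j}}\right]\in V$ for $j$ large --- a contradiction; hence $\left[k_{i}\mu_{i}\right]\to\left[\mu\right]$. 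None of this is deep: the only genuine content beyond \lemref{measure_action_cont} is the Hausdorff property of $\pp\mm\left(Z\right)$, equivalently that ratios of compactly supported integrals separate nonzero locally finite measures up to scaling, so that is the step where I would be most careful.
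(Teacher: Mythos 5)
Your argument is correct. The paper itself gives no proof of \corref{measure_action} --- it merely calls the three parts ``immediate corollaries'' of \lemref{measure_action_cont} --- and you have supplied precisely the implicit argument: observe that $\pp\mm\left(Z\right)$ is Hausdorff (so that limits are unique and singletons are closed) and then read off (1) as preimage of a closed set, (2) from uniqueness of limits, and (3) from a compactness/subsequence argument, all driven by the joint continuity of the action map. Your care over the Hausdorff property is well placed, and the patch using $f_{2}^{\mu}+f_{2}^{\nu}$ to produce a common non-null test function correctly handles the disjoint-support case; the only thing worth noting explicitly is that the subsequence argument in (3) uses sequential compactness of $K$, which holds here because $G$ (being $G_{S}$ or $H_{S}$ in all of the paper's applications) is second countable and hence so is any compact $K\subseteq G$.
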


The last corollary above is very useful, since if $\mu$ is a $G$-invariant
measure, then we can take $K$ to be any compact subset of $G$. Thus,
when speaking about translations, we can always shift the translations
by some elements from a compact set.

\subsection{\label{subsec:Orbit_measures}Orbit measures, translations and the
$\protect\RR$-invariance}

In this section we begin to study the orbit measures. We start with
a general definition of an orbit measure, which we will later use
mainly for the diagonal group and its subgroups.
\begin{defn}
Fix some $L\leq G_{S}$ and $x\in X_{S}$ such that $stab_{L}\left(x\right)\backslash L$
supports an $L$-invariant measure and the map $stab_{L}\left(x\right)g\mapsto xg$
is proper. Then the orbit $xL$ supports an $L$-invariant measure
which is locally finite. We call this measure the \emph{orbit measure}
of $L$ and denote it by $\delta_{xL}$.
\end{defn}

If the $L$-invariant measure on $stab_{L}\left(x\right)\backslash L$
is finite, then we may normalize $\delta_{xL}$ to be a probability
measure. In any case the homothety class of $\delta_{xL}$ will always
be well defined regardless of the normalization, and if the measure
is finite or not.

Recall that we use the following notation for (real) diagonal and
unipotent matrices{\footnotesize{}
\begin{align*}
U & =\left\{ u_{\alpha}=\left(\begin{array}{cc}
1 & \alpha\\
0 & 1
\end{array}\right)\;\mid\;\alpha\in\RR\right\} \\
A & =\left\{ a\left(t\right)=\left(\begin{array}{cc}
e^{-t/2} & 0\\
0 & e^{t/2}
\end{array}\right)\;\mid\;t\in\RR\right\} ,
\end{align*}
}which we always consider as subgroups of $G_{S}$ (in the real coordinate).
\begin{example}
\begin{enumerate}
\item The orbit measure $\delta_{x_{\RR}U}$ is just the Lebesgue measure
on $S^{1}=\nicefrac{\RR}{\ZZ}$ pushed to the horocycle $\Gamma_{\RR}U$.
This is because $U\cong\RR$, while $stab_{U}\left(\Gamma_{\RR}\right)\cong\ZZ\leq\RR$. 
\item The orbit measure $\delta_{\Gamma_{\RR}A}$ is a locally finite measure,
but not a probability. On the other hand, for almost every $x\in\SL_{2}\left(\ZZ\right)\backslash\SL_{2}\left(\RR\right)$,
the orbit $xA$ is dense (and the map $a\mapsto xa$ is not proper),
so that $\delta_{xA}$ is not locally finite.
\end{enumerate}
The second diagonal example above will be the main orbit measure that
we work with, though we will see the unipotent example too. As we
said before, from now on we will restrict our attention to dimension
2, though almost everything in this section can still be generalized
to higher dimension with the right formulation.
\end{example}

\begin{defn}[Diagonal subgroups]
 For $\nu\in\PP_{\infty}$ be a place and let $A_{\nu}$ be the diagonal
matrices in $\GL_{2}\left(\QQ_{\nu}\right)$. We additionally set
\begin{align*}
A_{p}^{+} & =A_{p}\cap\GL_{d}\left(\ZZ_{p}\right)=\left\{ diag\left(\alpha,\beta\right)\;\mid\;\alpha,\beta\in\ZZ_{p}^{\times}\right\} ,\quad p\;is\;prime\\
A_{\infty}^{+} & =A=\left\{ diag\left(e^{-t},e^{t}\right)\;|\;t\in\RR\right\} .
\end{align*}
For general $S\subseteq\PP_{\infty}$ (possibly doesn't contain $\infty$)
we set $A_{S}$ to be the diagonal subgroup in $G_{S}$, i.e. the
restricted product $G_{S}\cap\prod_{\nu\in S}'A_{\nu}$ with respect
to $A_{\nu}^{+}$, and set $A_{S}^{+}=\prod_{\nu\in S}A_{\nu}^{+}$.
\end{defn}

\begin{rem}
While in the prime places $A_{p}^{+}\cong\left(\ZZ_{p}^{\times}\right)^{2}$
is two dimensional, in the real place $A_{\infty}^{+}\cong\RR$ is
one dimensional. The reason for that is that in $\GL_{2}^{1}\left(\AA\right)$
we have the extra condition that \\
$\left|\det\right|\left(a\right)=\prod_{\nu}\left|\det\left(a^{\left(\nu\right)}\right)\right|_{\nu}=1$,
so we lose one dimension. In the standard diagonal subgroup we instead
simply intersect with $G_{S}$.
\end{rem}

\begin{defn}
We denote by $x_{S}=\Gamma_{S}\cdot Id\in X_{S}$ the origin in $X_{S}$. 
\end{defn}

Our main interest will be the orbit measures $\delta_{x_{S}A_{S}}$
and their translations. We begin with the simple observation that
$\QQ_{p}^{\times}\cong p^{\ZZ}\ZZ_{p}^{\times}$, so that $A_{p}\cong\left(p^{\ZZ}\right)^{2}\times A_{p}^{+}$
and use it to give a simpler presentation of $x_{S}A_{S}$. 
\begin{lem}
For every $\infty\in S\subseteq\PP_{\infty}$, the map $A_{S}^{+}\to X_{S}:a\mapsto x_{S}a$
is a proper and bijective map onto the orbit $x_{S}A_{S}$. In particular
it follows that $\delta_{x_{S}A_{S}}=\delta_{x_{S}A_{S}^{+}}$ is
the pushforward of the $A_{S}^{+}$-Haar measure.
\end{lem}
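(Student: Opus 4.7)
The plan is to establish three separate facts about the map $a\mapsto x_{S}a$ from $A_{S}^{+}$ to $X_{S}$ --- namely injectivity, that its image is the full orbit $x_{S}A_{S}$, and properness --- and then invoke uniqueness of invariant measure on a homogeneous space to identify $\delta_{x_{S}A_{S}}$ with the pushforward of the $A_{S}^{+}$-Haar measure. The key inputs will be the decomposition $\QQ_{p}^{\times}=p^{\ZZ}\cdot\ZZ_{p}^{\times}$, the product formula for $\QQ$, and the generalized Mahler criterion (\lemref{generalized_mahler}).

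For injectivity, I would reduce to showing $\Gamma_{S}\cap A_{S}^{+}=\{Id\}$. A diagonal element $\gamma=\diag{\alpha,\beta}$ in $\Gamma_{S}=\GL_{2}(\ZZ[S^{-1}])$ satisfies $\alpha,\beta\in\ZZ[S^{-1}]^{\times}$, which already forces $\av{\alpha}_{p}=\av{\beta}_{p}=1$ for every $p\notin S$. Membership in $A_{p}^{+}$ gives the same equalities for $p\in S\setminus\{\infty\}$, so by the product formula $\av{\alpha}_{\infty}=\av{\beta}_{\infty}=1$; combined with the positivity constraint coming from $A_{\infty}^{+}=A$ this forces $\alpha=\beta=1$.

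For surjectivity onto $x_{S}A_{S}$, given $a\in A_{S}$ I would write each $p$-adic component as $a^{(p)}=\diag{p^{n_{p}},p^{m_{p}}}\tilde{a}^{(p)}$ with $\tilde{a}^{(p)}\in A_{p}^{+}$, noting that $n_{p}=m_{p}=0$ for all but finitely many $p\in S$. Then $\gamma:=\diag{\epsilon_{1}\prod_{p}p^{n_{p}},\epsilon_{2}\prod_{p}p^{m_{p}}}$ lies in $\Gamma_{S}$ for a suitable choice of signs $\epsilon_{i}\in\{\pm1\}$, and $\gamma^{-1}a$ has all prime components in $A_{p}^{+}$ by construction and positive real diagonal entries by choice of signs. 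The $\av{\det}=1$ constraint defining $G_{S}$ together with the product formula applied to $\gamma$ forces the real-place diagonal entries of $\gamma^{-1}a$ to multiply to $1$, placing $\gamma^{-1}a\in A_{S}^{+}$ as desired. For properness, using the $H_{S}$-presentation from \subsecref{Adelic_lattices} one has $A_{S}^{+}\subset H_{S}$, with $\prod_{p\in S\setminus\{\infty\}}A_{p}^{+}$ compact by Tychonoff and $A_{\infty}^{+}\cong\RR$; since the height $ht_{S}(x_{S}a)=\max(e^{t},e^{-t})$ when $a^{(\infty)}=\diag{e^{-t},e^{t}}$, Mahler's criterion shows that any bounded set in $X_{S}$ pulls back to a set in $A_{S}^{+}$ with bounded $\av{t}$, hence compact.

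Assembling these, the map is a continuous proper bijection between locally compact Hausdorff spaces, so it is a homeomorphism from $A_{S}^{+}$ onto the orbit $x_{S}A_{S}$. The pushforward of the Haar measure on $A_{S}^{+}$ is therefore a locally finite Radon measure on $x_{S}A_{S}$ that is $A_{S}^{+}$-invariant, and since $\delta_{x_{S}A_{S}}$ is also such a measure, uniqueness of $A_{S}^{+}$-invariant Radon measure on the homogeneous space $\{e\}\backslash A_{S}^{+}\cong A_{S}^{+}$ gives the identification (up to the usual homothety class ambiguity). I expect the main subtlety to be the sign/product-formula bookkeeping in the surjectivity step, ensuring that the rational diagonal element correcting the $p$-adic valuations automatically produces positivity and unit determinant in the real place; everything else is essentially a direct consequence of the structure of $A_{S}$ and Mahler's criterion.
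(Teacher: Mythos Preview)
Your proposal is correct and follows essentially the same approach as the paper: both arguments establish the decomposition $A_{S}=(\Gamma_{S}\cap A_{S})\cdot A_{S}^{+}$ with trivial intersection via the product formula and the structure $\QQ_{p}^{\times}=p^{\ZZ}\ZZ_{p}^{\times}$, and both deduce properness from the generalized Mahler criterion applied to the real component. The only cosmetic difference is that the paper phrases the measure identification by noting $A_{S}^{+}$ is open in $A_{S}$ so the Haar measure restricts, whereas you invoke uniqueness of invariant measure on the homogeneous space; these are equivalent.
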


\begin{proof}
Note first that the group $A_{S}^{+}$ is open inside $A_{S}$, so
that the Haar measure on $A_{S}^{+}$ is just the restriction of the
Haar measure from $A_{S}$. 

We claim that $A_{S}=stab_{A_{S}}\left(x_{S}\right)A_{S}^{+}$ and
$stab_{A_{S}}\left(x_{S}\right)\cap A_{S}^{+}=\left\{ Id\right\} $
which implies that \\
$\delta_{x_{S}A_{S}}=\delta_{x_{S}A_{S}^{+}}$. Indeed, if $\left(a_{\nu}\right)\in\prod_{\nu\in S}\QQ_{\nu}^{\times}$
then 
\[
b:=sign\left(a^{\left(\infty\right)}\right)\cdot\prod_{p\in S\backslash\left\{ \infty\right\} }\left|a^{\left(p\right)}\right|_{p}\in\ZZ\left[S^{-1}\right]^{\times},
\]
and $ba\in\RR_{>0}\times\prod_{p}\ZZ_{p}^{\times}$. Extending this
to the diagonal matrices we get that \\
$\left(\Gamma_{S}\cap A_{S}\right)\cdot A_{S}^{+}=A_{S}$ where $\Gamma_{S}\cap A_{S}=stab_{A_{S}}\left(x_{S}\right)$.
Since $\ZZ\left[S^{-1}\right]^{\times}\cap\RR_{>0}\cap{\displaystyle \bigcap_{p\in S\backslash\left\{ \infty\right\} }}\ZZ_{p}^{\times}=\left\{ 1\right\} $,
we obtain that $stab_{A_{S}}\left(x_{S}\right)\cap A_{S}^{+}=\left\{ Id\right\} $. 

For the properness, we use the generalized Mahler's criterion from
\lemref{generalized_mahler} which shows that for $a\in A_{S}^{+}\leq H_{S}$,
the height function is simply $ht_{S}\left(x_{S}a\right)=ht_{\infty}\left(\ZZ^{2}a^{\left(\infty\right)}\right)$.
Thus, being in a compact set means bounding the $a^{\left(\infty\right)}$,
and since in the prime places $\prod A_{p}^{+}$ the group is already
compact, we see that the inverse of a compact set is compact.
\end{proof}
For different $S\subseteq\PP_{\infty}$, the measures $\delta_{x_{S}A_{S}}$
live in different spaces. However, if $S\subseteq S'\subseteq\PP_{\infty}$,
then it is easy to check that $\pi_{S}^{S'}\left(x_{S'}A_{S'}\right)=x_{S}A_{S}$,
so when we choose the normalization for these locally finite measures
we do so that $\pi_{S}^{S'}\left(\mu_{S'}\right)=\mu_{S}$. To be
more precise, we start by fixing an $A_{S}^{+}$-Haar measure $\eta_{S}$
on $A_{S}^{+}$ for each $S\subseteq\PP_{\infty}$. Note that for
$\Omega\subseteq A=A_{\infty}^{+}$, the map $\Omega\mapsto\eta_{S,\RR}\left(\Omega\right):=\eta_{S}\left(\Omega\times\prod_{p\in S}A_{p}^{+}\right)$
is an $A$-invariant measure on $A$. Hence, we can choose normalizations
on the $\eta_{S}$, and therefore $\delta_{x_{S}A_{S}^{+}}$, such
that $\eta_{S,\RR}$ are the standard Lebesgue measure on $A\cong\RR$.\\

The measures we deal with in this paper are translations of the form
$g_{i}\left(\delta_{x_{\AA}A_{\AA}}\right)$ where $g_{i}\in\GL_{2}^{1}\left(\AA\right)$,
and we find conditions on the $g_{i}$ which imply equidistribution.

Before we continue, we note that the measure $g_{i}\left(\delta_{x_{\AA}A_{\AA}}\right)$
is supported on $x_{\AA}A_{\AA}g_{i}^{-1}$. This problematic ``left
to inverse right'' notation is confusing, so instead we will always
translate with inverses. So for example $\left(ag\right)^{-1}\left(\delta_{x_{\AA}A_{\AA}}\right)$
is supported on $x_{\AA}A_{\AA}ag=x_{\AA}A_{\AA}g$ for $a\in A_{\AA}$.
In particular, this $A_{\AA}$-invariance of $\delta_{x_{\AA}A_{\AA}}$
implies that multiplying the $g_{i}$ from the left by elements from
$A_{\AA}$ doesn't change the limit. Multiplying $g_{i}$ from the
right by a sequence from a compact set can be taken care of by using
\corref{measure_action} which leads to the following.
\begin{lem}
Let $g_{i}\in G_{\AA}$, $a_{i}\in A_{\AA}$ and $k_{i}\in K_{\AA}\subseteq A_{\AA}$
where $K_{\AA}$ is a fixed compact set. The sequence $g_{i}^{-1}\left[\delta_{x_{\AA}A_{\AA}}\right]$
equidistributes if and only if $\left(a_{i}g_{i}k_{i}\right)^{-1}\left[\delta_{x_{\AA}A_{\AA}}\right]$
equidistributes.
\end{lem}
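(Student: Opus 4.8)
The plan is to deduce the lemma directly from \corref{measure_action}, after first disposing of the factor $a_i$.

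First I would record that the translation action of $G_\AA$ on $\pp\mm\left(X_\AA\right)$ is a left action and that, as explained just above the lemma, one always translates through inverses. Hence, using that this is a group action,
\[
\left(a_ig_ik_i\right)^{-1}\bigl(\delta_{x_\AA A_\AA}\bigr)=\bigl(k_i^{-1}g_i^{-1}a_i^{-1}\bigr)\bigl(\delta_{x_\AA A_\AA}\bigr)=\bigl(k_i^{-1}g_i^{-1}\bigr)\bigl(\delta_{x_\AA A_\AA}\bigr),
\]
where the last equality uses the $A_\AA$-invariance of $\delta_{x_\AA A_\AA}$, i.e. $a_i^{-1}\in A_\AA\subseteq stab_{G_\AA}\bigl(\left[\delta_{x_\AA A_\AA}\right]\bigr)$, so that $a_i^{-1}\bigl(\delta_{x_\AA A_\AA}\bigr)=\delta_{x_\AA A_\AA}$. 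Writing $\left[\mu_i\right]:=g_i^{-1}\left[\delta_{x_\AA A_\AA}\right]$, the claim thus reduces to showing that $\left[\mu_i\right]$ equidistributes, i.e. $\left[\mu_i\right]\to\left[\mu_{Haar,\AA}\right]$ in $\pp\mm\left(X_\AA\right)$, if and only if $\left[k_i^{-1}\mu_i\right]\to\left[\mu_{Haar,\AA}\right]$.

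Both implications then follow from part (3) of \corref{measure_action}, using that $\mu_{Haar,\AA}$ is $G_\AA$-invariant, so $stab_{G_\AA}\bigl(\left[\mu_{Haar,\AA}\right]\bigr)=G_\AA$ and consequently any compact subset of $G_\AA$ is admissible as the set $K$ there. For the forward implication, the set $K_\AA^{-1}$ is compact (the image of the compact set $K_\AA$ under the continuous inversion map) and contained in $G_\AA$, and $k_i^{-1}\in K_\AA^{-1}$; hence $\left[\mu_i\right]\to\left[\mu_{Haar,\AA}\right]$ gives $\left[k_i^{-1}\mu_i\right]\to\left[\mu_{Haar,\AA}\right]$. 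For the converse, apply the same corollary to the convergent sequence $\left[k_i^{-1}\mu_i\right]\to\left[\mu_{Haar,\AA}\right]$ with the compact set $K_\AA\subseteq G_\AA$ and the elements $k_i\in K_\AA$, which yields $\left[k_i\left(k_i^{-1}\mu_i\right)\right]=\left[\mu_i\right]\to\left[\mu_{Haar,\AA}\right]$. No passage to subsequences is needed, since \corref{measure_action}(3) is already uniform over the compact set.

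There is essentially no obstacle here; the argument is bookkeeping built on \corref{measure_action}. The only points that require care are the conventions: the translation action is a \emph{left} action on $\pp\mm\left(X_\AA\right)$ and translations are applied through inverses, so the factor $a_i$ enters as $a_i^{-1}$ acting on the left and is absorbed by the $A_\AA$-invariance of $\delta_{x_\AA A_\AA}$; and the word "equidistributes" must be read, as in \defref{locally_finite} and \thmref{main_theorem}, as convergence of the homothety classes to $\left[\mu_{Haar,\AA}\right]$ in the topology of $\pp\mm\left(X_\AA\right)$ (this is what makes $K_\AA^{\pm1}$ land inside the stabilizer of the limit), so that \corref{measure_action} applies verbatim.
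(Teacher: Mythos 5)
Your proof is correct and follows precisely the route the paper sketches in the two sentences preceding the lemma: absorb the $a_i$ by the $A_\AA$-invariance of $\delta_{x_\AA A_\AA}$ (the left action by $a_i^{-1}$ translates the support by $a_i$ on the right, which fixes $x_\AA A_\AA$ and the measure), and handle the $k_i$ by part (3) of \corref{measure_action} applied with the compact sets $K_\AA^{-1}$ and $K_\AA$ inside $stab_{G_\AA}\bigl(\left[\mu_{Haar,\AA}\right]\bigr)=G_\AA$. The paper leaves the verification implicit; your write-up is the intended argument with the conventions made explicit.
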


The first immediate observation, is that if $g_{i}\in A_{\AA}K_{\AA}$
for some fixed compact set $K_{\AA}$, then $g_{i}^{-1}\left(\delta_{x_{\AA}A_{\AA}}\right)$
cannot equidistribute. Hence a necessary condition for equidistribution
is that $A_{\AA}g_{i}$ diverges in $A_{\AA}\backslash G_{\AA}$.

In general, the last lemma suggest that we should use the Iwasawa
decomposition $ANK$, and in dimension 2 we have a very simple decomposition
for $\GL_{2}\left(\AA\right)$ based on the Chinese remainder theorem.

Recall that for $q\in\Gamma_{\AA}=\GL_{2}\left(\QQ\right)$ we write
$q^{\left(f\right)}=\left(q,q,q,...\right)\in{\displaystyle \prod_{p\in\PP}}'\GL_{2}\left(\QQ_{p}\right)$.
\begin{lem}
\label{lem:Iwasawa_reduction}Let $K$ be the compact set 
\[
K=\left(O_{2}\left(\RR\right)\cdot\left\{ u_{t}\;\mid\;\left|t\right|\leq1\right\} \right)\times\prod_{p\in\PP}\GL_{2}\left(\ZZ_{p}\right)\subseteq\GL_{2}^{1}\left(\AA\right)
\]
Then for every $g\in G_{\AA}$ there are some $m\in\NN_{\geq1}$ and
$n\in\NN_{\geq0}$ such that $\left(u_{n},u_{1/m}^{\left(f\right)}\right)\in A_{\AA}\cdot g\cdot K$.
\end{lem}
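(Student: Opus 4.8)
\textbf{Proof plan for Lemma \ref{lem:Iwasawa_reduction}.}

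The plan is to work place-by-place and exploit the fact that everything left of the orbit measure can be absorbed into $A_\AA$ and $\Gamma_\AA = \GL_2(\QQ)$ (which acts trivially on the left of $X_\AA$), while everything on the right that lands in $K$ costs nothing by Corollary \ref{cor:measure_action}. Concretely, I want to show that the double coset $A_\AA \cdot g \cdot K$ always contains an element of the very restricted form $(u_n, u_{1/m}^{(f)})$ with $n \in \NN_{\geq 0}$ and $m \in \NN_{\geq 1}$.

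\emph{Step 1: Iwasawa at the real place.} Using the standard Iwasawa decomposition $\GL_2(\RR) = A_\infty \cdot N_\infty \cdot O_2(\RR)$ (where $N_\infty = U$), write $g^{(\infty)} = a^{(\infty)} u_t k_\infty$ with $k_\infty \in O_2(\RR)$, $t \in \RR$. Since $O_2(\RR)\cdot\{u_s : |s|\le 1\}$ is part of $K$ in the real place, I may further reduce $t$ modulo $1$: writing $t = n_0 + s$ with $n_0 \in \ZZ$ and $|s| \le 1$, we get $u_t = u_{n_0} u_s$, and $u_{n_0} \in \SL_2(\ZZ) \subseteq \GL_2(\QQ) = \Gamma_\AA$ can be pushed into the left. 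So in the real coordinate we have reduced $g^{(\infty)}$, up to $A$ on the left and $K$ on the right and a $\GL_2(\QQ)$ factor that will have to be tracked in the other coordinates, to a unipotent $u_{n_0}$ with $n_0 \in \ZZ$; I'll eventually want $n_0 \ge 0$, which I can arrange at the end because $u_{-n_0}$ differs from $u_{n_0}$ by conjugation/inversion symmetries or simply by re-choosing the sign of the decomposition.

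\emph{Step 2: Iwasawa at the finite places.} For each prime $p$, $\GL_2(\QQ_p) = B(\QQ_p) \cdot \GL_2(\ZZ_p)$ where $B$ is the Borel of upper-triangular matrices (this is the $p$-adic Iwasawa decomposition, using that $\GL_2(\ZZ_p)$ is a maximal compact and $\PGL_2(\QQ_p)/\PGL_2(\ZZ_p)$ is the Bruhat--Tits tree). For almost all $p$ we have $g^{(p)} \in \GL_2(\ZZ_p)$ already, so it is absorbed into $K$ outright; only finitely many primes $p_1,\dots,p_k$ are "bad". At each bad prime write $g^{(p_j)} = a^{(p_j)} u_{\beta_j} k_{p_j}$ with $a^{(p_j)}$ diagonal, $\beta_j \in \QQ_{p_j}$, $k_{p_j} \in \GL_2(\ZZ_{p_j})$. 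The diagonal parts at all places combine with the real diagonal part and get absorbed by $A_\AA$ on the left (after correcting by a global rational scalar from $\ZZ[S^{-1}]^\times$ to land in $G_\AA$, exactly as in the proof that $x_S A_S = x_S A_S^+$). The unipotent part is now $(u_{n_0}, (u_{\beta_1}, \dots, u_{\beta_k}, \mathrm{Id}, \mathrm{Id}, \dots))$, with $\beta_j \in \QQ_{p_j}$; modulo $\ZZ_{p_j}$ (which is in $K$) I may take $\beta_j \in \ZZ[1/p_j]$, in fact of the form $-a_j/p_j^{e_j}$ with $a_j$ coprime to $p_j$.

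\emph{Step 3: Gluing via the Chinese Remainder Theorem.} Now the task is to find a single rational $u_{1/m}^{(f)}$ (diagonally embedded in all finite places) and a diagonal conjugation that, together with $K$-moves, realizes the tuple $(u_{n_0}; u_{\beta_1}, \dots, u_{\beta_k})$. Using the identity $u_{n/m} = \mathrm{diag}(1, n^{-1})\, u_{-1/m}\, \mathrm{diag}(1,n)$ highlighted in \subsecref{The_p_adic_motivation}, a unipotent $u_\beta$ with $\beta \in \ZZ[1/p]$ is, up to a $\GL_2(\ZZ_p)$ factor and a diagonal factor, equal to $u_{1/p^e}$ for suitable $e$; and $u_{1/p^e}$ in turn equals $u_{1/m}$ up to $\GL_2(\ZZ_p)$ provided $p^e \mid m$. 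So I choose $m = \prod_j p_j^{e_j}$ (the lcm of the denominators appearing), and then at each bad prime $p_j$ the difference between $u_{\beta_j}$ and $u_{1/m}$ is a unit matrix in $\GL_2(\ZZ_{p_j})$ times a bounded diagonal piece; at every good prime $u_{1/m} \in \GL_2(\ZZ_p)$ automatically since $p \nmid m$. The diagonal corrections are again swept into $A_\AA$ via a global rational scalar. In the real place $u_{1/m} = \mathrm{Id}$ up to the $\{u_s : |s|\le 1\}$ factor once $m \ge 1$, so it does not disturb the $u_{n_0}$ we already arranged; relabel $n := |n_0|$ (adjusting by the inversion/transpose symmetry if $n_0 < 0$) to get $n \in \NN_{\ge 0}$. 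Assembling, $(u_n, u_{1/m}^{(f)}) \in A_\AA \cdot g \cdot K$.

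\emph{Main obstacle.} The delicate point is Step 3: bookkeeping the diagonal factors so that they genuinely lie in $A_\AA$ (i.e. the product of the $|\det|_\nu$ over all places equals $1$), which forces me to introduce a compensating global scalar from $\ZZ[\{1/p_j\}]^\times$ and verify it lands correctly in every coordinate --- precisely the manipulation already carried out in the lemma identifying $x_S A_S$ with $x_S A_S^+$ and in the transitivity claim $\Gamma_S \backslash G_S \cong \SL_d(\ZZ)\backslash H_S$. I would cite those computations rather than redo them. A secondary nuisance is checking that the "$K$-moves" at the finite places are honestly independent of the real-place reduction; this is automatic because $K$ is a direct product over places and $A_\AA$, $\Gamma_\AA$ act diagonally, so the three steps do not interfere.
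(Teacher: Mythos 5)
Your proposal follows the same broad strategy as the paper (Iwasawa decomposition at every place, reduce to unipotents, conjugate by local diagonals to produce a common $u_{1/m}$ at the finite places), but there are three points where it goes off the rails.

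First, the appeal to $\Gamma_\AA$ in Step 1 is not allowed. The lemma asserts $(u_n,u_{1/m}^{(f)})\in A_\AA\cdot g\cdot K$; there is no left $\Gamma_\AA$ factor to absorb $u_{n_0}$ into. You in fact do not need it: after writing $g^{(\infty)}=a^{(\infty)}u_x k_\infty$, multiply on the right by $u_{\flr{x}-x}\in\{u_s:\av{s}\le 1\}\subset K$ to replace $x$ by $\flr{x}\in\ZZ$, and handle the sign by conjugating by $\diag{1,-1}$, which lies in $A_\infty\cap O_2(\RR)$ and so is absorbed into both $A_\AA$ on the left and $K$ on the right. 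Your ``inversion/transpose symmetry'' is left vague, but this conjugation is the precise move.

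Second, Step 3 has a genuine gap: the claim that ``$u_{1/p^e}$ equals $u_{1/m}$ up to $\GL_2(\ZZ_p)$ provided $p^e\mid m$'' is false. Writing $m=p^e m'$ with $(m',p)=1$, the difference $1/p^e-1/m=(m'-1)/(p^e m')$ lies in $\ZZ_p$ only when $p^e\mid(m'-1)$, which is not automatic. One more diagonal conjugation (by $\diag{1,m'}\in A_p\cap\GL_2(\ZZ_p)$) fixes it, but your text claims only a $\GL_2(\ZZ_p)$ adjustment there, contradicting your own later remark about a ``bounded diagonal piece.'' The paper collapses your two conjugations into one clean identity: with $m=\prod_p\av{\alpha_p}_p$ one has $m\alpha_p\in\ZZ_p^\times$ and $u_{1/m}=\diag{1,m\alpha_p}\,u_{\alpha_p}\,\diag{1,m\alpha_p}^{-1}$, and $\diag{1,m\alpha_p}\in A_p\cap\GL_2(\ZZ_p)$ is absorbed on both sides.

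Third, the ``main obstacle'' you flag (a compensating global scalar from $\ZZ[\{1/p_j\}]^{\times}$ to land in $G_\AA$) is a red herring for this lemma. The Iwasawa decomposition already forces the diagonal part of $g$ to have $\av{\det}_\AA=1$ since both $K$ and the unipotent factor do, and every conjugating diagonal you introduce satisfies $\av{\det(\cdot)}_\nu=1$ locally (e.g.\ $\av{\det\diag{1,m\alpha_p}}_p=1$), so at no point do you leave $A_\AA\leq G_\AA$. The global-scalar manipulation you cite is needed elsewhere (identifying $x_S A_S$ with $x_S A_S^+$), not here.
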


\begin{proof}
We already have the standard Iwasawa decompositions $\GL_{2}\left(\RR\right)=A_{\infty}U\mathrm{O}_{2}\left(\RR\right)$
and \\
$\GL_{2}\left(\QQ_{p}\right)=A_{p}U_{p}\GL_{2}\left(\ZZ_{p}\right)$
where $U_{p}$ are upper triangular unipotent in $\GL_{2}\left(\QQ_{p}\right)$.
This means that if $g\in\GL_{2}^{1}\left(\AA\right)$, then we can
always multiply it from the left and right with elements from $A_{\AA}$
and $K$ respectively so that we are left with upper triangular unipotent
matrices $u_{\alpha_{p}},\;\alpha_{p}\in\QQ_{p}$ in each prime place
and $u_{x},\;x\in\RR$ in the real place. Since $diag\left(1,-1\right)\in A_{\infty}\cap O_{2}\left(\RR\right)$,
then we can conjugate $u_{x}$ by it to get $u_{-x}$, so we may assume
that $x\geq0$. Moreover, by multiplying further by $u_{\flr x-x}$
we may assume that $x=n$ is a nonnegative integer.

As for the prime places, by defintion in most prime places $g^{\left(p\right)}\in\GL_{2}\left(\ZZ_{p}\right)$,
so after the decomposition above we may assume that $\left|\alpha_{p}\right|>1$
for finitely many $p$, and for the rest $\alpha_{p}=1$.

With this assumption, $m={\displaystyle \prod_{p}}\left|\alpha_{p}\right|_{p}$
is wel defined. Moreover $\frac{1}{m}=\frac{1}{m\alpha_{p}}\cdot\alpha_{p}$
where $\left|m\alpha_{p}\right|_{p}=1$ for all $p$ and therefore
$m\alpha_{p}\in\ZZ_{p}^{\times}$. Finally, since 
\[
u_{1/m}=diag\left(1,m\alpha_{p}\right)u_{\alpha_{p}}diag\left(1,m\alpha_{p}\right)^{-1}
\]
and $diag\left(1,m\alpha_{p}\right)\in A_{p}\cap\GL_{2}\left(\ZZ_{p}\right)$,
we see that we can change $u_{\alpha_{p}}$ to simply $u_{1/m}$,
and this finishes the proof.
\end{proof}
With this last lemma in mind, we can assume that our translation is
by $\left(u_{n},u_{1/m}^{\left(f\right)}\right)$ for some $n,m\in\NN$
with $m\geq1$. It is also easy to see that $A_{\AA}\left(u_{n_{i}},u_{1/m_{i}}^{\left(f\right)}\right)$
diverges in $A_{\AA}\backslash G_{\AA}$ if and only if $n_{i}\to\infty$
or $m_{i}\to\infty$. If either the $n_{i}$ or $m_{i}$ are bounded,
we can change them with any other elements in some bounded set for
our equidistribution result, or in the $u_{1/m}^{\left(f\right)}$
case change it to the identity. In particular we may assume that $n_{i}\to\infty$
or $n_{i}=0$ for each $i$, which as we shall see lead us to $U$
or $A$-invariance needed in \ref{thm:main_lifting}.
\begin{assumption}
\label{assu:unipotent} The elements $g_{i}=\left(u_{n_{i}},u_{1/m_{i}}^{\left(f\right)}\right)$
are such that $n_{i},\;m_{i}\in\NN$, $m_{i}\geq1$ and $m_{i}\to\infty$
or $n_{i}\to\infty$. If $n_{i}\not\to\infty$ , then $n_{i}=0$ for
all $i$.
\end{assumption}

\begin{lem}
\label{lem:invariance_condition}Let $g_{i}=\left(u_{n_{i}},u_{1/m_{i}}^{\left(f\right)}\right)\in G_{\AA}$
as in \ref{assu:unipotent}. If $\left[g_{i}^{-1}\delta_{x_{\AA}A_{\AA}}\right]\to\left[\mu\right]$
for some probability measure $\mu$, then $\mu$ is either $A$-invariant
(if $n_{i}=0$), or $U$-invariant (if $n_{i}\to\infty$).
\end{lem}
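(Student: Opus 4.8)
The plan is to exploit the commutation relations between the translating elements $g_i$ and one-parameter subgroups of $\SL_2(\RR)$, together with \corref{measure_action}(2) which says that limits of measures invariant under $g_i\to g$ are $g$-invariant. The starting point is the $A_\AA$-invariance of $\delta_{x_\AA A_\AA}$: since we translate from the right by $g_i^{-1}$, for any $a\in A_\AA$ the measure $g_i^{-1}\delta_{x_\AA A_\AA}$ is invariant under $g_i a g_i^{-1}$. So the whole game is to produce, for each fixed target element $h$ (a diagonal element in the $n_i=0$ case, a unipotent $u_c$ in the $n_i\to\infty$ case), a sequence $a_i\in A_\AA$ with $g_i a_i g_i^{-1}\to h$, and then invoke \corref{measure_action}(2).

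First I would treat the case $n_i=0$, so $g_i=(\mathrm{Id},u_{1/m_i}^{(f)})$ acts only in the finite places. Fix $a=a(t)\in A\le A_\infty^+$, regarded as an element of $A_\AA$ sitting in the real coordinate. Since $g_i$ is the identity in the real place, $g_i a g_i^{-1}=a$ for every $i$; hence $g_i^{-1}\delta_{x_\AA A_\AA}$ is $a$-invariant for all $i$, and by \corref{measure_action}(2) (with the constant sequence $a_i=a$) the limit $\mu$ is $a$-invariant. As this holds for every $a(t)\in A$, $\mu$ is $A$-invariant, which is the first claim.

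Next the case $n_i\to\infty$, $g_i=(u_{n_i},u_{1/m_i}^{(f)})$. Here I use the standard shearing identity in the real place, exactly as in the Case 2 discussion in \subsecref{shearing_Adelic_intuition}: conjugating the real unipotent $u_{n_i}$ by $a(t)$ gives $u_{n_i}a(t)u_{-n_i}=a(t)u_{(e^{t}-1)n_i}$ (up to the sign/normalization conventions of the paper's $a(t)$; the precise exponent is immaterial). Fix $c\in\RR$. Because $n_i\to\infty$, I can solve $(e^{t_i}-1)n_i=c$ for $t_i$ with $t_i\to 0$; set $a_i=a(t_i)\in A_\AA$ (trivial in all finite places). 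In the finite places the $u_{1/m_i}^{(f)}$ commute with $a_i$ (which is trivial there), so $g_i a_i g_i^{-1}=(a(t_i)u_c,\mathrm{Id})\to (u_c,\mathrm{Id})$ since $a(t_i)\to\mathrm{Id}$. Each $g_i^{-1}\delta_{x_\AA A_\AA}$ is $g_i a_i g_i^{-1}$-invariant, so by \corref{measure_action}(2) the limit $\mu$ is $u_c$-invariant; letting $c$ range over $\RR$ gives $U$-invariance. The main thing to be careful about — and the only real obstacle — is the bookkeeping between "left-translate by $g_i$" versus "right-translate by $g_i^{-1}$" (the paper's convention that it always translates by inverses) and the sign in the definition of $a(t)=\diag(e^{-t/2},e^{t/2})$, so that one produces an element genuinely of the form $g_i a_i g_i^{-1}$ with $a_i\in A_\AA$ and verifies it converges to the intended $h$; once the identity $u_n a(t)u_{-n}=a(t)u_{(e^{t}-1)n}$ (or its mirror) is written in the paper's conventions, the argument is immediate from \corref{measure_action}.
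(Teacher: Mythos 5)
Your proof is correct and takes essentially the same shearing approach as the paper: exploit the $A_\AA$-invariance of the orbit measure to get invariance of the translate under conjugates of $A$, solve for $t_i\to 0$ so that the unipotent factor of the conjugate equals a fixed $u_c$, and invoke \corref{measure_action}. One bookkeeping slip, which you rightly flagged as the delicate point: since $g_i^{-1}\delta_{x_\AA A_\AA}$ is supported on $x_\AA A_\AA g_i$, the correct conjugate is $g_i^{-1}ag_i$ (not $g_ia g_i^{-1}$), which in the paper's conventions reads $u_{-n_i}a(t)u_{n_i}=a(t)u_{n_i(1-e^{t})}$; this only flips the sign of $c$, and since you quantify over all $c\in\RR$ the conclusion is unaffected.
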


\begin{proof}
The measure $\delta_{x_{\AA}A_{\AA}}$ is $A_{\AA}$-invariant, hence
$g_{i}^{-1}\delta_{x_{\AA}A_{\AA}}$ is $g_{i}^{-1}A_{\AA}g_{i}$
-invariant. Clearly, if \\
$x_{i}^{\left(\infty\right)}=Id$ for all $i$, then $g_{i}^{-1}\delta_{x_{\AA}A_{\AA}}$
are all $A$-invariant and so is their limit $\mu$. 

The $x_{i}\to\infty$ case is solved using a standard shearing argument.
In this case $g_{i}^{-1}\delta_{x_{\AA}A_{\AA}}$ is invariant under
\[
u_{-x_{i}}\left(\begin{array}{cc}
e^{-t/2} & 0\\
0 & e^{t/2}
\end{array}\right)u_{x_{i}}=\left(\begin{array}{cc}
e^{-t/2} & 0\\
0 & e^{t/2}
\end{array}\right)u_{-e^{t}x_{i}}u_{x_{i}}=\left(\begin{array}{cc}
e^{-t/2} & 0\\
0 & e^{t/2}
\end{array}\right)u_{x_{i}\left(1-e^{t}\right)}.
\]
Fixing some $C\in\RR$, we can choose $t_{i}$ such that $x_{i}\left(1-e^{t_{i}}\right)=C$,
and because $x_{i}\to\infty$ we get that $t_{i}\to0$. Thus, the
limit of $u_{-x_{i}}a\left(t_{i}\right)u_{x_{i}}\to u_{C}$, so by
\corref{measure_action} the limit measure $\left[\mu\right]$ is
$u_{C}$-invariant. Since $C$ was arbitrary we get that $\left[\mu\right]$
is $U$-invariant. 
\end{proof}
\begin{rem}
Note that we can write $\frac{1}{m}$ as $\prod_{p\in\PP}\left|m\right|_{p}$.
We use the first notation because it is simpler, however the second
notation is in a sense more accurate. Indeed, we study the behavior
of the translation by $u_{1/m}$ over the prime places, and it is
controlled by the $p$-adic norm. One more interesting observation,
is that while $\frac{1}{m}$ is not defined when $m=0$, the product
$\prod_{p\in\PP}\left|m\right|_{p}$ is zero, in which case 
\[
u_{\prod_{p\in\PP}\left|m\right|_{p}}=Id.
\]
In the same way, we could write $\left|n_{i}\right|_{\infty}$ instead
of $n_{i},\;n_{i}\geq0$ which will make our presentation uniform. 
\end{rem}

\subsection{\label{subsec:Uniform_prime_invariance}Uniform invariance over the
prime places }

The next step is to prove the prime invariance condition in \ref{thm:main_lifting},
namely, we want to show that if $\mu$ is a limit probability measure
of our translations, then for any $S\subseteq\PP_{\infty}$ finite,
the pushforward ${\displaystyle \det_{S\backslash\left\{ \infty\right\} }}\left(\pi_{S}^{\AA}\left(\mu\right)\right)$
to ${\displaystyle \prod_{p\in S\backslash\left\{ \infty\right\} }}\ZZ_{p}^{\times}$
is the Haar measure.

The determinant of a matrix is (almost) determined by a multiplication
by a diagonal matrix. Since in our measures we start with a diagonal
orbit measure which spend equal amount of time in each determinant,
and we translate it by unipotent matrices which have determinant 1,
we expect this condition to be automatically true. In particular,
we might want to try and already push each one of these measure down
to ${\displaystyle \prod_{p\in S\backslash\left\{ \infty\right\} }}\ZZ_{p}^{\times}$
and show that each of them is uniform there. However, these measure
are not finite, so the projection to the compact space ${\displaystyle \prod_{p\in S\backslash\left\{ \infty\right\} }}\ZZ_{p}^{\times}$
will give us nothing. What we will do instead is first find a way
to change our locally finite measures into finite measures and then
apply the argument above. \\

Our diagonal group is $A_{\AA}$, but as we saw before $x_{\AA}A_{\AA}=x_{\AA}A_{\AA}^{+}$,
and $A_{\AA}^{+}=A\times\prod_{p}A_{p}^{+}$. For each prime $p$,
the group $A_{p}^{+}\cong\left(\ZZ_{p}^{\times}\right)^{2}$ is compact,
so the only part that makes our measure infinite is $A$. Let us show
that for each translation, there is a compact subset of $A$ such
that outside of it our translated orbit goes quickly to the cusp.

To do that, we first want to present our translated orbit as element
in $X_{\AA}=\SL_{2}\left(\ZZ\right)\backslash H_{\AA}$ where by our
definition 
\[
H_{\AA}=\SL_{2}\left(\RR\right)\times\prod_{p}\GL_{2}\left(\RR\right).
\]

This means that for any $a\in A_{\AA}^{+}$ we want to find $\gamma\in\Gamma_{\AA}$
such that $\gamma a\left(u_{n},u_{1/m}^{\left(f\right)}\right)\in H_{\AA}$.
Considering a single prime place, we have
\[
\gamma^{\left(p\right)}\left(\begin{array}{cc}
\alpha & 0\\
0 & \beta
\end{array}\right)\left(\begin{array}{cc}
1 & 1/m\\
0 & 1
\end{array}\right)=\gamma^{\left(p\right)}\left(\begin{array}{cc}
1 & \alpha\beta^{-1}/m\\
0 & 1
\end{array}\right)\left(\begin{array}{cc}
\alpha & 0\\
0 & \beta
\end{array}\right),
\]
where $\alpha,\beta\in\ZZ_{p}^{\times}$. Since $\left(\begin{smallmatrix}\alpha & 0\\
0 & \beta
\end{smallmatrix}\right)\in\GL_{2}\left(\ZZ_{p}\right)$, for the product above to be in $\GL_{2}\left(\ZZ_{p}\right)$, the
element $\gamma^{\left(p\right)}$ should also be of the form $u_{q}$
where $q\in\QQ$ and $q+\frac{\alpha/\beta}{m}\in\ZZ_{p}$. This $q$
should solve this problem for each prime and for that we can use the
Chinese remainder theorem. With this in mind, we use the following
definition.
\begin{defn}
For each prime $p$ define $\xi_{p}:A_{p}^{+}\to\ZZ_{p}^{\times}$
by $\xi_{p}\left(diag\left(\alpha,\beta\right)\right)=\alpha\beta^{-1}$.
For $m=\prod p_{i}^{k_{i}}$ define
\[
\psi_{n}:A_{\AA}^{+}\to\prod_{i}A_{p_{i}}^{+}\overset{\prod\xi_{p}}{\longrightarrow}\prod_{i}\ZZ_{p_{i}}^{\times}\to\prod_{i}\inv{p_{i}^{k_{i}}}\overset{CRT}{\longrightarrow}\inv m.
\]
\end{defn}

\begin{lem}
\label{lem:gamma_fix}Let $a\in A_{\AA}^{+}$, $m,n\in\NN$ and let
$\ell\in\left\{ 1,...,m-1\right\} $ such that $\psi_{m}\left(a\right)\equiv_{m}\ell$.
Then
\[
\left(u_{-\ell/m},u_{-\ell/m}^{\left(f\right)}\right)a\cdot\left(u_{n},u_{1/m}\right)\in H_{\AA}.
\]
\end{lem}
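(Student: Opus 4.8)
The plan is to verify membership in $H_{\AA}$ place by place, using the explicit conjugation identity already displayed in the text. Recall $H_{\AA} = \SL_{2}(\RR) \times \prod_{p} \GL_{2}(\ZZ_{p})$, so I need to check two things: that the real component $u_{-\ell/m} a^{(\infty)} u_{n}$ lands in $\SL_{2}(\RR)$, and that for every prime $p$ the component $u_{-\ell/m} a^{(p)} u_{1/m}$ lies in $\GL_{2}(\ZZ_{p})$. The real place is the easy half: $a^{(\infty)} \in A = A_{\infty}^{+}$ has determinant $1$ by definition of $A$, and $u_{-\ell/m}, u_{n}$ are unipotent, so the product has determinant $1$ and is visibly in $\SL_{2}(\RR)$. (One should note that $\ell/m \in \QQ$ and $n \in \NN$ are honest real numbers here, so $u_{-\ell/m}, u_{n} \in U \leq \SL_{2}(\RR)$.)

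The substance is at the finite places. Fix a prime $p$ and write $a^{(p)} = \diag{\alpha,\beta}$ with $\alpha,\beta \in \ZZ_{p}^{\times}$ (this uses $a \in A_{\AA}^{+}$, whose prime components lie in $A_{p}^{+} = A_{p}\cap\GL_{2}(\ZZ_{p})$). Using the identity from the text,
\[
a^{(p)} u_{1/m} = \begin{pmatrix} \alpha & 0 \\ 0 & \beta \end{pmatrix}\begin{pmatrix} 1 & 1/m \\ 0 & 1 \end{pmatrix} = \begin{pmatrix} 1 & \alpha\beta^{-1}/m \\ 0 & 1 \end{pmatrix}\begin{pmatrix} \alpha & 0 \\ 0 & \beta \end{pmatrix} = u_{\alpha\beta^{-1}/m}\, a^{(p)},
\]
so
\[
u_{-\ell/m}\, a^{(p)}\, u_{1/m} = u_{-\ell/m}\, u_{\alpha\beta^{-1}/m}\, a^{(p)} = u_{(\alpha\beta^{-1} - \ell)/m}\, a^{(p)}.
\]
Since $a^{(p)} \in \GL_{2}(\ZZ_{p})$, it remains to show $u_{(\alpha\beta^{-1}-\ell)/m} \in \GL_{2}(\ZZ_{p})$, i.e. that $(\alpha\beta^{-1} - \ell)/m \in \ZZ_{p}$. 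Write $m = \prod_{i} p_{i}^{k_{i}}$; if $p \nmid m$ then $|m|_{p} = 1$ and the claim is immediate since $\alpha\beta^{-1} - \ell \in \ZZ_{p}$. If $p = p_{i}$ divides $m$, then I must show $\alpha\beta^{-1} \equiv \ell \pmod{p_{i}^{k_{i}}}$ in $\ZZ_{p_{i}}$, and this is exactly the definition of $\psi_{m}$: the map $\psi_{m}$ sends $a$ first via $\prod\xi_{p}$ to $\prod_{i}\alpha^{(p_{i})}(\beta^{(p_{i})})^{-1} \in \prod_{i}\ZZ_{p_{i}}^{\times}$, then reduces mod $p_{i}^{k_{i}}$, then reassembles via CRT; the hypothesis $\psi_{m}(a) \equiv_{m} \ell$ unwinds under CRT precisely to $\xi_{p_{i}}(a^{(p_{i})}) = \alpha\beta^{-1} \equiv \ell \pmod{p_{i}^{k_{i}}}$ for each $i$. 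Hence $\alpha\beta^{-1} - \ell \in p_{i}^{k_{i}}\ZZ_{p_{i}}$, so dividing by $m$ (which has $p_{i}$-adic valuation exactly $k_{i}$) keeps us in $\ZZ_{p_{i}}$, as needed. Finally, for the finitely many primes $p$ where the stated translating element uses $u_{1/m}$ but the argument is vacuous because $p\nmid m$, and more importantly for \emph{almost every} $p$ where $a^{(p)}\in A_{p}^{+}\subseteq\GL_{2}(\ZZ_{p})$ and $u_{1/m}, u_{-\ell/m}\in\GL_{2}(\ZZ_{p})$ automatically (as $p\nmid m$), the restricted-product condition defining $H_{\AA}$ is satisfied, so the global element indeed lies in $H_{\AA}$.

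I do not expect a serious obstacle here — the lemma is essentially a bookkeeping statement that the CRT-based definition of $\psi_{m}$ was set up correctly. The one point requiring a little care is the direction of the congruence (whether it should be $\alpha\beta^{-1}$ or $\alpha^{-1}\beta$, and whether $\ell$ or $-\ell$ appears), which is pinned down by matching the conjugation identity above to the definition $\xi_{p}(\diag{\alpha,\beta}) = \alpha\beta^{-1}$; the sign on $u_{-\ell/m}$ in the statement is chosen precisely to cancel the $+\alpha\beta^{-1}/m$ produced by the conjugation. A secondary bookkeeping point is checking that the element $\left(u_{-\ell/m}, u_{-\ell/m}^{(f)}\right)$ is a legitimate element of $G_{\AA}$ to multiply by — but since $\ell/m\in\QQ$, this is just $u_{-\ell/m}\in\GL_{2}(\QQ)$ embedded diagonally, hence in $\Gamma_{\AA}$, which is consistent with the eventual use of this lemma (the $\gamma$ of the surrounding discussion). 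So the proof is: reduce to a place-by-place check, dispatch the real place by the determinant-one observation, and at each finite place combine the displayed conjugation identity with the definition of $\psi_{m}$ and a $p$-adic valuation count.
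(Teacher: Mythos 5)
Your proof is correct and follows essentially the same route as the paper's: use the conjugation identity to move $a^{(p)}$ past the unipotent, then unwind the CRT-based definition of $\psi_m$ to see that the resulting upper-right entry lies in $\ZZ_p$. You are also slightly more careful than the paper at one point: the paper asserts $(\alpha\beta^{-1}-\ell)/m \in \ZZ_{p_i}^{\times}$, but only membership in $\ZZ_{p_i}$ is guaranteed (and is all that is needed), exactly as you state.
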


\begin{proof}
For any prime $p$ write $a^{\left(p\right)}=\left(\begin{array}{cc}
\alpha^{\left(p\right)} & 0\\
0 & \beta^{\left(p\right)}
\end{array}\right)$. If $m=\prod p_{i}^{k_{i}}$, then by the Chinese remainder correspondence,
we have that $\ell=\psi_{m}\left(a\right)\equiv_{p_{i}^{k_{i}}}\frac{\alpha^{\left(p_{i}\right)}}{\beta^{\left(p_{i}\right)}}$.
It then follows that 
\[
\frac{\left(\frac{\alpha^{\left(p_{i}\right)}}{\beta^{\left(p_{i}\right)}}-\ell\right)}{m}\in\ZZ_{p_{i}}^{\times},
\]
which means that $u_{-\ell/m}a^{\left(p_{i}\right)}u_{1/m}\in\GL_{2}\left(\ZZ_{p_{i}}\right)$.
As this is true for any prime $p$, we get that $\left(u_{-\ell/m},u_{-\ell/m}^{\left(f\right)}\right)a\cdot\left(u_{n},u_{1/m}\right)\in H_{\AA}.$
\end{proof}
Now that we know how to present our translated orbits in $\SL_{2}\left(\ZZ\right)\backslash H_{\AA}$,
we can ask which part is close to the cusp, and therefore doesn't
contribute too much to the integration. In the presentation in $H_{\AA}=\SL_{2}\left(\RR\right)\times\prod_{p}\GL_{2}\left(\RR\right)$
the only noncompact part is the real place $\SL_{2}\left(\RR\right)$,
so whether a part of the orbit is close to the cusp is mainly determined
by the real entry $a^{\left(\infty\right)}=a\left(t\right)$ of the
diagonal matrix. What we will do is restrict $t$ to the part of the
translated orbit ``before'' it diverges to the cusp.
\begin{defn}
For a segment $I\subseteq\RR$ set $A_{\AA}^{I}$ to be the set $\left\{ \left(a\left(t\right),a^{\left(f\right)}\right)\in A_{\AA}^{+}\;\mid\;t\in I\right\} $,
and denote by $\delta_{x_{\AA}A_{\AA}^{I}}$ to be the restriction
of the orbit measure $\delta_{x_{\AA}A_{\AA}}$ to $x_{\AA}A_{\AA}$.
Note that by our choice of normalization, if $I$ is finite, then
$\left|I\right|^{-1}\delta_{x_{\AA}A_{\AA}^{I}}$ is a probability
measure.
\end{defn}

\begin{lem}
\label{lem:restricted_measures}Let $g_{i}=\left(u_{n_{i}},u_{1/m_{i}}^{\left(f\right)}\right)\in G_{\AA}$
as in \ref{assu:unipotent} and set $T_{i}=\ln\left(\max\left\{ 1,n_{i}\right\} \cdot m_{i}\right)$.
If $\frac{1}{T_{i}}g_{i}^{-1}\left(\delta_{x_{\AA}A_{\AA}^{\left[0,T_{i}\right]}}\right)$
equidistribute, then $g_{i}^{-1}\left[\delta_{x_{\AA}A_{\AA}}\right]$
equidistribute.
\end{lem}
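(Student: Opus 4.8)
The plan is to show that the only part of the locally finite measure $\delta_{x_{\AA}A_{\AA}}$ that carries non-negligible mass (relative to the normalization $T_i \to \infty$) is the part $\delta_{x_{\AA}A_{\AA}^{[0,T_i]}}$, and that the complementary pieces contribute $O(1)$ to the integral against any fixed $f \in C_c(X_{\AA})$. Concretely, I would decompose $A_{\AA}^+ = A_{\AA}^{(-\infty,0)} \sqcup A_{\AA}^{[0,T_i]} \sqcup A_{\AA}^{(T_i,\infty)}$ (in the real parameter $t$) and correspondingly write $g_i^{-1}\delta_{x_{\AA}A_{\AA}} = g_i^{-1}\delta_{x_{\AA}A_{\AA}^{(-\infty,0)}} + g_i^{-1}\delta_{x_{\AA}A_{\AA}^{[0,T_i]}} + g_i^{-1}\delta_{x_{\AA}A_{\AA}^{(T_i,\infty)}}$. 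After multiplying by $1/T_i$, the middle term converges (by hypothesis) to $\mu_{Haar,\AA}$; so it suffices to prove that for any fixed $f \in C_c(X_{\AA})$, both $\left(g_i^{-1}\delta_{x_{\AA}A_{\AA}^{(-\infty,0)}}\right)(f)$ and $\left(g_i^{-1}\delta_{x_{\AA}A_{\AA}^{(T_i,\infty)}}\right)(f)$ are bounded uniformly in $i$ by a constant depending only on $\norm f_\infty$ and $\mathrm{supp}(f)$. Dividing by $T_i \to \infty$ then kills these tails, and the conclusion follows from the definition of equidistribution of locally finite measures (item (1) or (3) after \defref{locally_finite}).

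The heart of the matter is the uniform bound on the two tails, and this is where I would spend the real effort. For the tail $t < 0$: using \lemref{gamma_fix} to realize $x_{\AA}A_{\AA}^I g_i$ inside $\SL_2(\ZZ)\backslash H_{\AA}$, the point at parameter $t$ has real component essentially $\SL_2(\ZZ) a(t) u_{n_i}$ (up to a bounded $K$-part and the $\Gamma$-reduction), whose height $ht_\infty$ grows like $e^{|t|/2}$ as $t \to -\infty$ once $|t|$ is past a bounded threshold — because $u_{n_i}$ with $n_i$ a nonnegative integer is reduced from the cusp direction, and $a(t)$ for $t \ll 0$ pushes the orbit straight up the cusp. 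By the generalized Mahler criterion \lemref{generalized_mahler}, $f$ (being compactly supported) vanishes once $ht_\infty$ exceeds a fixed constant $R = R(\mathrm{supp}(f))$, so the $t$-integral $\int_{-\infty}^0 f(\,\cdot\,a(t)\,)\,\dt$ is supported on $t \in [-C_R, 0]$ for a constant $C_R$ independent of $i$; hence it is bounded by $C_R \norm f_\infty$. For the tail $t > T_i$: here I would invoke the symmetry discussed in \subsecref{Symmetries_horospheres} and \subsecref{Shearing} — namely that up to the reflection $\tau = \smallmat{0 & 1 \\ 1 & 0}$ and the substitution of $n$ by a suitable $n'$, the orbit at time $T_i + s$ mirrors the orbit at time $T_i - s$ — which reduces the $t > T_i$ tail to a $t < 0$ tail of a $\tau$-conjugated translated orbit (the center of symmetry being precisely at $t = T_i = \ln(\max\{1,n_i\}\cdot m_i)$, the analogue of $\ln m$ and $\ln x$ in the two one-place cases). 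The same Mahler-criterion argument then bounds it by a constant independent of $i$.

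The main obstacle I anticipate is making the symmetry argument for the $t > T_i$ tail fully rigorous \emph{simultaneously in the real and in all the prime places}: the reflection $\tau$ must be applied in every coordinate, the replacement $n \mapsto n'$ with $n n' \equiv -1$ has to be tracked adelically (recall from \subsecref{The_p_adic_motivation} that the natural symmetric translator is $\smallmat{m & -1 \\ 0 & 1}$ rather than $u_{-1/m}$, so there is a diagonal correction absorbed into $A_{\AA}$ and $K_{\AA}$ via the preceding lemma), and one must check that this correction lands in a \emph{fixed} compact set so that \corref{measure_action} applies. A cleaner alternative, which I would pursue if the explicit symmetry gets unwieldy, is to bound the $t > T_i$ tail directly: parametrize it by $s = t - T_i > 0$, write $a(T_i + s) g_i$ in $H_{\AA}$ using \lemref{gamma_fix}, and observe that for $s$ past a bounded threshold the height again grows exponentially in $s$ (now because $a(s)$ combined with the already-expanded configuration $a(T_i) u_{n_i}$ pushes toward the \emph{transposed} cusp), so $f$ vanishes there and the $s$-integral is again supported on a bounded interval. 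Either way, the key point is the exponential escape to the cusp off a bounded window, and the bound is uniform in $i$ because the window size depends only on $\mathrm{supp}(f)$, not on $n_i, m_i$.
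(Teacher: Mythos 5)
There is a genuine gap in the treatment of the $t > T_{i}$ tail. Your plan claims that $\bigl(g_{i}^{-1}\delta_{x_{\AA}A_{\AA}^{(T_{i},\infty)}}\bigr)(f)$ is bounded uniformly in $i$ by a constant depending only on $\norm f_{\infty}$ and $\mathrm{supp}(f)$, and that dividing by $T_{i}$ therefore kills it. This is false, and neither of your two proposed routes produces such a bound. The $\tau$-symmetry centered at $T_{i}$ sends the point at time $T_{i}+s$ to the point at time $T_{i}-s$; as $s$ ranges over $(0,\infty)$ the image ranges over $(-\infty,T_{i})$, which is \emph{not} a $t<0$ tail but the entire left half, including the ``main'' interval $[0,T_{i}]$ that carries mass $\approx T_{i}$. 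Your direct alternative fails for the same reason: writing the real component in $H_{\AA}$ as $\Gamma_{\RR}u_{-\ell/m}a(t)u_{n}$ and using the witness $(m,\ell)\in\ZZ^{2}$, one gets $(m,\ell)u_{-\ell/m}a(t)u_{n}=(me^{-t/2},\,mne^{-t/2})$, so $ht_{\infty}\geq e^{(t-2T_{i})/2}$; the orbit does not leave a fixed height level $C$ until $t>2T_{i}+2\ln C$. The escape threshold is thus $s\approx T_{i}$, \emph{diverging} with $i$, and the contribution of $(T_{i},\infty)$ against $f$ is $\sim T_{i}$, not $O(1)$.

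The paper's proof uses the same two ingredients you name but deploys the symmetry differently. The $\tau$-symmetry is used to establish an equivalence of \emph{homothety classes}: $g_{i}^{-1}\bigl[\delta_{x_{\AA}A_{\AA}^{(-\infty,T_{i}]}}\bigr]$ equidistributes if and only if $g_{i}^{-1}\bigl[\delta_{x_{\AA}A_{\AA}^{[T_{i},\infty)}}\bigr]$ does (after replacing $u_{n_{i}}$ with $a(\ln(\max\{1,n_{i}\}))u_{n_{i}}\approx h(y_{i})$ in the real place and $u_{1/m_{i}}$ with $v_{m_{i}}$ in the finite places, so that the compact correction lands in a fixed set and \corref{measure_action} applies). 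This reduces the problem to the left half $(-\infty,T_{i}]$; the Mahler argument with witness $e_{2}=(0,1)$ then trims only the $(-\infty,0]$ portion of that half (there the height is $\geq e^{-t/2}$, so $f$ vanishes for $t<-2\ln C$, giving the $O(1)$ bound), leaving the hypothesis interval $[0,T_{i}]$. Since both halves then converge to $\mu_{Haar,\AA}$ after dividing by $T_{i}$, so does their sum in $\pp\mm(X_{\AA})$. In short: your Mahler argument for $t<0$ is correct and matches the paper, but the $(T_{i},\infty)$ piece must be handled by the homothety-class equivalence with the other half, not by a uniform $O(1)$ bound — no such bound exists.
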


\begin{proof}
We will prove this lemma in two steps. First we will use a symmetry
argument to get rid of half of the $A_{\AA}^{+}$-orbit, and then
use Mahler criterion to show that most of the remaining orbit is near
the cusp and therefore doesn't contribute anything. The intuition
behind the ideas here were given in \ref{subsec:Symmetries_horospheres}
and \ref{subsec:Shearing}.\\

Recall that our symmetry was switching between the $x$ and $y$ coordinates,
which is multiplying by the matrix $\tau=\tau^{-1}=\left(\begin{array}{cc}
0 & 1\\
1 & 0
\end{array}\right)$, which in our context is also a matrix in $\Gamma_{\AA}=\GL_{2}\left(\QQ\right)$. 

Before we consider the translation, let us consider this symmetry
on the orbit $x_{\AA}A_{\AA}^{+}$. The first observation, is that
in each prime place we integrate over all the diagonal matrices in
$\GL_{2}\left(\ZZ_{p}\right)$, and conjugating by $\tau^{\left(f\right)}$
just switch the entries on the diagonal, which doesn't change our
measure. In the real place we integrate over $a\left(t\right)$ and
$\tau a\left(t\right)\tau=a\left(-t\right)$ so together we get that
for each $t\in\RR$ we have
\[
\int_{A_{\AA}^{+\left(f\right)}}\delta_{x_{\AA}\left(a\left(t\right),a^{\left(f\right)}\right)\left(\tau,\tau^{\left(f\right)}\right)}\da^{\left(f\right)}=\int_{A_{\AA}^{+\left(f\right)}}\delta_{x_{\AA}\left(\tau,\tau^{\left(f\right)}\right)\left(a\left(-t\right),a^{\left(f\right)}\right)}\da^{\left(f\right)}=\int_{A_{\AA}^{+\left(f\right)}}\delta_{x_{\AA}\left(a\left(-t\right),a^{\left(f\right)}\right)}\da^{\left(f\right)}.
\]
Integrating over $t\in\left[0,\infty\right]$ on both sides, we get
that $\left(\tau,\tau^{\left(f\right)}\right)\delta_{x_{\AA}A_{\AA}^{\left[0,\infty\right]}}=\delta_{x_{\AA}A_{\AA}^{\left[-\infty,0\right]}}$
, namely the two ``halves'' of the orbits are mirror images of one
another.\\

We want to have a similar result for our translation, namely $g_{i}^{-1}\left[\delta_{x_{\AA}A_{\AA}^{\left[0,\infty\right]}}\right]=\left(g_{i}k_{i}\right)^{-1}\left[\delta_{x_{\AA}A_{\AA}^{\left[-\infty,0\right]}}\right]$
for some bounded sequence $k_{i}$. This is not true for $g_{i}=\left(u_{n_{i}},u_{1/m_{i}}^{\left(f\right)}\right)$
but we can fix it once we choose the right point $t_{i}\in\RR$ around
which there is a symmetry, or more formally translate with $\left(a\left(t_{i}\right),Id\right)g_{i}$
instead of $g_{i}$.

For $g_{i},\tilde{g}_{i}\in G_{\AA}$ we will write $g_{i}\approx\tilde{g}_{i}$
if $g_{i}^{-1}\tilde{g}_{i}$ are all contained in a compact set.
This implies that translation by $g_{i}$ equidistribute if and only
if translations by $\tilde{g}_{i}$ equidistribute.

First, for the symmetry argument in the real place consider the hyperbolic
matrix 
\begin{align*}
h\left(y\right) & :=\left(\begin{array}{cc}
\cosh\left(y/2\right) & \sinh\left(y/2\right)\\
\sinh\left(y/2\right) & \cosh\left(y/2\right)
\end{array}\right)\\
 & =\left(\begin{array}{cc}
\cosh^{-1/2}\left(y\right) & 0\\
0 & \cosh^{1/2}\left(y\right)
\end{array}\right)\left(\begin{array}{cc}
1 & \sinh\left(y\right)\\
0 & 1
\end{array}\right)\left[\overbrace{\left(\cosh\left(y\right)\right)^{-1/2}\left(\begin{array}{cc}
\cosh\left(y/2\right) & -\sinh\left(y/2\right)\\
\sinh\left(y/2\right) & \cosh\left(y/2\right)
\end{array}\right)}^{k\left(y\right)}\right]\\
 & =a\left(\ln\left(\cosh\left(y\right)\right)\right)u_{\cosh\left(y\right)}u_{\sinh\left(y\right)-\cosh\left(y\right)}k\left(y\right).
\end{align*}
Note that $\sinh\left(y\right)-\cosh\left(y\right)=-e^{-y}$ which
is uniformly bounded for $y\geq0$. Assuming that $n_{i}\to\infty$
we can set $y_{i}=\cosh^{-1}\left(n_{i}\right)$ to get that $h\left(y_{i}\right)\approx a\left(\ln\left(n_{i}\right)\right)u_{n_{i}}$.
Moreover, since $h\left(y\right)=\tau h\left(y\right)\tau$, we get
that 
\[
a\left(\ln\left(n_{i}\right)\right)u_{n_{i}}\approx\tau a\left(\ln\left(n_{i}\right)\right)u_{n_{i}}.
\]
In other words, the symmetry coming from the real place is going to
be around the time $\ln\left(n_{i}\right)$. To add the $n_{i}=0$
case we can write instead
\[
a\left(\ln\left(\max\left\{ 1,n_{i}\right\} \right)\right)u_{n_{i}}\approx\tau a\left(\ln\left(\max\left\{ 1,n_{i}\right\} \right)\right)u_{n_{i}}.
\]
\\
Similarly, instead of translating by $u_{1/m}$ in the prime places,
we translate instead by 
\[
v_{m}=\left(\begin{array}{cc}
\frac{m}{\flr{m^{1/2}}} & 0\\
0 & \frac{1}{\flr{m^{1/2}}}
\end{array}\right)u_{1/m}=\flr{m^{1/2}}^{-1}\left(\begin{array}{cc}
m & 1\\
0 & 1
\end{array}\right).
\]
We divide by $\flr{m^{1/2}}$ and not $m^{1/2}$ since later we will
move it to the $\SL_{2}\left(\RR\right)$ part of $H_{\AA}$. The
lattice $v_{m}\ZZ^{2}$ is invariant under $\tau$, or more specifically
$\tau v_{m}=v_{m}\ss{\left(\begin{array}{cc}
-1 & 0\\
m & 1
\end{array}\right)}\in v_{m}\GL_{2}\left(\ZZ\right)$. In the prime places we have that $\ss{\left(\begin{array}{cc}
-1 & 0\\
m & 1
\end{array}\right)}^{\left(f\right)}\in\prod_{p}\GL_{2}\left(\ZZ_{p},\right)$ which is compact, implying that $\tau v_{m}\approx v_{m}$. Together
we get that 
\[
\left(a\left(\max\left\{ 1,n_{i}\right\} \right)u_{n_{i}},v_{m_{i}}^{\left(f\right)}\right)\approx\left(\tau,\tau^{\left(f\right)}\right)\left(a\left(\max\left\{ 1,n_{i}\right\} \right)u_{n_{i}},v_{m_{i}}^{\left(f\right)}\right),
\]
so that translations by $\left(a\left(\ln\left(\max\left\{ 1,n_{i}\right\} \right)\right)u_{n_{i}},v_{m_{i}}^{\left(f\right)}\right)^{-1}$
of $\delta_{x_{\AA}A_{\AA}^{\left[0,\infty\right]}}$ equidistribute
if and only if the translation by $\delta_{x_{\AA}A_{\AA}^{\left[-\infty,0\right]}}$
equidsitribute.

For $I=\left[-\infty,0\right]$ or $\left[0,\infty\right]$ we get
that 
\[
\left(a\left(\ln\left(\max\left\{ 1,n_{i}\right\} \right)\right)u_{n_{i}},v_{m_{i}}^{\left(f\right)}\right)^{-1}\delta_{x_{\AA}A_{\AA}^{I}}=\left(\left(a\left(\ln\left(m_{i}\cdot\max\left\{ 1,n_{i}\right\} \right)\right)\cdot\frac{\flr{m^{1/2}}}{m^{1/2}},Id\right)g_{i}\right)^{-1}\delta_{x_{\AA}A_{\AA}^{I}}.
\]
The part $\frac{\flr{m^{1/2}}}{m^{1/2}}$ is a scalar that is always
in $\left[\frac{1}{2},1\right]$ so we can put it inside the compact
set. Hence, we see that the center of our symmetry is around $T_{i}=\ln\left(m_{i}\cdot\max\left\{ 1,n_{i}\right\} \right)$,
or equivalently $g_{i}^{-1}\left[\delta_{x_{\AA}A_{\AA}^{\left[-\infty,T_{i}\right]}}\right]$
equidistribute if and only if $g_{i}^{-1}\left[\delta_{x_{\AA}A_{\AA}^{\left[T_{i},\infty\right]}}\right]$
equidistribute.\\

We are now left with only half of the orbit, and next we want to cut
it even more and leave just a finite segment by removing the part
which is too close to the cusp. Using Mahler's criterion from \lemref{generalized_mahler}
for $x=x_{\AA}\cdot h\in X_{\AA}$ with $h\in H_{\AA}$, the height
is defined by 
\[
ht(x):={\displaystyle \max_{0\neq v\in\ZZ^{2}}}\norm{vh_{\infty}}_{\infty}^{-1}.
\]

While computing the height can be quite difficult, in order to show
that the height is large, it is enough to find one vector ``witness''
which have a small norm. Here we will use the vector $v=e_{2}=\left(0,1\right)$
as our witness. Using \lemref{gamma_fix} for the presentation in
$H_{\AA}$, we have 
\begin{align*}
e_{2}\left(u_{-\ell/m}a\left(t\right)u_{n}\right) & =\left(0,e^{t/2}\right)
\end{align*}
so that 
\begin{align*}
ht\left(u_{-\ell/m}a\left(t\right)u_{n}\right) & \geq e^{-t/2}
\end{align*}

In particular, for any $C>0$ if $t\leq-2\ln\left(C\right)$, then
the height is at least $C$.

If $f\in C_{c}\left(X_{\AA}\right)$, then we can bound its support
in a set of the form $M_{C}=\left\{ x\in X_{\AA}\;\mid\;ht\left(x\right)\leq C\right\} $
for some $C\geq1$. We then have that 
\[
\frac{1}{T_{i}}\left|g_{i}^{-1}\delta_{x_{\AA}A_{\AA}^{(-\infty,T_{i}]}}\left(f\right)-g_{i}^{-1}\delta_{x_{\AA}A_{\AA}^{\left[0,T_{i}\right]}}\left(f\right)\right|=\frac{1}{T_{i}}\left|g_{i}^{-1}\delta_{x_{\AA}A_{\AA}^{(-\infty,0]}}\left(f\right)\right|\leq\frac{\norm f_{\infty}\cdot2\ln\left(C\right)}{T_{i}}.
\]
Since we fixed $f$, which in turn fix $C$, and because $T_{i}\to\infty$
the upper bound goes to zero. As this is true for all $f\in C_{c}\left(X_{\AA}\right)$,
we get that 
\[
\limfi i{\infty}\frac{1}{T_{i}}g_{i}^{-1}\delta_{x_{\AA}A_{\AA}}=\limfi i{\infty}\frac{1}{T_{i}}g_{i}^{-1}\delta_{x_{\AA}A_{\AA}^{\left[0,T_{i}\right]}}=\mu_{Haar,\AA},
\]
which is what we wanted to prove.
\end{proof}
Now we can shift our attention to the probability measures from the
last lemma, and to show that it satisfies the prime uniformity condition
that we need in \ref{thm:main_lifting}.
\begin{lem}
\label{lem:prime_uniformity}Let $\mu=\left|I\right|^{-1}\left(u_{n},u_{1/m}\right)^{-1}\left(\delta_{x_{\AA}A_{\AA}^{I}}\right)$
for some finite segment $I\subseteq\RR$. Then for any $S\subseteq\PP_{\infty}$
finite, the pushforward $\eta={\displaystyle \det_{S\backslash\left\{ \infty\right\} }}\left(\pi_{S}^{\AA}\left(\mu\right)\right)$
to ${\displaystyle \prod_{p\in S\backslash\left\{ \infty\right\} }}\ZZ_{p}^{\times}$
is the Haar measure.
\end{lem}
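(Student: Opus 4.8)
The plan is to reduce the statement to a concrete computation on the compact group $\prod_{p\in S\backslash\{\infty\}}\ZZ_p^\times$ by exploiting the fact that, after presenting the translated orbit inside $H_\AA=\SL_2(\RR)\times\prod_p\GL_2(\RR)$ via \lemref{gamma_fix}, the map ${\displaystyle \det_{S\backslash\{\infty\}}}\circ\pi_S^\AA$ essentially just reads off $\prod_i\xi_{p_i}$ of the diagonal part of $a\in A_\AA^+$, twisted by the integral translation $\gamma=(u_{-\ell/m},u_{-\ell/m}^{(f)})$. Since $u_{-\ell/m}$ is unipotent it has determinant $1$ in every place, and $u_n$, $u_{1/m}$ likewise have determinant $1$; so the determinant of $\gamma\, a\,(u_n,u_{1/m})$ in each prime place $p$ is just $\det(a^{(p)})=\alpha^{(p)}\beta^{(p)}$, and hence ${\displaystyle \det_{S\backslash\{\infty\}}}$ of the point $x_\AA a (u_n,u_{1/m})^{-1}$ (suitably interpreted) depends only on the coordinates $(\alpha^{(p)}\beta^{(p)})_{p\in S\backslash\{\infty\}}$ of $a\in A_\AA^+=A\times\prod_p A_p^+$. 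The first step is to write this out carefully and check that, as a function $A_\AA^+\to\prod_{p\in S\backslash\{\infty\}}\ZZ_p^\times$, it is exactly the composition of the projection $A_\AA^+\to\prod_{p\in S\backslash\{\infty\}}A_p^+\cong\prod_{p\in S\backslash\{\infty\}}(\ZZ_p^\times)^2$ with the product map $(\alpha,\beta)\mapsto\alpha\beta$ on each factor.

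Next I would use the structure of the orbit measure. By the lemma preceding \assuref{unipotent}, $\delta_{x_\AA A_\AA}=\delta_{x_\AA A_\AA^+}$ is the pushforward of Haar measure on $A_\AA^+=A\times\prod_p A_p^+$, normalized so that the $A=A_\infty^+$-factor gets Lebesgue measure and each $A_p^+\cong(\ZZ_p^\times)^2$ gets its (normalized) Haar measure. Restricting to $A_\AA^I$ only truncates the non-compact $A$-coordinate to $I$, leaving the compact factors $\prod_p A_p^+$ untouched and still Haar-distributed. Therefore $\pi_S^\AA(\mu)$, and then its image under ${\displaystyle \det_{S\backslash\{\infty\}}}$, is by Step 1 the pushforward of $\prod_{p\in S\backslash\{\infty\}}(\text{Haar on }(\ZZ_p^\times)^2)$ under $\prod_p$ of the multiplication map $(\ZZ_p^\times)^2\to\ZZ_p^\times$, $(\alpha,\beta)\mapsto\alpha\beta$. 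The final step is the elementary observation that the multiplication map of a compact group onto itself pushes the product Haar measure to Haar measure: for fixed $\alpha$ the map $\beta\mapsto\alpha\beta$ is a translation and preserves Haar measure on $\ZZ_p^\times$, so by Fubini the pushforward is again Haar on $\ZZ_p^\times$; taking the product over the finitely many $p\in S\backslash\{\infty\}$ gives that $\eta$ is the Haar measure on $\prod_{p\in S\backslash\{\infty\}}\ZZ_p^\times$, as claimed. The truncation by $|I|^{-1}$ is just the normalization making $\mu$ a probability measure and plays no role beyond that.

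The genuinely delicate point — and the one I expect to be the main obstacle — is bookkeeping the $\Gamma_\AA$-reduction correctly: when we pass from the orbit $x_\AA A_\AA (u_n,u_{1/m})^{-1}$ in $\Gamma_\AA\backslash G_\AA$ to a representative in $\SL_2(\ZZ)\backslash H_\AA$ via \lemref{gamma_fix}, the choice of $\gamma=(u_{-\ell/m},u_{-\ell/m}^{(f)})$ depends measurably on $a$ through $\ell=\psi_m(a)$, and one must verify that ${\displaystyle \det_{S\backslash\{\infty\}}}$ evaluated on the $H_\AA$-representative agrees with the intended intrinsic quantity (i.e. that $\gamma$'s real and prime determinants, which are all $1$ since $\gamma$ is unipotent, genuinely do not interfere) and that the $\SL_2(\ZZ)$-ambiguity on the left, together with the $\prod_p\GL_2(\ZZ_p)$-ambiguity from the $K$-part of the Iwasawa decomposition, does not change the value in $\prod_{p\in S\backslash\{\infty\}}\ZZ_p^\times$ — this is exactly where the definition ${\displaystyle \det_{S\backslash\{\infty\}}}$ lands in $\ZZ_p^\times$ rather than $\QQ_p^\times$, because $\GL_2(\ZZ_p)$ has unit determinant, so the quotient is well-defined. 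Once this well-definedness is nailed down, the rest is the short Fubini argument above.
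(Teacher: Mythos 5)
Your proposal is correct and follows essentially the same route as the paper: use \lemref{gamma_fix} to place the translated orbit in the $H_\AA$-presentation where $\det_{S\backslash\{\infty\}}$ is defined, observe that all unipotent factors (the $\gamma$ on the left and $(u_n,u_{1/m})$ on the right) have trivial determinant so only $\det(a^{(p)})=\alpha^{(p)}\beta^{(p)}$ survives, and conclude via the Haar measure on $\prod_p A_p^+$. The only cosmetic difference is the final step: the paper phrases it as $A_p^+$-invariance of $\delta_{x_\AA A_\AA^I}$ implying $\ZZ_p^\times$-invariance of the pushforward, while you phrase it as a Fubini computation showing that multiplication $(\ZZ_p^\times)^2\to\ZZ_p^\times$ pushes product Haar to Haar; these are the same observation. (Minor slip: early on you refer to $\xi_p$, which is $\alpha\beta^{-1}$, where the determinant is $\alpha\beta$, but you correct this a line later; and the worry about ``$\prod_p\GL_2(\ZZ_p)$-ambiguity from the $K$-part of the Iwasawa decomposition'' is extraneous here, since in this lemma the translating element is already fixed to be $(u_n,u_{1/m})$ and $\det_{S\backslash\{\infty\}}$ is a genuine function on $X_S$ rather than on a $\prod_p\GL_2(\ZZ_p)$-quotient.)
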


\begin{proof}
The main idea here is that while in $A_{\AA}^{I}$ we restrict the
elements in the real place, we still have the entire group in the
prime place. Recall that the determinant ${\displaystyle \det_{S\backslash\left\{ \infty\right\} }}$
is define only in the presentation $\SL_{2}\left(\ZZ\right)\backslash H_{\AA}$.
However, by \ref{lem:gamma_fix} we know that for any $a\in A_{\AA}^{+}$
there is some $\ell=\psi_{m}\left(a\right)$ such that 
\[
x_{\AA}a\left(u_{n},u_{1/m}\right)=x_{\AA}\overbrace{\left(u_{-\ell/m},u_{-\ell/m}^{\left(f\right)}\right)a\cdot\left(u_{n},u_{1/m}\right)}^{\in H_{\AA}}.
\]
Since unipotent matrices (on the right side) have determinant $1$,
we see that our translation doesn't change the determinant, and in
other words
\[
\det_{S\backslash\left\{ \infty\right\} }\left(\mu\right)=\det_{S\backslash\left\{ \infty\right\} }\left(\left|I\right|^{-1}\left(\delta_{x_{\AA}A_{\AA}^{I}}\right)\right).
\]
Since $\delta_{x_{\AA}A_{\AA}^{+}}$ is $A_{p}^{+}\cong\left(\ZZ_{p}^{\times}\right)^{2}$-invariant
for every $p$, it follows that the determinant is $\ZZ_{p}^{\times}$-invariant
in every $p$, which is what we wanted to show.
\end{proof}
We can now put all the results together to show that for the equidistribution
of translated orbit, we only need to show equidistribution in the
real place.
\begin{thm}
Let $\left(u_{n_{i}},u_{1/m_{i}}^{\left(f\right)}\right)\in G_{\AA}$
as in \ref{assu:unipotent} and set $T_{i}=\ln\left(m_{i}\max\left\{ 1,n_{i}\right\} \right)$.
Assume that for each such $m_{i},n_{i}$ with $T_{i}\to\infty$ the
measures $\pi_{\RR}^{\AA}\left(\left|T_{i}\right|^{-1}\left(u_{n_{i}},u_{1/m_{i}}\right)^{-1}\left(\delta_{x_{\AA}A_{\AA}^{\left[0,T_{i}\right]}}\right)\right)$
equidistribute in $X_{\RR}$. Then for any $g_{i}\in G_{S}$ such
that $A_{\AA}g_{i}$ diverge in $A_{\AA}\backslash G_{\AA}$ we have
that $g_{i}^{-1}\left[\delta_{x_{\AA}A_{\AA}}\right]$ equidistribute
in $X_{\AA}$.
\end{thm}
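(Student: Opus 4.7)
The plan is to chain together \lemref{Iwasawa_reduction}, \lemref{restricted_measures}, \lemref{invariance_condition}, \lemref{prime_uniformity}, and the adelic lifting theorem \thmref{main_lifting}, using the hypothesis on the real-place projection as the one essential analytic input. First I would reduce to translations in the normal form of \assuref{unipotent}: given $g_i \in G_\AA$ with $A_\AA g_i$ divergent in $A_\AA \backslash G_\AA$, \lemref{Iwasawa_reduction} produces $a_i \in A_\AA$, $k_i$ in the fixed compact set $K$, and integers $n_i \geq 0$, $m_i \geq 1$ with $a_i g_i k_i = (u_{n_i}, u_{1/m_i}^{(f)})$. Because $\delta_{x_\AA A_\AA}$ is $A_\AA$-invariant the left factor $a_i$ is immaterial, and by \corref{measure_action}(3) the compact factor $k_i$ does not affect the limit either, so it suffices to prove the statement for $g_i = (u_{n_i}, u_{1/m_i}^{(f)})$. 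The divergence condition then becomes $T_i := \ln(m_i \max\{1, n_i\}) \to \infty$, and passing to a subsequence we may assume either $n_i = 0$ for every $i$ or $n_i \to \infty$, so \assuref{unipotent} is in force.

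Next, \lemref{restricted_measures} shows that it is enough to prove $\mu_i \wstar \mu_{\mathrm{Haar}, \AA}$, where
\[
\mu_i := T_i^{-1}\, (u_{n_i}, u_{1/m_i}^{(f)})^{-1}\! \left( \delta_{x_\AA A_\AA^{[0, T_i]}} \right)
\]
is a probability measure on $X_\AA$. The hypothesis says precisely that $\pi_\RR^\AA(\mu_i) \wstar \mu_{\mathrm{Haar}, \RR}$. Since $\pi_\RR^\AA$ is proper (the fibers are orbits of the compact group $\prod_p \GL_2(\ZZ_p)$), tightness of the real-place projections transfers to tightness of $(\mu_i)$, so by Prokhorov every subsequence has a further subsequence converging weak-$*$ to some probability $\mu_\infty$ on $X_\AA$. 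A standard subsequence-of-subsequence argument reduces the claim to showing that every such limit $\mu_\infty$ equals $\mu_{\mathrm{Haar}, \AA}$.

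To identify $\mu_\infty$ I would verify the three hypotheses of \thmref{main_lifting}. The $\RR$-uniformity, $\pi_\RR^\AA(\mu_\infty) = \mu_{\mathrm{Haar}, \RR}$, is immediate from continuity of push-forward under the proper map $\pi_\RR^\AA$. The prime-uniformity is inherited from each $\mu_i$: \lemref{prime_uniformity} gives that $\det_{S \setminus \{\infty\}} \pi_S^\AA(\mu_i)$ is the Haar measure on $\prod_{p \in S \setminus \{\infty\}} \ZZ_p^\times$ for every finite $S$, and this passes to the weak-$*$ limit since the target is a compact group. The $\RR$-invariance is the step I expect to be the main obstacle: in \lemref{invariance_condition} the argument used the exact $A_\AA$-invariance of the \emph{full} orbit measure $\delta_{x_\AA A_\AA}$, whereas our $\mu_i$ are only its truncations to $A_\AA^{[0, T_i]}$, so one must redo the shearing computation with explicit boundary accounting. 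When $n_i = 0$, each $\mu_i$ differs from its $a(s)$-translate only on the real-place boundary segments of length $|s|$ at the two endpoints of $[0, T_i]$, contributing $O(|s| \cdot \|f\|_\infty / T_i)$ against any compactly supported test function $f$ and thus vanishing in the limit, giving $A$-invariance of $\mu_\infty$. When $n_i \to \infty$, the shearing identity $u_{-n_i} a(t) u_{n_i} = a(t) u_{n_i(1 - e^t)}$ with $t_i \to 0$ chosen so that $n_i(1 - e^{t_i}) = C$ yields $u_C$-invariance in the limit for every $C \in \RR$, again up to vanishing boundary error, so $\mu_\infty$ is $U$-invariant. Once these three conditions are in hand, \thmref{main_lifting} forces $\mu_\infty = \mu_{\mathrm{Haar}, \AA}$, completing the proof.
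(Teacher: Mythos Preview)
Your proof is correct and follows essentially the same architecture as the paper: Iwasawa reduction via \lemref{Iwasawa_reduction}, passage to the truncated probability measures $\mu_i$ via \lemref{restricted_measures}, tightness from properness of $\pi_{\RR}^{\AA}$, and verification of the three hypotheses of \thmref{main_lifting}.

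The one place you diverge slightly is the $\RR$-invariance step. You correctly observe that \lemref{invariance_condition} is stated for the full orbit measure $\delta_{x_{\AA}A_{\AA}}$ rather than the truncations, and you redo the shearing computation with explicit $O(|s|/T_i)$ boundary terms. That works. The paper instead routes around the issue: having fixed a subsequential limit $\mu_i\wstar\mu$, it invokes (the proof of) \lemref{restricted_measures} to conclude that the \emph{full} locally finite measures $g_i^{-1}[\delta_{x_{\AA}A_{\AA}}]$ converge to $[\mu]$ as well, and then applies \lemref{invariance_condition} verbatim to that convergence, where exact $A_{\AA}$-invariance is available. This saves repeating the boundary bookkeeping, but your direct argument is equally valid and arguably more self-contained.
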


\begin{proof}
First, using Iwasawa decomposition from \ref{lem:Iwasawa_reduction}
we may assume that $g_{i}=\left(u_{n_{i}},u_{1/m_{i}}^{\left(f\right)}\right)$.
Let $\mu_{i}=\left|T_{i}\right|^{-1}g_{i}^{-1}\left(\delta_{x_{\AA}A_{\AA}^{\left[0,T_{i}\right]}}\right)$
be the probability measure restrictions of our translated orbits.
If $\mu_{i}\wstar\mu$, then by \ref{lem:restricted_measures} we
see that our original locally finite measures $g_{i}^{-1}\left[\delta_{x_{\AA}A_{\AA}}\right]$
converge to $\left[\mu\right]$ as well. It is now enough to show
that every convergent subsequence of $\mu_{i}$ converge to the Haar
measure, so let us assume that $\mu_{i}$ converge.

First, since by assumption $\pi_{\RR}^{\AA}\left(\mu\right)=\mu_{Haar,\RR}$,
we conclude that $\mu$ is a probability measure (there is no escape
of mass) and we also have the $\RR$-uniformity condition from our
lifting result in \ref{thm:main_lifting}. The $\RR$-invariance and
prime uniformity conditions follow from \ref{lem:invariance_condition}
and \ref{lem:prime_uniformity}, so applying \ref{thm:main_lifting}
we conclude that $\mu=\mu_{Haar,\AA}$.
\end{proof}

\newpage{}

\section{\label{sec:real_case}The real translations}

\global\long\def\norm#1{\left\Vert #1\right\Vert }%
\global\long\def\AA{\mathbb{A}}%
\global\long\def\QQ{\mathbb{Q}}%
\global\long\def\PP{\mathbb{P}}%
\global\long\def\CC{\mathbb{C}}%
\global\long\def\HH{\mathbb{H}}%
\global\long\def\ZZ{\mathbb{Z}}%
\global\long\def\NN{\mathbb{N}}%
\global\long\def\KK{\mathbb{K}}%
\global\long\def\RR{\mathbb{R}}%
\global\long\def\FF{\mathbb{F}}%
\global\long\def\oo{\mathcal{O}}%
\global\long\def\aa{\mathcal{A}}%
\global\long\def\bb{\mathcal{B}}%
\global\long\def\ff{\mathcal{F}}%
\global\long\def\mm{\mathcal{M}}%
\global\long\def\limfi#1#2{{\displaystyle \lim_{#1\to#2}}}%
\global\long\def\pp{\mathcal{P}}%
\global\long\def\qq{\mathcal{Q}}%
\global\long\def\da{\mathrm{da}}%
\global\long\def\dt{\mathrm{dt}}%
\global\long\def\dg{\mathrm{dg}}%
\global\long\def\ds{\mathrm{ds}}%
\global\long\def\dm{\mathrm{dm}}%
\global\long\def\dmu{\mathrm{d\mu}}%
\global\long\def\dx{\mathrm{dx}}%
\global\long\def\dy{\mathrm{dy}}%
\global\long\def\dz{\mathrm{dz}}%
\global\long\def\dnu{\mathrm{d\nu}}%
\global\long\def\flr#1{\left\lfloor #1\right\rfloor }%
\global\long\def\nuga{\nu_{\mathrm{Gauss}}}%
\global\long\def\diag#1{\mathrm{diag}\left(#1\right)}%
\global\long\def\bR{\mathbb{R}}%
\global\long\def\Ga{\Gamma}%
\global\long\def\PGL{\mathrm{PGL}}%
\global\long\def\GL{\mathrm{GL}}%
\global\long\def\PO{\mathrm{PO}}%
\global\long\def\SL{\mathrm{SL}}%
\global\long\def\PSL{\mathrm{PSL}}%
\global\long\def\SO{\mathrm{SO}}%
\global\long\def\mb#1{\mathrm{#1}}%
\global\long\def\wstar{\overset{w^{*}}{\longrightarrow}}%
\global\long\def\vphi{\varphi}%
\global\long\def\av#1{\left|#1\right|}%
\global\long\def\inv#1{\left(\mathbb{Z}/#1\mathbb{Z}\right)^{\times}}%
\global\long\def\cH{\mathcal{H}}%
\global\long\def\cM{\mathcal{M}}%
\global\long\def\bZ{\mathbb{Z}}%
\global\long\def\bA{\mathbb{A}}%
\global\long\def\bQ{\mathbb{Q}}%
\global\long\def\bP{\mathbb{P}}%
\global\long\def\eps{\epsilon}%
\global\long\def\on#1{\mathrm{#1}}%
\global\long\def\nuga{\nu_{\mathrm{Gauss}}}%
\global\long\def\set#1{\left\{  #1\right\}  }%
\global\long\def\smallmat#1{\begin{smallmatrix}#1\end{smallmatrix}}%
\global\long\def\len{\mathrm{len}}%
\global\long\def\idealeq{\trianglelefteqslant}%

Using the results from the previous sections, in order to show full
equidistribution, we are left to show that the projections of our
restricted measures to the real place equidistribute. 

For this section let us fix the following notation. The integers $n_{i},m_{i}$
will always used for the translation by $\left(u_{n_{i}},u_{1/m_{i}}\right)$,
and we will denote $T_{i}=\ln\left(\max\left\{ n_{i},1\right\} \cdot m_{i}\right)$.
We want to show that $\frac{1}{T_{i}}\pi_{\RR}^{\AA}\left(\left(u_{n_{i}},u_{1/m_{i}}\right)^{-1}\delta_{x_{\AA}A_{\AA}^{\left[0,T_{i}\right]}}\right)\wstar\mu_{Haar}$.
The proof where $m_{i}$ are bounded (so we may assume that $u_{1/m_{i}}=Id$)
is more or less described in \subsecref{Shearing}, and can be found
in much more details in \cite{oh_limits_2014}. The case where $n_{i}$
are bounded were originally proved in \cite{david_equidistribution_2018}
and most of the ideas were presented in \subsecref{continuous_analogue}
and \subsecref{Symmetries_horospheres}. In this section we will assume
that both $m_{i}$ and $n_{i}$ diverge to infinity. In the proof
we need to combine the equidistribution coming from the real part
(for $n_{i}\to\infty$) and from the finite prime part (for $m_{i}\to\infty$).
All of the details needed for each one of the bounded cases can be
found in the full proof, however, while we can probably write a proof
that encompass all the three parts, it seems that the notation for
it will be quite confusing. For example, we can now simply write $T_{i}=\ln\left(n_{i}m_{i}\right)$.
Thus, we restrict ourselves to the $m_{i},n_{i}\to\infty$ and leave
the bounded cases as an exercise to the reader. \\

Our first step will be to write our projections in a simpler way,
and for that we begin with the following notation.
\begin{defn}
\begin{enumerate}
\item For any finite set $\Lambda\subseteq X_{\RR}$ we write 
\[
\delta_{\Lambda}=\frac{1}{\left|\Lambda\right|}\sum_{x\in\Lambda}\delta_{x}.
\]
\item For a segment $I\subseteq\RR$ and a probability measure $\mu$ on
$X_{\RR}$ we write 
\[
\mu^{I}:=\int_{I}a\left(-t\right)\mu\dt.
\]
\end{enumerate}
\end{defn}

\begin{lem}
Let $n,m\in\NN_{>0}$ and set $T=\ln\left(m\cdot n\right)$ and $\Lambda_{m}=\left\{ x_{\RR}u_{\ell/m}\;\mid\;\left(m,\ell\right)=1,\;1\leq\ell\leq m\right\} $.
Then
\[
\pi_{\RR}^{\AA}\left(\left(u_{n},u_{1/m}^{\left(f\right)}\right)^{-1}\delta_{x_{\AA}A_{\AA}^{\left[0,T\right]}}\right)=u_{n}^{-1}\delta_{\Lambda_{m}}^{\left[0,T\right]}.
\]
\end{lem}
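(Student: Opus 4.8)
The goal is to unwind both sides of the claimed identity to the point where they are visibly the same pushforward measure. The left side is a measure on $X_{\RR}=\SL_2(\ZZ)\backslash\SL_2(\RR)$ obtained by (a) restricting the adelic diagonal orbit measure $\delta_{x_\AA A_\AA}$ to the segment $A_\AA^{[0,T]}$, (b) translating by $(u_n,u_{1/m}^{(f)})^{-1}$, and (c) projecting to the real place via $\pi_\RR^\AA$. First I would use the description $x_\AA A_\AA = x_\AA A_\AA^+$ with $A_\AA^+ = A\times\prod_p A_p^+$ and the chosen normalization for which $\eta_{S,\RR}$ is Lebesgue on $A\cong\RR$, so that $\delta_{x_\AA A_\AA^{[0,T]}}$ is literally $\int_0^T\!\!\int_{\prod_p A_p^+} \delta_{x_\AA (a(t),a^{(f)})}\,\da^{(f)}\dt$.

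**Reduction to $H_\AA$ and applying $\pi_\RR^\AA$.** The determinant-normalization subtlety means $\pi_\RR^\AA$ is only defined via the presentation $X_\AA\cong\SL_2(\ZZ)\backslash H_\AA$, so for each fixed $a\in A_\AA^+$ I would invoke \lemref{gamma_fix}: with $\ell=\psi_m(a)\in\{1,\dots,m-1\}$ one has $(u_{-\ell/m},u_{-\ell/m}^{(f)})\,a\,(u_n,u_{1/m})\in H_\AA$, and multiplying on the left by the element $(u_{-\ell/m},u_{-\ell/m}^{(f)})\in\Gamma_\AA=\GL_2(\QQ)$ does not change the point in $X_\AA$. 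Thus $x_\AA\, a\,(u_n,u_{1/m}^{(f)})^{-1}{}^{-1}$ — i.e. the support point of the translated measure — equals $x_\AA$ acting by the $H_\AA$-element $(u_{-\ell/m}a(t)u_n,\ (u_{-\ell/m}a^{(f)}u_{1/m}))$, whose real coordinate is $u_{-\ell/m}a(t)u_n = a(t)u_{\ell e^{t}/m}\,u_n$... more carefully, I would just track the real coordinate directly: $\pi_\RR^\AA$ reads off the real entry $u_{-\ell/m}a(t)u_n$ of this $H_\AA$-representative, which in $X_\RR$ is $x_\RR\, u_{-\ell/m}\, a(t)\, u_n = (x_\RR u_{-\ell/m})\,a(t)\,u_n$. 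Since $\tau=\smallmat{0&1\\1&0}$ and $-\mathrm{Id}$ lie in $\Gamma_\RR$, and $u_{-\ell/m}$ and $u_{\ell/m}$ are $\GL_2(\ZZ)$-conjugate, $x_\RR u_{-\ell/m}=x_\RR u_{\ell/m}$ in $X_\RR$ after relabeling $\ell\mapsto m-\ell$ (a bijection of $(\ZZ/m\ZZ)^\times$); so the real coordinate is $x_\RR u_{\ell/m}a(t)u_n$.

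**Identifying the fibre measure with $\delta_{\Lambda_m}$.** The key combinatorial point is: as $a^{(f)}=\mathrm{diag}(\alpha^{(p)},\beta^{(p)})_p$ ranges over $\prod_p A_p^+$ with the Haar measure, the induced value $\psi_m(a)=\alpha^{(p)}/\beta^{(p)}\bmod p^{k_p}$ (CRT-assembled) is distributed \emph{uniformly} over $(\ZZ/m\ZZ)^\times$. This is because $\xi_p\colon A_p^+\to\ZZ_p^\times$, $\mathrm{diag}(\alpha,\beta)\mapsto\alpha\beta^{-1}$, pushes Haar measure on $(\ZZ_p^\times)^2$ to Haar on $\ZZ_p^\times$, which in turn pushes to the uniform (counting) measure on $(\ZZ/p^{k_p}\ZZ)^\times$, and the places are independent. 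Hence integrating over the fibre $\prod_p A_p^+$ at fixed $t$ replaces $\int_{\prod_p A_p^+}\delta_{x_\RR u_{\psi_m(a)/m}a(t)u_n}\,\da^{(f)}$ by the normalized sum $\frac{1}{\varphi(m)}\sum_{(\ell,m)=1}\delta_{x_\RR u_{\ell/m}a(t)u_n} = a(t)u_n{}_*\,\delta_{\Lambda_m}$ — here I use $\delta_{\Lambda_m}$ with $\Lambda_m=\{x_\RR u_{\ell/m}:(\ell,m)=1,\,1\le\ell\le m\}$ exactly as in the statement. Finally, since in the real place $a(t)$ in \emph{this paper's} convention ($a(t)=\mathrm{diag}(e^{-t/2},e^{t/2})$, acting on the right) corresponds to $a(-t)$ in the $\mu^I=\int_I a(-t)\mu\,\dt$ notation — I would check the sign convention carefully against the $\delta_\Lambda^I$ definition — the $t$-integral over $[0,T]$ assembles into $\delta_{\Lambda_m}^{[0,T]}$, and the right translation by $u_n$, pulled outside, becomes $u_n^{-1}$ acting on the measure (acting on functions), yielding $u_n^{-1}\delta_{\Lambda_m}^{[0,T]}$. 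Throughout one uses the normalization of $\eta_{S,\RR}$ as Lebesgue so that no Jacobian factors appear.

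**Main obstacle.** The only genuinely delicate step is the interchange of the $\Gamma_\AA$-reduction (\lemref{gamma_fix}) with the fibre integration: one must be sure that as $a$ varies the choice of $\gamma=(u_{-\ell/m},\dots)$ varies measurably and that the resulting real-coordinate point depends on $a^{(f)}$ \emph{only} through $\psi_m(a)\in(\ZZ/m\ZZ)^\times$, so that Fubini legitimately collapses the fibre to a finite uniform average. Equally, one must get the two bookkeeping sign/inverse conventions right — $u_{-\ell/m}$ versus $u_{\ell/m}$ via the $\tau$-symmetry, and $a(t)$ versus $a(-t)$, and left-translation-on-functions versus right-translation-on-points — since a stray inverse would falsify the stated identity. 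These are routine once set up, but they are where an error could hide.
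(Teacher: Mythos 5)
Your proposal is correct and follows the same route as the paper's proof: disintegrate the orbit measure over the prime fibre $\prod_{p}A_{p}^{+}$ via $\psi_{m}$ (equivalently, note the pushforward of fibre Haar under $\psi_{m}$ is uniform on $\inv m$), invoke \lemref{gamma_fix} to read off the real coordinate $x_{\RR}u_{-\ell/m}a(t)u_{n}$, and average over $\ell$. The one small slip is your justification that the collection of points $x_{\RR}u_{-\ell/m}$ equals $\Lambda_{m}$: no $\GL_{2}\left(\ZZ\right)$-conjugacy or $\tau$ is needed, just left translation by $u_{1}\in\SL_{2}\left(\ZZ\right)$ giving $x_{\RR}u_{-\ell/m}=x_{\RR}u_{(m-\ell)/m}$, but the relabeling $\ell\mapsto m-\ell$ that you invoke does land the right answer.
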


\begin{proof}
We begin by decomposing $A_{\AA}^{+}$ to $A_{\AA}^{\left[0,T\right]}=\bigsqcup_{\ell}\left(A_{\AA}^{\left[0,T\right]}\cap\psi^{-1}\left(\ell\right)\right)$
where $\psi_{m}$ is the function from \ref{lem:gamma_fix} where
each part has exactly $\frac{1}{\varphi\left(m\right)}$ mass. If
$a\in A_{\AA}^{\left[0,T\right]}\cap\psi^{-1}\left(\ell\right)$,
then by \ref{lem:gamma_fix} we have that 
\[
\pi_{\RR}^{\AA}\left(x_{\AA}a\left(u_{n},u_{1/m}\right)\right)=\pi_{\RR}^{\AA}\left(x_{\AA}\overbrace{\left(u_{-\ell/m},u_{-\ell/m}^{\left(f\right)}\right)a\left(u_{n},u_{1/m}^{\left(f\right)}\right)}^{\in H_{\AA}}\right)=x_{\RR}u_{-\ell/m}a^{\left(\infty\right)}u_{n}.
\]
Thus, this decomposition shows that the integral on $x_{\AA}\left(A_{\AA}^{\left[0,T\right]}\cap\psi^{-1}\left(\ell\right)\right)\cdot\left(u_{n},u_{1/m}\right)$
is mapped down to the integral $u_{-n}\delta_{x_{\RR}}^{\left[0,T\right]}$.
Finally, we need to average over the $\ell$ and we get the required
result.
\end{proof}
\newpage{}

As can be seen in the lemma above, the translation in the prime places
lead to the discrete average over $\Lambda_{m}$. This set will appear
in many of our computations and in many forms, and by abusing the
notation to no end we will identify this set with other sets via
\[
x_{\AA}u_{\ell/m}\sim u_{\ell/m}\sim\frac{\ell}{m}\sim\ell
\]
as elements in $\Gamma_{\RR}U\leq X_{\RR}$, $U$, $\left[0,1\right]$,
$\nicefrac{\RR}{\ZZ}$, $\ZZ$ and $\left(\nicefrac{\ZZ}{m\ZZ}\right)^{\times}$.

Also, now that we have our measures in $X_{\RR}$, we will write $\Gamma,G$
and $X$ instead for $\SL_{2}\left(\ZZ\right),\SL_{2}\left(\RR\right)$
and $X_{\RR}$. Of course, we also keep the notation of {\footnotesize{}
\begin{align*}
A & =\left\{ a\left(t\right)=\left(\begin{array}{cc}
e^{-t/2} & 0\\
0 & e^{t/2}
\end{array}\right)\;\mid\;t\in\RR\right\} ,\\
U & =\left\{ u_{h}=\left(\begin{array}{cc}
1 & h\\
0 & 1
\end{array}\right)\;\mid\;h\in\RR\right\} .
\end{align*}
}We are now left with the following problem.
\begin{problem}
Show that $u_{n_{i}}^{-1}\delta_{\Lambda_{m_{i}}}^{\left[0,T_{i}\right]}$
equidistribute in $X$ whenever $n_{i},m_{i}\to\infty$.
\end{problem}

The main idea here, that was already presented in \subsecref{Shearing},
is to take small segments from $\left[0,T_{i}\right]$ and to show
that if they are chosen suitably, then they approximate expanding
horocycles, and the closer we are to $T_{i}$ the more expanded these
horocycles are. The translation in $u_{n}^{-1}$ will be dealt with
the standard shearing technique, while the average over $\Lambda_{m}$
we be dealt with a result about equidistribution of relatively prime
numbers.

We begin the equidistribution of the relatively prime numbers. In
every consecutive $m$ integers, exactly $\varphi\left(m\right)$
of which are coprime to $m$. The next result shows that up to a small
error, this is true for any segment in $\RR$.
\begin{lem}
\label{lem:equi_coprime}Fix some $m\in\NN$. Denote by $\omega\left(m\right)$
be the number of distinct prime divisors of $m$ and set $\Lambda_{m}^{\star}=\left\{ \ell\in\ZZ\;\mid\;\left(\ell,m\right)=1\right\} $.
Then for any finite interval $I\subseteq\RR$ we have that 
\[
\left|\left|\Lambda_{m}^{\star}\cap I\right|-\frac{\varphi\left(m\right)}{m}\left|I\right|\right|\leq2^{\omega\left(m\right)}.
\]
\end{lem}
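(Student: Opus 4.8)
The statement is a clean inclusion--exclusion (sieve) estimate, so I would prove it directly rather than invoking anything from earlier in the paper. Let $p_1,\dots,p_k$ be the distinct prime divisors of $m$, so $k=\omega\left(m\right)$, and for $J\subseteq\left\{1,\dots,k\right\}$ write $d_J=\prod_{j\in J}p_j$ (with $d_\emptyset=1$). For an integer $\ell$, the condition $\left(\ell,m\right)=1$ is equivalent to $p_j\nmid\ell$ for every $j$, so by inclusion--exclusion over which primes divide $\ell$ we get
\[
\left|\Lambda_m^{\star}\cap I\right|=\sum_{J\subseteq\left\{1,\dots,k\right\}}\left(-1\right)^{\left|J\right|}N_{d_J}\left(I\right),\qquad N_d\left(I\right):=\left|\left\{\ell\in I\cap\ZZ\;\mid\;d\mid\ell\right\}\right|.
\]

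The next step is the elementary observation that for any $d\geq1$ and any finite interval $I$, the number of multiples of $d$ in $I$ differs from $\left|I\right|/d$ by less than $1$: indeed, writing $I$ with endpoints $a\leq b$, this count lies between $\left\lfloor b/d\right\rfloor-\left\lceil a/d\right\rceil+1$ and $\left\lfloor b/d\right\rfloor-\left\lceil a/d\right\rceil+1$ style bounds, and a short check gives $\left|N_d\left(I\right)-\left|I\right|/d\right|\leq 1$. So I would write $N_{d_J}\left(I\right)=\left|I\right|/d_J+\theta_J$ with $\left|\theta_J\right|\leq1$, substitute, and separate the main term from the error. The main term factors as an Euler product,
\[
\sum_{J\subseteq\left\{1,\dots,k\right\}}\frac{\left(-1\right)^{\left|J\right|}}{d_J}=\prod_{j=1}^{k}\left(1-\frac{1}{p_j}\right)=\frac{\varphi\left(m\right)}{m},
\]
so $\left|\Lambda_m^{\star}\cap I\right|-\frac{\varphi\left(m\right)}{m}\left|I\right|=\sum_J\left(-1\right)^{\left|J\right|}\theta_J$. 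The error is then bounded by $\sum_J\left|\theta_J\right|\leq\sum_{J\subseteq\left\{1,\dots,k\right\}}1=2^{k}=2^{\omega\left(m\right)}$, which is exactly the claimed bound, since the number of subsets of $\left\{p_1,\dots,p_k\right\}$ equals the number of squarefree divisors of $m$.

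There is essentially no serious obstacle here; the only point that needs a moment's care is the uniform per-term estimate $\left|N_d\left(I\right)-\left|I\right|/d\right|\leq1$, which must hold for \emph{all} $d$ and \emph{all} intervals $I$ (including half-open or closed endpoints) so that summing over the $2^{\omega\left(m\right)}$ squarefree divisors produces the stated constant with no extra factors. Everything else is the standard sieve bookkeeping.
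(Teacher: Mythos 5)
Your proposal is correct and is essentially the paper's own argument: the paper also uses inclusion--exclusion over the squarefree divisors $P\mid m$ (via the M\"obius function, which is just the $(-1)^{|J|}$ bookkeeping in your subset notation), the same per-term bound $\left|\,\#\{\ell\in I:d\mid\ell\}-|I|/d\,\right|\leq 1$, and the same count of $2^{\omega(m)}$ terms. No meaningful difference.
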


\begin{proof}
To count the number of points in $\Lambda_{m}^{\star}\cap I$ we use
the inclusion exclusion principle. For each $P\mid m$ let $U_{P}=P\ZZ\cap I$
so that $\left|\left|U_{P}\right|-\frac{\left|I\right|}{P}\right|\leq1$
and note that $U_{P}={\displaystyle \bigcap_{p\mid P\;prime}U_{p}}$.
Applying the inclusion exclusion principle, we obtain that 
\[
\left|\Lambda_{m}^{\star}\cap I\right|=\left|U_{1}\backslash\bigcup_{\text{prime }p\mid m}U_{p}\right|=\sum_{\substack{p\mid m\\
p\ prime
}
}\mu\left(P\right)\left|U_{P}\right|,
\]
where $\mu\left(P\right)$ is the M{\"o}bius function. On the other
hand 
\[
\sum_{P\mid m}\mu\left(P\right)\frac{\left|I\right|}{P}=\left|I\right|\sum_{P\mid m\;SQF}\prod_{\substack{p\mid P\\
p\;prime
}
}\frac{-1}{p}=\left|I\right|\prod_{\substack{p\mid m\\
p\;prime
}
}\left(1-\frac{1}{p}\right)=\left|I\right|\frac{\varphi\left(m\right)}{m}.
\]
We conclude that 
\[
\left|\left|\Lambda_{m}^{\star}\cap I\right|-\left|I\right|\frac{\varphi\left(m\right)}{m}\right|\leq\sum_{P\mid m\;SQF}\left|\left|U_{P}\right|-\frac{\left|I\right|}{P}\right|\leq2^{\omega\left(P\right)}.
\]
\end{proof}
It is known (see \cite{robin_estimation_1983}) that the function
$\omega\left(m\right)$ is in $O\left(\frac{\ln\left(m\right)}{\ln\left(\ln\left(m\right)\right)}\right)$
so that $2^{\omega\left(m\right)}$ is bounded from above by $m^{c/\ln\left(\ln\left(m\right)\right)}$
for some constant $c>0$. In particular, for a given $\varepsilon>0$
and all $m$ large enough it is smaller than $m^{\varepsilon}$. Given
this upper bound, it is an exercise to show that $\frac{\ln\left(\varphi\left(m\right)\right)}{\ln\left(m\right)}\to1$
as $m\to\infty$, or equivalently for any $\varepsilon$ we have that
$\varphi\left(m\right)\geq m^{1-\varepsilon}$ for all $m$ big enough.
Hence, as long as $\left|I\right|$ is not too small, namely it is
at least $m^{\varepsilon}$, we get that for all $m$ large enough
the error is much smaller compared to $\frac{\varphi\left(m\right)}{m}\left|I\right|$
and $\left|\Lambda_{m}^{\star}\cap I\right|$.

We now want to extend this result to our expanding horocycles and
the points from $\delta_{\Lambda_{m}}$ on them. At time $t$, the
horocycle $x_{\RR}Ua\left(t\right)$ is isometric to a cycle of length
$e^{t}$. In particular, at time $t=\ln\left(m\right)$, our points
from $\delta_{\Lambda_{m}}$ ``become'' integers in the cycle of
length $m$. At that time we can use the result above, though it is
only useful if the error is much smaller than $\frac{\varphi\left(m\right)}{m}\left|I\right|$.
When considering intervals on some horocycle at time $t$, we first
need to move to the horocycle at time $\ln\left(m\right)$ to use
the result above.

For example, if we take an interval $I$ of length $\frac{1}{2}$
at time $t=\ln\left(m\right)$ where our points are just integers,
then $\left|\Lambda_{m}^{\star}\cap I\right|,\frac{\varphi\left(m\right)}{m}\left|I\right|\leq1$.
Thus, an error $2^{\omega\left(m\right)}\geq1$ already makes the
result useless. However an interval $J$ of length $\frac{1}{2}$
at time $t=0$ is expanded to an interval $I$ of length $\frac{m}{2}$
at time $t=\ln\left(m\right)$ so that $\frac{\varphi\left(m\right)}{m}\left|I\right|=\frac{\varphi\left(m\right)}{2}$.
Compared to this, $2^{\omega\left(m\right)}$ is rather small. In
general, a constant size interval $J$ at time $t$ will become an
interval of length $\left|J\right|e^{\ln\left(m\right)-t}$ at time
$\ln\left(m\right)$, so if we assume that $t<\left(1-\varepsilon\right)\ln\left(m\right)$,
then $2^{\omega\left(m\right)}$ is small compared to this expanded
interval.

If there was no translation in the real place, so that $T=\ln\left(m\right)$,
then this will be enough for the full equidistribution. However, with
the real place translation we have that $T=\ln\left(mn\right)$ with
$n$ big, so that the distance between two points in $\Lambda_{m}a\left(t\right)$
become too big for this approximation. In this case, we can use the
shearing coming from the real translation, so that instead of approximating
using a Riemann sum technique, we integrate a little bit over the
horocycle to get that the distribution function is close to uniform
1. We start with the result of integrating over the horocycle and
then do the full shearing result.
\begin{lem}
\label{lem:approx_horo}Fix some $f\in C_{c}\left(\nicefrac{\RR}{\ZZ}\right)$
and consider $\Lambda_{m}$ as rationals in $\nicefrac{\RR}{\ZZ}$.
Then for any $h,t>0$ and $m\in\NN$ we have that 
\[
\left|\frac{1}{h}\int_{0}^{h}\delta_{\Lambda_{m}-s}\left(f\right)\ds-\int_{0}^{1}f\left(s\right)\ds\right|\leq\frac{2^{\omega\left(m\right)}}{\varphi\left(m\right)h}\norm f_{\infty}
\]
\end{lem}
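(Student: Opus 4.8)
The plan is to reduce the integral over the horocycle to a sum of integrals over unit-length intervals and to apply the coprime-counting estimate from \lemref{equi_coprime} on each piece. Concretely, write $\delta_{\Lambda_{m}-s}\left(f\right)=\frac{1}{\varphi\left(m\right)}\sum_{\ell\in\Lambda_{m}^{\star}}f\left(\tfrac{\ell}{m}-s\right)$ (viewing everything in $\nicefrac{\RR}{\ZZ}$), so that
\[
\frac{1}{h}\int_{0}^{h}\delta_{\Lambda_{m}-s}\left(f\right)\ds=\frac{1}{\varphi\left(m\right)h}\sum_{\ell\in\Lambda_{m}^{\star}}\int_{0}^{h}f\left(\tfrac{\ell}{m}-s\right)\ds=\frac{1}{\varphi\left(m\right)h}\int_{\RR}f\left(\tfrac{r}{m}\right)\cdot\#\set{\ell\in\Lambda_{m}^{\star}\;\mid\;r\leq\ell<r+mh}\,\frac{\mathrm{d}r}{m}.
\]
Here I have substituted $r=m\left(\tfrac{\ell}{m}-s\right)$ inside each term and then recombined, using that $f$ is $1$-periodic so we may integrate over a fundamental domain of length $m$ for $r$. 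The key point is that the counting function $r\mapsto\#\set{\ell\in\Lambda_{m}^{\star}\cap\left[r,r+mh\right)}$ is exactly $\left|\Lambda_{m}^{\star}\cap I\right|$ for the interval $I=\left[r,r+mh\right)$ of length $mh$, so \lemref{equi_coprime} gives $\bigl|\#\set{\cdots}-\tfrac{\varphi\left(m\right)}{m}\cdot mh\bigr|\leq2^{\omega\left(m\right)}$, i.e. it equals $\varphi\left(m\right)h$ up to an additive error bounded by $2^{\omega\left(m\right)}$.

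Substituting this estimate into the displayed formula, the main term is
\[
\frac{1}{\varphi\left(m\right)h}\int_{\text{(length }m\text{)}}f\left(\tfrac{r}{m}\right)\cdot\varphi\left(m\right)h\,\frac{\mathrm{d}r}{m}=\int_{\text{(length }m\text{)}}f\left(\tfrac{r}{m}\right)\frac{\mathrm{d}r}{m}=\int_{0}^{1}f\left(s\right)\ds,
\]
after the change of variables $s=r/m$, and the error term is bounded by
\[
\frac{1}{\varphi\left(m\right)h}\int_{\text{(length }m\text{)}}\left|f\left(\tfrac{r}{m}\right)\right|\cdot2^{\omega\left(m\right)}\,\frac{\mathrm{d}r}{m}\leq\frac{2^{\omega\left(m\right)}}{\varphi\left(m\right)h}\norm f_{\infty},
\]
using $\int\frac{\mathrm{d}r}{m}=1$ over an interval of length $m$. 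This gives exactly the claimed bound.

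The one point that needs a little care, rather than being a genuine obstacle, is bookkeeping with the periodicity: $f$ lives on $\nicefrac{\RR}{\ZZ}$ and the points $\ell/m$ are taken modulo $1$, so one should either lift $f$ to a $1$-periodic function on $\RR$ and let $\ell$ range over all of $\ZZ$ with $\left(\ell,m\right)=1$ (which is consistent because $\left(\ell,m\right)=1$ depends only on $\ell\bmod m$), or work with one period at a time; I would do the former, as it makes the interchange of sum and integral and the single application of \lemref{equi_coprime} cleanest. A secondary remark is that the estimate is only useful when $\varphi\left(m\right)h\gg2^{\omega\left(m\right)}$, but that is a matter for the sequel (where $h$ is a fixed small constant and $m\to\infty$, using $\varphi\left(m\right)\geq m^{1-\varepsilon}$ and $2^{\omega\left(m\right)}\leq m^{\varepsilon}$), and does not affect the proof of the lemma itself, which holds for all $h,t>0$ and $m\in\NN$ as stated. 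Note also that $t$ does not actually enter the statement's right-hand side and plays no role in the proof; it is a free parameter recorded only because the lemma will later be applied on the horocycle at time $t$.
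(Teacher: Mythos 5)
Your argument is correct and follows the same route as the paper: lift $f$ to a $\ZZ$-periodic function, interchange sum and integral, fold onto a fundamental domain, and apply \lemref{equi_coprime} once to the resulting coprime-counting function (the paper works with $r\in[0,1]$ rather than your $r\in[0,m]$, a cosmetic rescaling). The first display has a small slip --- $\sum_{\ell\in\Lambda_{m}^{\star}}$ and $\int_{\RR}$ are both literally divergent, and should be $\sum_{\ell\in\Lambda_{m}}$ and $\int_{0}^{m}$ respectively --- but your very next sentence supplies the correct folding, and your observation that $t$ is a vestigial parameter in the statement is accurate.
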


\begin{proof}
We start by rewriting the integral in the lemma:
\[
\int_{0}^{h}\delta_{\Lambda_{m}-s}\left(f\right)\ds=\frac{1}{\varphi\left(m\right)}\sum_{\ell\in\Lambda_{m}}\int_{0}^{h}f\left(\ell/m-s\right)\ds.
\]
Extend the function $f$ to a $\ZZ$-periodic function on $\RR$.
Then 
\begin{align*}
\frac{1}{\varphi\left(m\right)}\int_{\RR}\sum_{\ell\in\Lambda_{m}}f\left(\ell/m-s\right)\chi_{\left[0,h\right]}\left(s\right)\ds & =\frac{1}{\varphi\left(m\right)}\int_{\RR}f\left(r\right)\sum_{\ell\in\Lambda_{m}}\chi_{\left[0,h\right]}\left(\ell/m-r\right)\mathrm{dr}\\
 & =\frac{1}{\varphi\left(m\right)}\int_{\left[0,1\right]}f\left(r\right)\sum_{\substack{\ell\in\ZZ\\
\left(\ell,m\right)=1
}
}\chi_{\left[0,h\right]}\left(\ell/m-r\right)\mathrm{dr}\\
 & =\frac{1}{\varphi\left(m\right)}\int_{\left[0,1\right]}f\left(r\right)\left|\left[mr,mr+hm\right]\cap\Lambda_{m}^{\star}\right|\mathrm{dr}.
\end{align*}
Using \lemref{equi_coprime} we get that $\left|\left[mr,mr+hm\right]\cap\Lambda_{m}^{\star}\right|$
is $hm\frac{\varphi\left(m\right)}{m}$ up to a $2^{\omega\left(m\right)}$
error. In other words
\[
\left|\frac{1}{h}\int_{0}^{h}\left(u_{s}\delta_{\Lambda_{m}}\right)\left(f\right)\ds-\int_{\left[0,1\right]}f\left(r\right)\mathrm{dr}\right|\leq\frac{1}{\varphi\left(m\right)h}\norm f_{\infty}2^{\omega\left(m\right)},
\]
which is what we wanted to show.
\end{proof}
Next we show how to transform a small segment in $\left[0,T\right]$
from our measure $u_{-n}\delta_{\Lambda_{m}}^{\left[0,T\right]}$,
into an integral over expanded horocycle, which is closed to the Haar
measure.
\begin{lem}
\label{lem:small_interval-1-2}Let $f\in C_{c}\left(X\right)$ , $1>\varepsilon>0$
and $x\in\left[0,1-2\varepsilon\right]\cdot\ln\left(nm\right)$. For
all $\Delta=\Delta\left(f,\varepsilon\right)>0$ small enough and
for all $m,n$ big enough we have that 
\[
\left|\frac{1}{\Delta}\left(u_{-n}\right)_{*}\delta_{\Lambda_{m}}^{\left[x,x+\Delta\right]}\left(f\right)-\mu_{\Gamma U}\left(a\left(x\right)f\right)\right|\leq\varepsilon+4\norm f\varepsilon+\frac{1}{n^{\varepsilon}m^{\varepsilon}}\frac{\norm f_{\infty}}{\Delta}.
\]
\end{lem}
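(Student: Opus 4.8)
The plan is to unwind the definition of $\delta_{\Lambda_m}^{[x,x+\Delta]}=\int_x^{x+\Delta}a(-t)\delta_{\Lambda_m}\dt$ and to use the standard shearing computation from \subsecref{Shearing}, combined with \lemref{approx_horo} to control the discrete average over $\Lambda_m$ along the horocycle. First I would write, for $t$ ranging over the short interval $[x,x+\Delta]$, the element $x_\RR u_{\ell/m}a(-t)u_{-n}$ in the ``shearing form'' used in \subsecref{Shearing}: namely $\Gamma a(-(x+r))u_{-n} = \Gamma u_{-n e^{-x}e^{-r}}a(-x)a(-r)$ for $r\in[0,\Delta]$, so that acting on the discrete set $\Lambda_m$ we get something of the shape $\Gamma u_{\ell/m}u_{-ne^{-x-r}}a(-x)a(-r)$. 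Since the $u_{1/m}$-spacing of $\Lambda_m$ at time $x$ is $e^{x}/m$ (the computation $\SL_2(\ZZ)u_{(n+1)/m}a(t)=\SL_2(\ZZ)u_{n/m}a(t)u_{e^t/m}$ from \subsecref{Symmetries_horospheres}), the relevant horocycle here has length $\sim e^{x}$, and the translate by $u_{-ne^{-x-r}}$ moves along it; the exponential reparametrization $s = ne^{-x-r}$ (as in \subsecref{Shearing}) turns the $\dt$-integral into $\int f(\ldots u_s a(-x))\tfrac{1}{s}\ds$ over an interval of length $\sim ne^{-x}\Delta$. The condition $x\le (1-2\varepsilon)\ln(nm)$ guarantees $ne^{-x}\ge (nm)^{2\varepsilon}/m = n^{2\varepsilon}m^{2\varepsilon-1}$, but more usefully $ne^{-x}\cdot m \ge (nm)^{2\varepsilon}$ is large; I will use this in two places.

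Next I would handle the three error terms separately, matching the three summands on the right-hand side. (i) The factor $\tfrac{1}{s}$ over an interval $s\in[ne^{-x}e^{-\Delta},ne^{-x}]$ differs from the constant $ne^{-x}$ by a multiplicative factor in $[e^{-\Delta},1]$; choosing $\Delta$ small makes this a relative error $\le\varepsilon$, contributing the ``$\varepsilon$'' term. (ii) The factor $a(-r)$ for $r\in[0,\Delta]$ can be removed by uniform continuity of $f$ (\lemref{uniform_continuity}): for $\Delta=\Delta(f,\varepsilon)$ small, $\|a(-r)f - f\|_\infty\le\varepsilon$, and after pulling out $a(-x)$ this is an error bounded by a constant times $\norm f\varepsilon$ — I expect the ``$4\norm f\varepsilon$'' to come from combining this continuity error with the rounding of the horocycle-length count (the interval of $s$-values covers the horocycle $\flr{ne^{-x}(1-e^{-\Delta})}$ full times plus a fractional remainder, and the fractional remainder is $\le 1$ full horocycle out of $\sim ne^{-x}\Delta$ of them, hence relatively $\lesssim \varepsilon$ once $ne^{-x}\Delta$ is large — this is where $x\le(1-2\varepsilon)\ln(nm)$ and $m,n$ large are used). (iii) The genuinely arithmetic error: after reducing to an integral over the standard horocycle of a translate of $f$, we are averaging $f$ over the finite set $\Lambda_m$ translated along $\Gamma U$, and \lemref{approx_horo} (applied with the relevant $h$, which here is comparable to $\Delta$ after rescaling, so that $h\gtrsim$ a fixed constant and $\varphi(m)h \gtrsim \varphi(m)$) gives exactly a term of size $\frac{2^{\omega(m)}}{\varphi(m)h}\norm f_\infty$; using $\varphi(m)\ge m^{1-\varepsilon}$ and $2^{\omega(m)}\le m^{\varepsilon}$ for $m$ large (the estimates recorded after \lemref{equi_coprime}), and the expansion factor built into $h$, this is bounded by $\frac{1}{n^{\varepsilon}m^{\varepsilon}}\frac{\norm f_\infty}{\Delta}$. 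Finally, the main term that survives is $\int_0^1 f(\Gamma u_s a(-x))\ds = \mu_{\Gamma U}(a(x)f)$, which is the asserted leading term.

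The step I expect to be the main obstacle is keeping the bookkeeping of the three error sources \emph{uniform} in $x$ over the whole range $[0,1-2\varepsilon]\cdot\ln(nm)$, and in particular making sure that the $\flr{\cdot}$-rounding of ``how many times we wrap around the horocycle'' and the reparametrization $s=ne^{-x-r}$ interact cleanly when $x$ is close to the endpoint $(1-2\varepsilon)\ln(nm)$ — at that edge $ne^{-x}$ is as small as $\sim (nm)^{2\varepsilon}/m$, and one must check this is still large enough (for $m,n$ large) that the fractional-wrap error and the $\tfrac1s$-variation are both absorbed into the stated constants. A secondary subtlety is that $\Lambda_m$ is being identified simultaneously with a subset of $\Gamma U\subseteq X$ and with rationals in $\RR/\ZZ$ (the abuse of notation fixed just before the lemma), so I must be careful that the application of \lemref{approx_horo} — which is stated for $f\in C_c(\RR/\ZZ)$ — is legitimate; concretely one restricts the $C_c(X)$ function to the compact piece of the horocycle $\Gamma U a(-x)$, which is a continuous function on $\RR/\ZZ$, and applies the lemma to that. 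Once these uniformity points are pinned down, the inequality assembles term-by-term as above. I would not grind through the explicit constants; the ``$4$'' and the precise exponents are artifacts of how one chooses to split the rounding error, and any reasonable split gives a bound of the stated form.
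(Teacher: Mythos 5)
Your overall structure — shear, reparametrize $s=Ce^{-t}$, remove $a(t)$ by uniform continuity, flatten $\tfrac1s$, and then invoke \lemref{approx_horo} — matches the paper's proof, and you correctly identify \lemref{approx_horo} as the key arithmetic input. The exact allocation of constants among the error terms is slightly different from the paper (the paper gets the standalone $\varepsilon$ from removing $a(t)$, and the $4\norm f\varepsilon$ from the two approximations $e^{-\Delta}\rightsquigarrow 1-\Delta$ and $\tfrac1s\rightsquigarrow\tfrac1C$), but as you note that bookkeeping is not essential.

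There is, however, a genuine gap in the floor-rounding step you insert. You propose to count how many full horocycles the $s$-interval wraps around, $\flr{ne^{-x}(1-e^{-\Delta})}$, and to absorb the leftover fractional wrap into the $4\norm f\varepsilon$ bucket, arguing that the fractional remainder is relatively small ``once $ne^{-x}\Delta$ is large.'' You flag this as the place where uniformity in $x$ might be delicate, and you guess that $ne^{-x}\gtrsim (nm)^{2\varepsilon}/m=n^{2\varepsilon}m^{2\varepsilon-1}$ is ``still large enough.'' But it is not: for $\varepsilon<\tfrac12$ the exponent of $m$ is negative, so taking for instance $n=m\to\infty$ gives $ne^{-x}\sim m^{4\varepsilon-1}\to 0$ whenever $\varepsilon<\tfrac14$. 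In that regime the $s$-interval does not even complete one wrap, the floor is $0$, and the ``fractional remainder'' is the entire integral — so the extra error is $O\!\left(\tfrac{\norm f_\infty}{ne^{-x}\Delta}\right)$, which blows up and cannot be hidden inside $4\norm f\varepsilon$.

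The fix, which is what the paper does, is simply to omit the floor step: \lemref{approx_horo} is stated for an arbitrary window length $h>0$ with error $\tfrac{2^{\omega(m)}}{\varphi(m)\,h}\norm f_\infty$, and it already compares the (possibly sub-period-length) average $\tfrac1h\int_0^h\delta_{\Lambda_m - s}(f)\,\ds$ directly to the full period integral. There is no separate ``wrap-counting'' error; the correct quantity that must be large is $\varphi(m)\cdot h$, not $h$ itself, and because $\varphi(m)\geq m^{1-\varepsilon/2}$ this survives even when $h=ne^{-x}\Delta$ is small, giving precisely the third term $\tfrac{1}{n^\varepsilon m^\varepsilon}\tfrac{\norm f_\infty}{\Delta}$. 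So the obstruction you anticipated is real, but the resolution is to lean harder on \lemref{approx_horo} rather than to supplement it with a Riemann-sum style count.
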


\begin{proof}
By setting $\tilde{f}=a\left(x\right)f$ we need instead to bound
\[
\left|\frac{1}{\Delta}\left(a\left(x\right)u_{-n}\right)_{*}\delta_{\Lambda_{m}}^{\left[x,x+\Delta\right]}\left(\tilde{f}\right)-\mu_{\Gamma U}\left(\tilde{f}\right)\right|=\left|\frac{1}{\Delta}\left(u_{-ne^{-x}}\right)_{*}\delta_{\Lambda_{m}}^{\left[0,\Delta\right]}\left(\tilde{f}\right)-\mu_{\Gamma U}\left(\tilde{f}\right)\right|.
\]

Setting $C=-ne^{-x}$ , we rewrite our integral as 
\begin{align*}
\left(u_{C}\right)_{*}\delta_{\Lambda_{m}}^{\left[0,\Delta\right]}\left(\tilde{f}\right) & =\frac{1}{\Delta}\int_{0}^{\Delta}\left(\left(u_{C}a\left(-t\right)\right)_{*}\delta_{\Lambda_{m}}\right)\left(\tilde{f}\right)\dt=\\
 & =\frac{1}{\Delta}\int_{0}^{\Delta}\left(\left(u_{Ce^{-t}}\right)_{*}\delta_{\Lambda_{m}}\right)\left(a\left(t\right)\tilde{f}\right)\dt.
\end{align*}
The function $f$ is uniform continuous by \lemref{uniform_continuity},
so we may assume that $\varepsilon\geq\Delta=\Delta\left(f,\varepsilon\right)>0$
is small enough so that $\left|t\right|\leq\Delta$ implies that $\norm{a\left(t\right)f-f}_{\infty}<\varepsilon$.
Because $a\left(x\right)$ commutes with $a\left(t\right)$ we also
get that $\norm{a\left(t\right)\tilde{f}-\tilde{f}}_{\infty}<\varepsilon$
and hence 
\begin{equation}
\left|\frac{1}{\Delta}\int_{0}^{\Delta}\left(\left(u_{Ce^{-t}}\right)_{*}\delta_{\Lambda_{m}}\right)\left(a\left(t\right)\tilde{f}\right)-\frac{1}{\Delta}\int_{0}^{\Delta}\left(\left(u_{Ce^{-t}}\right)_{*}\delta_{\Lambda_{m}}\right)\left(\tilde{f}\right)\right|<\varepsilon.\label{eq:11}
\end{equation}

The measure $\frac{1}{\Delta}\int_{0}^{\Delta}u_{Ce^{-t}}\delta_{\Lambda_{m}}\dt$
is supported on a single horocycle, but the integration is not the
uniform $U$-invariant measure there. However, it is a good approximation,
and in order to show it we need to (1) change the $e^{-t}$ to a linear
function, and (2) change the uniform measure on the finite set $\Lambda_{m}$
to the continuous uniform measure.\\

For part (1), we set $s=Ce^{-t}$ to get
\begin{align*}
\frac{1}{\Delta}\int_{0}^{\Delta}\left(u_{Ce^{-t}}\delta_{\Lambda_{m}}\right)\left(\tilde{f}\right)\dt & =-\frac{1}{\Delta}\int_{C}^{Ce^{-\Delta}}\left(u_{s}\delta_{\Lambda_{m}}\right)\left(\tilde{f}\right)\frac{1}{s}\ds.
\end{align*}
For any $\frac{1}{2}>\Delta>0$ small enough we have that $\Delta^{2}\geq e^{-\Delta}-\left(1-\Delta\right)\geq0$,
so let us use it to change the $e^{-\Delta}$ in the upper bound of
the integral to $\left(1-\Delta\right)$:
\begin{equation}
\left|\frac{1}{\Delta}\int_{C}^{Ce^{-\Delta}}\left(u_{s}\delta_{\Lambda_{m}}\right)\left(\tilde{f}\right)\frac{1}{s}\ds-\frac{1}{\Delta}\int_{C}^{C\left(1-\Delta\right)}\left(u_{s}\delta_{\Lambda_{m}}\right)\left(\tilde{f}\right)\frac{1}{s}\ds\right|\norm{\tilde{f}}\frac{e^{-\Delta}-\left(1-\Delta\right)}{\Delta\left(1-\Delta\right)}\leq2\norm f\varepsilon.\label{eq:22}
\end{equation}
Next, we want to get rid of the $\frac{1}{s}$ part, by noting that
$s$ is almost constant. Indeed, we have that $\frac{1}{C\left(1-\Delta\right)}\leq\frac{1}{s}\leq\frac{1}{C}$
and therefore $\left|\frac{1}{s}-\frac{1}{C}\right|\leq\frac{\Delta}{C\left(1-\Delta\right)}\leq\frac{2\Delta}{C}$.It
follows that 
\begin{equation}
\left|\frac{1}{\Delta}\int_{C}^{C\left(1-\Delta\right)}\left(u_{s}\delta_{\Lambda_{m}}\right)\left(\tilde{f}\right)\frac{1}{s}\ds-\frac{1}{\Delta}\int_{C}^{C\left(1-\Delta\right)}\left(u_{s}\delta_{\Lambda_{m}}\right)\left(\tilde{f}\right)\frac{1}{C}\ds\right|\leq2\norm f_{\infty}\varepsilon.\label{eq:33}
\end{equation}
Finally, we use \lemref{approx_horo} to get 
\begin{equation}
\left|\frac{1}{\left|C\right|\Delta}\int_{C}^{C\left(1-\Delta\right)}\left(u_{s}\delta_{\Lambda_{m}}\right)\left(\tilde{f}\right)\ds-\delta_{x_{\RR}U}\left(\tilde{f}\right)\right|\leq\frac{2^{\omega\left(m\right)}}{\varphi\left(m\right)\left|C\right|\Delta}\norm f_{\infty}=\frac{2^{\omega\left(m\right)}}{\varphi\left(m\right)ne^{-x}\Delta}\norm f_{\infty}.\label{eq:44}
\end{equation}
Since we assume that $x\leq\left(1-2\varepsilon\right)\ln\left(mn\right)$,
and for all $m$ large enough we have that $2^{\omega\left(m\right)}\leq m^{\varepsilon/2}$
and $m^{1-\varepsilon/2}\leq\varphi\left(m\right)$, then the upper
bound is at most 
\[
\frac{m^{\varepsilon/2}}{m^{1-\varepsilon/2}n\left(nm\right)^{2\varepsilon-1}}\frac{\norm f_{\infty}}{\Delta}\leq\frac{1}{n^{\varepsilon}m^{\varepsilon}}\frac{\norm f_{\infty}}{\Delta}.
\]
Putting all the triangle inequalities in \eqref{11}, \eqref{22},
\eqref{33} and \eqref{44} together, we get that 
\[
\left|\frac{1}{\Delta}\left(a\left(x\right)u_{-n}\right)_{*}\delta_{\Lambda_{m}}^{\left[x,x+\Delta\right]}\left(\tilde{f}\right)-\mu_{\Gamma U}\left(\tilde{f}\right)\right|<\varepsilon+4\norm f\varepsilon+\frac{1}{n^{\varepsilon}m^{\varepsilon}}\frac{\norm f_{\infty}}{\Delta}
\]
which completes the proof.
\end{proof}
Finally, we can use the result about equidistribution of expanding
horocycles to show the equidistribution of our measures and the last
condition from \thmref{main_lifting}.
\begin{thm}
Let $n_{i},m_{i}\in\NN$ we diverge to infinity and set $T_{i}=\ln\left(n_{i}m_{i}\right)$.
Then \\
$\frac{1}{T_{i}}u_{-n_{i}}\delta_{\Lambda_{m_{i}}}^{\left[0,T_{i}\right]}\wstar\mu_{Haar,\RR}$
equidistributes.
\end{thm}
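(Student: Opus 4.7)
The plan is to use \lemref{small_interval-1-2} as a local approximation---each short sub-interval of the diagonal orbit becomes, after translation by $u_{-n_{i}}$, close to a pushforward of the standard horocycle measure $\mu_{\Gamma U}$---and then to invoke the equidistribution of expanding horocycles to conclude that the resulting Cesaro average tends to $\mu_{Haar}$ on $X$.

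Concretely, fix $f\in C_{c}\left(X\right)$ and $\varepsilon\in(0,\tfrac{1}{2})$. First I would truncate $[0,T_{i}]$ to $[0,(1-2\varepsilon)T_{i}]$ so as to be in the regime where \lemref{small_interval-1-2} applies; the tail $[(1-2\varepsilon)T_{i},T_{i}]$ contributes at most $2\varepsilon\norm{f}_{\infty}$ after normalization by $T_{i}^{-1}$ and is harmless in the limit $\varepsilon\to 0$. On the main interval, choose $\Delta=\Delta(f,\varepsilon)>0$ small as required by \lemref{small_interval-1-2}, partition $[0,(1-2\varepsilon)T_{i}]$ into consecutive sub-intervals $I_{k}=[k\Delta,(k+1)\Delta]$ for $k=0,\dots,N_{i}-1$ with $N_{i}=\lfloor(1-2\varepsilon)T_{i}/\Delta\rfloor$, apply \lemref{small_interval-1-2} on each $I_{k}$, and sum (with weight $\Delta/T_{i}$) to obtain
\[
\frac{1}{T_{i}}\left(u_{-n_{i}}\right)_{*}\delta_{\Lambda_{m_{i}}}^{[0,(1-2\varepsilon)T_{i}]}(f)=\frac{\Delta}{T_{i}}\sum_{k=0}^{N_{i}-1}\mu_{\Gamma U}\bigl(a(k\Delta)f\bigr)+\mathrm{Err}_{i},
\]
with $|\mathrm{Err}_{i}|\leq\bigl(1+4\norm{f}_{\infty}\bigr)\varepsilon+(n_{i}m_{i})^{-\varepsilon}\norm{f}_{\infty}\Delta^{-1}$; the second summand vanishes as $i\to\infty$ since $n_{i}m_{i}\to\infty$ and $\Delta$ is fixed once $\varepsilon$ is.

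The Riemann sum is an approximation---up to an error bounded by the modulus of continuity of $t\mapsto\mu_{\Gamma U}(a(t)f)$ at scale $\Delta$, independent of $T_{i}$---of $T_{i}^{-1}\int_{0}^{(1-2\varepsilon)T_{i}}\mu_{\Gamma U}(a(t)f)\dt$. By equidistribution of expanding horocycles, $\mu_{\Gamma U}(a(t)f)\to\mu_{Haar}(f)$ as $t\to\infty$, while the integrand is uniformly bounded by $\norm{f}_{\infty}$; hence the Cesaro average converges to $(1-2\varepsilon)\mu_{Haar}(f)$ as $i\to\infty$. Combining the tail bound, the error $\mathrm{Err}_{i}$, and the limit of the Cesaro average, and then letting $\varepsilon\to 0$, yields the desired equidistribution. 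The only delicate point is controlling the per-interval error from \lemref{small_interval-1-2}, which carries a factor $\Delta^{-1}$: after summing over $\sim T_{i}/\Delta$ pieces it could a priori blow up, but it is tamed exactly by the $(n_{i}m_{i})^{-\varepsilon}$ decay, which is where the hypothesis $n_{i},m_{i}\to\infty$ enters essentially (and is also why the bounded-$n_{i}$ and bounded-$m_{i}$ cases are naturally handled separately rather than by a single uniform argument).
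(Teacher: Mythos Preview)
Your proposal is correct and follows essentially the same route as the paper: partition $[0,T_{i}]$ into $\Delta$-subintervals, apply \lemref{small_interval-1-2} on each, and then use equidistribution of expanding horocycles. The only cosmetic difference is how the two ends of $[0,T_{i}]$ are handled: the paper truncates \emph{both} ends, discarding the $k$ with $k\Delta<\varepsilon T_{i}$ or $k\Delta\geq(1-2\varepsilon)T_{i}$ via the trivial bound $2\norm{f}_{\infty}$ (there are $O(\varepsilon T_{i}/\Delta)$ such terms), and then uses directly that $\left|\mu_{\Gamma U}(a(k\Delta)f)-\mu_{Haar}(f)\right|<\varepsilon$ for the remaining $k$ once $i$ is large; you truncate only the top end and absorb the bottom end into the Ces\`aro limit of $t\mapsto\mu_{\Gamma U}(a(t)f)$. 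Both arguments are equivalent, and your remark about the $\Delta^{-1}$ factor is correct but slightly overstated: after weighting by $\Delta/T_{i}$ the accumulated error is still $\norm{f}_{\infty}\Delta^{-1}(n_{i}m_{i})^{-\varepsilon}$, which is harmless because $\Delta$ is fixed once $\varepsilon$ and $f$ are.
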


\begin{proof}
Fix some $f\in C_{c}\left(X\right)$ , $\varepsilon>0$ and let $\Delta=\Delta\left(f,\varepsilon\right)\leq\varepsilon$
as in \lemref{small_interval-1-2} where we may assume that $\Delta\mid T_{i}$.
Given $m,n\in\NN$ we write 
\[
\left(u_{-n_{i}}\right)_{*}\delta_{\Lambda_{m_{i}}}^{\left[0,T_{i}\right]}\left(f\right)=\Delta\sum_{1}^{T_{i}/\Delta}\frac{1}{\Delta}\left(u_{-n_{i}}\right)_{*}\delta_{\Lambda_{m_{i}}}^{\left[\left(k-1\right)\Delta,k\Delta\right]}\left(f\right).
\]
Pick $k$ such that $\varepsilon T_{i}\leq k\Delta<\left(1-2\varepsilon\right)T_{i}$.
By \lemref{small_interval-1-2} we get that 
\[
\left|\frac{1}{\Delta}\left(u_{-n}\right)_{*}\delta_{\Lambda_{m}^{-}}^{\left[\left(k-1\right)\Delta,k\Delta\right]}\left(f\right)-\mu_{\Gamma U}\left(a\left(k\Delta\right)f\right)\right|\leq\varepsilon+4\norm f\varepsilon+\frac{1}{n^{\varepsilon}m^{\varepsilon}}\frac{\norm f_{\infty}}{\Delta}.
\]
Since $\varepsilon,\Delta$ are fixed and $n_{i},m_{i}\to\infty$,
then for all $i$ large enough the last bound is smaller than $\varepsilon\left(1+5\norm f_{\infty}\right)$.

Using the equidistribution of expanding horocycle, since $T_{i}\to\infty$
we get that for every $i$ big enough we can approximate the integral
over the horocycle by
\[
\left|\mu_{\Gamma U}\left(a\left(k\Delta\right)f\right)-\mu_{Haar}\left(f\right)\right|\leq\varepsilon.
\]
There are at most $6\varepsilon\frac{T_{i}}{\Delta}$ integers $k$
which do not satisfy our condition above, for which we have the trivial
bound 
\[
\left|\mu_{\Gamma U}\left(a\left(k\Delta\right)f\right)-\mu_{Haar}\left(f\right)\right|\leq2\norm f.
\]
Putting it all together, we get that for all $i$ big enough
\begin{align*}
\left|\left(u_{-n_{i}}\right)_{*}\delta_{\Lambda_{m_{i}}}^{\left[0,T_{i}\right]}\left(f\right)-\mu_{Haar}\left(f\right)\right| & \leq\frac{\Delta}{T_{i}}\sum_{1}^{T_{i}/\Delta}\left|\left(\frac{1}{\Delta}\left(u_{-n_{i}}\right)_{*}\delta_{\Lambda_{m_{i}}}^{\left[\left(k-1\right)\Delta,k\Delta\right]}\left(f\right)-\mu_{Haar}\left(f\right)\right)\right|\\
 & \leq\varepsilon\left(1+5\norm f_{\infty}\right)+12\varepsilon\norm f_{\infty}.
\end{align*}
As this is true for every $\varepsilon>0$, we conclude that $\left(u_{-n_{i}}\right)_{*}\delta_{\Lambda_{m_{i}}}^{\left[0,T_{i}\right]}\left(f\right)\to\mu_{Haar}\left(f\right)$,
and since $f\in C_{c}\left(X\right)$ was arbitrary we get the required
equidistribution. 

\end{proof}

\appendix

\section{\label{sec:Mahler_criterion}The generalized Mahler's criterion}

\global\long\def\norm#1{\left\Vert #1\right\Vert }%
\global\long\def\AA{\mathbb{A}}%
\global\long\def\QQ{\mathbb{Q}}%
\global\long\def\PP{\mathbb{P}}%
\global\long\def\CC{\mathbb{C}}%
\global\long\def\HH{\mathbb{H}}%
\global\long\def\ZZ{\mathbb{Z}}%
\global\long\def\NN{\mathbb{N}}%
\global\long\def\KK{\mathbb{K}}%
\global\long\def\RR{\mathbb{R}}%
\global\long\def\FF{\mathbb{F}}%
\global\long\def\oo{\mathcal{O}}%
\global\long\def\aa{\mathcal{A}}%
\global\long\def\bb{\mathcal{B}}%
\global\long\def\ff{\mathcal{F}}%
\global\long\def\mm{\mathcal{M}}%
\global\long\def\limfi#1#2{{\displaystyle \lim_{#1\to#2}}}%
\global\long\def\pp{\mathcal{P}}%
\global\long\def\qq{\mathcal{Q}}%
\global\long\def\da{\mathrm{da}}%
\global\long\def\dt{\mathrm{dt}}%
\global\long\def\dg{\mathrm{dg}}%
\global\long\def\ds{\mathrm{ds}}%
\global\long\def\dm{\mathrm{dm}}%
\global\long\def\dmu{\mathrm{d\mu}}%
\global\long\def\dx{\mathrm{dx}}%
\global\long\def\dh{\mathrm{dh}}%
\global\long\def\dy{\mathrm{dy}}%
\global\long\def\dz{\mathrm{dz}}%
\global\long\def\dnu{\mathrm{d\nu}}%
\global\long\def\flr#1{\left\lfloor #1\right\rfloor }%
\global\long\def\nuga{\nu_{\mathrm{Gauss}}}%
\global\long\def\diag#1{\mathrm{diag}\left(#1\right)}%
\global\long\def\bR{\mathbb{R}}%
\global\long\def\Ga{\Gamma}%
\global\long\def\PGL{\mathrm{PGL}}%
\global\long\def\GL{\mathrm{GL}}%
\global\long\def\PO{\mathrm{PO}}%
\global\long\def\SL{\mathrm{SL}}%
\global\long\def\PSL{\mathrm{PSL}}%
\global\long\def\SO{\mathrm{SO}}%
\global\long\def\mb#1{\mathrm{#1}}%
\global\long\def\wstar{\overset{w^{*}}{\longrightarrow}}%
\global\long\def\vphi{\varphi}%
\global\long\def\av#1{\left|#1\right|}%
\global\long\def\inv#1{\left(\mathbb{Z}/#1\mathbb{Z}\right)^{\times}}%
\global\long\def\cH{\mathcal{H}}%
\global\long\def\cM{\mathcal{M}}%
\global\long\def\bZ{\mathbb{Z}}%
\global\long\def\bA{\mathbb{A}}%
\global\long\def\bQ{\mathbb{Q}}%
\global\long\def\bP{\mathbb{P}}%
\global\long\def\eps{\epsilon}%
\global\long\def\on#1{\mathrm{#1}}%
\global\long\def\nuga{\nu_{\mathrm{Gauss}}}%
\global\long\def\set#1{\left\{  #1\right\}  }%
\global\long\def\smallmat#1{\begin{smallmatrix}#1\end{smallmatrix}}%
\global\long\def\len{\mathrm{len}}%
\global\long\def\idealeq{\trianglelefteqslant}%

Mahler's criterion is very useful when trying to study the space of
Euclidean lattices. Since in this notes we work with the generalized
version of $S$- adic lattices, in this section we give the definition
and proofs for the generalized Mahler criterion.for boundedness in
the space of Euclidean lattices is very useful \\

\begin{defn}
For $v\in\QQ_{\nu}^{n}$, we write $\norm v_{\nu}=\max\left|v_{i}\right|_{\nu}$.
For $S\subseteq\PP_{\infty}$ and $\left(v^{\left(\nu\right)}\right)\in\QQ_{S}^{d}$
we set $\norm v_{S}=\prod_{\nu\in S}\norm{v^{\left(\nu\right)}}_{\nu}$.
\end{defn}

The ``norm'' function above is the generalization of the standard
norm that we use in Euclidean spaces. As with our $\left|\cdot\right|_{S}$
notation on $\QQ_{S}$, it is possible for $\norm v_{S}=0$ without
$v=0$, though for $0\neq v\in\QQ^{n}\leq\QQ_{S}^{n}$ the norm will
always be nonzero.

For the real place, we have a natural geometric intuition regarding
the norm. For the $p$-prime case we have instead an algebraic interpretation.
For $v\in\QQ_{p}^{d}$, it is easy to check that the $\ZZ_{p}$ module
$\left\langle v\right\rangle _{\ZZ_{p}}:=span_{\ZZ_{p}}\left\{ v_{i}\mid1\leq i\leq d\right\} $
satisfy $\left\langle v\right\rangle _{\ZZ_{p}}=\norm v_{p}\ZZ_{p}$.
In particular we get that for $M\in M_{d}\left(\ZZ_{p}\right)$ we
have that $\left\langle vM\right\rangle _{\ZZ_{p}}\leq\left\langle v\right\rangle _{\ZZ_{p}}$
so that $\norm{vM}_{p}\geq\norm v_{p}$, and if $M\in\GL_{n}\left(\ZZ_{p}\right)$
(e.g. $M$ is diagonal over $\ZZ_{p}^{\times}$), then we have equality.
In other words, $\GL_{d}\left(\ZZ_{p}\right)$ preserve the norm.

Continuing with the generalization of Mahler's criterion, recall that
for a Euclidean lattice $L\leq\RR^{n}$ we define the height to be
$ht\left(L\right)=\left(\min_{0\neq v\in L}\norm v\right)^{-1}$.
We now generalize to to $S$-adic lattices.
\begin{defn}
For $g=\left(g^{\left(p\right)}\right)\in\GL_{n}\left(\QQ_{S}\right)$
we define the \emph{height function} $ht_{S}\left(g\right)=\left({\displaystyle \inf_{0\neq v\in\ZZ\left[S^{-1}\right]^{n}}}\norm{vg}\right)^{-1}$.
\end{defn}

Note that $ht_{S}\left(g\right)$ is constant on left orbits of $\Gamma_{S}=\GL_{d}\left(\ZZ\left[S^{-1}\right]\right)$,
so that it is actually a function on $X_{S}=\Gamma_{S}\backslash G_{S}$.
Since $H_{S}$ acts transitively on $X_{S}$, we can always find a
representative $h\in H_{S}$ such that $\Gamma_{S}h=\Gamma g$, and
therefore $ht_{S}\left(g\right)=ht_{S}\left(h\right)$. Using this
presentation we can describe the height in a more familiar way, and
in particular show that the infimum is a minimum.
\begin{lem}
\label{lem:height_function}Fix some $S\subseteq\PP_{\infty}.$
\begin{enumerate}
\item For any $v\in\QQ_{S}^{n}$ and $q\in\ZZ\left[S^{-1}\right]^{\times}=\left\langle \pm S\right\rangle $
we have that $\norm{qv}=\norm v$. For $q\in\left\langle \PP\backslash S\right\rangle $
we have that $\norm{qv}=\left|q\right|_{\infty}\norm v$.
\item For any $g\in\GL_{d}\left(\QQ_{S}\right)$ we have that ${\displaystyle \inf_{0\neq v\in\ZZ\left[S^{-1}\right]^{n}}}\norm{vg}={\displaystyle \inf_{\substack{0\neq v\in\ZZ^{n}\\
v\;primitive
}
}}\norm{vg}$.
\item If $h\in H_{S}$, then ${\displaystyle \inf_{0\neq v\in\ZZ\left[S^{-1}\right]^{n}}}\norm{vh}={\displaystyle \inf_{\substack{0\neq v\in\ZZ^{n}\\
v\;primitive
}
}}\norm{vh^{\left(\infty\right)}}$.
\end{enumerate}
\end{lem}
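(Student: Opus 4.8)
The three statements are all elementary consequences of the multiplicativity of the local norms together with the product formula, so I would prove them in the order (1) $\Rightarrow$ (2) $\Rightarrow$ (3). The plan is to treat part (1) as the basic computational lemma, then use it to clear denominators in (2), and finally observe that (3) is the special case of (2) where the local components at finite places are units.

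For part (1): write $v = (v^{(\nu)})_{\nu\in S}$ so that $\norm v_S = \prod_{\nu\in S}\norm{v^{(\nu)}}_\nu$. If $q\in\ZZ[S^{-1}]^\times$, then by the classification of units (part (1) of the corollary in \secref{The_p_adic_motivation}, or directly) $q = \pm\prod_{p\in S\setminus\{\infty\}}p^{e_p}$ with $e_p\in\ZZ$, so $\av q_\nu = 1$ for every place $\nu\notin S$ and, by the product formula, $\prod_{\nu\in S}\av q_\nu = 1$; diagonally embedded, $q$ acts on $\QQ_S^n$ by scaling the $\nu$-component by $\av q_\nu$, hence $\norm{qv}_S = \bigl(\prod_{\nu\in S}\av q_\nu\bigr)\norm v_S = \norm v_S$. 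If instead $q\in\langle\PP\setminus S\rangle$, i.e. $q$ is (up to sign) a product of primes \emph{not} in $S$, then $\av q_p = 1$ for all $p\in S\setminus\{\infty\}$ since such $q$ lies in $\ZZ_p^\times$, so $\norm{qv^{(p)}}_p = \norm{v^{(p)}}_p$ at every finite place of $S$, while at the real place $\norm{qv^{(\infty)}}_\infty = \av q_\infty\norm{v^{(\infty)}}_\infty$; multiplying gives $\norm{qv}_S = \av q_\infty\norm v_S$. (Here I use the $p$-adic observation recorded just before the definition of $ht_S$: $\GL_d(\ZZ_p)$, and in particular multiplication by a $p$-adic unit, preserves $\norm\cdot_p$.)

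For part (2): the inclusion $\inf_{0\neq v\in\ZZ^n,\ v\text{ primitive}}\norm{vg}\ge\inf_{0\neq v\in\ZZ[S^{-1}]^n}\norm{vg}$ is trivial since primitive integer vectors are a subset of $\ZZ[S^{-1}]^n\setminus\{0\}$. For the reverse, take any $0\neq v\in\ZZ[S^{-1}]^n$. Its entries have denominators supported on primes of $S\setminus\{\infty\}$, so there is $q\in\langle S\rangle\subseteq\ZZ[S^{-1}]^\times$ (a product of primes in $S$) with $qv\in\ZZ^n$; by part (1), $\norm{qv}_S = \norm v_S$. Now write $qv = d w$ with $d\in\ZZ_{\ge 1}$ and $w\in\ZZ^n$ primitive. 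Factor $d = d_1 d_2$ where $d_1\in\langle S\setminus\{\infty\}\rangle$ collects the prime factors of $d$ lying in $S$ and $d_2\in\langle\PP\setminus S\rangle$ collects the rest. By part (1) again, $\norm{qv}_S = \norm{d w}_S = \av{d_2}_\infty\norm{w}_S \ge \norm w_S$ since $\av{d_2}_\infty = d_2\ge 1$. Hence $\norm w_S\le\norm v_S$ with $w$ a primitive integer vector, which proves the infimum over primitive integer vectors is $\le$ the infimum over all of $\ZZ[S^{-1}]^n\setminus\{0\}$, giving equality.

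For part (3): apply part (2) to $h\in H_S$. Since $h = (h^{(\infty)},(h^{(p)})_{p\in S\setminus\{\infty\}})$ with $h^{(p)}\in\GL_d(\ZZ_p)$ for every finite $p\in S$, the $p$-adic-norm-preservation property of $\GL_d(\ZZ_p)$ gives, for any $v\in\ZZ^n$ (whose $p$-component is in $\ZZ_p^n$, so $\norm{v^{(p)}}_p\le 1$, in fact $=1$ for primitive $v$), that $\norm{vh^{(p)}}_p = \norm{v^{(p)}}_p$; for primitive $v$ this is $1$ at every finite place of $S$. Therefore $\norm{vh}_S = \norm{vh^{(\infty)}}_\infty\cdot\prod_{p\in S\setminus\{\infty\}}\norm{vh^{(p)}}_p = \norm{vh^{(\infty)}}_\infty$ for primitive $v\in\ZZ^n$, and combined with part (2) this yields $\inf_{0\neq v\in\ZZ[S^{-1}]^n}\norm{vh}_S = \inf_{0\neq v\in\ZZ^n\ \text{primitive}}\norm{vh^{(\infty)}}_\infty$ as claimed.

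The only mild subtlety — the step I'd flag as the ``main obstacle,'' though it is really just bookkeeping — is the denominator-clearing in part (2): one must be careful to split the integer scalar $d$ according to whether its prime factors lie in $S$ or not, because only the factor coprime to $S$ actually changes the $S$-norm (and it can only \emph{increase} it, which is exactly what one needs). Everything else is a direct application of multiplicativity of $\av\cdot_\nu$, the product formula, and the fact that $\GL_d(\ZZ_p)$ preserves $\norm\cdot_p$.
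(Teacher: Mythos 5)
Your proof is correct and follows essentially the same route as the paper: part (1) from multiplicativity of the local norms plus the product formula, part (2) by factoring a general $v\in\ZZ\left[S^{-1}\right]^{n}$ as (an $S$-unit)$\,\times\,$(an integer supported on primes outside $S$)$\,\times\,$(a primitive integer vector), and part (3) by observing that $h^{\left(p\right)}\in\GL_{d}\left(\ZZ_{p}\right)$ preserves $\norm{\cdot}_{p}$ so each finite place contributes $1$ on primitive vectors. One small notational slip in part (2): you compute $\norm{qv}_{S}$ and $\norm{dw}_{S}$ where the claim concerns $\norm{vg}_{S}$; since $q$, $d_{1}$, $d_{2}$ are scalars this is harmless — the identical chain with $vg$ in place of $v$ gives $\norm{wg}_{S}\leq\norm{vg}_{S}$ — but it should be carried through.
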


\begin{proof}
\begin{enumerate}
\item By definition we have that $\norm{qv}=\left(\prod_{\nu\in S}\left|q\right|_{\nu}\right)\norm v$.
For for $q\in\left\langle \pm S\right\rangle $ we have the product
formula $\prod_{\nu\in S}\left|q\right|_{\nu}=1$, while for $q\in\left\langle \PP\backslash S\right\rangle $
we have that $\left|q\right|_{p}=1$ for all primes $p\in S$ so that
$\left(\prod_{p\in S}\left|q\right|_{p}\right)=\left|q\right|_{\infty}$. 
\item Given $0\neq v\in\ZZ\left[S^{-1}\right]^{n}$ , we can write it as
$v=q_{S}mu$ for some $q_{S}\in\left\langle \pm S\right\rangle $
and $m\in\ZZ$ such $m\in\left\langle \PP\backslash S\right\rangle $
and $u\in\ZZ^{n}$ is primitive. By part 1 we have that 
\[
\norm{vg}=\norm{q_{S}mug}=\left|m\right|_{\infty}\norm{ug}\geq\norm{ug}.
\]
Thus, we get that ${\displaystyle \inf_{0\neq v\in\ZZ\left[S^{-1}\right]^{n}}}\norm{vg}\geq{\displaystyle \inf_{0\neq v\in\ZZ^{n}}}\norm{vg}$.
The converse is clearly true, so we have an equality. 
\item Assume now that $h\in H_{S}$. For any finite prime $p\in S$, we
have that $h^{\left(p\right)}\in\GL_{n}\left(\ZZ_{p}\right)$ so in
particular $h^{\left(p\right)}\;(mod\;p)$ is well defined and invertible.
Given $v\in\ZZ^{n}$ primitive, we get that $vh^{\left(p\right)}\in\ZZ_{p}^{n}$
is a nonzero vector mod $p$, and therefore $\norm{vh^{\left(p\right)}}_{p}=1$
for all finite prime $p$. The proof is now completed by the fact
that 
\[
\norm{vh}=\norm{vh^{\left(\infty\right)}}_{\infty}\prod_{\infty\neq p\in S}\norm{vh^{\left(p\right)}}_{p}=\norm{vh^{\left(\infty\right)}}_{\infty}.
\]
\end{enumerate}
\end{proof}
Note that part (3) says that for $h\in H_{S}$ we have that $ht_{S}\left(h\right)=ht_{\infty}\left(h^{\left(\infty\right)}\right)$.
This allows us to generalize Mahler's criterion from Euclidean to
$S$-adic lattices.
\begin{thm}[Mahler's criterion]
 A set $\Omega\subseteq X_{S}$ is bounded if and only if $\left\{ ht_{S}\left(x\right)\;\mid\;x\in\Omega\right\} $
is bounded.
\end{thm}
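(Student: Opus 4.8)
The statement to prove is the $S$-adic Mahler criterion: a set $\Omega\subseteq X_S$ is bounded iff $\{ht_S(x)\mid x\in\Omega\}$ is bounded. The strategy is to reduce everything to the classical Mahler criterion in the real place via the presentation $X_S\cong\SL_d(\ZZ)\backslash H_S$ and the identity $ht_S(h)=ht_\infty(h^{(\infty)})$ from \lemref{height_function}(3), which has already been established. First I would fix the reduction: every $x\in X_S$ has a representative $h\in H_S$, and since $H_S=\SL_d(\RR)\times\prod_{p\in S\backslash\{\infty\}}\GL_d(\ZZ_p)$ is a direct product with the second factor compact, boundedness of $\Omega$ in $X_S$ is equivalent to boundedness of the image of a lift of $\Omega$ under the projection $H_S\to \SL_d(\ZZ)\backslash\SL_d(\RR)$, i.e. boundedness of the corresponding set of real lattices $\{\ZZ^d h^{(\infty)}\}$. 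This is because the preimage of any compact set under $\pi^S_{\{\infty\}}$ is compact (the fibers are orbits of the compact group $\prod_{p\in S\backslash\{\infty\}}\GL_d(\ZZ_p)$), and conversely $\pi^S_{\{\infty\}}$ is continuous, so $\Omega$ is bounded iff $\pi^S_{\{\infty\}}(\Omega)$ is.

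Next I would invoke the classical Mahler compactness criterion for $\SL_d(\ZZ)\backslash\SL_d(\RR)$ (equivalently $X_{\{\infty\}}$): a set of unimodular real lattices is bounded iff there is a uniform positive lower bound on the length of the shortest nonzero vector, i.e. iff $ht_\infty$ is bounded on it. Since by \lemref{height_function}(3) we have $ht_S(x)=ht_\infty(h^{(\infty)})=ht_\infty(\pi^S_{\{\infty\}}(x))$, boundedness of $ht_S$ on $\Omega$ is the same as boundedness of $ht_\infty$ on $\pi^S_{\{\infty\}}(\Omega)$, which by classical Mahler is the same as boundedness of $\pi^S_{\{\infty\}}(\Omega)$, which by the previous paragraph is the same as boundedness of $\Omega$. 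Chaining these equivalences gives the theorem.

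For completeness I would either cite the classical Mahler criterion from a standard reference (e.g. \cite{einsiedler_ergodic_2010}) or sketch its two directions. The easy direction: if $\Omega$ is bounded, then $\overline{\Omega}$ is compact, so the continuous function $ht_S$ attains a maximum on it and is bounded. The substantive direction is the converse: given $ht_S\le C$ on $\Omega$, one shows $\pi^S_{\{\infty\}}(\Omega)$ lies in a compact subset of $\SL_d(\ZZ)\backslash\SL_d(\RR)$. Here one uses reduction theory — e.g. a Siegel set $\Sigma$ covering $\SL_d(\ZZ)\backslash\SL_d(\RR)$ — together with the observation that on a Siegel set the shortest-vector length is comparable to the smallest diagonal entry in the Iwasawa $KAN$ decomposition; a uniform lower bound on shortest vectors then forces the $A$-part to stay in a compact region, and the $N$-part is already bounded in the Siegel set while $K$ is compact, so the whole lift stays compact.

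**Main obstacle.** The only real content beyond bookkeeping is the converse direction of classical Mahler, and since the paper is content to cite \cite{einsiedler_ergodic_2010} for other standard facts, I expect this proof to be essentially a two-line reduction: state that by \lemref{height_function}(3), $ht_S=ht_\infty\circ\pi^S_{\{\infty\}}$; note $\pi^S_{\{\infty\}}$ is proper with compact fibers hence a bounded set in $X_S$ is exactly the preimage of a bounded set in $X_{\{\infty\}}$; and apply classical Mahler there. The mild subtlety to be careful about is the distinction between $\GL_d^1$ and $\SL_d$, but this was already addressed in \secref{Adelic_lifting} (the index-two issue: $X_{\{\infty\}}\cong\SL_d(\ZZ)\backslash\SL_d(\RR)$), so no new work is needed there.
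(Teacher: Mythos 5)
Your proof is correct and follows essentially the same route as the paper: reduce to the real place via the proper projection $\pi_{\RR}^{S}$ (whose fibers are compact orbits of $\prod_{p\in S\backslash\{\infty\}}\GL_d(\ZZ_p)$), apply classical Mahler in $X_{\RR}$, and use \lemref{height_function}(3) to identify $ht_S$ with $ht_\infty\circ\pi_{\RR}^{S}$. The extra sketch of the classical criterion via Siegel sets is fine but beyond what the paper includes, since it just cites the classical result.
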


\begin{proof}
Recall that we have the projection $\pi_{\RR}^{S}:X_{S}\to X_{\RR}$
induced from the projection \\
$H_{S}\to H_{\RR}=\SL_{d}\left(\RR\right)$. Since $\pi_{\RR}^{S}$
is proper (the preimage of every point is an orbit of the compact
group ${\displaystyle \prod_{p\in S\backslash\left\{ \infty\right\} }}\GL_{d}\left(\ZZ_{p}\right)$),
it follows that $\Omega\subseteq X_{S}$ is bounded if and only if
its image $\pi_{\RR}^{S}\left(\Omega_{S}\right)$ is bounded. The
standard Mahler's criterion tell us that $\pi_{\RR}^{S}\left(\Omega_{S}\right)$
is bounded if and only if the standard height function $ht_{\infty}$
is bounded on this set. We now use part (3) of \ref{lem:height_function}
to show that this is equivalent to $ht_{S}\left(\Omega_{S}\right)$
being bounded.
\end{proof}
\begin{rem}
Note that since the height function is well defined, we get that if
$L\leq\QQ_{S}^{d}$ is a lattice, then $\norm v_{S}>0$ for any $0\neq v\in L$.
In particular, if $v\in\QQ_{S}^{d}$ satisfy $\norm v_{S}=0$, that
it is not contained in any lattice.
\end{rem}

\newpage{}

\section{\label{sec:Disintegration-of-measures}Disintegration of measures}

\global\long\def\norm#1{\left\Vert #1\right\Vert }%
\global\long\def\AA{\mathbb{A}}%
\global\long\def\QQ{\mathbb{Q}}%
\global\long\def\PP{\mathbb{P}}%
\global\long\def\CC{\mathbb{C}}%
\global\long\def\HH{\mathbb{H}}%
\global\long\def\ZZ{\mathbb{Z}}%
\global\long\def\NN{\mathbb{N}}%
\global\long\def\KK{\mathbb{K}}%
\global\long\def\RR{\mathbb{R}}%
\global\long\def\FF{\mathbb{F}}%
\global\long\def\oo{\mathcal{O}}%
\global\long\def\aa{\mathcal{A}}%
\global\long\def\bb{\mathcal{B}}%
\global\long\def\ff{\mathcal{F}}%
\global\long\def\mm{\mathcal{M}}%
\global\long\def\limfi#1#2{{\displaystyle \lim_{#1\to#2}}}%
\global\long\def\pp{\mathcal{P}}%
\global\long\def\qq{\mathcal{Q}}%
\global\long\def\da{\mathrm{da}}%
\global\long\def\dt{\mathrm{dt}}%
\global\long\def\dg{\mathrm{dg}}%
\global\long\def\ds{\mathrm{ds}}%
\global\long\def\dm{\mathrm{dm}}%
\global\long\def\dmu{\mathrm{d\mu}}%
\global\long\def\dx{\mathrm{dx}}%
\global\long\def\dh{\mathrm{dh}}%
\global\long\def\dy{\mathrm{dy}}%
\global\long\def\dz{\mathrm{dz}}%
\global\long\def\dnu{\mathrm{d\nu}}%
\global\long\def\flr#1{\left\lfloor #1\right\rfloor }%
\global\long\def\nuga{\nu_{\mathrm{Gauss}}}%
\global\long\def\diag#1{\mathrm{diag}\left(#1\right)}%
\global\long\def\bR{\mathbb{R}}%
\global\long\def\Ga{\Gamma}%
\global\long\def\PGL{\mathrm{PGL}}%
\global\long\def\GL{\mathrm{GL}}%
\global\long\def\PO{\mathrm{PO}}%
\global\long\def\SL{\mathrm{SL}}%
\global\long\def\PSL{\mathrm{PSL}}%
\global\long\def\SO{\mathrm{SO}}%
\global\long\def\mb#1{\mathrm{#1}}%
\global\long\def\wstar{\overset{w^{*}}{\longrightarrow}}%
\global\long\def\vphi{\varphi}%
\global\long\def\av#1{\left|#1\right|}%
\global\long\def\inv#1{\left(\mathbb{Z}/#1\mathbb{Z}\right)^{\times}}%
\global\long\def\cH{\mathcal{H}}%
\global\long\def\cM{\mathcal{M}}%
\global\long\def\bZ{\mathbb{Z}}%
\global\long\def\bA{\mathbb{A}}%
\global\long\def\bQ{\mathbb{Q}}%
\global\long\def\bP{\mathbb{P}}%
\global\long\def\eps{\epsilon}%
\global\long\def\on#1{\mathrm{#1}}%
\global\long\def\nuga{\nu_{\mathrm{Gauss}}}%
\global\long\def\set#1{\left\{  #1\right\}  }%
\global\long\def\smallmat#1{\begin{smallmatrix}#1\end{smallmatrix}}%
\global\long\def\len{\mathrm{len}}%
\global\long\def\idealeq{\trianglelefteqslant}%

Disintegration of measures is a well known process used to study probability
measures. In this notes we deal with locally finite measures on homogeneous
spaces, so for completeness we add the proofs to the generalization
of the disintegration for these measures.

We start by recalling the standard theorem.
\begin{thm}[Disintegration of probability measures]
 Let $\pi:Y\to X$ be Borel measurable function of Radon spaces,
$\mu$ a probability measure on $Y$ and set $\nu=\pi_{*}\mu$ the
push forward probability on $X$. Then there exists $\nu$ almost
everywhere uniquely determined probability measures $\mu_{x}$ on
$Y$ such that
\begin{enumerate}
\item For each $B\subseteq Y$ measurable, the function $x\mapsto\mu_{x}\left(B\right)$
is measurable.
\item For almost every $x$ we have that $\mu_{x}\left(\pi^{-1}\left(x\right)\right)=1$
.
\item For every Borel function $f:Y\to\left[0,\infty\right]$ we have that
\[
\int_{Y}f\left(y\right)\dmu=\int_{X}\left(\int_{Y}f\left(y\right)\dmu_{x}\left(y\right)\right)\dnu\left(x\right).
\]
\end{enumerate}
\end{thm}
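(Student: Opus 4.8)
The plan is to reduce to the model case of standard Borel spaces and then construct the conditional measures $\mu_x$ by hand from conditional expectations, using a countability argument to upgrade "almost-everywhere finite additivity" into genuine countable additivity. First I would use that $Y$, being a Radon space, is Borel isomorphic to a Borel subset of a Polish space, so we may as well take $Y$ Polish and $\mu$ a Radon probability on it, and I fix a countable algebra $\aa$ of Borel subsets of $Y$ that generates the Borel $\sigma$-algebra $\ff_Y$ and is closed under finite unions and complements. For each $A\in\aa$ the indicator $\mathbf 1_A\in L^1(\mu)$ has a conditional expectation with respect to the sub-$\sigma$-algebra $\pi^{-1}(\ff_X)$, which by the Doob--Dynkin lemma has the form $\varphi_A\circ\pi$ for a $\nu$-measurable $\varphi_A:X\to[0,1]$; set $\mu_x(A):=\varphi_A(x)$, so that $\int_{\pi^{-1}(E)}\mathbf 1_A\,\dmu=\int_E\varphi_A\,\dnu$ for every $E\in\ff_X$.

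Next I would check that for $\nu$-almost every $x$ the set function $A\mapsto\mu_x(A)$ is a finitely additive set function on $\aa$ with $\mu_x(Y)=1$ that moreover satisfies the continuity condition: for every decreasing sequence $A_n\downarrow\emptyset$ in $\aa$ one has $\mu_x(A_n)\to 0$. Each such requirement is a single almost-sure identity or inequality — for instance $\mu_x(A\cup B)=\mu_x(A)+\mu_x(B)$ for disjoint $A,B\in\aa$, or $\lim_n\varphi_{A_n}(x)=0$, which holds $\nu$-a.e.\ since $\int\varphi_{A_n}\,\dnu=\mu(A_n)\to 0$ with $\varphi_{A_n}$ decreasing after passing to versions — and since $\aa$ is countable there are only countably many such constraints; intersecting the corresponding conull sets yields a single conull $X_0\subseteq X$ on which all of them hold at once. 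For $x\in X_0$, Carath\'{e}odory's extension theorem applies (the continuity-at-$\emptyset$ condition makes $\mu_x$ countably additive on the algebra $\aa$) and produces a genuine Borel probability measure, still written $\mu_x$, on $Y$; for $x\notin X_0$ set $\mu_x$ equal to a fixed probability measure.

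Then I would verify the three conclusions. Item (1): measurability of $x\mapsto\mu_x(B)$ holds on $\aa$ by construction and propagates to all Borel $B$ by a monotone class argument. Item (3): the integration formula holds for $f=\mathbf 1_A$, $A\in\aa$, by the defining property of $\varphi_A$, and extends to all Borel $f:Y\to[0,\infty]$ by linearity, a monotone class argument, and monotone convergence. Item (2): taking a countable algebra generating $\ff_X$ and applying item (3) to functions $\mathbf 1_{\pi^{-1}(F)}$ shows that for $\nu$-a.e.\ $x$ and every $F$ in that algebra $\mu_x(\pi^{-1}(F))=\mathbf 1_F(x)$; a monotone class argument upgrades this to all Borel $F$, so $\mu_x(\pi^{-1}(X\setminus\{x\}))=0$, i.e.\ $\mu_x$ is carried by $\pi^{-1}(x)$. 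Uniqueness $\nu$-a.e.\ follows since two disintegrations must agree on the countable algebra $\aa$ off a null set, hence agree as measures.

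The step I expect to be the main obstacle is precisely the passage from the family of conditional expectations — defined only up to $\nu$-null sets, with the exceptional set depending on the set $A$ — to an honestly countably additive measure $\mu_x$ valid on a single conull set of $x$. This is where the Radon/standard-Borel hypothesis is essential: it supplies the countable generating algebra and makes Carath\'{e}odory extension applicable, and without some such regularity regular conditional probabilities need not exist at all. Everything past the extension step is routine monotone-class bookkeeping.
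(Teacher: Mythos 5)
The paper does not prove this theorem --- it is recalled (in Appendix B) as a known fact, as background for the locally-finite generalization the paper actually establishes. So there is no argument of the paper to compare against; I'll review the proposal on its own terms.

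Your overall architecture is the standard one (conditional expectations $\to$ finitely additive set function on a countable generating algebra $\to$ Carath\'eodory extension), and most of the bookkeeping in items (1)--(3) is fine. But there is a genuine gap at precisely the step you flag as "the main obstacle," namely the passage to countable additivity. You assert that the continuity condition "for every decreasing sequence $A_n\downarrow\emptyset$ in $\aa$, $\mu_x(A_n)\to 0$" involves only countably many constraints because $\aa$ is countable. That is false: the set of decreasing sequences in a countably infinite algebra $\aa$ with empty intersection is uncountable (it is a subset of $\aa^{\mathbb N}$), and a fixed null exceptional set cannot be extracted by intersecting over all of them. This is exactly the point where the naive countability argument breaks, and it is the reason textbook proofs do not run the argument the way you have written it.

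There are two standard repairs, both of which use more of the Radon/Polish structure than you currently do. One is to invoke tightness: since $\mu$ is a Radon probability on a Polish space, choose compact $K_n\subseteq Y$ with $\mu(Y\setminus K_n)<2^{-n}$, enlarge $\aa$ to a countable algebra containing suitable sets built from the $K_n$, and show that after discarding one $\nu$-null set each $\mu_x$ is inner regular with respect to the compact class $\{K_n\}$ on $\aa$; the compact-class criterion (finite intersection property of compacts) then yields countable additivity on $\aa$ without ever enumerating decreasing sequences. The other, perhaps cleaner, repair is the Billingsley/Durrett route: take a Borel isomorphism $\phi:Y\to B\subseteq[0,1]$, form $F_x(q):=E[\mathbf 1_{\{\phi\le q\}}\mid\pi^{-1}(\ff_X)](x)$ for rational $q$, impose the (now genuinely countably many) monotonicity, right-limit and endpoint constraints to get a bona fide distribution function for $\nu$-a.e.\ $x$, and let $\mu_x$ be the pushforward under $\phi^{-1}$ of the corresponding Lebesgue--Stieltjes measure; here countable additivity comes for free from the Lebesgue--Stieltjes construction rather than having to be verified on the algebra. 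Either fix slots into the rest of your argument unchanged; as written, the proof is incomplete at this one step.
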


We would like to extend this theorem to locally finite measures on
some group $H$ with respect to a map $\pi:H\to\nicefrac{H}{W}=K$.
The problem there is that $\pi_{*}\mu$ is usually infinite on many
sets. To solve this problem we instead apply the theorem to increasing
parts of $H$ and then make sure that this defines a good measure
in the limit.

For the rest of this section we will use this assumption.
\begin{assumption}
\label{assu:disintegration}Let $H$ be a group with decomposition
$H=K\cdot W$ with $W\cap K=\left\{ e\right\} $ where the map $K\times W\to G$
is a homeomorphism. All these groups are second countable, locally
compact and Hausdorff. We denote $\pi:H\to\nicefrac{H}{W}\cong K$
the natural projection and assume that $K$ is compact. Finally, we
let $\mu$ be a right $W$-invariant measure on $G$ and denote by
$\mu_{W}$ a right $W$-invariant measure on $W$ (which is unique
up to a scalar).
\end{assumption}

Fix some $U\subseteq W$ open with compact closure. Since $W$ is
second countable, we can find countably many $w_{i}\in W$ such that
$\bigcup_{1}^{\infty}Uw_{i}=W$ and therefore
\[
\mu\left(H\right)=\mu\left(\bigcup_{1}^{\infty}KUw_{i}\right)\leq\sum_{1}^{\infty}\mu\left(KUw_{i}\right).
\]
If $\mu\left(KU\right)=0$, then by the right $W$-invariance of $\mu$
we get that $\mu\left(H\right)=0$ - contradiction. Hence we must
have that $\mu\left(KU\right)>0$. 

We can now apply the disintegration theorem to $\frac{1}{\mu\left(KU\right)}\mu\mid_{KU}$
and get in particular that if \\
$f:KU\to\left[0,\infty\right]$ is Borel measurable , then
\[
\int_{H}f\left(y\right)\dmu=\int_{KU}f\left(y\right)\dmu=\mu\left(KU\right)\int_{K}\left(\int_{KU}f\left(y\right)\dmu_{k,U}\left(y\right)\right)\dnu_{U}\left(k\right).
\]
Since for almost every $k$, the measure $\mu_{k,U}$ is supported
on $kU$, we will instead consider its induced measure on $U\subseteq W$
and write instead $\int_{W}f\left(kh\right)\dmu_{k,U}\left(h\right)$,
so that 
\[
\int_{H}f\left(y\right)\dmu=\int_{K}\left(\int_{W}f\left(kh\right)\mu\left(KU\right)\dmu_{k,U}\left(h\right)\right)\dnu_{U}\left(k\right).
\]

The first step is to show that $\nu_{U}$ is actually independent
of the choice of $U$.

\newpage{}
\begin{lem}
The probability measure $\nu_{U}$ is independent of $U$.
\end{lem}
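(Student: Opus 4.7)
My plan is to write $\nu_U(E)$ explicitly and show it factors through a ratio that eliminates the $U$-dependence. By construction $\nu_U$ is the pushforward under $\pi$ of $\mu(KU)^{-1}\mu|_{KU}$, and the homeomorphism $K \times W \to H$ identifies $KU \cap \pi^{-1}(E)$ with $EU$. Hence for any Borel $E \subseteq K$,
\[
\nu_U(E) = \frac{\mu(EU)}{\mu(KU)}.
\]
It therefore suffices to show that $\mu(EU)$ factorizes as $c(E)\mu_W(U)$ for some quantity $c$ depending only on $E$.

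For this, I would fix $E$ and consider the set function $\lambda_E(U) := \mu(EU)$ on Borel subsets of $W$. It is locally finite since $EU \subseteq K\overline{U}$ and the latter is the image of the compact set $K \times \overline{U}$ under continuous multiplication, hence compact and of finite $\mu$-measure. Countable additivity follows from the uniqueness of the decomposition $H = K \cdot W$: disjoint $U_n \subseteq W$ yield disjoint $EU_n \subseteq H$. The crux is right $W$-invariance,
\[
\lambda_E(Uw) = \mu(E(Uw)) = \mu((EU)w) = \mu(EU) = \lambda_E(U),
\]
which combines associativity in $H$ with the right $W$-invariance of $\mu$. By uniqueness of right Haar measure on the locally compact group $W$, there exists $c(E) \geq 0$ with $\lambda_E = c(E)\mu_W$, so
\[
\nu_U(E) = \frac{c(E)\mu_W(U)}{c(K)\mu_W(U)} = \frac{c(E)}{c(K)},
\]
independent of $U$. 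Note $c(K) > 0$, for otherwise $\mu(KU) = 0$, contradicting the standing assumption.

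I expect the only technical obstacle to be verifying that $\lambda_E$ is a genuine Radon measure on $W$ so that Haar uniqueness applies; this should follow from standard measure-theoretic arguments showing that a locally finite, countably additive Borel set function on a locally compact second countable Hausdorff space is automatically Radon. Once this is in hand, the right-invariance and Haar uniqueness deliver the desired factorization essentially for free.
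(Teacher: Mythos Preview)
Your proof is correct and follows essentially the same approach as the paper: both establish the formula $\nu_U(E)=\mu(EU)/\mu(KU)$, observe that $V\mapsto\mu(EV)$ is a right $W$-invariant (hence Haar) measure on $W$, and conclude that the ratio $\mu(EU)/\mu(KU)=\lambda_E/\lambda_K$ is independent of $U$. The only cosmetic difference is that the paper extracts the formula $\nu_U(E)=\mu(EU)/\mu(KU)$ from the disintegration integral applied to $\chi_{EU}$, whereas you read it off directly from the pushforward definition; your added care about $\lambda_E$ being Radon is a detail the paper simply asserts.
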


\begin{proof}
Given $\Omega\subseteq K$ measurable, we have that 
\[
\mu\left(\Omega U\right)=\int_{H}\chi_{\Omega U}\dmu=\int_{K}\left(\int_{W}\chi_{\Omega U}\left(kh\right)\mu\left(KU\right)\dmu_{k,U}\left(h\right)\right)\dnu_{U}\left(k\right)=\mu\left(KU\right)\nu_{U}\left(\Omega\right).
\]
For any fixed measurable subset $\Omega\subseteq K$, we define the
measure $V\mapsto\mu\left(\Omega V\right)$ on $W$ which is right
$W$-invariant (because $\mu$ is right $W$-invariant). Thus, we
have that $\mu\left(\Omega V\right)=\lambda_{\Omega}\mu_{W}\left(V\right)$
for some scalar $\lambda_{\Omega}\geq0$. Since $\mu\left(KU\right)>0$,
the equality above show that $\nu_{U}\left(\Omega\right)=\frac{\mu\left(\Omega U\right)}{\mu\left(KU\right)}=\frac{\lambda_{\Omega}}{\lambda_{K}}$
doesn't depend on $U$.
\end{proof}
If $f$ is measurable on $KU$ and $U\subseteq V$ are open, then
it is measurable on $KV$ as well, and we can apply the result above
for both spaces. The next step is to show that $\mu\left(KU\right)\mu_{k,U}$
are for almost every $k$ independent of $U$, so afterwards we can
take the limit as $U_{i}\nearrow H$.
\begin{lem}
Let $U\subseteq V$ open in $W$. For $\nu$ almost every $k\in K$
and for every $f$ measurable on $KU$ we have that $\mu\left(KU\right)\mu_{k,U}\left(L_{k}\circ f\right)=\mu\left(KV\right)\mu_{k,V}\left(L_{k}\circ f\right)$.
\end{lem}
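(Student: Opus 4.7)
The plan is to exploit the uniqueness of the disintegration, combined with a countable family argument to handle the dependence of the null set on $f$. First I would apply the disintegration formula to both $\mu\mid_{KU}$ and $\mu\mid_{KV}$ separately. Given any measurable $\Omega\subseteq K$ and any Borel function $f:H\to[0,\infty]$ supported on $KU$, the disintegration relations give
\[
\int_{\Omega}\mu(KU)\mu_{k,U}(L_{k}\circ f)\dnu(k)=\int_{\pi^{-1}(\Omega)}f\dmu=\int_{\Omega}\mu(KV)\mu_{k,V}(L_{k}\circ f)\dnu(k),
\]
where the right-hand equality uses the fact (from the previous lemma) that $\nu_{U}=\nu_{V}=\nu$ and that $f$ is supported on $KU\subseteq KV$. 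Since $\Omega$ is arbitrary, for each fixed $f$ we conclude that there is a $\nu$-null set $N_{f}\subseteq K$ off which the pointwise equality $\mu(KU)\mu_{k,U}(L_{k}\circ f)=\mu(KV)\mu_{k,V}(L_{k}\circ f)$ holds.

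The second step removes the dependence of the null set on $f$. Using second countability of $W$ (built into \ref{assu:disintegration}), I would fix a countable algebra $\mathcal{A}$ of Borel subsets of $U$ that generates the Borel $\sigma$-algebra of $U$. Applying the previous paragraph to the Borel indicator functions $f_{A}(k h)=\chi_{A}(h)$ for $A\in\mathcal{A}$ (so $L_{k}\circ f_{A}=\chi_{A}$), we obtain a single $\nu$-null set $N:=\bigcup_{A\in\mathcal{A}}N_{f_{A}}\subseteq K$ with the following property: for every $k\notin N$ and every $A\in\mathcal{A}$, one has $\mu(KU)\mu_{k,U}(A)=\mu(KV)\mu_{k,V}(A)$.

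Having obtained this equality simultaneously on the generating algebra $\mathcal{A}$, Dynkin's $\pi$-$\lambda$ theorem (or the monotone class theorem) promotes it to an equality of the two finite Borel measures $\mu(KU)\mu_{k,U}$ and $\mu(KV)\mu_{k,V}$ on all Borel subsets of $U$, for every fixed $k\notin N$. Standard approximation by simple functions followed by monotone convergence then upgrades this to the equality of integrals $\mu(KU)\mu_{k,U}(L_{k}\circ f)=\mu(KV)\mu_{k,V}(L_{k}\circ f)$ for every non-negative Borel function $f$ supported on $KU$, which is the claim. The main obstacle is the "for every $f$" clause: a priori each $f$ might produce its own exceptional null set, and the countable dense argument together with second countability is exactly what keeps this under control. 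The rest is standard Radon measure theory.
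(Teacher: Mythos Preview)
Your proof is correct and follows essentially the same approach as the paper: both first establish the a.e.\ equality for each fixed $f$ by testing against arbitrary $\Omega\subseteq K$, and then eliminate the $f$-dependence of the null set via a countable family. The only variation is that the paper uses a countable dense family in $C(K\overline{U})$ and extends by sup-norm continuity of the Radon measures, whereas you use indicators of a countable generating algebra and extend by the $\pi$--$\lambda$ theorem; both choices are standard and interchangeable here.
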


\begin{proof}
Write $\xi\left(f,k\right)=\mu\left(KU\right)\mu_{k,U}\left(L_{k}\circ f\right)-\mu\left(KV\right)\mu_{k,V}\left(L_{k}\circ f\right)$
and note that \\
$\xi\left(\chi_{\Omega W}\cdot f,k\right)=\xi\left(f,k\right)\chi_{\Omega}\left(k\right)$
for any $\Omega\subseteq K$. 

For such any $f$ on $KU$ we have that 
\[
\int_{K}\left(\int_{W}f\left(kh\right)\mu\left(KV\right)\dmu_{k,V}\left(h\right)\right)\dnu\left(k\right)=\int_{H}f\left(y\right)\dmu=\int_{K}\left(\int_{W}f\left(kh\right)\mu\left(KU\right)\dmu_{k,U}\left(h\right)\right)\dnu\left(k\right),
\]
so that $\int_{K}\xi\left(f,k\right)\dnu\left(k\right)=0$. Applying
this to $\chi_{\Omega W}\cdot f$ we get that $\int_{\Omega}\xi\left(f,k\right)\dnu\left(k\right)=0$
for all $\Omega\subseteq K$ measurable, hence $\xi\left(f,k\right)=0$
for $\nu$ almost every $k\in K$.

Denote by $B_{f}=\left\{ k\in K\mid\xi\left(f,k\right)\neq0\right\} $.
Choose some countable family of functions $\left\{ f_{i}\right\} $
which is dense in $C\left(K\overline{U}\right)$. Then $f\in\overline{\left\{ f_{i}\right\} }$
implies that $B_{f}\subseteq\bigcup B_{f_{i}}$, and in particular,
outside of the zero $\nu$-measure set $\bigcup B_{f_{i}}$ we have
that $\xi\left(f,k\right)=0$ for all $f$ measurable on $KU$.
\end{proof}
\begin{defn}
Let $U_{i}\nearrow H$ be open with compact closure. For $f\in C_{c}\left(G\right)$
with $supp\left(f\right)\subseteq KU_{i}$, define $\mu_{k}\left(f\right)=\int_{W}f\left(h\right)\mu\left(KU_{i}\right)\dmu_{k,U_{i}}\left(h\right)$.
\end{defn}

\begin{cor}
By the previous lemma, the definition of $\mu_{k}$ doesn't depend
on $i$ for almost every $k$. Hence we have that 
\[
\int_{H}f\left(g\right)\dmu\left(g\right)=\int_{K}\left(\int_{W}f\left(kh\right)\dmu_{k}\left(h\right)\right)\dnu\left(k\right).
\]
\end{cor}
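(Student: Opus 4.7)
The plan is to reduce the claim to a single piece $KU_{i_0}$ of the exhaustion $U_i \nearrow W$, apply the standard (finite-measure) disintegration theorem there, and then invoke the previous lemma to rewrite the conditional measures as the well-defined $\mu_k$.

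First, I would fix $f \in C_c(H)$. Because $H \cong K \times W$ as a topological space (by the hypothesis in \assuref{disintegration} that $K \times W \to H$ is a homeomorphism), the projection of $\operatorname{supp}(f)$ to $W$ is compact. Since the $U_i$ are open with compact closure and $\bigcup_i U_i = W$, this projection is contained in some $U_{i_0}$, so $\operatorname{supp}(f) \subseteq K U_{i_0}$.

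Next, I would apply the classical disintegration theorem to the probability measure $\mu(KU_{i_0})^{-1}\mu\bigl|_{KU_{i_0}}$ along $\pi \colon H \to K$, exactly as was done preceding the definition of $\mu_k$. This yields
\[
\int_H f\,\dmu = \int_{KU_{i_0}} f\,\dmu = \int_K \left( \int_W f(kh)\,\mu(KU_{i_0})\,\mathrm{d}\mu_{k,U_{i_0}}(h) \right) \dnu(k),
\]
where $\nu$ is the common measure on $K$ from the first lemma of this appendix (independent of $U_{i_0}$). By the definition of $\mu_k$ on functions supported in $KU_{i_0}$, the inner integrand equals $\int_W f(kh)\,\mathrm{d}\mu_k(h)$ for $\nu$-almost every $k$, which is exactly the desired formula.

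The only subtlety — and the place where one must be a little careful — is ensuring that the $\nu$-null set of ``bad'' $k$ on which $\mu_k$ fails to be well-defined can be chosen independently of $f$. This is handled by the previous lemma, whose proof already exhibits such a null set by taking a countable dense family $\{f_i\}$ in $C(K\overline{U_{i_0}})$: outside $\bigcup_i B_{f_i}$, the identity $\mu(KU_i)\mu_{k,U_i} = \mu(KU_j)\mu_{k,U_j}$ holds for all measurable $f$ on $KU_i$, and hence, taking a countable exhaustion and a countable union of null sets, for all $f \in C_c(H)$ simultaneously. Given this, no further work is needed; the corollary is really just packaging the disintegration of each $\mu|_{KU_i}$ into a single formula valid on all of $C_c(H)$.
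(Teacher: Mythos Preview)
Your proposal is correct and matches the paper's approach; in fact the paper gives no separate proof for this corollary, treating the formula as immediate from the definition of $\mu_k$ together with the preceding lemma, which is exactly what you unpack. Your extra care about choosing the null set independently of $f$ (via a countable dense family and the countable exhaustion) is the right way to make precise what the paper leaves implicit.
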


Finally, we want to show that $\mu_{k}$ are the Haar measure on $H$.
\begin{thm}[Disintegration of measures on $H$]
T Let $H,W,K,\mu$ be as in \assuref{disintegration}, Then there
exist $r_{k}\geq0$ such that 
\[
\int_{H}f\left(g\right)\dmu\left(g\right)=\int_{K}\left(\int_{W}f\left(kh\right)\dmu_{W}\left(h\right)\right)r_{k}\dnu\left(k\right).
\]
\end{thm}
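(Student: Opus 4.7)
The plan is to use the right $W$-invariance of $\mu$ to show that the fiber measures $\mu_{k}$ constructed in the preceding corollary are themselves right $W$-invariant for $\nu$-almost every $k\in K$, and then invoke uniqueness of right Haar measure on $W$ to conclude $\mu_{k}=r_{k}\mu_{W}$.

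First, I would fix $w_{0}\in W$ and apply the right $W$-invariance of $\mu$ to the function $R_{w_{0}}f$, where $R_{w_{0}}f\left(g\right)=f\left(gw_{0}\right)$, to obtain
\[
\int_{K}\int_{W}f\left(kh\right)\dmu_{k}\left(h\right)\dnu\left(k\right)=\mu\left(f\right)=\mu\left(R_{w_{0}}f\right)=\int_{K}\int_{W}f\left(khw_{0}\right)\dmu_{k}\left(h\right)\dnu\left(k\right).
\]
Restricting to $f\in C_{c}\left(H\right)$ supported in $KU_{i}$ for some $i$ as in the preceding corollary, and using the $\nu$-almost-everywhere uniqueness of the disintegration measures coming from the classical disintegration theorem applied to $\frac{1}{\mu\left(KU_{i}\right)}\mu\mid_{KU_{i}}$, the identity forces $\left(R_{w_{0}}\right)_{*}\mu_{k}=\mu_{k}$ for $\nu$-almost every $k$. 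Here the null set a priori depends on $w_{0}$.

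To remove this dependence, I would pick a countable dense subset $\left\{ w_{j}\right\} _{j=1}^{\infty}\subseteq W$, which exists since $W$ is second countable. The preceding step applied to each $w_{j}$ provides a $\nu$-null set $N_{j}\subseteq K$ outside of which $\mu_{k}$ is $R_{w_{j}}$-invariant. Then for $k\notin\bigcup_{j}N_{j}$, the measure $\mu_{k}$ is invariant under the dense subgroup $\left\langle w_{j}\right\rangle $ of $W$; continuity of the right action of $W$ on locally finite measures on $W$ in the topology of \defref{locally_finite} (which follows from \lemref{uniform_continuity} applied to test functions $f\in C_{c}\left(W\right)$, exactly as in \lemref{measure_action_cont}) promotes this to full $W$-invariance. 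By uniqueness of right Haar measure on $W$ up to scalar, there exists $r_{k}\geq0$ (possibly zero) with $\mu_{k}=r_{k}\mu_{W}$. Measurability of $k\mapsto r_{k}$ is obtained by fixing any $f_{0}\in C_{c}\left(W\right)$ with $\mu_{W}\left(f_{0}\right)>0$ and writing $r_{k}=\mu_{k}\left(f_{0}\right)/\mu_{W}\left(f_{0}\right)$, where measurability of $k\mapsto\mu_{k}\left(f_{0}\right)$ is inherited from the classical disintegration. Substituting $\mu_{k}=r_{k}\mu_{W}$ into the disintegration identity yields the claimed formula.

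The main obstacle I anticipate is the bookkeeping in passing from right $W$-invariance of $\mu$ on all of $H$ to right $w_{0}$-invariance of each individual fiber measure $\mu_{k}$: a priori the disintegration identity holds only after integrating against $\dnu\left(k\right)$, so one must carefully patch together $\nu$-a.e. uniqueness across the increasing family $KU_{i}$ used in the construction, exactly as was done in the previous lemma about $\mu_{k,U}$ being independent of $U$. Everything else — the countable-dense-subgroup argument, the continuity of the $W$-action on $\mathcal{M}\left(W\right)$, and the appeal to uniqueness of Haar measure — is routine once that step is in place.
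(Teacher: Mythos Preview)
Your proposal is correct and follows essentially the same route as the paper: show that the fiber measures $\mu_k$ are right $W$-invariant for $\nu$-almost every $k$, run a countable-density argument to make the null set uniform, and invoke uniqueness of Haar measure on $W$. The only tactical difference is in how the almost-everywhere step is extracted from the integrated identity: the paper multiplies by $\chi_{K_{0}W}$ for arbitrary measurable $K_{0}\subseteq K$ to localize (exactly the trick used in the preceding lemma), whereas you invoke uniqueness of the classical disintegration; the localization trick avoids the bookkeeping across the $KU_{i}$ that you correctly flag as the delicate point.
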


\begin{proof}
This is done similar to the previous lemma. For any continuous function
$f$ with compact support on $H$, any $h_{0}\in W$ and any subset
$K_{0}\subseteq K$ we have that
\begin{align*}
\int_{K_{0}}\mu_{k}\left(L_{k}\left(f\right)\right)\dnu\left(k\right) & =\int_{H}\left(\chi_{K_{0}W}\cdot f\right)\left(g\right)\dmu\left(g\right)=\int_{H}\left(\chi_{K_{0}W}\cdot f\right)\left(gh_{0}\right)\dmu\left(g\right)\\
 & =\int_{K}\left(\int_{W}\left(\chi_{K_{0}W}f\right)\left(khh_{0}\right)\dmu_{k}\left(h\right)\right)\dnu\left(k\right)=\int_{K_{0}}\mu_{k}\left(R_{h_{0}}L_{k}\left(f\right)\right)\dnu\left(k\right).
\end{align*}
Since this is true for any $K_{0}$ we get that $\mu_{k}\left(L_{k}\left(f\right)\right)=\left(\mu_{k}\right)\left(R_{h_{0}}L_{k}\left(f\right)\right)$
for almost every $k$. Again, using separability we get that for almost
every $k$ this is true for all $f$. Since $H$ is also separable
we get that for almost every $k$ we have $\mu_{k}=\mu_{k}\circ R_{h_{0}}$
for all $h_{0}\in H$, namely $\mu_{k}$ is right $W$-invariant,
so we can write $\mu_{k}=r_{k}\mu_{W}$ for some $r_{k}\geq0$. To
sum up, we have that 
\[
\int_{G}f\left(g\right)\dmu\left(g\right)=\int_{K}\left(\int_{H}f\left(hk\right)\dmu_{W}\left(h\right)\right)r_{k}\dnu\left(k\right).
\]
\end{proof}

\bibliographystyle{plain}
\bibliography{bib}

\end{document}